\documentclass{amsart}

\usepackage[utf8]{inputenc}
\usepackage[T1]{fontenc}
\usepackage{float}
\usepackage{mathtools}
\usepackage{color}
\usepackage[english]{babel}
\usepackage{amsmath} 
\usepackage{amsfonts}
\usepackage{thmtools}
\usepackage{amssymb}
\usepackage{amsthm}
\usepackage{mathrsfs}
\usepackage{setspace}
\usepackage{graphicx}
\usepackage{bm}
\usepackage[all]{xy}
\usepackage{lscape}
\usepackage{latexsym}
\usepackage{epigraph}
\usepackage[Sonny]{fncychap}
\usepackage{tikz} 
\usepackage{tikz-cd}
\usetikzlibrary{matrix}
\usepackage{epsfig}
\usepackage{fancyhdr}
\usepackage{hyperref}
\usepackage{paralist}
\usepackage[mathcal]{eucal}
\usepackage{xcolor}
\usepackage{caption}
\usepackage{subcaption}
\usepackage{float}
\numberwithin{equation}{section}

\theoremstyle{cupthm}
\newtheorem{theorem}{Theorem}[section]
\newtheorem{proposition}[theorem]{Proposition}
\newtheorem{lemma}[theorem]{Lemma}
\newtheorem{corollary}[theorem]{Corollary}
\newtheorem{convention}[theorem]{Convention}
\newtheorem{claim}[theorem]{Claim}
\newtheorem*{theorem*}{Theorem}
\theoremstyle{definition}
\newtheorem{definition}[theorem]{Definition}
\newtheorem{example}[theorem]{Example}

\theoremstyle{remark}
\newtheorem{remark}[theorem]{Remark}

\theoremstyle{definition}
\newtheorem*{definition*}{Definition}

\newtheorem*{corollary*}{Corollary}

\usepackage{geometry}
\geometry{paperwidth=210mm,
paperheight=297mm,
textwidth=180mm,
textheight=210mm,
top=30mm,
bottom=20mm,
left=20mm,
right=20mm}

\definecolor{darkred}{rgb}{0,0,0} 
\definecolor{darkgreen}{rgb}{0,0,0}
\definecolor{darkblue}{rgb}{0,0,0}

\newcommand{\A}{\mathbb{A}} 
\newcommand{\AO}{\mathbb{A}^{\mathcal{O}}}
\newcommand{\B}{\mathbb{B}}

\newcommand{\D}{((\mathbb{B}, u_1),\phi,  \{p_j\},  \{\gamma_j\})}
 \newcommand{\G}{\Gamma_{((\mathbb{B}, u_1),\phi,  \{p_j\},  \{\gamma_j\})}}
\newcommand{\st}{\text{St}}    
\newcommand{\rk}{\text{rk}} 
   
\newcommand{\M}{((\mathbb{B}, u_0), \varphi, \mathcal{T})}

\begin{document}

\title{Nielsen equivalence in closed $2$-orbifold groups}

\author{Ederson R. F. Dutra} \thanks{Author supported by FAPESP, São Paulo Research Foundation, grant 2018/08187-6.}
\address{Universidade Federal de São Carlos,  São Carlos, Brazil}
\email{edersondutra@dm.ufscar.br}

\begin{abstract} We prove that any generating tuple of the fundamental group of a sufficiently large $2$-dimensional orbifold is represented by an almost orbifold covering. As a corollary we obtain a generalization of Louder's Theorem \cite{Louder} which asserts that any two generating tuples of the fundamental group of a closed surface are Nielsen equivalent.
\end{abstract}

%-------------------------------------------------------------------------- 

\maketitle
\section{Introduction}

If $G$ is a group, $n\geqslant 1$ and $\mathcal{T}=(g_1,\ldots, g_n)$ is a $n$-tuple of elements of $G$,  an \textit{elementary Nielsen transformation} on $\mathcal{T}$ is one of the following three types of moves:
\begin{enumerate}
\item[(T1)] For some $i\in \{1,\ldots, n\}$ replace $g_i$ by $g_i^{-1}$ in $\mathcal{T}$.
\item[(T2)] For some $i\neq j$, $i,j\in \{1, \ldots, n\}$ interchange $g_i$ and $g_j$ in $\mathcal{T}$. 
\item[(T3)] For some $i\neq j$, $i,j\in \{1, \ldots, n\}$ replace $g_i$ by $g_ig_j$ in $\mathcal{T}$.
\end{enumerate}
Two $n$-tuples $\mathcal{T}$ and $\mathcal{T}'$ of elements of $G$ are called	 \textit{Nielsen equivalent}, denoted by  $\mathcal{T}\sim_{NE} \mathcal{T}'$, if there exists a finite chain of elementary Nielsen transformations taking $\mathcal{T}$ to $\mathcal{T}'$.  Nielsen equivalence clearly defines an equivalence relation on the set  of $n$-tuples of  elements of  $G$ for every $n\geqslant 1$.

Recall that, for a finitely generated group $G$, the \emph{rank} of $G$, denoted by $\rk(G)$, is the smallest number of elements in a generating set of $G$.  A generating $n$-tuple is called \emph{minimal} if $n=\rk(G)$.

An  $n$-tuple $\mathcal{T}$  is called  \emph{reducible} if $\mathcal{T}$ is Nielsen equivalent to a  tuple of  type $(g_1',\ldots, g_{n-1}',1)$. Otherwise,  we say that $\mathcal{T}$ is  \emph{irreducible}.  Note that any minimal generating tuple  is  irreducible.

J. Nielsen~\cite{N1,N2,N3} showed that for the free group $F_n=F(x_1,\ldots, x_n)$ any generating  tuple  of $F_n$ is Nielsen equivalent to $(x_1,\ldots, x_n, 1, \ldots, 1)$. This result implies in particular that in a  free group all generating tuples of the same size are Nielsen equivalent.

Nielsen's result was later generalized to free products by I. A. Grushko~\cite{Grushko}. Note that Nielsen's result can be easily derived from Grushko's theorem and the euclidean algorithm, which is nothing but Nielsen's method in the case of a free group of rank $1$.

\begin{theorem*}[Grushko]{\label{thm:Grushko}}
Let $G=H\ast K$ be a free product. Then any generating tuple $\mathcal{T}=(g_1,\ldots,g_n)$ of   $G$ is Nielsen equivalent to a tuple   $(h_1,\ldots,h_s,k_{s+1},\ldots,k_n)$
with $h_i\in H$ and  $k_i\in K$ for all $i$.\end{theorem*}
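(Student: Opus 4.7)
The plan is to exploit the natural action of $G = H\ast K$ on its Bass--Serre tree $T$. Recall that $T$ is a bipartite tree on which $G$ acts without inversions, with two vertex orbits stabilized by conjugates of $H$ and of $K$ respectively, and a single edge orbit whose stabilizer is trivial. Fix a base edge $e_0$ with endpoints $v_H$ and $v_K$ stabilized by $H$ and $K$, respectively. A key observation is that for any $g\in G$, the edge $ge_0$ shares a vertex with $e_0$ if and only if $g\in H\cup K$; the theorem will therefore follow once we reduce, by Nielsen moves, to a tuple in which this property holds for every entry.

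To each $n$-tuple $\mathcal{T}=(g_1,\ldots,g_n)$ I would associate the complexity
\[
c(\mathcal{T}) := \#E\bigl(Y(\mathcal{T})\bigr), \qquad Y(\mathcal{T}) := \mathrm{conv}\bigl(\{e_0\}\cup\{g_i^{\pm 1}e_0 : 1\leqslant i \leqslant n\}\bigr)\subseteq T,
\]
which is invariant under Nielsen moves of types $(T1)$ and $(T2)$, since those only permute or invert the input edges defining $Y$. Within the Nielsen class of $\mathcal{T}$ I would choose a representative $\mathcal{T}'=(g_1',\ldots,g_n')$ minimizing $c$, and the core claim is that for this minimizer every edge $g_i'e_0$ meets $e_0$. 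Granting this, every $g_i'$ lies in $H\cup K$, and a final permutation via $(T2)$ sorts the tuple into the required form $(h_1,\ldots,h_s,k_{s+1},\ldots,k_n)$.

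To establish the core claim, suppose for contradiction that $Y(\mathcal{T}')$ possesses a leaf edge $f$ disjoint from $e_0$. Since the convex-hull construction introduces no leaves beyond the input edges, $f=(g_i')^{\epsilon}e_0$ for some $i$ and $\epsilon\in\{\pm 1\}$. Because $\mathcal{T}'$ generates $G$ and $G$ acts minimally on $T$, I would argue that the geodesic inside $Y(\mathcal{T}')$ from $f$ to $e_0$ must contain an edge of the form $(g_j')^{\delta}e_0$ lying strictly between $f$ and $e_0$. A suitable composition of $(T1)$ and $(T3)$ moves then replaces $g_i'$ with a product involving $g_i'$ and $(g_j')^{\pm 1}$ whose action on $e_0$ produces an edge strictly closer to $e_0$ than $f$, decreasing $c$ and contradicting the minimality of $\mathcal{T}'$.

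The main obstacle will be making this pulling-in step rigorous: establishing the existence of the witness generator $g_j'$ whenever $\mathcal{T}'$ generates $G$, and verifying that the resulting move does not create a new leaf at equal or greater distance from $e_0$ elsewhere in $Y$. This is the combinatorial heart of Grushko's theorem, and is precisely where the full strength of the generating hypothesis enters (as opposed to merely assuming the tuple generates a subgroup). A careful approach is to induct on $c(\mathcal{T})$ while simultaneously performing a local analysis of $Y$ near the leaf $f$, using the triviality of edge stabilizers in $T$ to translate combinatorial identifications in $Y$ into algebraic Nielsen moves on the tuple.
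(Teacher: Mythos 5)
The paper does not actually prove this statement: Grushko's theorem is quoted as a classical result with a citation to \cite{Grushko}, so there is no in-paper argument to compare against and your sketch must stand on its own. It does not, because the step you yourself flag as the ``combinatorial heart'' is not merely unfinished --- the intermediate claim you propose to prove it with is false. You assert that if the $c$-minimal tuple $\mathcal{T}'$ has a leaf $f=(g_i')^{\epsilon}e_0$ of $Y(\mathcal{T}')$ disjoint from $e_0$, then \emph{because $\mathcal{T}'$ generates and the action is minimal} the geodesic from $f$ to $e_0$ contains an edge $(g_j')^{\delta}e_0$ strictly between them. Take $G=\langle a\rangle\ast\langle b\rangle$ and the generating tuple $(ab,\,ab^2)$. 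Then $ab\,e_0=[abv_H,av_K]$ is a leaf of $Y$ at distance two from $e_0$, and the unique edge strictly between it and $e_0$ on the geodesic is $a\,e_0$, which is not $(g_j')^{\delta}e_0$ for either generator and either sign (edge stabilizers are trivial, so $a\,e_0=ge_0$ forces $g=a$). Generation and minimality of the action hold here, so they cannot be what produces the witness; you would have to extract it from $c$-minimality of $\mathcal{T}'$ itself, and no mechanism for that is indicated. Indeed the move that reduces $(ab,ab^2)$ is $(ab,ab^2)\mapsto(ab,(ab)^{-1}ab^2)=(ab,b)$, which exploits the \emph{overlap} of the two geodesics $[e_0,ab\,e_0]$ and $[e_0,ab^2e_0]$ along $a\,e_0$ --- i.e.\ cancellation between a \emph{pair} of generators --- not the presence of a translate of $e_0$ between a leaf and the base edge. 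The correct local criterion is therefore about coincidence of initial segments of the paths from $e_0$ to the various $(g_j')^{\pm1}e_0$, and proving that a $c$-minimal tuple admits no such coincidence is the Nielsen-reducedness argument you have not supplied.

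Even after that repair a second gap remains: absence of pairwise cancellation does not by itself force every entry into $H\cup K$. The tuple $(hk)$ is $c$-minimal in its Nielsen class (for $n=1$ there are no (T3) moves) yet its entry has syllable length two; it simply fails to generate. What Grushko's theorem really requires is the global statement that a Nielsen-reduced tuple generates the free product of the subgroups generated by its length-one entries together with a free group on its longer entries, positioned so that the longer entries contribute nothing to $H\cup K$; only then does the hypothesis that the tuple generates \emph{all} of $G$ force the longer entries to be absent (or removable). Your sketch defers exactly this point. As a calibration: the folding machinery of Sections 2, 4 and 6 of the paper is the rigorous modern implementation of this same minimal-complexity strategy in a more general setting, and the length of that implementation is a fair measure of what is still missing from the proposal.
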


The purpose of  this paper is to study Nielsen equivalence of generating tuples  of    $2$-orbifold groups. A $2$-\emph{orbifold} is a space locally modeled on the quotients of the Euclidean plane by finite group actions. Further, these local models are glued together by maps compatible with the finite group action. We  shall restrict our  attention to $2$-orbifolds without reflector curves. This class of orbifolds can be defined in an entirely combinatorial way as follows.

\begin{definition}
A  $2$-\emph{orbifold}  $\mathcal{O}$ is a pair $(F, p)$ where $F$ is a  connected  surface, called the \emph{underlying surface} of $\mathcal{O}$, and $p:F\rightarrow \mathbb{N}$ is a function such that $\Sigma(\mathcal{O}):=\{x\in F \ | \ p(x)\geqslant 2\}$ is countable, discrete and is contained in the interior $ \text{Int}(F)=F-\partial F$ of $F$. The number $p(x)$ is called the \emph{order} of   $x$ and the points of $\Sigma(\mathcal{O})$   are called \emph{cone points}.  An  orbifold is said to be \emph{compact/closed} if its underlying surface is  compact/closed.   
\end{definition}

We will often denote a compact   $2$-orbifold $\mathcal{O}=(F,p)$ with cone points $x_1, \ldots, x_r$   by $\mathcal{O}=F(p_1, \ldots, p_r)$ where $p_i$ is the order of $x_i$.   To describe a  $2$-orbifold pictorially, we show the underlying surface $F$, mark the cone points, and label each cone point by its order. Figure~\ref{fig:orbifold1} illustrates a $2$-orbifod with underlying surface an annulus and with a single cone point of order $p_1$.

\smallskip 

The \emph{fundamental group} $\pi_1^{o}(\mathcal{O})$ of a   $2$-orbifold $\mathcal{O}=(F, p)$  is defined as follows.  Let $x_1, x_2, \ldots$ be the cone points of $\mathcal{O}$ and for each $i$ let $D_i\subseteq \text{Int} F$ be a disk centered at $x_i$ such that $ {D}_i\cap {D}_j= \emptyset$ for $i\neq j$. Let $S_{\mathcal{O}}$ be the surface $F - \cup\ \text{Int}(D_i)$.  Then $\pi_1^{o}(\mathcal{O})$ is the  group obtained from $\pi_1 (S_{\mathcal{O}})$ by adding the relations $s_i^{p_i}=1$, where $s_i$ is the homotopy class represented by $\partial D_i$ and $p_i$ is the order of  $x_i$.  Thus a presentation of the fundamental group of a compact  orbifold  $\mathcal{O}=F(p_1,\ldots, p_r)$ is  
$$\pi_1^o(\mathcal{O})=\langle a_1,\ldots , a_{p}, t_1, \ldots, t_q,  s_1, \ldots , s_{r} \ | \  s_1^{p_1},\ldots, s_r^{p_r},  R \cdot s_1\cdot \ldots\cdot s_r =t_1\cdot \ldots \cdot t_q\rangle,$$
where the element  $t_j$ ($1\leqslant j\leqslant q$) is the homotopy class of the $j$th boundary component of $F$. The word $R$ and the number $p\geqslant 0$  are  given by:
\begin{enumerate}
\item[(P1)] $p$ is even and $R =[a_1,a_2]\cdot \ldots \cdot [a_{p-1},a_p]$ if $F$ is orientable;
\item[(P2)]   $p\geqslant 1$ and $R =a_1^2\cdot\ldots \cdot a_p^2$ if $F$ is non-orientable.
\end{enumerate}
For a more detailed discussion  about orbifolds the reader is referred to the article of P. Scott~\cite{Scott}. 
\begin{figure}[h]
\begin{minipage}[b]{0.4\linewidth}
\centering
\includegraphics[width=0.35\linewidth]{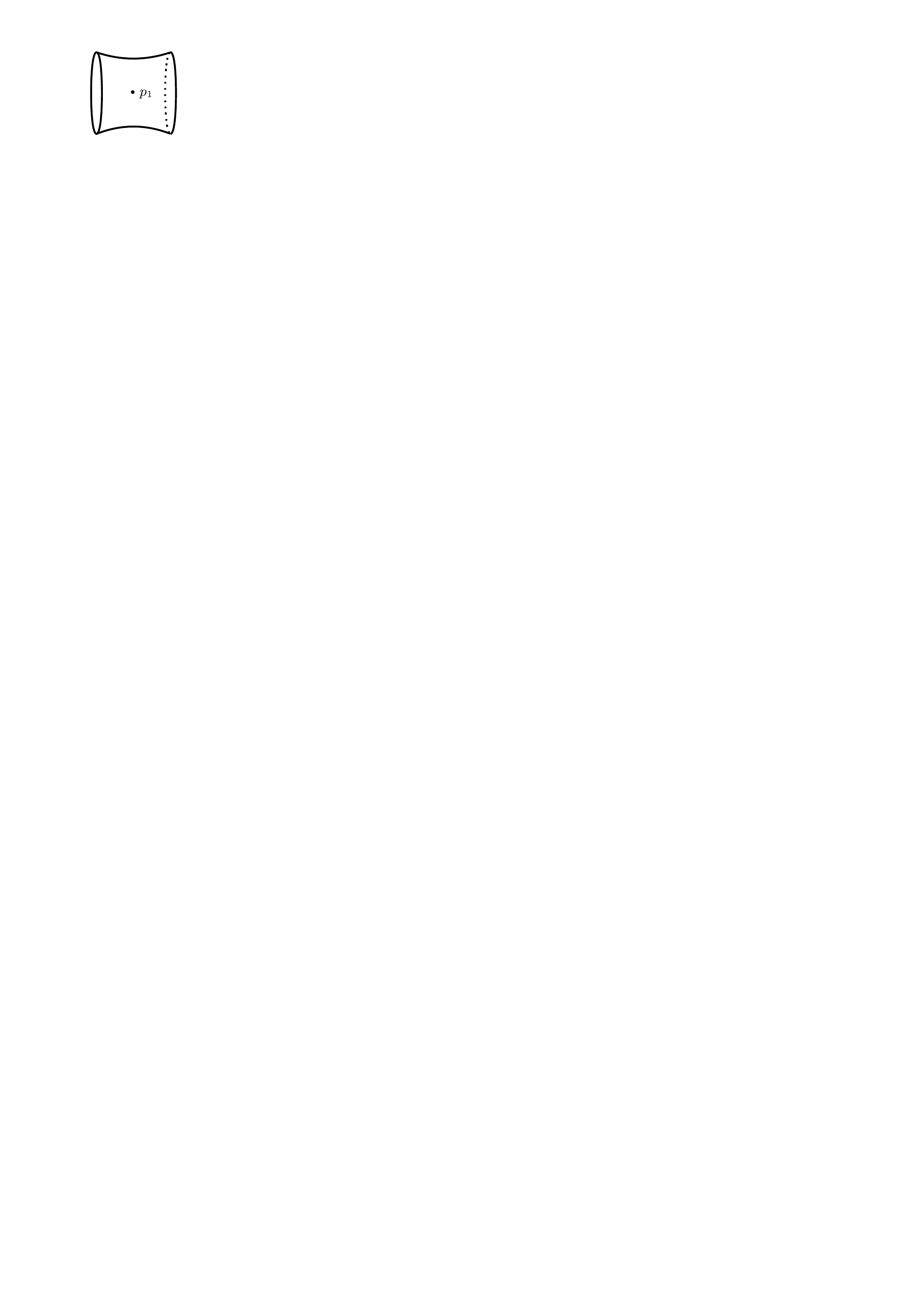}
\caption{ $\mathcal{O}=A(p_1)$. }\label{fig:orbifold1}
\end{minipage}% \hfill 
\begin{minipage}[b]{0.4\linewidth}
\centering
\includegraphics[width=0.35\linewidth]{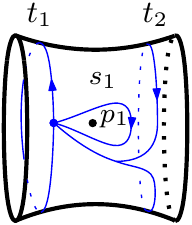}
\caption{Generators of $\pi_1^o(\mathcal{O})$. }\label{fig:genfundgp}
\end{minipage}
\end{figure}
\begin{convention}
Let $\mathcal{O}=F(p_1, \ldots, p_r)$ be a compact  connected  orbifold with cone points $x_1, \ldots, x_r$ of order $p_1, \ldots,  p_r$ respectively. Given a point $x$ of $\mathcal{O}$ of order $p\geqslant 1$, we will denote by  $s_x$     the element of  $\pi_1^o(\mathcal{O})$ represented by  the boundary of a small disk  containing  $x$ in its interior.  Thus $s_x=1$ if $p=1$ and $s_{x}=s_i$ if $x=x_i$ for some $1\leqslant i\leqslant r$. 
\end{convention}

\begin{definition*}[Standard generating tuple]
Let $\mathcal{O}$ be a   compact  orbifold. A \emph{standard generating tuple} of $\pi_1^{o}(\mathcal{O})$ is a tuple of the form  $(a_1,a_2,\ldots,a_p,t_{j_1}, \ldots,t_{j_{q'}},  s_{i_1}^{\nu_1},\ldots,s_{i_{r'}}^{\nu_{r'}})$
where  the following hold:
\begin{enumerate}
\item[(S1)] $q'+r'=q+r-1$.
\item[(S2)] $1\leqslant j_1<\ldots <j_{q'}\leqslant q$ and $1\leqslant i_1<\ldots<i_{r'}\leqslant r$.
\item[(S3)] $\nu_1,\ldots,\nu_{r'}$ are positive integers   such that $(\nu_j,p_j)=1$   for all $1\leqslant j\leqslant r'$.
\end{enumerate}
\end{definition*}

\begin{remark}{\label{remark:1}}Let $\mathcal{O}$ be a compact   $2$-orbifold.  If $\mathcal{O}$ has non-empty    boundary, then 
$$\pi_1^o(\mathcal{O})=F(a_1,\ldots, a_p, t_1, \ldots, t_{q-1})\ast \langle s_1\rangle\ast \ldots \ast \langle s_r\rangle.$$ 
Grushko's theorem and the euclidean algorithm imply  that any generating tuple of $\pi_1^o(\mathcal{O})$ is  Nielsen equivalent to a tuple 
$(a_1,\ldots, a_p, t_1,\ldots, t_{q-1}, s_{1}^{\nu_1}, \ldots, s_{r}^{\nu_r}, 1,\ldots, 1)$
with  $\nu_1,\ldots, \nu_r$ positive integers  such that  $(\nu_i, p_i)=1$ for all $1\leqslant i\leqslant r$. In particular, any minimal generating tuple of $\pi_1^o(\mathcal{O})$ is Nielsen equivalent to a standard generating tuple.
\end{remark}

In~\cite[Satz 6]{Zieschang}  H. Zieschang proves that any    minimal generating tuple of the fundamental group of a closed  surface of genus $\neq3$ is Nielsen equivalent to a standard generating tuple.  Rosenberger~\cite{Rosenberger, Rosenberger1} proves  a similar result for a large class of Fuchsian groups. In~\cite{Lustig, Lustig1} the authors distinguish Nielsen equivalence classes of standard generating tuples of Fuchsian groups and use this to distinguish  vertical  Heegaard splittings of Seifert fibered spaces. Recently L. Louder~\cite{Louder}  proves that any two generating tuples of the fundamental group of a closed surface are Nielsen equivalent.

In order to formulate the main result of this paper we need the notion of an almost orbifold covering. We first recall the definition of an orbifold covering. Let $\mathcal{O}=(F,p)$ and $\mathcal{O}'=(F', p')$ be two  $2$-orbifolds. An \emph{orbifold covering} $\eta:\mathcal{O}'\rightarrow \mathcal{O}$ is a continuous surjective map $\eta:F'\rightarrow F$ 
with the following properties:
\begin{enumerate}
\item For each point $y\in F'$, the order of $y$ divides the order of $\eta(y)$.

\item For each point $x\in \text{Int}(F)$ the set $\eta^{-1}(x)\subseteq F'$ is discrete and, over a disk in $F$ centered at $x$,   $\eta$ is equivalent to the map 
$$(z, y)\in \mathbb{D}^2 \times \eta^{-1}(x) \mapsto e^{( \frac{2\pi p'(y)}{p(x)}i ) } z\in \mathbb{D}^2$$ 
and $x$ corresponds to $0$ in $\mathbb{D}^2$.

\end{enumerate} 
Note that the map $\eta|_{F'- \eta^{-1}(\Sigma(\mathcal{O}))}:F'- \eta^{-1}(\Sigma(\mathcal{O}))\rightarrow F-\Sigma(\mathcal{O})$ is a genuine covering.  The \emph{degree}  of  $\eta:\mathcal{O}'\rightarrow\mathcal{O}$, denoted by $\text{deg}(\eta)$,    is defined as  the degree of $\eta|_{F'- \eta^{-1}(\Sigma(\mathcal{O}))}$. It is not hard to see that an orbifold covering $\eta:\mathcal{O}'\rightarrow \mathcal{O}$    induces a monomorphism   $\eta_{\ast}:\pi_1^o(\mathcal{O}')\rightarrow \pi_1^o(\mathcal{O}).$ 
Conversely, for any subgroup $H\leq \pi_1^o(\mathcal{O})$ there is an orbifold covering $\eta:\mathcal{O}_{H}\rightarrow  \mathcal{O}$ such that $\eta_{\ast}(\pi_1^o(\mathcal{O}_{H}))=H$.

\begin{definition*}[Almost orbifold covering]{\label{def:almost}}
Let $\mathcal{O}'=(F', p')$ and $\mathcal{O}=(F, p)$ be two  compact  $2$-orbifolds.  An \emph{almost orbifold covering} $\eta:\mathcal{O}'\rightarrow \mathcal{O}$ is a continuous map $\eta:F'\rightarrow F$ having the following properties:
\begin{enumerate}
\item[(C1)] For each $y\in F'$ the order  $y$ divides the order  of $\eta(y)$. 

\item[(C2)] There is a point $x\in\text{Int}(F)$ of order $p\geqslant 1$,  called the \emph{exceptional point}, and a disk ${D}\subseteq\text{Int}(F)$ centered at $x\in F$, called the \emph{exceptional disk}, with $(D-\{x\})\cap \Sigma(\mathcal{O})=\emptyset$  
such that $\eta$ restricted to $F'-\eta^{-1}(\text{Int}(D))$ defines  an orbifold covering of finite degree between the compact $2$-orbifolds  
$$\mathcal{Q}':=(F'-\eta^{-1}(\text{Int}(D)), p'|_{F'-\eta^{-1}(\text{Int}(D))})\subseteq \mathcal{O}' \ \ \text{ and } \ \ \mathcal{Q}:=(F-\text{Int}(D), p|_{F-\text{Int}(D)})\subseteq\mathcal{O}.$$

\item[(C3)] $\eta^{-1}(D)=D_1\sqcup D_2\sqcup\ldots\sqcup D_t\sqcup C$, where $t\geqslant 0$ and  
\begin{enumerate}
\item[(C3.a)]  $C\subseteq \partial F'$ is a boundary component of $\mathcal{O}'$, called the \emph{exceptional boundary component} of $\mathcal{O}'$.

\item[(C3.b)] Each $D_j\subseteq \text{Int}(F')$ ($1\leqslant j\leqslant t$) is a disk and $\eta|_{D_j}:D_j\rightarrow D$ is equivalent to the map
$$z\in \mathbb{D}^2\longmapsto e^{\frac{2\pi p}{q}i}  z\in \mathbb{D}^2$$
and  $x\in D$ corresponds to $0$ in $\mathbb{D}^2$, where  $q$ is the order of the  point $\eta^{-1}(x)\cap D_j$.  
\end{enumerate}

\end{enumerate}
The degree of $\eta:\mathcal{O}'\rightarrow \mathcal{O}$, which we also denote by $\text{deg}(\eta)$,  is defined as the degree of the orbifold covering  $\eta|_{\mathcal{Q}'}:\mathcal{Q}'\rightarrow \mathcal{Q}$.   We further call an almost orbifold covering   $\eta :\mathcal{O}'\rightarrow \mathcal{O}$   \emph{special} if the degree of the map  $\eta|_{C}:C\rightarrow \partial D$ is at most  the order $p$ of the exceptional point $x$.
\end{definition*}

An almost orbifold covering $\eta:\mathcal{O}'\rightarrow \mathcal{O}$ also induces a homomorphism $\eta_{\ast}:\pi_1^o(\mathcal{O}')\rightarrow \pi_1^o(\mathcal{O})$.  But contrary to orbifold coverings, the induced homomorphism $\eta_{\ast}$ is not injective since $\eta_{\ast}$ maps the element $g$,   represented by the exceptional boundary component  $C\subseteq \partial \mathcal{O}'$,   onto  a conjugate of  $s_x^l$,  where $x$ is the exceptional point and   $l$ is the degree of the map $\eta|_{C}:C\rightarrow \partial D$.    Another important difference from orbifold coverings is that the degree of an almost orbifold covering may be strictly larger than the index of $\eta_{\ast}(\pi_1^o(\mathcal{O}'))$ in $\pi_1^o(\mathcal{O})$; we will give examples of $\pi_1$-surjective almost orbifold coverings of degree $\geqslant 2$.

\begin{remark}{\label{remark:2}}
Note that if $\eta:\mathcal{O}'\rightarrow \mathcal{O}$ is an almost orbifold covering,  then  $\mathcal{O}'$ has  non-empty boundary, and therefore $\pi_1^o(\mathcal{O}')$  splits as a free product of cyclic groups. It follows from  Remark~\ref{remark:1}  that any generating tuple of $\pi_1^o(\mathcal{O}')$ is either reducible or  is Nielsen equivalent to a standard generating tuple.    
\end{remark}

\begin{example}{\label{ex:almost1}}
Let $p_1=2n+1$ and $p_2=2n'$ with $n, n'\geqslant 1$. Fig.~\ref{fig:almost1} shows a special  almost orbifold covering $\eta:\mathcal{O}'\rightarrow \mathcal{O}$ where $\mathcal{O}'=F'(p_1,p_2,\frac{p_2}{2})$ with $F'$ a punctured double torus,   and $\mathcal{O}=T^2(p_1, p_2)$.  The exceptional point is the cone point of $\mathcal{O}$  of order $p_1$ and the exceptional disk is  $D$. The inverse image of $D$ under $\eta$ is equal to $D_1\sqcup C$.    
\begin{figure}[h!]
\begin{center}
\includegraphics[scale=.95]{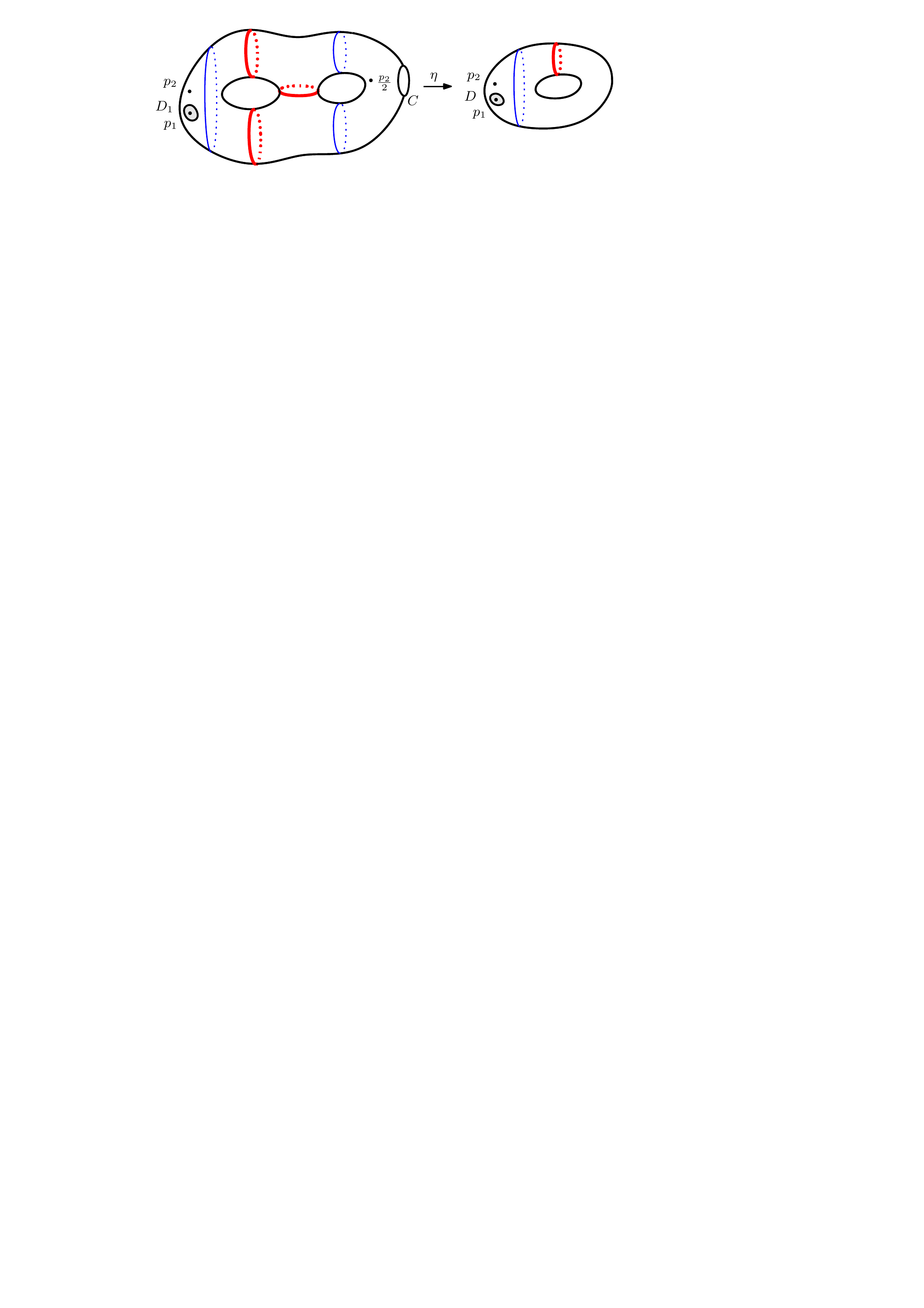}
\end{center}
\caption{$\eta:\mathcal{O}'\rightarrow \mathcal{O}$ is a special almost orbifold cover of   degree $3$.}\label{fig:almost1}
\end{figure}  
\end{example}

\begin{example}{\label{ex:almost2}}
Suppose that $\mathcal{O}=(F,p)$ and $\mathcal{O}''=(F'', p'')$ are compact $2$-orbifolds and that $\eta'':\mathcal{O}''\rightarrow \mathcal{O}$ is an orbifold covering of finite degree.   Let $x$ be a point of $F$ of order $p\geqslant1$,  $D\subseteq\text{Int}(F)$ a disk centered at $x$ with $(D-\{x\})\cap \Sigma(\mathcal{O})=\emptyset$  and let $D''\subseteq F''$ a component of $(\eta'')^{-1}(D)$. Then $\eta''$ restricted to  $F''-\text{Int}(D'')$  defines a special  almost orbifold covering $\eta:\mathcal{O}'\rightarrow \mathcal{O}$ where   
$$\mathcal{O}'=(F''-\text{Int}(D'') , p''|_{F''-\text{Int}(D'')})\subseteq \mathcal{O}''.$$ 
The exceptional boundary component of $\mathcal{O}''$   is  $\partial D''\subseteq \partial\mathcal{O}'$, the exceptional point is  $x$ and the exceptional disk is $D$. Note that the  degree of the map $\eta|_{\partial D''}:\partial D''\rightarrow \partial D$ divides $p$.
\end{example}

\begin{example}{\label{ex:almost3}}
Conversely, suppose that $\eta: \mathcal{O}'\rightarrow \mathcal{O}$ is a special  almost orbifold covering   with exceptional boundary component  $C\subseteq \partial \mathcal{O}'$,  exceptional disk $D\subseteq \text{Int}(F)$  and exceptional point $x\in \text{Int}(D)$. If the degree of the map  $\eta|_{C}:C\rightarrow \partial D$   divides the order  $p$  of $x$,  then $\eta$ is    the restriction of some orbifold covering of finite degree $\eta'':\mathcal{O}''\rightarrow \mathcal{O}$ as in the previous example.
\end{example}

In this article we study Nielsen classes of generating tuples of the fundamental group of sufficiently large $2$-orbifolds where we say that a $2$-orbifold $\mathcal{O}$ is \emph{sufficiently large} if $\mathcal{O}$ is closed and the following hold:
\begin{enumerate}
\item If the underlying surface of $\mathcal{O}$ is $S^2$,  then $\mathcal{O}$ has at least $4$ cone pints. 

\item If the underlying surface of $\mathcal{O}$ is $\mathbb{R}P^2$,  then $\mathcal{O}$ has at least $2$ cone points.  
\end{enumerate}
The main result of this paper is the following theorem.
\begin{theorem}{\label{MainThm}}
Let $\mathcal{O}$ be a sufficiently large $2$-orbifold and  $\mathcal{T}$  a  generating tuple of $\pi_1^o(\mathcal{O})$.  Then there exists a special    almost orbifold covering $\eta:\mathcal{O}'\rightarrow \mathcal{O}$ and a generating tuple $\mathcal{T}'$ of $\pi_1^o(\mathcal{O}')$ such that $\eta_{\ast}(\mathcal{T}')$ and $\mathcal{T}$ are   Nielsen equivalent. 
\end{theorem}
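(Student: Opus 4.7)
The plan is to adapt Louder's strategy for surface groups to the orbifold setting, using the decorated graph-of-groups machinery suggested by the paper's notation $\mathcal{M}=((\mathbb{B}, u_0), \varphi, \mathcal{T})$. My first observation is that almost orbifold coverings are the natural generalization of Louder's ``resolutions'': when $\mathcal{O}$ has no cone points, an almost orbifold covering is essentially a compact surface with one distinguished boundary component mapping into $\mathcal{O}$. Because $\mathcal{O}$ is sufficiently large, it contains an essential simple closed curve $c$ that induces a nontrivial splitting of $\pi_1^o(\mathcal{O})$ as an amalgamated product or HNN extension over an infinite cyclic subgroup; the numerical hypotheses in the definition of ``sufficiently large'' are precisely what is needed to guarantee such a curve with complementary pieces of strictly smaller orbifold complexity. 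I would proceed by induction on this complexity (say, the negative orbifold Euler characteristic refined by cone point data), invoking Zieschang and Rosenberger for small base cases and Remark~\ref{remark:1} for pieces with nonempty boundary.

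Given a generating tuple $\mathcal{T}=(g_1,\ldots,g_n)$ of $\pi_1^o(\mathcal{O})=\pi_1(\mathbb{B})$, choose a free group $F_n$ with basis sent to $\mathcal{T}$; this yields a morphism $\varphi:F_n\rightarrow \pi_1(\mathbb{B})$ and hence a marked datum $\mathcal{M}$. I would then perform Stallings-type folding moves on $\mathcal{M}$ in the spirit of Weidmann's Nielsen method for graphs of groups. These moves strictly decrease a natural complexity function on $\mathcal{M}$ while preserving both surjectivity of $\varphi$ and the Nielsen class of $\mathcal{T}$. The process terminates in a folded datum which either exhibits $\mathcal{T}$ as reducible, in which case the induction hypothesis applied to a shorter tuple finishes the argument, or else induces a decomposition $\mathcal{T}\sim_{NE}(\mathcal{T}_1,\ldots,\mathcal{T}_k)$ where each $\mathcal{T}_i$ generates (a conjugate of) a vertex group of $\mathbb{B}$, that is, the fundamental group of a sub-orbifold obtained by cutting $\mathcal{O}$ along $c$.

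By the inductive hypothesis each piece admits an almost orbifold covering producing the corresponding $\mathcal{T}_i$ up to Nielsen equivalence. I would argue that at most one of these pieces contributes a genuinely ``almost'' covering while the others promote to honest orbifold coverings (cf.\ Example~\ref{ex:almost3}), so that the pieces glue along compatible preimages of $c$ to an almost orbifold covering $\eta:\mathcal{O}'\rightarrow \mathcal{O}$ with $\eta_{\ast}(\mathcal{T}')\sim_{NE}\mathcal{T}$. The chief obstacles are twofold. First, one must control which piece carries the exceptional structure: only a single vertex of the folded graph $\Gamma_{\mathcal{M}}$ may support an almost cover, and this vertex must correspond to a cone disk of $\mathcal{O}$ so that the exceptional point $x$ in condition (C2) is a genuine cone point. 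Second and more delicate, one must ensure that the final covering is \emph{special}, i.e.\ that $\text{deg}(\eta|_C)\leqslant p$. I would address this by a minimality argument: among all representations of $\mathcal{T}$ by almost orbifold coverings, pick one minimizing the exceptional wrapping number $\text{deg}(\eta|_C)$; if this minimum exceeded $p$, then Example~\ref{ex:almost3} would upgrade $\eta$ to a genuine orbifold covering of strictly smaller complexity, contradicting minimality. The correct bookkeeping of the folded graph $\Gamma_{\mathcal{M}}$ in the inductive step, together with this minimality argument for specialness, is where I expect the bulk of the technical work to lie.
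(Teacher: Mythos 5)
Your proposal contains genuine gaps, and the central one is the claimed outcome of the folding process. You assert that folding a marked morphism "terminates in a folded datum" which either exhibits $\mathcal{T}$ as reducible or yields $\mathcal{T}\sim_{NE}(\mathcal{T}_1,\ldots,\mathcal{T}_k)$ with each $\mathcal{T}_i$ generating a conjugate of a vertex group of the splitting along $c$. Neither alternative can occur. A folded morphism induces an injective map on fundamental groups (Normal Form Theorem), while the domain of any marked morphism here is a free product of cyclic groups surjecting onto a closed $2$-orbifold group, which can never be injective; so the folding sequence must get stuck at a morphism that is neither folded nor vertex injective (this is exactly Remark~\ref{remark:notfolded}). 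More importantly, the Grushko-type dichotomy you invoke is false for splittings over $\mathbb{Z}$: the paper's degree-$3$ special almost orbifold covering of Example~\ref{ex:almost1} produces generating tuples that are not Nielsen equivalent to tuples supported on the vertex groups of any such splitting, and the whole point of the theorem is to account for these. The paper therefore does not induct on orbifold complexity at all; it fixes one decomposition into small orbifolds, runs a double minimality argument ($c$-complexity, then $d$-complexity) over the set $\Omega(\mathcal{T}_0)$ of marked morphisms representing $\mathcal{T}$, and then invokes Proposition~\ref{proposition:1} — a self-contained local analysis over a single small orbifold, proved with a second graph-of-groups decomposition having trivial edge groups — to show that the unique non-injective vertex carries a special almost orbifold covering while every other vertex carries a finite-degree genuine covering. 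Your "at most one piece is genuinely almost" is the conclusion of that minimality argument (Lemmas~\ref{lemma:noncoll} and~\ref{lemma:almostcovering}), not something you can assume when gluing.

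The specialness argument is a second gap. Example~\ref{ex:almost3} upgrades an almost orbifold covering to a genuine one only when $\deg(\eta|_C)$ \emph{divides} the order $p$ of the exceptional point; if your minimal wrapping number exceeded $p$ without dividing it, no upgrade is available and your minimality argument produces no contradiction. In the paper, specialness is established inside the proof of Proposition~\ref{proposition:1}: Lemma~\ref{lemma:adjfinite} shows that if the relevant index $d$ were smaller than the wrapping number $k_u$, one could unfold and create a square-folding configuration, contradicting edge-minimality of the chosen decorated morphism; hence $k_u\leqslant d\leqslant p$.
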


\begin{example}[Standard generating tuples]{\label{ex:standard}}
Let $\mathcal{O}=F(p_1,\ldots, p_r)$ be a sufficiently large $2$-orbifold and let $\mathcal{T}=(a_1,\ldots, a_p, s_{1}^{\nu_{1}},\ldots, s_{i-1}^{\nu_{i-1}}, s_{i+1}^{\nu_{i+1}}, \ldots,  s_{r}^{\nu_r})$  be a standard generating tuple of $\pi_1^o(\mathcal{O})$. A special  almost orbifold covering $\eta:\mathcal{O}'\rightarrow \mathcal{O}$ and a generating tuple $\mathcal{T}'$ of $\pi_1^o(\mathcal{O}')$ such that $\eta_{\ast}(\mathcal{T}')\sim_{NE}\mathcal{T}$ can be constructed as follows. If $r=0$, i.e. $\Sigma(\mathcal{O})=\emptyset$, then  let $x$ be an arbitrary point of $F$.  If $r\geqslant 1$ then let $x\in F$ be the cone point of $\mathcal{O}$ that corresponds to the generator $s_i$ of $\pi_1^o(\mathcal{O})$. Put   
$$\mathcal{O}'=(F-\text{Int}(D), p|_{F-\text{Int}(D)})$$
where $D\subseteq F$  is a disk centered at  $x \in F$ with $(D-\{x\})\cap \Sigma(\mathcal{O})=\emptyset$. Then the inclusion map $\eta:F-\text{Int}(D)  \hookrightarrow F$ defines an almost orbifold covering $\mathcal{O}'\rightarrow \mathcal{O}$ of degree one (Indeed any almost orbifold  covering of degree one is obtained in this way). The fundamental group of $\mathcal{O}'$ has the following presentation:
\begin{equation*} \label{eq1}
\begin{split}
\langle a_1,\ldots , a_p, t,    s_{1},\ldots, s_{i-1}, s_{i+1}, \ldots ,s_{r}  \ &  |  \ s_{1}^{p_{1}},\ldots, s_{i-1}^{p_{i-1}}, s_{i+1}^{p_{i+1}}, \ldots, s_{r}^{p_r},\\
 &   R\cdot s_{1}\cdot\ldots \cdot  s_{i-1}\cdot  t \cdot s_{i+1}\cdot  \ldots \cdot s_{r}  \rangle
\end{split}
\end{equation*}
where $t\in \pi_1^o(\mathcal{O'})$ is the element represented by $\partial D$.  Thus  $\mathcal{T}'=(a_1,\ldots, a_p, s_{1}^{\nu_1}, \ldots, s_{i-1}^{\nu_{i-1}}, s_{i+1}^{\nu_{i+1}}, \ldots,  s_r^{\nu_{r}})$   generates $\pi_1^o(\mathcal{O}')$. As $\eta$ is simply the inclusion map we further see that    $\eta_{\ast}(\mathcal{T}')=\mathcal{T}$.  
\begin{figure}[h!]
\begin{center}
\includegraphics[scale=1]{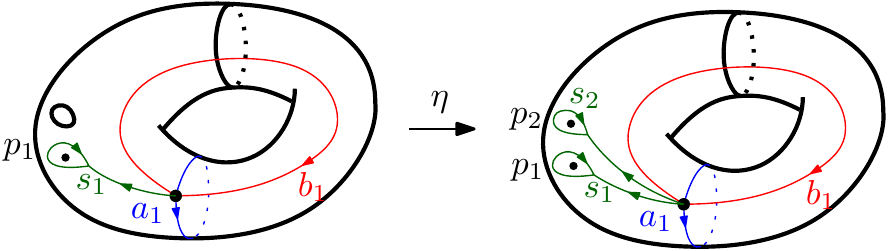}  
\end{center} 
\caption{A standard generating tuple of $\pi_1^o(\mathcal{O})$.}\label{fig:standard1}
\end{figure}  
\end{example}

\begin{remark}{\label{remark:deg1}}
Grushko's theorem implies that the converse of the previous example holds, i.e. if  there is an almost orbifold covering $\eta:\mathcal{O}'\rightarrow\mathcal{O}$ of degree one and a generating tuple $\mathcal{T}'$ of $\pi_1^o(\mathcal{O}')$ such that $\eta_{\ast}(\mathcal{T}')$ is Nielsen equivalent to  $\mathcal{T}$, then $\mathcal{T}$ is either reducible or   is Nielsen equivalent to a  standard generating tuple. 
\end{remark}

\begin{example}
Let $\mathcal{O}=T^2(2n+1, 2n)$  where  $n=2n_0+1\geqslant 5$ such that $n$ and $3$ are coprime. Let further  $\eta:\mathcal{O}'\rightarrow \mathcal{O}$ be the special almost orbifold  covering from  Example~\ref{ex:almost1}, and   $\sigma_1, \sigma_2, \sigma_3, \alpha_1, \alpha_2, \beta_1, \gamma_1$   the elements of $\pi_1^o(\mathcal{O}')$   illustrated in Fig.~\ref{fig:exthm}.   Consider the generating tuple   
$$\mathcal{T}_1=(s_1^2, s_2^3, a_1,  b_1^{-1} a_1b_1 , b_1^3,  b_1s_1b_1, b_1 s_2^4 b_1^{-1})$$ of 
$\pi_1^o(\mathcal{O})=\langle a_1,b_1, s_1, s_2 \ | \ s_1^{2n+1}, s_2^{2n}, [a_1,b_1]s_1s_2\rangle.$ The description of $\eta$ implies that $\eta_{\ast}$ maps the generating  tuple  $\mathcal{T}'=(\sigma_1^2, \sigma_2^3, \alpha_1, \alpha_2, \beta_1 , \gamma_1, \sigma_3^2)$  of $\pi_1^o(\mathcal{O}')$ onto a tuple that is Nielsen equivalent to $ \mathcal{T}_1$.   Now consider the generating tuple $$\mathcal{T}_2=(s_1^2, s_2, a_1,  b_1^{-1} a_1b_1 , b_1^3,  b_1s_1b_1, b_1 s_2^4 b_1^{-1})$$ 
of $\pi_1^o(\mathcal{O})$. Note that $\eta_{\ast}$ maps the generating tuple $\mathcal{T}''= (\sigma_1^2, \sigma_2 , \alpha_1, \alpha_2, \beta_1 , \gamma_1, \sigma_3^2)$  of $\pi_1^o(\mathcal{O}')$ onto a tuple that is Nielsen equivalent to $ \mathcal{T}_2$.   Using the relation $s_2^{-1}= [a_1,b_1]s_1$ we see that $\mathcal{T}_2$ is Nielsen equivalent to the tuple $(a_1, b_1, s_1, 1, 1, 1,1)$.   Thus there is an  almost orbifold   covering $\eta'':\mathcal{O}''\rightarrow \mathcal{O}$  of degree one (and therefore special)   and a  reducible  generating tuple $\mathcal{T}''$ of $\pi_1^o(\mathcal{O}'')$ such that $\eta_{\ast}''(\mathcal{T}'')\sim_{NE} \mathcal{T}_2$.   
\begin{figure}[h!]
\begin{center}
\includegraphics[scale=1]{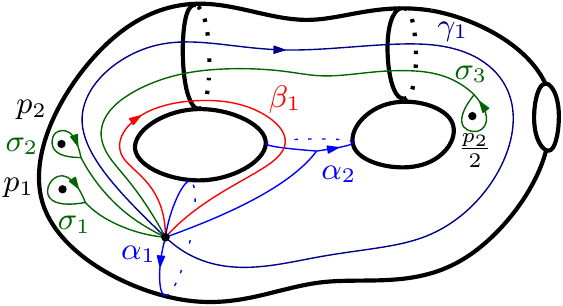}
\end{center} 
\caption{The generating tuple $\mathcal{T}'=(\sigma_1^2, \sigma_2^3, \alpha_1, \alpha_2,   \beta_1,  \gamma_1,   \sigma_3^{2})$ of $\pi_1^o(\mathcal{O}')$.}\label{fig:exthm}
\end{figure}  
\end{example}
\begin{remark}
The previous example  shows that the special almost orbifold covering and the generating tuple given in Theorem~\ref{MainThm} are not unique. However, in  a  joint work with  Richard Weidmann  we  aim to address the extent of this
non-uniqueness.  We are trying to prove, in particular, that generating tuples that come from non-trivial almost orbifold coverings  are irreducible. 
\end{remark}

 Louder~\cite[Theorem 2.3]{Louder} proved that  a generating  tuple of a closed surface group is either reducible or it is Nielsen equivalent to a standard generating tuple.  As a corollary of Theorem~\ref{MainThm} we obtain the following generalization of Louder's result.  
\begin{corollary}{\label{cor:Louder}}[Louder]
If $\mathcal{O}=F(p_1,\ldots,p_r)$ is a sufficiently large orbifold with either $r=0$ or $r\geqslant 1$ and  $p_1=\ldots=p_r= 2$, then any generating tuple of $\pi_1^o(\mathcal{O})$ is either reducible or  is Nielsen equivalent to a standard generating tuple. 
\end{corollary}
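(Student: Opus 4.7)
The plan is to deduce the corollary from Theorem~\ref{MainThm} by arguing that, under the hypotheses on the cone-point orders of $\mathcal{O}$, any almost orbifold covering produced by the theorem is forced to have degree one, after which Remark~\ref{remark:deg1} finishes the job.

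First I would fix a generating tuple $\mathcal{T}$ of $\pi_1^o(\mathcal{O})$ and invoke Theorem~\ref{MainThm} to obtain a special almost orbifold covering $\eta:\mathcal{O}'\to\mathcal{O}$, with exceptional point $x$ of order $p$, exceptional disk $D$, and exceptional boundary component $C\subseteq\partial\mathcal{O}'$, together with a generating tuple $\mathcal{T}'$ of $\pi_1^o(\mathcal{O}')$ such that $\eta_{\ast}(\mathcal{T}')\sim_{NE}\mathcal{T}$. Since every cone point of $\mathcal{O}$ has order at most $2$ (either because $r=0$, in which case $x$ is not a cone point and $p=1$, or because all $p_i=2$), the order $p$ satisfies $p\in\{1,2\}$. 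The hypothesis that $\eta$ is special yields $l:=\deg(\eta|_C:C\to\partial D)\leq p$, and because $p\leq 2$ every such $l$ automatically divides $p$. This is the only place in the argument where the restriction on cone-point orders is used, and it is the step I expect to be the main obstacle in the sense that any relaxation of the hypothesis (allowing some $p_i\geq 3$) immediately opens the door to $l\nmid p$ and precludes the extension argument below.

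Next, Example~\ref{ex:almost3} applies, so $\eta$ is the restriction of a finite-degree orbifold covering $\eta'':\mathcal{O}''\to\mathcal{O}$ as in Example~\ref{ex:almost2}, and in particular $\deg(\eta)=\deg(\eta'')$. To force $\deg(\eta'')=1$, I would use that $\mathcal{T}$ generates $\pi_1^o(\mathcal{O})$ and Nielsen equivalence preserves generation, so $\eta_{\ast}(\mathcal{T}')$ generates $\pi_1^o(\mathcal{O})$ and $\eta_{\ast}$ is surjective. Writing $\eta=\eta''\circ i$ with $i:\mathcal{O}'\hookrightarrow\mathcal{O}''$ the inclusion that fills in the removed disk, a standard van~Kampen computation shows $i_{\ast}$ is surjective, while $\eta''_{\ast}$ is injective because $\eta''$ is an orbifold covering. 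The factorization $\eta_{\ast}=\eta''_{\ast}\circ i_{\ast}$ then forces $\eta''_{\ast}$ to be surjective as well, hence an isomorphism, and therefore $\deg(\eta'')=[\pi_1^o(\mathcal{O}):\eta''_{\ast}(\pi_1^o(\mathcal{O}''))]=1$.

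Having reduced to the case $\deg(\eta)=1$, the conclusion is immediate from Remark~\ref{remark:deg1}: $\mathcal{T}$ is either reducible or Nielsen equivalent to a standard generating tuple. The overall structure is therefore \emph{apply Theorem~\ref{MainThm}, extend the almost covering to a genuine covering via Example~\ref{ex:almost3}, collapse the degree to one by surjectivity of $\eta_{\ast}$, and finish via Remark~\ref{remark:deg1}}.
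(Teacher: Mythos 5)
Your proposal is correct and follows essentially the same route as the paper: apply Theorem~\ref{MainThm}, use $p\leqslant 2$ to see that the degree of $\eta|_C$ divides $p$, extend $\eta$ to a genuine orbifold covering $\eta''$ via Example~\ref{ex:almost3}, deduce from surjectivity of $\eta_\ast$ (factored through the surjection $i_\ast$) that $\eta''$ is $\pi_1$-surjective and hence of degree one, and conclude with Remark~\ref{remark:deg1}. The only cosmetic difference is that the paper phrases the degree-one step as ``$\mathcal{T}''$ generates $\pi_1^o(\mathcal{O}'')$, so $\eta''$ is $\pi_1$-surjective, so $\mathcal{O}''=\mathcal{O}$,'' which is the same argument you give.
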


 The paper is organized as follows. In Section 2, we recall  some basic facts and definitions regarding graphs of groups,  graphs of groups morphisms  and  foldings. In section 3,  we define the notion of decorated groups over the fundamntal group of small $2$-orbifolds   and state Proposition~\ref{proposition:1}, which is the main result  to prove Theorem~\ref{MainThm}.  In Section 4, we prove Theorem~\ref{MainThm}. In Section 5 we derive Louder's theorem from Theorem~\ref{MainThm}.  Finally,  in Section 6, we prove Proposition~\ref{proposition:1}. It should be noted that most features and ideas of Louder's paper are preserved, but the proof of Theorem~\ref{MainThm} is much more involved and subtle  because of the presence of torsion elements in the fundamental group of an orbifold.

\section{Graph of groups  morphisms and folds}

In this section we fix notations for  the theory of morphisms between graph of groups and foldings as we will need precise language later on. Graphs of grups morphisms  were introduced by Hyman Bass~\cite{Bass} while the notion of folds of graph of groups morphisms was introduced by Dunwoody-Bestvina-Feighn-Stallings~\cite{BF,D,S, S1}.   We will follow the treatment given in~\cite{KMW} and in \cite{RW}.

\subsection{Graphs of groups.}
A \emph{graph} $A$ is a tuple  $(VA,EA, \alpha, \omega, ^{-1})$,  where $^{-1}:EA\rightarrow EA$ is a fixed point free involution and $\alpha, \omega:EA\rightarrow VA$ are maps such that
$$ \omega(e)=\alpha(e^{-1}) \ \ \text{ and that } \ \ \alpha(e)\neq \omega(e) \ \ \text{ for all  } \ \  e\in EA.$$ 
We call $VA$  the \emph{vertex set of $A$} and $EA$ the  \emph{edge set of $A$}. We refer to $\alpha(e)$ as the \emph{initial vertex} of $e$ and to $\omega(e)$ as the \emph{terminal vertex} of $e$. For each vertex $v\in VA$, the set  
$$\st(v,A):=\{e\in EA \ | \ \alpha(e)=v\}\subseteq EA$$ 
is called the \emph{star of $v$}. The graph $A$ is said to be finite if the vertex set  and the  edge set of $A$  are finite.

\begin{remark}
Note that the definition given here  is slightly different from that given by Serre~\cite{Serre} as we do not allow edge loops, i.e we impose that $\alpha(e)\neq \omega(e)$ for all edges $e$ of $A$.  
\end{remark} 
 
A \emph{path} $p$ in $A$ is a finite  sequence of edges $e_1,\ldots, e_k$ such that $\alpha(e_{i+1})=\omega(e_{i})$ for $1\leqslant i\leqslant k-1$. We call $\alpha(e_1)$ the \emph{initial} vertex of $p$, we denote $\alpha(p)$, and $\omega(e_k)$ the \emph{terminal} vertex of $p$, which  we denote by  $\omega(p)$.   We say that  $A$ is \emph{connected} if  given any two vertices   $v$ and $w$ of  $A$  there is a path in $A$ with initial vertex $v$ and terminal vertex $w$. 
 
%--------------------------------------------------------------------------- 
 
\begin{definition}[Graphs of groups]
A \emph{graph of groups} $\A$ is a tuple
$$(A, \{A_v \ | \ v\in VA\}, \{A_e \ | \ e\in EA\}, \{\alpha_e \ | \ e\in EA\}, \{\omega_e \ | \ e\in EA\})$$
where $A$ is a finite graph, $A_v$ and $A_e$ are groups (called the vertex and edge groups,  respectively) with $A_{e^{-1}}=A_e$ for all $e\in EA$, and where  $\alpha_e:A_e\rightarrow A_{\alpha(e)}$ and $\omega_e:A_e\rightarrow A_{\omega(e)}$ are   monomorphisms  such that $\omega_e=\alpha_{e^{-1}}$  for all $e\in EA$. We call the maps $\alpha_e$ and $\omega_e$ \emph{boundary monomorphisms}.   
\end{definition}
 
An \emph{$\A$-path of length $k\geqslant 0$  from $v\in VA$ to $v'\in VA$}, or simply a path in $\A$ from $v$ to $v'$,  is a finite sequence $$p=a_0,e_1,a_1,\ldots, a_{k-1}, e_k, a_k$$ 
where $k\geqslant 0$ is an integer, $e_1,\ldots, e_k$ is a path in $A$ from the vertex $v$ to the vertex $v'$, where $a_0\in A_v=A_{\alpha(e_1)}$ and $a_i\in A_{\omega(e_i)}$ for $1\leqslant i\leqslant k$. The vertex $v=\alpha(e_1)$ is called the initial vertex  of $p$, we  denote  $\alpha(p)$. Analogously, the vertex $v'=\omega(e_k)$  is called the terminal vertex of $p$, we denote  $\omega(p)$.   The  integer $k\geqslant 0$ is called the length of $p$ and is  denoted  by $|p|$. Note that   $|p|=0$ implies that   $v=v'$ and $p=a_0\in A_v$.

Let $p=a_0, e_1, a_1, \ldots, a_{k-1}, e_k, a_k$ and $p'=a_0', e_1', a_1', \ldots, a_{k'-1}',  e_{k'}', a_{k'}'$ be $\A$-paths, $i$ and $j$ integers such that $1\leqslant i\leqslant j\leqslant k$, and $a_{i-1}', a_{i-1}''$ elements of $ A_{\alpha(e_{i})}$ such  that $a_{i-1}=a_{i-1}''a_{i-1}'$. We define the following   $\A$-paths:
\begin{enumerate}
\item[1.]  The  $\A$-path $a_{i-1}, e_i, a_i, \ldots , e_j, a_j$ is called  an \emph{$\A$-subpath} of $p$ .

\item[2.] If $\omega(p)=\alpha(p')$ then the  $\A$-path  $pp':= a_0,e_1,a_1,\ldots, e_k, a_ka_0', e_1', a_1', \ldots, e_{k'}', a_{k'}$ 
is called the \emph{concatenation} of $p$ and $p'$.

\item[3.] If $\alpha(p)=\omega(p)$ then the $\A$-path $a_{i-1}', e_i, a_i, \ldots, e_k, a_ka_0, e_1, a_1, \ldots, e_{i-1}, a_{i-1}''$ 
is called a \emph{cyclic permutation} of $p$.
\end{enumerate}

An equivalence relation   $\sim$ on the  set of all $\A$-paths is  generated by the elementary equivalences 
$$a,e,\omega_e(c), e^{-1}, a' \sim a\alpha_e(c)a'   \ \ \text{ and }  \ \  a,e,a'\sim a\alpha_e(c), e, \omega_e(c^{-1}) a' \ \ \text{ for }  c \in A_e.$$
The $\sim$-equivalence class of an $\A$-path $p$ is denoted by $[p]$.   An $\A$-path is $\A$-\emph{reduced}, or just \emph{reduced},  if  it has no $\A$-subpath of the form $a, e, \omega_e(c), e^{-1}, a'$ ($c\in A_e$), in other words, no elementary equivalences of the first type are applicable to it.

\begin{definition}[Fundamental group of a graph of groups] Given a base vertex $v_0\in VA$, \emph{the fundamental group of $\A$ with respect to $v_0$}, $\pi_1(\A, v_0)$, is the set of $\sim$-equivalence classes of $\A$-paths from $v_0$ to $v_0$, with multiplication given by  $[p][q]:=[pq]$.   
\end{definition}

We say that a graph of groups $\A'$ is a \emph{sub-graph of groups of $\A$}, and write $\A'\subseteq \A$, if the graph $A'$ underlying $\A'$ is a sub-graph of  $A$  and  the following hold:
\begin{enumerate}
\item[(Vertex groups)] For each $v\in VA'\subseteq VA$, the group  $A_v'$  of the vertex $v$ in $\A'$  is a free factor of the vertex group  $A_v$ of $\A$ at $v$ and $\alpha_e(A_e)\leq A_v'$ for all $e\in \st(v, A')$, that is, there is a subgroup $A_v''\leq A_v$ such that $A_v=A_v'\ast A_v''$ and   $\alpha_e(A_e)\leq A_v'$ for all $e\in \st(v, A')$.

\item[(Edge groups)] For each $e\in EA'\subseteq EA$ the   group $A_e'$ of the edge $e$ in $\A'$  is  $A_e'=A_e$. The boundary monomorphism  $\alpha_e':A_e'\rightarrow A_{\alpha(e)}'$ is obtained from $\alpha_e:A_e\rightarrow A_{\alpha(e)}$ by replacing the codomain  $A_{\alpha(e)}$ of $\alpha_e$  by the group $A_{\alpha(e)}'\leq A_{\alpha(e)}$.
\end{enumerate}

If $\A'\subseteq \A$ is a sub-graph of groups of $\A$   with underlying graph $A'\subseteq A$  and if   $A_v'=A_v$ for all vertices $v\in VA'$,  then we will say that  $\A'\subseteq \A$ is \emph{carried} by the sub-graph $A'$  and we will  denote $\A'$ by $\A(A')$.

A special class  of sub-graph of groups  are those for which   the underlying graph $A'\subseteq A$  consists of a  single vertex  $v\in VA$. Note that in this case   $\pi_1(\A', v)\cong A_v'\leq A_v$.

Let $\A'\subseteq \A$ be a sub-graph of groups of $\A$,  $v_0\in VA$ and $v_0'\in VA'$.  For each subgroup $U\leq \pi_1(\A', v_0')$ and  each $\A$-path  $\gamma$  with initial vertex  $\alpha(\gamma)=v_0$ and terminal vertex $\omega(\gamma)=v_0'$, we define 
$$\gamma   U  \gamma^{-1}:=\{ [\gamma   p   \gamma^{-1}] \ | \  p \text{ is an } \A'\text{-path from } v_0' \text{ to } v_0' \ \text{ and } \ [p]\in U\}.$$
It follows from the Normal Form Theorem~\cite[Chapter I, Theorem 11]{Serre} that the homomorphism $U \rightarrow \pi_1(\A, v_0)$ given by $[p]\mapsto [\gamma   p   \gamma^{-1}]$ is injective. Thus the subgroup  $\gamma  U \gamma^{-1}\leq \pi_1(\A, v_0)$ is isomorphic to $U\leq \pi_1(\A', v_0')$.

\subsection{Graph of groups morphisms.} The main result of Bass-Serre theory states that if a group $G$ acts without inversions of edges on a simplicial tree $T$, then $G$ is isomorphic to the fundamental group of a graph of groups.
Now  a morphism from a  $G$-tree $T$ to a $H$-tree $S$ consists of a group homomorphism $f:G\rightarrow H$ and an $f$-equivariant  graph-morphism  $\varphi:T\rightarrow S$. Any such morphism can be encoded  on the level of the associated graph of groups  as follows.

\begin{definition}[Graph of groups morphisms]
Let $\A$ and $\B$ be graphs of groups.  A \emph{morphism}  $\varphi:\B\rightarrow \A$ from $\B$ to $\A$ is a tuple
 $\varphi=(\varphi, \{\varphi_u \ | \ u\in VB\}, \{\varphi_f \ | \ f\in EB\}, \{o_f  \ | \ f\in EB\}, \{t_f  \ | \ f\in EB\})$  
where 
\begin{enumerate}
\item[(1)] $\varphi$ is a graph-morphism from $B$ to $A$. 

\item[(2)] for each $u\in VB$,  $\varphi_u$ is a group homomorphism from $B_u$ to $A_{\varphi(u)}$. 

\item[(3)] for each $f\in EB$,  $\varphi_f$ is a group homomorphism from  $B_f$ to $A_{\varphi(f)}$ such that $\varphi_f=\varphi_{f^{-1}}$.

\item[(4)] for each $f\in EB$,  $o_f\in A_{\alpha(\varphi(f))}$ and $t_f\in A_{\omega(\varphi(f))}$ such that $t_{f}^{-1}=o_{f^{-1}}$.

\item[(5)] for each $f\in EB$ the diagram
$$
\begin{tikzcd}
B_f \arrow[rr,"\alpha_f " ] \arrow[d,"\varphi_f"' ]  &  & B_{\alpha(f)} \arrow[d,"\varphi_{\alpha(f)}" ] \\ 
A_{\varphi(f)} \arrow[r,"\alpha_{\varphi(f)} "' ] &  A_{\alpha(\varphi(f))} \arrow[r,"i_{o_f}"' ] &   A_{\alpha(\varphi(f))} 
\end{tikzcd}$$
commutes, where $i_{o_f}$ denotes the inner automorphism $g\mapsto o_f g  o_f^{-1}$. 
\end{enumerate}  
\end{definition}
The homomorphisms $\varphi_u$ and $\varphi_f$ for $u\in VB$ and $f\in EB$ are called   \emph{vertex homomorphisms} and \emph{edge homomorphisms} respectively. The elements $o_f$ and $t_f$ for $f\in EB$  are called \emph{edge elements}.  We will write $o_{f}^{\varphi}$ or $t_f^{\varphi}$ instead of $o_f$ or $t_f$ if we want to make explicit that these elements come from $\varphi$.

Every  graph of groups morphism $\varphi:\B\rightarrow \A$ induces a group homomorphism  $\varphi_{\ast}:\pi_1(\B,u_0)\rightarrow \pi_1(\A,\varphi(u_0))$ 
in the following way.  To any $\B$-path $q=b_0,f_1,b_1,\ldots, b_{k-1},  f_k,b_k$ we associate the $\A$-path
$$\varphi(q):=a_0,\varphi(f_1), a_1, \ldots, a_{k-1}, \varphi(f_k), a_k$$
where $a_0=\varphi_{\alpha(f_1)}(b_0) o_{f_1}$, $a_k=t_{f_k}  \varphi_{\omega(f_k)}(b_k)$ and $ a_i=t_{f_i} \varphi_{\omega(f_i)}(b_i)  o_{f_{i+1}}$ for $1\leqslant i\leqslant k-1.$ It is easily seen that $\varphi$ preserves the  equivalence relation $\sim$ and that $\varphi(qq')=\varphi(q)\varphi(q')$ whenever $qq'$ is defined. Thus the map 
$$\varphi_{\ast}:\pi_1(\B,u_0)\rightarrow \pi_1(\A,\varphi(u_0)),  \ \ \  [q]\mapsto [\varphi(q)]$$  
defines a group homomorphism, called the \emph{homomorphism  induced by $\varphi$}.

In what follows we will be mostly interested in morphisms that satisfy additional hypotheses. 
\begin{definition}[Folded morphisms]{\label{def:folded}}
We say that a morphism  $\varphi:\B\rightarrow \A$ is \emph{folded} if the following hold:
\begin{enumerate}
\item[(F0)] $\varphi$ is \emph{vertex injective}, i.e. $\varphi_x:B_x\rightarrow A_{\varphi(x)}$ is injective for all $x\in VB$.  

\item[(F1)] There exist  no edges $f_1\neq f_2\in EB$ with $x:=\alpha(f_1)=\alpha(f_2) \in VB$ and $e:=\varphi(f_1)=\varphi(f_2)\in EA$ such that $o_{f_2}=\varphi_x(b) o_{f_1}\alpha_{e} (c)$ for some $b\in B_x$ and $c\in A_e$.

\item[(F2)] There exists no edge $f\in EB$ such that  $o_f^{-1} \varphi_{\alpha(f)}(b)  o_f$ lies in $\alpha_{\varphi(f)} (A_{\varphi(f)})$ for some $b\in B_{\alpha(f)}-\alpha_f (B_f)$. 
\end{enumerate}
\end{definition}

 \begin{remark}
Condition (5) of the definition of graph of groups morphisms implies that $\varphi_f(B_f)$ is a subgroup of $\alpha_{\varphi(f)}^{-1}(o_f^{-1} \varphi_{\alpha(f)}(B_{\alpha(f)})o_f)$
for all $f\in EB$. If $\varphi:\B\rightarrow \A$ satisfies condition (F2) of Definition~\ref{def:folded}, then
\begin{equation}{\label{eq:F2'}}
\varphi_f(B_f)=\alpha_{\varphi(f)}^{-1} (o_f^{-1}  \varphi_{\alpha(f)}(B_{\alpha(f)})  o_f) \ \text{ for all } f\in EB. \tag*{$(F2)'$}
\end{equation}
 For vertex injective morphisms conditions (F2) and (F2)$'$ are equivalent, compare with \cite[Definition 4.5]{RW} in the context of $\A$-graphs. 
\end{remark}

With a simple modification of the proof of Lemma 4.2 of \cite{KMW} one can  show that a folded morphism  $\varphi:\B\rightarrow \A$ maps   reduced  $\B$-paths to   reduced  $\A$-paths. Consequently, by the Normal Form Theorem~\cite[Chapter I, Theorem 11]{Serre}, the induced homomorphism  $\varphi_{\ast}:\pi_1(\B,u_0)\rightarrow \pi_1(\A,\varphi(u_0))$ is injective.

\begin{definition}[Locally surjective morphisms]
Let $\varphi:\B\rightarrow \A$ be a morphism and $x$ a vertex of $\B$.   We say that $\varphi$ is \emph{locally surjective at $x$} if for each $e\in \st(\varphi(x),A)$ there is a subset $S_e\subseteq \st(x,B)\cap \varphi^{-1}(e)$ such that 
$$A_{\varphi(x)}=\bigcup_{f\in S_e} \varphi_x(B_x)  o_f   \alpha_{e}(A_e).$$  We  say that $\varphi$ is \emph{locally surjective} if $\varphi$ is locally surjective at all vertices  of $\B$.
\end{definition}

The composition of graph of groups  morphisms is defined as follows. Let $\varphi:\mathbb{B}\rightarrow \A$ and $\phi:\mathbb{D} \rightarrow \mathbb{B}$ be  graph of groups  morphisms,  we  define  $\psi:=\varphi\circ \phi:\mathbb{D}\rightarrow \A$ 
by setting 
$$\psi :=(\psi, \{\psi_w\ | \ w\in VD\} , \{\psi_g\ | \ g\in ED\}, \{o_g^{\psi}\ | \ g\in ED\},  \{t_g^{\psi}\ | \ g\in ED\})$$
where $\psi= \varphi\circ \phi:D\rightarrow A$, $\psi_{x}= \varphi_{\phi(x)}\circ \phi_x:D_x\rightarrow A_{\varphi(\phi(x))}=A_{\psi(x)}$ for all $x\in VD\cup ED$, and  $o_g^{\psi}= \varphi_{\alpha(\phi(g))}(o_g^{\phi})  o_{\phi(g)}^{\varphi}$ for all $g\in ED$.   A simple calculation shows that 
$$\psi_{\alpha(g)}\circ \alpha_g(x)= o_g^{\psi} \alpha_{\psi(g)}\circ \psi_g(x)   (o_g^{\psi})^{-1}$$ 
for all $x\in D_g$ and all   $g\in ED$. It is also easily verified that $\psi_{\ast}=(\varphi \circ \phi)_{\ast}= \varphi_{\ast} \circ \phi_{\ast}$.
 
An example of composition of morphisms  is the   restriction of a morphism to a sub-graph of groups. More precisely, let $\varphi:\B\rightarrow \A$ be a graph of groups morphism and $\B'\subseteq \B$ a sub-graph of groups of $\B$. Then there is an obvious morphism $\iota:\B'\rightarrow \B$    given by $$\iota=(\iota, \{\iota_u \ | \ u\in VB'\}, \{\iota_f \ | \ f\in EB'\}, \{o_f^{\iota} \ | \ f\in EB'\}, \{t_f^{\iota} \ | \ f\in EB\}).$$ where $\iota:B'\hookrightarrow B$ is the inclusion map, $\iota_x: B_x'\hookrightarrow B_{x}$ is the inclusion homomorphism for all $x\in VB'\cup EB'$ and $o_f^{\iota}=1$ for all $f\in EB'$.  The composition $\varphi\circ \iota:\B'\rightarrow \A$ will be denoted   by $\varphi|_{\B'}$ and referred to   as   \emph{the restriction of $\varphi$ to the sub-graph of groups $\B'$.}

\subsection{Auxiliary moves} Auxilairy moves were defined in  \cite{KMW} and in   \cite{RW},  for   $\A$-graphs and are merely used as preprocessing tools to define the main folding moves.  Here we  simply translate the definitions given  in~\cite{RW} to the language of graphs of groups morphisms.

\noindent \emph{Conjugation Move A0.} Let $\varphi:\mathbb{B}\rightarrow\A$ be a  morphism. Suppose $u$ is a vertex of $\mathbb{B}$ and that $g\in A_{\varphi(u)}$. 

Let $\varphi':\mathbb{B}\rightarrow \A$ be the morphism obtained from $\varphi$ as follows:
\begin{enumerate}
\item[(1)] replace the vertex homomorphism  $\varphi_u:B_u\rightarrow A_{\varphi(u)}$ by the homomorphism  $\varphi_u' =i_{g}\circ \varphi_u$ where  $i_g$ is the inner automorphism of $A_{\varphi(u)}$ determined by $g$.

\item[(2)] for each  $f\in\st(u,B)$ replace  the edge element  $o_f^{\varphi}\in A_{\varphi(u)}$ by $o_{f}^{\varphi'}:=g  o_{f}^{\varphi}\in A_{\varphi(u)}$. 
\end{enumerate}

In this case we will  say that $\varphi'$ is obtained from $\varphi$ by an \emph{auxiliary move of type A0}.  If $u'\in  VB$, $u\neq u'$, is another vertex  (whose vertex homomorphism is therefore not changed by the move),  we will say that this A0-move is admissible with respect to $u'$.

\noindent\textit{Bass-Serre move A1.} Let $\varphi:\mathbb{B}\rightarrow\A$ be a morphism. Suppose  $f$  is an edge of $\mathbb{B} $  and that $c\in A_{\varphi(f)}$. 

Let $\varphi':\mathbb{B}\rightarrow \A$ be the graph of groups morphism obtained from $\varphi$  as follows: 
\begin{enumerate}
\item[(1)] replace  the edge homomorphism $\varphi_f:B_f\rightarrow A_{\varphi(f)}$ by  the homomorphism  $\varphi_f':B_f\rightarrow A_{\varphi(f) }$ given by  $\varphi_f'(x) =c\varphi_f(x)c^{-1}$ for all $x\in B_f$.

\item[(2)] replace the edge element  $o_f^{\varphi}\in A_{\alpha(\varphi(f))}$ by  $o_f^{\varphi'}:= o_f^{\varphi }   \alpha_{\varphi(f)}(c^{ -1})\in A_{\alpha(\varphi(f))}$.

\item[(3)] replace  the edge element  $t_f^{\varphi}\in A_{\omega(\varphi(f))}$ by $t_f^{\varphi'}:=\omega_{\varphi(f)}(c)  t_f^{\varphi}\in A_{\omega(\varphi(f))}$. 
\end{enumerate}
In this case we will  say that $\varphi'$ is obtained from $\varphi$ by an \emph{auxiliary move of type A1}.

\begin{lemma}{\label{lemma:auxmoves}} Let $\varphi:\B\rightarrow\A$ be a graph of groups morphism  and $u_0\in VB$ with $\varphi(u_0)=v_0$. Suppose that the morphism  $\varphi':\mathbb{B} \rightarrow \A$ is obtained from $\varphi$ by an auxiliary move of type A1 or an  auxiliary move of type  A0 that is admissible with respect to $u_0$. Then  $\varphi_{\ast}=\varphi_{\ast}':\pi_1(\mathbb{B},u_0)\rightarrow \pi_1(\A,v_0).$ 
\end{lemma}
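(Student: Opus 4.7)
The plan is to handle the two move types separately, tracking the effect on $\varphi(q)$ for an arbitrary $\B$-path $q=b_0,f_1,b_1,\ldots,f_k,b_k$ from $u_0$ to $u_0$, and to show in each case that $\varphi'(q)$ equals $\varphi(q)$ up to the equivalence relation $\sim$. Write $v_i=\omega(f_i)=\alpha(f_{i+1})$ for the intermediate vertices, so $v_0=v_k=u_0$.

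For an A0-move at a vertex $u\neq u_0$ with element $g\in A_{\varphi(u)}$, the only modifications are $\varphi_u\mapsto i_g\circ\varphi_u$, and for each $f\in\st(u,B)$ the replacement $o_f^\varphi\mapsto g\,o_f^\varphi$. The convention $t_h^\varphi=(o_{h^{-1}}^\varphi)^{-1}$ forces, for every edge $f$ with $\omega(f)=u$, the corresponding right-modification $t_f^\varphi\mapsto t_f^\varphi g^{-1}$. Plugging these into the entry $a_i=t_{f_i}\,\varphi_{v_i}(b_i)\,o_{f_{i+1}}$ at an intermediate vertex with $v_i=u$ yields
\[
(t_{f_i}g^{-1})\bigl(g\,\varphi_{v_i}(b_i)\,g^{-1}\bigr)(g\,o_{f_{i+1}}) \;=\; t_{f_i}\,\varphi_{v_i}(b_i)\,o_{f_{i+1}} \;=\; a_i,
\]
so every entry is unchanged. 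When $v_i\neq u$ no factor changes; when $v_i=u_0$ the admissibility hypothesis $u\neq u_0$ ensures $\varphi_{v_i}$, $o_{f_{i+1}}$, and $t_{f_i}$ are all untouched, so $a_0$ and $a_k$ are fixed as well. Hence $\varphi'(q)=\varphi(q)$ literally, which gives $\varphi_\ast'=\varphi_\ast$.

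For an A1-move at edge $f$ with $c\in A_{\varphi(f)}$, the change is in $\varphi_f$ and in the edge elements at $f,f^{-1}$; using $t^{-1}=o^{-1}$ and $\varphi_{f}=\varphi_{f^{-1}}$, a uniform description is that for every edge $h\in\{f,f^{-1}\}$,
\[
o_h^{\varphi'}=o_h^\varphi\,\alpha_{\varphi(h)}(c^{-1}),\qquad t_h^{\varphi'}=\omega_{\varphi(h)}(c)\,t_h^\varphi.
\]
Crucially the edge homomorphism $\varphi_f$ does not appear in the formula for $\varphi(q)$, so only these edge-element changes have to be reconciled. Let $I=\{i:f_i\in\{f,f^{-1}\}\}$. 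For each $i\in I$ apply the elementary equivalence
\[
a,\,\varphi(f_i),\,a' \;\sim\; a\,\alpha_{\varphi(f_i)}(c^{-1}),\,\varphi(f_i),\,\omega_{\varphi(f_i)}(c)\,a'
\]
to $\varphi(q)$ at position $i$ (using the defining relation with the element $c^{-1}\in A_{\varphi(f_i)}$). Accumulating these over all $i\in I$ changes $a_i$ to $\omega_{\varphi(f_i)}(c)^{\varepsilon_i}\,a_i\,\alpha_{\varphi(f_{i+1})}(c^{-1})^{\varepsilon_{i+1}}$, where $\varepsilon_j$ is the indicator of $j\in I$; by the formulas above this is exactly the $i$th entry of $\varphi'(q)$. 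Thus $\varphi(q)\sim \varphi'(q)$, so $[\varphi(q)]=[\varphi'(q)]$ and $\varphi_\ast=\varphi_\ast'$.

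The only delicate point is the bookkeeping for A1 when two indices of $I$ are adjacent, since the equivalence applied at edge $f_i$ modifies the same entry $a_i$ that the equivalence at edge $f_{i+1}$ also touches. This is handled by observing that the two contributions land on opposite sides of $a_i$ (a right-multiplication by $\alpha_{\varphi(f_i)}(c^{-1})$ from the equivalence at position $i$, and a left-multiplication by $\omega_{\varphi(f_{i+1})}(c)$ from the equivalence at position $i+1$), so they commute as operations and together produce the entry prescribed by $\varphi'$. This is the only subtle step; everything else is a direct unwinding of the formulas for $\varphi(q)$ and the move definitions.
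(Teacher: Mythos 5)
Your proof is correct. The paper itself states Lemma~\ref{lemma:auxmoves} without proof (deferring implicitly to the corresponding statements for $\A$-graphs in \cite{KMW} and \cite{RW}), so there is no in-paper argument to compare against; your direct verification is exactly the computation one would write out. Both halves check out: for A0 the involution constraint $t_h=(o_{h^{-1}})^{-1}$ does force $t_h\mapsto t_h g^{-1}$ for edges terminating at $u$, the conjugations cancel entry by entry, and admissibility at $u_0$ is used precisely where you invoke it for $a_0$ and $a_k$; for A1 your uniform description of the new edge elements for $h\in\{f,f^{-1}\}$ is the correct consequence of $o_{f^{-1}}=t_f^{-1}$ and $\omega_{\varphi(f)}=\alpha_{\varphi(f^{-1})}$, and applying the second elementary equivalence with $c^{-1}$ at each occurrence of $f^{\pm1}$ reproduces $\varphi'(q)$ on the nose. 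The only blemish is in your closing remark on adjacent indices, where the two contributions to $a_i$ are labelled backwards: the equivalence at position $i$ \emph{left}-multiplies $a_i$ by $\omega_{\varphi(f_i)}(c)$ (it right-multiplies $a_{i-1}$), and the equivalence at position $i+1$ \emph{right}-multiplies $a_i$ by $\alpha_{\varphi(f_{i+1})}(c^{-1})$ --- which is in fact what your displayed formula already says, so the conclusion that the two operations act on opposite sides and commute stands unchanged.
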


\noindent \textit{Simple adjustment A2.} Let $\varphi:\mathbb{B}\rightarrow\A$ be a  morphism. Suppose  $f$ is an edge of $\mathbb{B}$ and that  $b\in B_{\alpha(f)}$.

We define a graph of groups $\B'$ and a morphism $\varphi :\B'\rightarrow \A$ as follows:
\begin{enumerate}
\item[(1)] $\mathbb{B}'$ is obtained from $\mathbb{B}$ by replacing the boundary monomorphism $\alpha_f:B_f\rightarrow B_{\alpha(f)}$ by  the monomorphism $\alpha_f':B_f\rightarrow B_{\alpha(f)}$ defined by  $\alpha_f'(x)=b\alpha_f(x)b^{-1}$  for all $x\in B_f$. 

\item[(2)]  $\varphi':\mathbb{B}'\rightarrow \A$ is obtained from $\varphi$ by replacing  the edge element  $o_f^{\varphi}$   by $o_f^{\varphi'}:=\varphi_{\alpha(f)}(b)  o_f^{\varphi}$. 
\end{enumerate}

In this case we will  say that $\varphi':\mathbb{B}'\rightarrow\A$ is obtained from $\varphi$ by an \emph{auxiliary move of type A2}. 

\begin{lemma}{\label{lemma:simpleadjustment}}
Let $\varphi:\mathbb{B}\rightarrow \A$ be a graph of groups morphism. If  $\varphi':\mathbb{B}'\rightarrow \A$ is obtained from $\varphi$ by an auxiliary move of type A2, then there exists a graph of groups isomorphism $\sigma:\mathbb{B}\rightarrow \mathbb{B}'$ such that $\varphi' \circ \sigma = \varphi$.
\end{lemma}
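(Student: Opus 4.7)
The plan is to let $\sigma:\mathbb{B}\to\mathbb{B}'$ be the identity on the underlying graph and on every vertex and edge group—this is legitimate because the A2-move changes none of these data—and to absorb the conjugation by $b$ that defines the new boundary monomorphism $\alpha_f'$ into the edge elements of $\sigma$ alone. Concretely, I would set $\sigma_u:=\mathrm{id}_{B_u}$ for all $u\in VB$, $\sigma_g:=\mathrm{id}_{B_g}$ for all $g\in EB$, and choose edge elements
$$o_f^{\sigma}:=b^{-1},\qquad t_{f^{-1}}^{\sigma}:=b,$$
with $o_g^{\sigma}=t_g^{\sigma}=1$ for every other oriented edge.

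Next I would verify axiom~(5) of a graph of groups morphism for $\sigma$. For $g\notin\{f,f^{-1}\}$ one has $\alpha_g'=\alpha_g$ and $o_g^{\sigma}=1$, so nothing needs checking; at $g=f^{-1}$ the boundary monomorphism is unchanged and $o_{f^{-1}}^{\sigma}=1$; at $g=f$ one computes
$$o_f^{\sigma}\,\alpha_f'(x)\,(o_f^{\sigma})^{-1}=b^{-1}\bigl(b\,\alpha_f(x)\,b^{-1}\bigr)b=\alpha_f(x)=\sigma_{\alpha(f)}\bigl(\alpha_f(x)\bigr),$$
as required. Because each $\sigma_u$ and $\sigma_g$ is an isomorphism and the underlying graph map is the identity, $\sigma$ admits an inverse graph of groups morphism (with identity vertex/edge homomorphisms and inverted edge elements), so it is an isomorphism of graphs of groups.

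Finally I would compute $\varphi'\circ\sigma$ and check it equals $\varphi$. The underlying graph map, vertex homomorphisms and edge homomorphisms match on the nose, since $\sigma$ is the identity on these data and $\varphi'$ and $\varphi$ agree on them (the A2-move only touches $\alpha_f$ and $o_f^{\varphi}$, and through the convention $t_e=o_{e^{-1}}^{-1}$ also $t_{f^{-1}}^{\varphi}$). Applying the composition formula for edge elements,
$$o_f^{\varphi'\circ\sigma}=\varphi'_{\alpha(f)}\bigl(o_f^{\sigma}\bigr)\,o_f^{\varphi'}=\varphi_{\alpha(f)}(b^{-1})\cdot\varphi_{\alpha(f)}(b)\,o_f^{\varphi}=o_f^{\varphi},$$
while $o_{f^{-1}}^{\varphi'\circ\sigma}=o_{f^{-1}}^{\varphi'}=o_{f^{-1}}^{\varphi}$ because $o_{f^{-1}}^{\varphi}=(t_f^{\varphi})^{-1}$ is not altered; at every other edge the composition is trivially $\varphi$. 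Nothing in the argument is genuinely deep, and I expect the only real obstacle to be careful bookkeeping of which companion edge element ($t_f$ versus $t_{f^{-1}}$) is implicitly modified when $o_f^{\varphi}$ is replaced, so that the various cancellations in the composition formula go through in the right direction.
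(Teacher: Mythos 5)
Your proposal is correct and follows essentially the same route as the paper: the paper defines exactly the same $\sigma$ (identity on the graph and on all vertex and edge groups, $o_f^{\sigma}=b^{-1}$, all other edge elements trivial) and asserts $\varphi=\varphi'\circ\sigma$. You merely spell out the verification of axiom (5) and the composition formula for edge elements, which the paper leaves implicit, and your bookkeeping of $t_{f^{-1}}^{\sigma}=b$ versus $o_{f^{-1}}^{\varphi'}=o_{f^{-1}}^{\varphi}$ is consistent with the convention $t_f^{-1}=o_{f^{-1}}$.
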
 
\begin{proof}
Suppose that $\varphi'$ is obtained from $\varphi$ by an auxiliary move of type A2 applied to the edge $f$ with element $b\in B_u$ where $u:=\alpha(f)$. The   assertion holds as $\varphi= \varphi' \circ \sigma$ where $\sigma:\mathbb{B}\rightarrow \mathbb{B}'$ is the graph of groups isomorphism
$$\sigma=(\sigma, \{\sigma_u\ | \ u\in VB\}, \{\sigma_f\ | \ f\in EB\},  \{o_f^{\sigma} \ | \ f\in EB\}, \{t_f^{\sigma} \ | \ f\in EB\})$$
where $\sigma=\text{Id}_B:B\rightarrow B$, $\sigma_x=\text{Id}_{B_x}:B_x\rightarrow B_x$ for all $x\in VB\cup EB$, $o_g^{\sigma}=1$ for $g\neq f$ and $o_f^{\sigma}=b^{-1}\in B_{\alpha(f)}$.
\end{proof}

\begin{remark}[Auxiliary moves of type A2 applied to graphs of groups with trivial edge groups]{\label{remark:auxmove}}
Let $\B$ be a graph of groups with trivial edge groups and suppose that the morphism $\varphi':\B'\rightarrow \A$ is obtained from  $\varphi$ by an auxiliary move of type A2 applied to the edge $f$ with element $b\in B_u$  where $u:=\alpha(f)$.  From the description of the move and the fact that all edges have trivial group in $\B$, we see that  the graph of groups  $\B'$ and $\B$ coincide.  Thus $\varphi'$ is   a morphism from $\B$ to $\A$. The previous lemma gives   a graph of groups automorphism $\sigma:\B\rightarrow \B$ such that $\varphi'\circ \sigma=\varphi$.

The image of a $\B$-path under $\sigma$ can easily be described as follows. Let  $q$ be an arbitrary  $\B$-path. Then   $q$  can be written as   
 $q=q_0  (1,f,1)^{\varepsilon_1}  q_1 (1,f,1)^{\varepsilon_2}  q_2  \cdot \ldots\cdot q_{l-1}  (1,f,1)^{\varepsilon_l} q_l $ 
where $\varepsilon_i=\pm 1$ and the paths $q_0,q_1,\ldots,q_l$ are contained in the sub-graph of groups $\B_f\subseteq \B$ carried by the sub-graph $B-\{f, f^{-1}\}$ of $B$.  By the definition of   $\sigma:\B\rightarrow \B$, we see that  
\begin{eqnarray}
\sigma(q) & = &  \sigma(q_0  (1,f,1)^{\varepsilon_1}  q_1 (1,f,1)^{\varepsilon_2}  q_2  \cdot \ldots\cdot q_{l-1}  (1,f,1)^{\varepsilon_l} q_l)\nonumber \\
          & = & \sigma(q_0) \sigma(1,f,1)^{\varepsilon_1}\sigma(q_1) \sigma(1,f,1)^{\varepsilon_2}  \sigma(q_2)  \cdot \ldots \cdot \sigma(q_{l-1}) \sigma(1,f,1)^{\varepsilon_l}\sigma(q_l)\nonumber \\
          & = & q_0 (b^{-1}, f , 1)^{\varepsilon_1}  q_1 (b^{-1},f,1)^{\varepsilon_2}  q_2 \cdot \ldots\cdot q_{l-1} (b^{-1},f,1)^{\varepsilon_l}  q_l. \nonumber
\end{eqnarray}
\end{remark}

\subsection{Folds and vertex morphisms.} We will only discuss folding  moves that are  applicable to morphisms that are not folded because either condition (F0) is not satisfied or condition   (F1) is not satisfied. Moreover, we will also restrict our attention to  morphisms defined on graph of groups with trivial edge groups. In order to define the main folding moves we  first discuss elementary folds. 

For the remainder of this section let $\B$ be a graph of groups with base vertex $u_0$  such that $B_f=1$ for all edges $f$ of $\B$, and  let  $\varphi:\B\rightarrow \A$ be  a graph of groups morphism that maps $u_0$ onto the base vertex $v_0$ of $\A$.

\smallskip

\noindent\emph{Elementary fold of type IA}.   Suppose $f_1$ and $f_2$ are edges of $\B$ with $u:=\alpha(f_1)=\alpha(f_2)$,  $ \omega(f_1)\neq \omega(f_2)$ and $e:=\varphi(f_1)=\varphi(f_2)$  such that $a:=o_{f_1}^{\varphi}=o_{f_2}^{\varphi}$ and $b:=t_{f_1}^{\varphi}=t_{f_2}^{\varphi}$. Suppose further that $ \omega(f_1)=x$ and $\omega(f_2)=y$. Clearly $x$ and $y$  are mapped by $
\varphi$ onto the vertex $w:=\omega(e) $.

We define a graph of groups $\B'$ and a morphism $\varphi':\B'\rightarrow \A$ as follows. 
\begin{enumerate}
\item[(1)]  $\B'$ is obtained from $\mathbb{B}$ by identifying the edges $f_1$ and $f_2$ into a single edge $f$, with trivial group, and by identifying  the vertices $x$ and $y$ into a single vertex $z:=\omega(f)$ with group $B_z':= B_x\ast  B_y$. 

\item[(2)] ${\varphi'}:{\mathbb{B}'}\rightarrow\A$ is given by $$\varphi'=(\varphi', \{\varphi'_{u} \ | \ u\in VB\}, \{\varphi_f \ | \ f\in EB\}, \{o_f^{\varphi'} \ | \ f\in EB\}, \{t_f^{\varphi'} \ | \  f\in EB\})$$ where the graph morphism ${\varphi'}:{B}'\rightarrow A$ is induced by $\varphi:B\rightarrow A$, that is, $\varphi'\circ \sigma =\varphi$ where $$\sigma:B\rightarrow B'=B/[f_1=f_2]$$ 
is the quotient graph-morphism. The vertex homomorphism  $\varphi_z':  {B}_z'=B_x\ast B_y \rightarrow A_w$ is the homomorphism induced by $\varphi_x:B_x\rightarrow A_{w}$ and $\varphi_y :B_y\rightarrow A_w$. Finally, $o_f^{{\varphi}'}=a   \in A_v$ and  $t_{f}^{{\varphi}'}=b \in A_w$. 
\end{enumerate}

In this case we will say that  ${\varphi}':{\mathbb{B}'}\rightarrow \A$ is obtained from $\varphi$ by an \emph{elementary fold of type IA}.
\begin{figure}[h!]
\begin{center}
\includegraphics[scale=1]{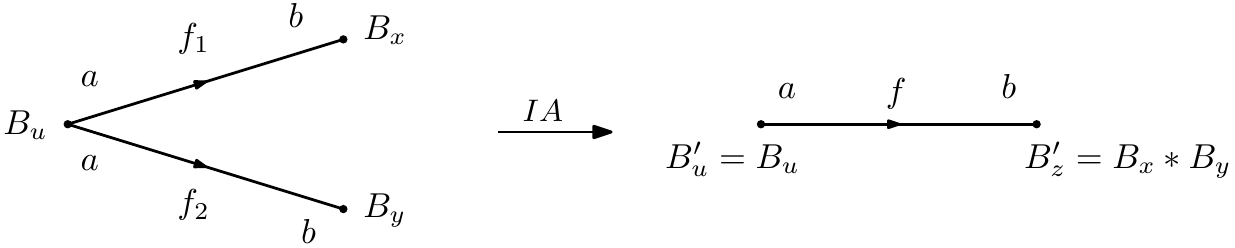}
\end{center}
\caption{An elementary fold of type IA.}
\end{figure}

\noindent\emph{Elementary fold of type IIIA.}   Suppose that $f_1$ and $  f_2$ are edge of $\B$ with  $u:=\alpha(f_1)=\alpha(f_2)$, $x:=\omega(f_1)=\omega(f_2)$ and  $e:=\varphi(f_1)=\varphi(f_2)$ such that $a:= o_{f_1}^{\varphi} = o_{f_2}^{\varphi}$.  Put $b_1:=t_{f_1}^{\varphi}$ and $b_2:=t_{f_2}^{\varphi}$.

We define a graph of groups $\B'$ and a morphism $\varphi':\B'\rightarrow \A$ as follows. 
\begin{enumerate}
\item[(1)] ${\mathbb{B}'}$ is obtained from $\mathbb{B}$ by identifying the edges $f_1$ and $f_2$ into a single edge $f$,  with trivial group, and by replacing the vertex group $B_x$  by the  group $B_x':= B_x\ast \langle b_x \ | \ -\rangle$. 

\item[(2)] ${\varphi'}:{\mathbb{B}'}\rightarrow\A$ is given by $$\varphi'=(\varphi', \{\varphi'_{u} \ | \ u\in VB\}, \{\varphi_f \ | \ f\in EB\}, \{o_f^{\varphi'} \ | \ f\in EB\}, \{t_f^{\varphi'} \ | \  f\in EB\})$$ where the graph morphism ${\varphi'}:{B}'\rightarrow A$ is induced by $\varphi:B\rightarrow A$, that is, $\varphi'\circ \sigma =\varphi$ where $$\sigma:B\rightarrow B'=B/[f_1=f_2]$$ 
is the quotient graph-morphism.   The vertex homomorphism  $\varphi_z': {B}_z'\rightarrow A_w$ is induced by the homomorphisms  $\varphi_x:B_x\rightarrow A_{w}$ and the map $b_x\mapsto b_1^{-1} b_2$.  Finally,  $o_f^{{\varphi}'}=a \in A_v$ and  $t_{f}^{{\varphi}'}= b_1 \in A_w$.
\end{enumerate}
In this case we will say  that ${\varphi}':{\mathbb{B}'}\rightarrow \A$ is obtained from $\varphi$ by an \emph{elementary fold of type IIIA}.
\begin{figure}[h!]
\begin{center}
\includegraphics[scale=1]{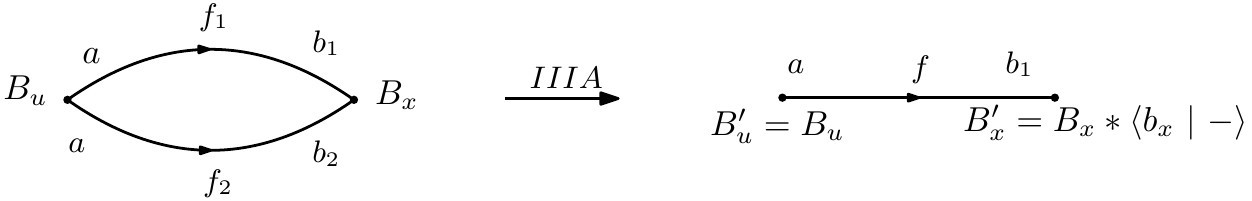}
\end{center}
\caption{An elementary fold of type IIIA.}
\end{figure}

Following the lines of the proof of Proposition 4.15 of \cite{KMW} we prove the following lemma. 
\begin{lemma}{\label{lemma:fold}}
Let $\varphi:\mathbb{B}\rightarrow \A$ be a graph of groups morphism and $u_0\in VB$ with $\varphi(u_0)=v_0$. If $ {\varphi}':{\mathbb{B}'}\rightarrow \A$ is obtained from $\varphi$ by an elementary  fold, then there exists a     morphism $\sigma:\B\rightarrow  \B'$ such that (i) ${\varphi}'\circ \sigma =\varphi$ and (ii) $\sigma_{\ast}:\pi_1(\B,u_0)\rightarrow \pi_1(\B',\sigma(u_0))$ is an isomorphism.
\end{lemma}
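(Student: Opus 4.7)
My plan is to treat the two types of elementary folds, IA and IIIA, separately, constructing the morphism $\sigma:\B\to\B'$ explicitly in each case (graph morphism, vertex homomorphisms, edge homomorphisms, edge elements), verifying $\varphi'\circ\sigma=\varphi$ by a direct comparison on every edge and vertex, and then deducing that $\sigma_{\ast}$ is an isomorphism via the Bass--Serre normal form (or by exhibiting a presentation-level inverse).

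For a fold of type IA, with $f_1,f_2$ sharing initial vertex $u$ and distinct terminal vertices $x,y$ identified to a single vertex $z$ having group $B_z'=B_x\ast B_y$, I define $\sigma$ as follows: the underlying graph morphism $\sigma:B\to B'$ is the quotient $B\to B/[f_1=f_2,\, x=y]$; vertex homomorphisms $\sigma_v=\mathrm{Id}$ for $v\neq x,y$; $\sigma_x:B_x\hookrightarrow B_x\ast B_y=B_z'$ and $\sigma_y:B_y\hookrightarrow B_x\ast B_y=B_z'$ are the canonical inclusions; and all edge elements are trivial. Condition $(i)$ is immediate: for $f_i$ we have $o_{f_i}^{\varphi'\circ\sigma}=\varphi_u'(1)\cdot o_f^{\varphi'}=a=o_{f_i}^{\varphi}$ and $t_{f_i}^{\varphi'\circ\sigma}=t_f^{\varphi'}\cdot\varphi_z'(1)=b=t_{f_i}^{\varphi}$, and the vertex homomorphism $\varphi_z'$ was defined precisely so as to restrict to $\varphi_x$ and $\varphi_y$ on the respective free factors, so $\varphi_z'\circ\sigma_x=\varphi_x$ and $\varphi_z'\circ\sigma_y=\varphi_y$.

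For a fold of type IIIA, where $\omega(f_1)=\omega(f_2)=x$ but $t_{f_1}^\varphi=b_1$ and $t_{f_2}^\varphi=b_2$ differ, and $B_x$ is replaced by $B_x'=B_x\ast\langle b_x\rangle$, I define $\sigma$ analogously, with $\sigma_x:B_x\hookrightarrow B_x'$ the inclusion, $o_{f_1}^\sigma=o_{f_2}^\sigma=1$, $t_{f_1}^\sigma=1$ and, crucially, $t_{f_2}^\sigma=b_x\in B_x'$. The verification of $(i)$ now reduces to checking $t_{f_2}^{\varphi'\circ\sigma}=t_f^{\varphi'}\cdot\varphi_x'(b_x)=b_1\cdot b_1^{-1}b_2=b_2$, exactly as required, while for $f_1$ the element $b_x$ does not enter and the computation matches type~IA.

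To obtain $(ii)$, I would choose a spanning tree $T$ of $B$ containing $f_1$ but not $f_2$ (in type IIIA) or a spanning tree in which exactly one of $f_1,f_2$ lies in $T$ (in type IA), and compare the resulting Bass--Serre presentations. In type IA the generator $B_x\ast B_y$ of $\pi_1(\B',\sigma(u_0))$ matches the contribution of $B_x,B_y$ in $\pi_1(\B,u_0)$ joined along the tree-edge $f_1$ to $B_u$, and the stable letter coming from $f_2$ survives the fold unchanged on both sides. In type IIIA the stable letter $t_{f_2}t_{f_1}^{-1}\in\pi_1(\B,u_0)$ is sent by $\sigma_\ast$ to $b_x\in B_x'$, which is precisely the free generator added to $B_x$; on the complementary generators $\sigma_\ast$ is the identity, giving an isomorphism. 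The main technical obstacle is bookkeeping the inner-automorphism correction factors built into the definition of composition of graph-of-groups morphisms when writing out these presentations; this is handled exactly as in the proof of \cite[Proposition~4.15]{KMW}, which we follow line by line, with the simplification that all edge groups in $\B$ and $\B'$ are trivial.
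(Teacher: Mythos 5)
Your construction of $\sigma$ and the verification of condition (i) are correct for both fold types, and this is essentially the route the paper takes: the paper gives no details at all, deferring to \cite[Proposition 4.15]{KMW}, so your explicit description of $\sigma$ (identity away from the folded edges, canonical inclusions $B_x\hookrightarrow B_x\ast B_y$ resp.\ $B_x\hookrightarrow B_x\ast\langle b_x\rangle$, trivial edge elements except $t_{f_2}^{\sigma}=b_x$ in type IIIA) is exactly the intended filling-in of that reference. The computation $t_{f_2}^{\varphi'\circ\sigma}=t_f^{\varphi'}\cdot\varphi_x'(b_x)=b_1\cdot b_1^{-1}b_2=b_2$ is the one non-trivial check and you carry it out correctly; note also that since all edge groups of $\B$ are trivial in this section, the compatibility condition (5) for $\sigma$ is vacuous, so $\sigma$ is indeed a morphism.

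There is one slip in your verification of (ii) for type IA. You propose a spanning tree $T$ containing exactly one of $f_1,f_2$, and claim the stable letter coming from $f_2$ survives the fold. But in a type IA fold $\sigma$ identifies the distinct vertices $x=\omega(f_1)$ and $y=\omega(f_2)$ as well as the edges, so $\sigma(T)$ then has $|VB|-1$ edges on the $|VB|-1$ vertices of $B'$ and therefore contains a cycle: it is not a spanning tree, and $\sigma(f_2)=\sigma(f_1)$ lies in it, so $f_2$ contributes no surviving stable letter relative to $\sigma(T)$. The correct choice is to put \emph{both} $f_1$ and $f_2$ into $T$, which is possible precisely because $\omega(f_1)\neq\omega(f_2)$ (the pair $\{f_1,f_2\}$ spans no cycle); then $\sigma(T)$ is a spanning tree of $B'$, the edges outside the trees correspond bijectively, and since all edge groups are trivial both fundamental groups are the free product of their vertex groups with a free group on the stable letters, with $\sigma_{\ast}$ matching the factor $B_x\ast B_y$ against the two factors $B_x$, $B_y$. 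Your treatment of type IIIA (where no vertices are identified, $f_1\in T$, $f_2\notin T$, and the lost stable letter is traded for the new free generator $b_x$) is correct as written. With this one correction the proof is complete.
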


\smallskip 

%---------------------------Folds on non folded morphisms--------------------

Now we define moves   that are applicable to morphisms that are not folded because either  condition (F0) or condition (F1) fails to hold.   Assume first that $\varphi:\B\rightarrow \A$ is not folded because  condition (F0) is not satisfied. Thus there is a vertex $u$ of $\B$ such that the vertex homomorphism $ \varphi_u:B_u\rightarrow A_{\varphi(u)}$ is not injective.

We define a graph of groups $\B'$  and a morphism $\varphi'$  from $\B'$ to $\A$ as follows:
\begin{enumerate}
\item[(1)] ${\mathbb{B}'}$ is  obtained from $\mathbb{B}$ by replacing the vertex group $B_u$ by the group $ {B}_u':= B_u / \ker(\varphi_u)$.

\item[(2)] ${\varphi}':{\mathbb{B}'}\rightarrow \A$ is  obtained from $\varphi$ by replacing the vertex homomorphism  $\varphi_u:B_u\rightarrow A_{\varphi(u)}$ by the  homomorphism $\varphi_u':  B_u'=B_u/\ker(\varphi_u)  \to A_{\varphi(u)}$ defined by the equation $\varphi_u'( \ker(\varphi_u)x)=\varphi_u(x)$. 
\end{enumerate}
In this case we will say that the morphism ${\varphi}':{\mathbb{B}'}\rightarrow \A$ is obtained from $\varphi$ by a \emph{vertex morphism}.

\begin{lemma}{\label{lemma:vertexmorphism}}
Let $\varphi:\mathbb{B}\rightarrow \A$ be a graph of groups morphism. If $ {\varphi}':{\mathbb{B}'}\rightarrow \A$ is obtained from $\varphi$ by a vertex morphism, then there exists a $\pi_1$-surjective graph of groups morphism $\sigma:\mathbb{B}\rightarrow  {\mathbb{B}'}$ such that $ {\varphi}'\circ \sigma =\varphi$.  
\end{lemma}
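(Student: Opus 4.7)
The plan is to define $\sigma:\mathbb{B}\to \mathbb{B}'$ explicitly as the \emph{identity on the underlying graph together with the quotient on} $B_u$, and then verify the three required properties. Since $\mathbb{B}$ and $\mathbb{B}'$ have the same underlying graph and differ only by replacing $B_u$ with $B_u' = B_u/\ker(\varphi_u)$, I would take the graph morphism $\sigma:B\to B'$ to be the identity, set $\sigma_v = \mathrm{id}_{B_v}$ for every vertex $v\neq u$, set $\sigma_u:B_u \to B_u'$ to be the canonical quotient map, set $\sigma_f = \mathrm{id}$ on each (trivial) edge group, and take all edge elements $o_f^\sigma = t_f^\sigma = 1$.

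First I would check that $\sigma$ is indeed a graph of groups morphism. Because this subsection restricts attention to graphs of groups with trivial edge groups, the commutativity square (5) from the definition of a morphism becomes vacuous at every edge, and the compatibility $t_f^\sigma = (o_{f^{-1}}^\sigma)^{-1}$ is immediate. The equality $\varphi'\circ\sigma = \varphi$ is then a direct comparison of data: at the vertex $u$ one has $\varphi'_u\circ\sigma_u(x) = \varphi'_u(\ker(\varphi_u)\,x) = \varphi_u(x)$ by the very definition of $\varphi'_u$; at every other vertex $\sigma$ is the identity and $\varphi'_v = \varphi_v$; and the edge homomorphisms and edge elements agree because $\sigma$ leaves them untouched and $\varphi'$ differs from $\varphi$ only on the vertex homomorphism at $u$.

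It then remains to show that $\sigma_*:\pi_1(\mathbb{B},u_0)\to \pi_1(\mathbb{B}',\sigma(u_0))$ is surjective. Given any $\mathbb{B}'$-path $p' = b_0', f_1, b_1',\ldots, f_k, b_k'$ from $\sigma(u_0)$ to $\sigma(u_0)$, I would lift each vertex group entry $b_i'$ to an element $b_i$ of the corresponding vertex group of $\mathbb{B}$: at a vertex $v\neq u$ this lift is trivial since $\sigma_v$ is the identity, and at $u$ it uses the surjectivity of the quotient map $\sigma_u:B_u\twoheadrightarrow B_u'$. The resulting $\mathbb{B}$-path $p = b_0, f_1, b_1,\ldots, f_k, b_k$ then satisfies $\sigma(p) = p'$ by construction, hence $\sigma_*[p] = [p']$. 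Since every class in $\pi_1(\mathbb{B}',\sigma(u_0))$ admits such a representative, $\sigma_*$ is surjective.

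The only potentially subtle point is that $\mathbb{B}'$ actually be well-defined as a graph of groups, i.e.\ that the boundary maps into $B_u'$ remain monomorphisms after quotienting by $\ker(\varphi_u)$; this is automatic in the present trivial-edge-group context, so I do not expect any real obstacle. The entire argument is a matter of unwinding the definitions of vertex morphism, of composition of graph of groups morphisms, and of $\pi_1$ of a graph of groups.
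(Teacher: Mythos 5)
Your proposal is correct and coincides with the paper's own proof: the paper defines exactly the same morphism $\sigma$ (identity on the graph and on all vertex groups except $u$, the quotient projection $B_u\to B_u/\ker(\varphi_u)$ at $u$, and trivial edge elements) and asserts $\varphi'\circ\sigma=\varphi$ with $\sigma$ $\pi_1$-surjective. You merely spell out the routine verifications (the commuting square, the factorization, and surjectivity of $\sigma_*$ via lifting $\mathbb{B}'$-paths) that the paper leaves implicit.
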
 
\begin{proof}
Te assertion holds as $\varphi= \varphi' \circ \sigma$ where $\sigma:\mathbb{B}\rightarrow \mathbb{B}'$ is the $\pi_1$-surjective graph of groups morphism 
$$\sigma=(\sigma, \{\sigma_u\ | \ u\in VB\}, \{\sigma_f\ | \ f\in EB\},  \{o_f^{\sigma} \ | \ f\in EB\}, \{t_f^{\sigma} \ | \ f\in EB\})$$
where $\sigma=\text{Id}_{B}:B\rightarrow B$, $\sigma_x=\text{Id}_{B_x}:B_x\rightarrow B_x$ for all $x\in VB\cup EB-\{u\}$, $\sigma_u$ is the projection $B_u\rightarrow B_u'=B_u/\ker(\varphi_u)$ and where $o_f^{\sigma}=1$ for all $f\in EB$.
\end{proof}

Assume now  that $\varphi:\B\rightarrow \A$ is not folded because   condition (F1) is not satisfied. Thus  there exist distinct edges $f_1$ and $f_2$ of $\B$ with $u:=\alpha(f_1)=\alpha(f_2)$ and   $e:=\varphi(f_1)=\varphi(f_1)$ such that  $o_{f_2}^{\varphi} =\varphi_u(b) o_{f_1}^{\varphi} \alpha_e(c)$ for some  $b\in B_u$ and $c\in A_e$. Denote the vertex $\varphi(u)$ by $v$ and the vertices  $\omega(f_1)$ and $ \omega(f_2)$ by $x$ and $y$ respectively. Note that $x $ and $y$ are mapped onto the same vertex $w$ of $A$.

We will modify   $\varphi$ by means of auxiliary moves until we obtain a morphism in which  an elementary fold can be applied.  Let $\varphi':\mathbb{B}\rightarrow \A$ be the morphism obtained from $\varphi$ by an auxiliary move of type A2 applied  to the edge $f_2$ with element $b^{-1}\in B_u$ (Recall that we supposed that $B_f=1$ for all $f\in EB$ so that   auxiliary moves of type A2  do  not change the graph of groups $\B$,  see Remark~\ref{remark:auxmove}).  Hence,   $o_{f_2}^{\varphi}$ is replaced by  $$o_{f_2}^{\varphi'}= \varphi_u(b^{-1}) o_{f_2}^{\varphi}  = \varphi_u(b^{-1}) \varphi_u(b) o_{f_1}^{\varphi} \alpha_e(c)=  o_{f_1}^{\varphi}\alpha_e(c).$$ 
The edge element   $o_{f_1}^{\varphi'}$ is equal to $o_{f_1}^{\varphi}$.

Let further $\varphi'':\mathbb{B} \rightarrow \A$ be the morphism obtained from $\varphi'$ by an auxiliary move of type A1 applied  to the edge $f_2$ with element $c\in A_e$. Thus,  $o_{f_2}^{\varphi'}=o_{f_1}^{\varphi}\alpha_e(c)$ is replaced by $o_{f_2}^{\varphi''}=o_{f_1}^{\varphi}\alpha_e(c) \alpha_e(c^{-1})=o_{f_1}^{\varphi}$ and  $t_{f_2}^{\varphi'}=t_{f_2}^{\varphi} \in A_w$ is replaced by $t_{f_2}^{\varphi''}= \omega_e(c)  t_{f_2}^{\varphi'}=\omega_e(c)  t_{f_2}^{\varphi }$. Note that $o_{f_1}^{\varphi''}=o_{f_1}^{\varphi'}=o_{f_1}^{\varphi}$ and that $t_{f_1}^{\varphi''}=t_{f_1}^{\varphi'}=t_{f_1}^{\varphi}$.

If $x=\omega(f_1)=\omega(f_2)=y$, then define $\overline{\varphi}:\overline{\B}\rightarrow \A$ as the morphism obtained from $\varphi''$ by an elementary fold of type IIIA that identifies the edges $f_1$  and $f_2$. In this case we will  say that $\overline{\varphi}$ is obtained from $\varphi$ by \emph{a fold of type IIIA}. 
 
 \smallskip
 
If $x=\omega(f_1)\neq \omega(f_2)=y$ then,  possibly after  exchanging $f_1$ and $f_2$,  we can assume that $y$ is not the base vertex $u_0$ of $\B$. Let $\varphi''':\mathbb{B} \rightarrow \A$ be the morphism obtained from $\varphi''$ by an auxiliary move of type A0  applied to the vertex $y$ with element $g:= (t_{f_1}^{\varphi''})^{-1} t_{f_2}^{\varphi''} =(t_{f_1}^{\varphi})^{-1} \omega_e(c)  t_{f_2}^{\varphi}$. Thus   $t_{f_2}^{\varphi''}= \omega_e(c)  t_{f_2}^{\varphi} $ is replaced by 
$$t_{f_3}^{\varphi'''}= \omega_e(c)t_{f_2}^{\varphi}  g^{-1}= t_{f_1}^{\varphi},$$ and the vertex homomorphism $\varphi_y''=\varphi_y:B_y\rightarrow A_w$ is replaced by   $\varphi_y''' = i_g\circ \varphi_y:B_y\rightarrow A_w, $ where $i_g$ is the inner automorphism of $A_w$ determined by $g$.  

Now we can apply an elementary fold of type IA to $\varphi'''$ since $o_{f_2}^{\varphi'''}=o_{f_1}^{\varphi'''}=o_{f_1}^{\varphi}$ and $t_{f_2}^{\varphi'''}=t_{f_1}^{\varphi'''}=t_{f_1}^{\varphi}$.  Let $\overline{\varphi}:\overline{\B}\rightarrow \A$ be the graph of groups morphism obtained from $\varphi''':\mathbb{B} \rightarrow \A$ by an elementary fold of type IA that identifies  the edges $f_1$ and $f_2$. In this case we will  say that   $\overline{\varphi}:\overline{\mathbb{B}}\rightarrow \A$ is obtained from $\varphi$ by a \emph{fold  of type IA}.

In both cases we will say that the morphism \emph{$\varphi:\B\rightarrow \A$ folds onto  $\overline{\varphi}:\overline{B}\rightarrow \A$} or that $\overline{\varphi}$ is obtained from $\varphi$ by a \emph{fold}.  The following lemma follows  from Lemmas~\ref{lemma:auxmoves}, \ref{lemma:simpleadjustment} and \ref{lemma:fold}.
\begin{lemma}{\label{lemma:mainfold}} 
Let $\varphi:\mathbb{B}\rightarrow \A$ be a graph of groups morphism and $u_0\in VB$ with $\varphi(u_0)=v_0$. If $ \overline{\varphi} :\overline{\mathbb{B}}\rightarrow \A$ is obtained from $\varphi$ by a fold, then there exists a morphism $\sigma:\B\rightarrow  \overline{\B}$ such that (i) $\overline{\varphi}_{\ast}\circ \sigma_{\ast} =\varphi_{\ast}$ and  (ii)  $\sigma_{\ast}:\pi_1(\B,u_0)\rightarrow \pi_1(\overline{\B},\sigma(u_0))$ is an isomorphism.
\end{lemma}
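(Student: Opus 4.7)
The strategy is to unpack the definition of a fold as an explicit finite composition of simpler moves and to concatenate the morphisms provided by the lemmas already established for each such move. Recall from the construction preceding the statement that the fold $\varphi\mapsto\overline{\varphi}$ is produced by the following pipeline: first an auxiliary move of type A2 (on the edge $f_2$ with element $b^{-1}$) giving $\varphi'$; then an auxiliary move of type A1 (on $f_2$ with element $c\in A_e$) giving $\varphi''$; in the type IA case, an auxiliary move of type A0 admissible with respect to $u_0$ (at the vertex $y\neq u_0$) giving $\varphi'''$; and finally an elementary fold of type IA or IIIA identifying $f_1$ and $f_2$, yielding $\overline{\varphi}:\overline{\B}\to\A$. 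Since $\B$ has trivial edge groups, Remark~\ref{remark:auxmove} ensures that the A2 step leaves the underlying graph of groups unchanged, and A0/A1 never change the graph of groups either; only the final elementary fold alters the source, producing $\overline{\B}$.

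The plan is therefore to build $\sigma:\B\to\overline{\B}$ as a composition $\sigma:=\sigma_2\circ\sigma_1$, where $\sigma_1:\B\to\B$ comes from Lemma~\ref{lemma:simpleadjustment} applied to the A2 step, and $\sigma_2:\B\to\overline{\B}$ comes from Lemma~\ref{lemma:fold} applied to the concluding elementary fold. Explicitly, Lemma~\ref{lemma:simpleadjustment} guarantees an isomorphism $\sigma_1$ with $\varphi'\circ\sigma_1=\varphi$, and Lemma~\ref{lemma:fold} guarantees a morphism $\sigma_2$ with $\overline{\varphi}\circ\sigma_2=\varphi''$ in the IIIA case and $\overline{\varphi}\circ\sigma_2=\varphi'''$ in the IA case, in either case with $(\sigma_2)_\ast$ an isomorphism.

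Property (i) $\overline{\varphi}_\ast\circ\sigma_\ast=\varphi_\ast$ then follows by chasing along the chain and invoking Lemma~\ref{lemma:auxmoves} to replace each A1 or (admissible) A0 step by an equality of induced homomorphisms on $\pi_1(\B,u_0)$. For instance, in the type IA case one has $\overline{\varphi}_\ast\circ\sigma_\ast=\overline{\varphi}_\ast\circ(\sigma_2)_\ast\circ(\sigma_1)_\ast=\varphi'''_\ast\circ(\sigma_1)_\ast=\varphi''_\ast\circ(\sigma_1)_\ast=\varphi'_\ast\circ(\sigma_1)_\ast=(\varphi'\circ\sigma_1)_\ast=\varphi_\ast$; the IIIA case is the same argument with the A0 link omitted. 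Property (ii) is immediate: $(\sigma_1)_\ast$ is an isomorphism by Lemma~\ref{lemma:simpleadjustment} and $(\sigma_2)_\ast$ is one by Lemma~\ref{lemma:fold}, so their composition $\sigma_\ast$ is also an isomorphism.

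The only subtlety I anticipate is the role of the admissibility hypothesis for the A0 move: Lemma~\ref{lemma:auxmoves} requires the A0 step to be admissible with respect to the base vertex in order to preserve $\varphi_\ast$, and this is exactly what is guaranteed in the fold construction by the clause that, after possibly swapping $f_1$ and $f_2$, one may assume $y\neq u_0$. Verifying that this choice is compatible with the subsequent elementary fold—i.e., that the resulting $\overline{\varphi}$ does not depend in a problematic way on the swap—is the one place where minor care is needed, but no new ideas are required beyond those already present in Lemmas~\ref{lemma:auxmoves}, \ref{lemma:simpleadjustment}, and \ref{lemma:fold}.
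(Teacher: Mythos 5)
Your proposal is correct and is exactly the argument the paper intends: the paper gives no written proof, stating only that the lemma ``follows from Lemmas~\ref{lemma:auxmoves}, \ref{lemma:simpleadjustment} and \ref{lemma:fold},'' and your decomposition of the fold into the A2, A1, (admissible) A0, and elementary-fold steps, with $\sigma=\sigma_2\circ\sigma_1$ assembled from those three lemmas, is precisely that omitted verification.
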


%---------Small orbifolds and dcorated groups-----------------------------------  

\section{Small orbifolds  and decorated groups} 
In this section we will study  homomorphisms from an arbitrary finitely generated group into the   fundamental group of a small orbifold, where we say that an orbifold  is \emph{small} if  it is isomorphic to a Moebius band or to  $F(p_1,\ldots, p_{r})$, where  $F$ is   a sphere with  $q\geqslant1$  open disks removed and  where  $0\leqslant r \leqslant 2$  such that $r=2$  if  $q=1$.
\begin{figure}[h!]
\begin{center}
\includegraphics[scale=1]{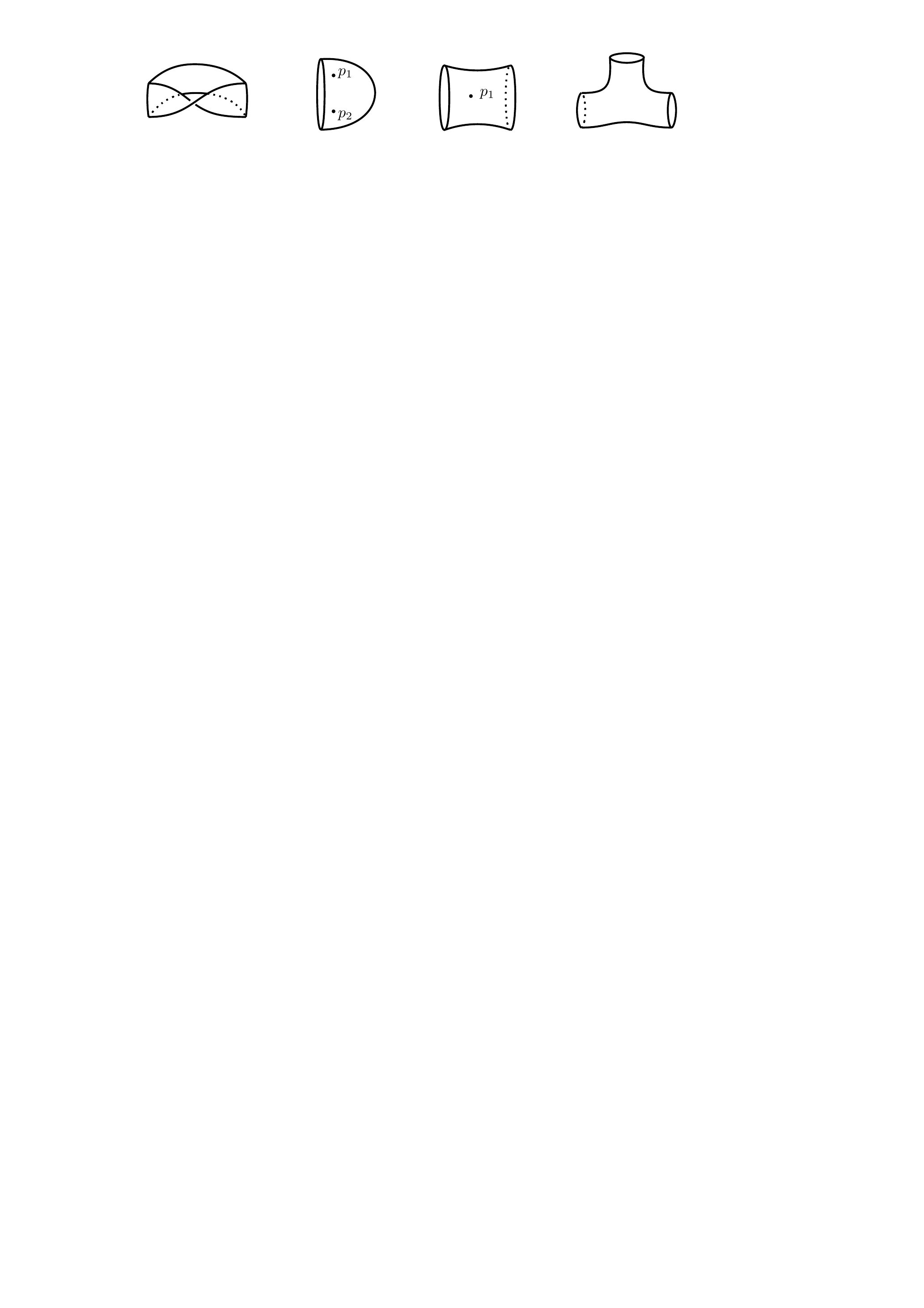}
\end{center}
\caption{Some small orbifolds.}
\end{figure}

For the remainder of this section let $\mathcal{O}$ be a small orbifold. Denote by   $\delta_1, \ldots, \delta_q$  the boundary components of $\mathcal{O}$.  As in the introduction, the element of $\pi_1^o(\mathcal{O})$  that corresponds to the curve   $\delta_i$ is   denote by  $t_i$.

Let $G$ be an arbitrary group and  $\eta$   a homomorphism from $G$ to $\pi_1^o(\mathcal{O})$. A cyclic subgroup  $C\leq G$ is called a \emph{peripheral subgroup of type $(o_C, i_{C})$},  where $o_C\in   \pi_1^o(\mathcal{O})$ and $i_C\in  \{1,\ldots, q\}$,    if $ \eta(C)$ is a non-trivial subgrup of $o_C \langle t_{i_C}\rangle  o_C^{-1}$ .   If   additionally $\eta(C)=o_C\langle t_{i_C}\rangle o_C^{-1}\cap \eta(G)$ then we say that $C$ is a \emph{maximal peripheral subgroup of type $ (o_C, i_C)$}.

\begin{definition}[Decorated groups]{\label{def:decgroup}}
 \emph{A  decorated group over $\pi_1^o(\mathcal{O})$} is a triple $(G,\eta, \{G_j\}_{j\in J})$, where  $G$ is a finitely generated group, $\eta$ is a homomorphism from $G$ to $\pi_1^o(\mathcal{O})$,  and  $\{G_j\}_{j\in J}$ is a finite set of peripheral subgroups of $G$ indexed by $J$ such that $G_j$ is of type  $(o_{G_j}, i_{G_j})$ for  $j\in J$. 
\end{definition}  

If the index set $J$ of  a decorated group   $(G, \eta, \{G_j\}_{j\in J})$  over $\pi_1^o(\mathcal{O})$ is equal to   $\{1,\ldots, n\}$   then we will also  denote $(G, \eta, \{G_j\}_{j\in J})$ by $(G, \eta, \{G_j\}_{1\leqslant j\leqslant n})$ or by $(G, \eta, \{G_1, \ldots, G_n\})$.

\begin{definition} A \emph{sub-decoration} of a decorated group $(G, \eta, \{G_j\}_{j\in J})$  is a decorated group of the type  $(G, \eta, \{G_{j'}\}_{j'\in J'})$ where $J'\subseteq J$ is a subset.    If $J' \varsubsetneq  J$ is a proper subset   then  $(G, \eta, \{G_{j'}\}_{j'\in J'})$ is called a  \emph{proper sub-decoration of $(G, \eta,  \{G_{j}\}_{j\in J})$}.  
\end{definition}

If the index  set $J$ of a decorated group $(G, \eta, \{G_j\}_{j\in J})$ is equal to $\{1,\ldots, n\}$ and if $J'\subseteq J$ is equal to $\{j_1', \ldots, j_m'\}$ then we will also  denote the sub-decoration $(G, \eta, \{G_{j'}\}_{j'\in J'})$ alternatively by $(G, \eta, \{G_{j_i'}\}_{1\leqslant i\leqslant m})$ or by $(G, \eta, \{G_{j_1'}, \ldots, G_{j_m'}\})$.

\begin{definition}{\label{def:equivdecgroups}}
Let $(G, \eta, \{G_j\}_{j\in J})$ and $(H, \lambda, \{H_k\}_{k\in K})$ be decorated groups over $\pi_1^o(\mathcal{O})$. We say that  $(G,\eta,  \{G_j\}_{j\in J})$ is  \emph{isomorphic  to}  $(H,\lambda,  \{H_k\}_{k\in K})$/$(G, \eta, \{G_j\}_{j\in J})$  \emph{projects onto} $(H,\lambda,  \{H_k\}_{k\in K})$, we write $$(G,\eta,  \{G_j\}_{j\in J})\cong (H, \lambda, \{H_k\}_{k\in K}) \ \  / \  \   (G,\eta,  \{G_j\}_{j\in J})\twoheadrightarrow (H, \lambda, \{H_k\}_{k\in K}),$$  if  there is an isomorphism/epimorphism  $\sigma:G\rightarrow H$   such that $\lambda \circ \sigma =\eta$,  and a   bijection $\tau:J \rightarrow K$ such that  for each index  $j\in J$ the following hold:  (i) $i_j:=i_{G_j}=i_{H_{\tau(j)}}\in \{1,\ldots, q\}$,  (ii) $\sigma(G_j)=h_jH_{\tau(j)}h_j^{-1}$  for some $h_j\in H_{\tau(j)}$, and   $o_{G_j}=\lambda(h_j)o_{H_{\tau(j)}} t_{i_j}^{z_j}$ for some integer  $z_j$.
\end{definition}

\begin{definition}
Let $(G, \eta, \{G_j\}_{j\in J})$   be decorated group  over $\pi_1^o(\mathcal{O})$  and $G_0$ a subgroup of $G$.   We say that  $(G, \eta, \{G_j\}_{j\in J})$   is $G_0$-\emph{collapsible} if the group  $G$ splits as $G_0\ast_{j\in J} G_j$.  If   $G_0\cap \ker(\eta)=1$, then  we say that $(G, \eta, \{G_j\}_{j\in J})$ is \emph{strongly $G_0$-collapsible}. We will say that   $(G, \eta, \{G_j\}_{j\in J} )$ is \emph{collapsible} (\emph{strongly collapsible}) if it is $G_0$-collapsible (strongly $G_0$-collapsible) for some subgroup $G_0$ of $G$.
\end{definition}

\begin{remark}{\label{remark:collapsible}}
Note that a decorated group $(G, \eta, \{G_j\}_{j\in J})$ over $\pi_1^o(\mathcal{O})$ is  isomorphic  to a (strongly) collapsible decorated group over $\pi_1^o(\mathcal{O})$ if, and only if,  there is a subgroup $G_0\leq G$ (with $G_0\cap \ker(\eta)=1$) and elements $g_{j}\in G$ for $j\in J$   such that  $G=G_0\ast_{j\in J} g_{j}G_{j}g_{j}^{-1}$. 
\end{remark}

\begin{example}[Decorated group induced by an orbifold covering]{\label{ex:01}}
If $\eta':\mathcal{O}'\rightarrow \mathcal{O}$ is an orbifold covering and $\pi_1^o(\mathcal{O}')$ is finitely generated (equivalently, $\mathcal{O}'$ has compact core), then $\eta$ defines a decorated group over $\pi_1^o(\mathcal{O})$ in the following way. Let    $\delta_{1}',\ldots, \delta_{n}'$  be the compact boundary components of $\mathcal{O}'$ and  $C_1',\ldots, C_n'$ the subgroups of $\pi_1^o(\mathcal{O}')$  that correspond  to the boundary components $\delta_1', \ldots, \delta_n'$ respectively.   Then $  (\pi_1^o(\mathcal{O}'), \eta_{\ast}', \{C_j'\}_{1\leqslant  j\leqslant n})$  
is a decorated group over $\pi_1^o(\mathcal{O})$.   We will say that $(\pi_1^o(\mathcal{O}), \eta_{\ast}', \{C_j'\}_{1\leqslant j\leqslant n})$ is  \emph{induced by the orbifold covering $\eta':\mathcal{O}'\rightarrow \mathcal{O}$}. Note that $(\pi_1^o(\mathcal{O}'), \eta_{\ast}', \{C_j'\}_{1\leqslant j\leqslant n})$ is well defined up to equivalence of decorated groups and that each $C_j'$ is a  maximal peripheral  subgroup of $\pi_1^o(\mathcal{O}')$.  
\end{example}

\begin{remark}{\label{rem:collapsible}}  
 The decorated group  $(\pi_1^o(\mathcal{O}'), \eta_{\ast}', \{C_j'\}_{1\leqslant j\leqslant n})$ is   strongly collapsible if  and only if the orbifold covering  $\eta'$ has infinite degree.  Also,  any  proper sub-decoration of $(\pi_1^o(\mathcal{O}'), \eta_{\ast}', \{C_j'\}_{1\leqslant j\leqslant n})$  is strongly collapsible.  
\end{remark}
 
We will need the following result  whose proof    follows immediately from the theory of orbifold coverings. 
\begin{lemma}{\label{lemma:inj}}
Let $(G, \eta, \{G_j\}_{j\in J})$ be a strongly collapsible  decorated group over $\pi_1^o(\mathcal{O})$.   If $\eta$ is injective, then there is an orbifold covering $\eta':\mathcal{O}'\rightarrow \mathcal{O}$   such that  $(G, \eta, \{G_j\}_{j\in J})$  is isomorphic to a sub-decoration  of the decorated group $(\pi_1^o(\mathcal{O}'), \eta_{\ast}', \{C_j'\}_{1\leqslant j\leqslant n})$ induced by $\eta'$.   
\end{lemma}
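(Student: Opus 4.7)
The plan is to use the classical correspondence between subgroups of orbifold fundamental groups and orbifold coverings to produce $\eta'$, then match up the peripheral data. Since $\eta$ is injective, $\eta(G)$ is a subgroup of $\pi_1^o(\mathcal{O})$, and orbifold covering theory yields an orbifold covering $\eta': \mathcal{O}'\rightarrow \mathcal{O}$ with $\eta_{\ast}'(\pi_1^o(\mathcal{O}'))=\eta(G)$. Because $\eta_{\ast}'$ is also injective, I would define
$$\sigma := (\eta_{\ast}')^{-1}\circ \eta: G\rightarrow \pi_1^o(\mathcal{O}'),$$
which is an isomorphism satisfying $\eta_{\ast}'\circ \sigma=\eta$; this $\sigma$ will serve as the group isomorphism required by Definition~\ref{def:equivdecgroups}.

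Next, I would exploit strong collapsibility $G=G_0\ast_{j\in J}G_j$ to prove that each cyclic factor $G_j$ is in fact a \emph{maximal} peripheral subgroup of $G$, i.e.\ $\eta(G_j)=o_{G_j}\langle t_{i_{G_j}}\rangle o_{G_j}^{-1}\cap \eta(G)$. The key input is that a generator of a cyclic free factor in a free product is never a proper power in the ambient group; combined with injectivity of $\eta$, this forces $\eta(G_j)$ to exhaust the intersection of $\eta(G)$ with the corresponding conjugate of $\langle t_{i_{G_j}}\rangle$. With this in hand I would invoke the double-coset description of the boundary components of $\mathcal{O}'$: lifts of $\delta_i\subseteq \partial\mathcal{O}$ correspond to the double cosets in $\langle t_i\rangle\backslash \pi_1^o(\mathcal{O})/\eta(G)$, a lift being compact precisely when its stabilizer in $\eta(G)$ is non-trivial. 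The double coset $\langle t_{i_{G_j}}\rangle o_{G_j}^{-1}\eta(G)$ has stabilizer equal to $\eta(G_j)\neq 1$, so it corresponds to a compact boundary component; let $\tau(j)$ index this component among the enumeration $\delta_1',\ldots,\delta_n'$.

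The remaining step is to choose conjugating elements $h_j\in \pi_1^o(\mathcal{O}')$ and integers $z_j$ so that $\eta_{\ast}'(h_jC_{\tau(j)}'h_j^{-1})=\eta(G_j)$ and $o_{G_j}=\eta_{\ast}'(h_j)\,o_{C_{\tau(j)}'}\,t_{i_{G_j}}^{z_j}$; injectivity of $\eta_{\ast}'$ then promotes the first identity to $\sigma(G_j)=h_jC_{\tau(j)}'h_j^{-1}$, as required. The main obstacle I foresee is twofold. First, I must verify that $\tau:J\to\{1,\ldots,n\}$ is injective: if $\tau(j)=\tau(j')$ with $j\neq j'$, then two distinct cyclic free factors $\sigma(G_j),\sigma(G_{j'})$ of $\pi_1^o(\mathcal{O}')=\sigma(G_0)\ast_{j\in J}\sigma(G_j)$ would be conjugate into a common maximal cyclic subgroup $C_{\tau(j)}'$, and a Kurosh-type argument is needed to rule this out. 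Second, the bookkeeping to produce matching $h_j$ and $z_j$ requires tracking specific representatives of the double coset $\langle t_{i_{G_j}}\rangle o_{G_j}^{-1}\eta(G)$ through the construction so that they agree with the geometrically defined element $o_{C_{\tau(j)}'}$ up to a power of $t_{i_{G_j}}$ and a factor in $\eta_{\ast}'(\pi_1^o(\mathcal{O}'))$; this is essentially a careful choice of basepoint and orientation in the covering.
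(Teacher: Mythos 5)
Your proposal is correct and follows exactly the route the paper has in mind: the paper offers no written proof, stating only that the lemma ``follows immediately from the theory of orbifold coverings,'' and your argument is precisely that theory spelled out --- take the covering $\eta'$ corresponding to the subgroup $\eta(G)$, use strong collapsibility plus injectivity of $\eta$ to see each $G_j$ is a maximal peripheral subgroup (the centralizer of a nontrivial element of an infinite cyclic free factor is that factor, so its generator cannot be a proper power), and match the $G_j$ to compact boundary components via the double-coset description. The two residual points you flag (injectivity of $\tau$ and the choice of $h_j$, $z_j$) are both settled by the conjugacy theorem for free products / malnormality of the peripheral subgroups, so they are genuine but routine.
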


\begin{example}{\label{ex:03}}
Let $\eta':\mathcal{O}'\rightarrow \mathcal{O}$ be an almost orbifold covering with exceptional point $x$ of order $p\geqslant 1$, exceptional disk $D$ and exceptional boundary component $\delta'\subseteq \partial \mathcal{O}'$. Let further $d$ be an integer such that $1\leqslant d=|\langle s_x\rangle :\langle s_x^d\rangle |\leqslant |\langle s_x\rangle|=p$ (recall that $s_x$ is the element of $\pi_1^o(\mathcal{O})$ that is represented by  the curve $\partial D$).  Suppose that $\partial \mathcal{O}'= \delta_1'\cup \ldots \cup  \delta_n' \cup \delta'$.  We  have the  following decorated groups over $\pi_1^o(\mathcal{O})$.
\begin{enumerate}
\item[\textbf{Decorated group induced by  $\eta'$.}]  For each $j$  there is some $i_j\in \{1,\ldots, q\}$ such that $\eta'$ maps  $\delta_j'$ onto the boundary component $\delta_{i_j}$ of $\mathcal{O}$. Denote by $C_j'$ the subgroup of $\pi_1^o(\mathcal{O}')$  that corresponds to   $\delta_j'$. Then     $C_j'$ is a peripheral subgroup of $\pi_1^o(\mathcal{O}')$.  We say that the decorated group  $(\pi_1^o(\mathcal{O}'), \eta_{\ast}', \{C_j'\}_{1\leqslant j\leqslant n})$  is \emph{induced by the almost orbifold covering $\eta':\mathcal{O}'\rightarrow \mathcal{O}$}. Again $(\pi_1^o(\mathcal{O}'), \eta_{\ast}', \{C_j'\}_{1\leqslant j\leqslant n})$  is well defined up to equivalence of decorated groups.

\item[\textbf{Adjoining a finite subgroup.}]  The element represented by the exceptional boundary component $\delta'$ is mapped by $\eta_{\ast}'$ onto $gs_x^kg^{-1}$ for some $g\in \pi_1^o(\mathcal{O})$, where $k$ is the degree of the map $\eta'|_{\delta'}:\delta'\rightarrow \partial D$.  Let $$\lambda :\pi_1^o(\mathcal{O}')\ast g\langle s_x^d\rangle g^{-1} \rightarrow \pi_1^o(\mathcal{O})$$ be the homomorphism induced by   $\eta_{\ast}':\pi_1^o(\mathcal{O}')\rightarrow \pi_1^o(\mathcal{O})$  and the inclusion map $g\langle s_x^d\rangle g^{-1} \hookrightarrow\pi_1^o(\mathcal{O}).$  Then the decorated group   $(\pi_1^o(\mathcal{O}')\ast g\langle s_x^d \rangle g^{-1}, \lambda,  \{C_j'\}_{1\leqslant j\leqslant n} )$ 
is said to be obtained from the decorated group induced by $\eta'$ by \emph{adjoining a finite subgroup}.  Note that  if $d=|\langle s_x\rangle|$ then  $\langle s_x^d\rangle $ is trivial, and therefore  $(\pi_1^o(\mathcal{O}')\ast g\langle s_x^d \rangle g^{-1}, \lambda,  \{C_j'\}_{1\leqslant j\leqslant n})$ is equal to  the decorated group induced by $\eta'$.
\end{enumerate}
\end{example}

\begin{remark}
The decorated group  $(\pi_1^o(\mathcal{O}'), \eta_{\ast}', \{C_j'\}_{1\leqslant j\leqslant n})$ is strongly collapsible since $\mathcal{O}'$ has  non-empty boundary. Consequently  any   decorated group that is obtained from $(\pi_1^o(\mathcal{O}'), \eta_{\ast}', \{C_j'\}_{1\leqslant j\leqslant n})$  by adjoining a finite subgroup is   collapsible.
\end{remark}

\begin{definition} 
Let  $(G, \eta, \{G_j\}_{j\in J})$ be a strongly  $G_0$-collapsible decorated group  over $\pi_1^o(\mathcal{O})$. 
\begin{enumerate}
\item We say that $(G, \eta, \{G_j\}_{j\in J})$ \emph{folds peripheral subgroups} if there are  distinct elements    $k,  l\in J$  with  $i:= i_{G_k}=i_{G_l}$  such that  $o_{G_{l}}=\eta(g) o_{G_{k}} t_{i}^z$ for some integer $z$  and some $g\in G_0\ast_{j\in J-\{l\}} G_j$.

\item  We say that $(G, \eta, \{G_j\}_{j\in J})$  has an  \emph{obvious relation} if there is some $k\in J$  such that 
 $$0<|o_{G_k}\langle t_{i_{G_k}}\rangle o_{G_k}^{-1}:(\eta(\overline{G})\cap o_{G_k}\langle t_{i_{G_k}}\rangle o_{G_k}^{-1})|< |  o_{G_k}\langle t_{i_{G_k}}\rangle o_{G_k}^{-1}:  \eta(G_k)|$$
where $\overline{G}= G_0\ast_{j\in J-\{k\}} G_j$. 
\end{enumerate}
\end{definition}

For a  finitely generated group $G$ we define the \emph{torsion} of $G$, $\text{tn}(G)$, as the number of conjugacy classes of maximal finite  subgroups of $G$. For example, for a compact orbifold $\mathcal{O}=F(p_1, \ldots, p_r)$  we have $\text{tn}(\pi_1^o(\mathcal{O}))=r$.

\smallskip  
 
It is not hard to see that if a strongly  collapsible decorated group $(G, \eta, \{G_j\}_{j\in J})$  projects onto a  decorated group that  folds peripheral subgroups or  has an obvious relation,   or if $(G, \eta, \{G_j\}_{j\in J})$  is isomorphic to the a  decorated group   obtained from the decorated group induced by an almost orbifold covering by adjoining a finite subgroup,   then  clearly   the homomorphism $\eta:G\rightarrow \pi_1^o(\mathcal{O})$ is not injective. Next Proposition, which is the main ingredient to prove  Theorem~\ref{MainThm}, shows that a stronger  converse  of this remark   holds.

\begin{proposition}{\label{proposition:1}} Let $\mathcal{O}$ be an orientable small orbifold and  $(G,\eta, \{G_j\}_{j\in J})$  a strongly collapsible  decorated group  over $\pi_1^o(\mathcal{O})$. If the homomorphism $\eta:G\rightarrow\pi_1^o(\mathcal{O})$ is not injective, then  one of the following holds:  
\begin{enumerate}
\item[($\alpha$)] $(G,\eta, \{G_j\}_{j\in J})$ projects onto a  strongly collapsible decorated group $(H, \lambda, \{H_k\}_{k\in K})$ over $\pi_1^o(\mathcal{O})$ such that one of the following three conditions holds:
\begin{enumerate}
\item[($\alpha.1$)] $(H, \lambda, \{H_k\}_{k\in K})$ folds peripheral  subgroups.

\item[($\alpha.2$)] $(H, \lambda, \{H_k\}_{k\in K})$  has an obvious relation.

\item[($\alpha.3$)] $\rk(H)<\rk(G)$ or $\rk(H)=\rk(G)$ and $\text{tn}(H)>\text{tn}(G)$.
\end{enumerate}

\item[($\beta$)] There is a special almost orbifold covering $\eta':\mathcal{O}'\rightarrow \mathcal{O}$ such that $(G,\eta, \{G_j\}_{j\in J})$ is isomorphic   to a decorated group over $\pi_1^o(\mathcal{O})$ that is obtained from the decorated group induced by $\eta'$  by adjoining a finite subgroup.

\end{enumerate} 
\end{proposition}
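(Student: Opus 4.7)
The plan is to translate the decorated group $(G,\eta,\{G_j\}_{j\in J})$ into a graph of groups morphism $\varphi:\B\to\A$ and exploit the folding machinery of Section~2. Using strong $G_0$-collapsibility, write $G=G_0*_{j\in J}G_j$ with $G_0\cap\ker\eta=1$ and each $G_j$ cyclic mapping nontrivially into a conjugate of a boundary subgroup $\langle t_{i_{G_j}}\rangle$. I would take $\B$ to have central vertex $u_0$ of group $G_0$ and pendant vertices $u_j$ of groups $G_j$, joined by edges $f_j$ with trivial edge groups, so that $\pi_1(\B,u_0)\cong G$. Choose $\A$ as a graph of groups decomposition of $\pi_1^o(\mathcal O)$ exposing the boundary subgroups $\langle t_i\rangle$ as edge stabilizers (for small orientable $\mathcal O$ such a decomposition is explicit). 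Define $\varphi$ with $\varphi_{u_0}=\eta|_{G_0}$, with $\varphi_{u_j}$ encoding $G_j\hookrightarrow\langle t_{i_{G_j}}\rangle$ via $o_{G_j}^{-1}\eta(\cdot)o_{G_j}$, and with edge elements $o_{f_j}$ recording the conjugators $o_{G_j}$; then $\varphi_*$ identifies with $\eta$.

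Since folded morphisms induce injections on $\pi_1$ by the Normal Form Theorem, non-injectivity of $\eta$ forces $\varphi$ to violate one of (F0), (F1), (F2). Condition (F0) holds: $\varphi_{u_0}$ is injective by $G_0\cap\ker\eta=1$, and each $\varphi_{u_j}$ is injective because $G_j$ is cyclic with nontrivial image in an infinite cyclic group. Failure of (F1) at two edges $f_k,f_l$ rewrites, after admissible type A0/A1 auxiliary moves (Lemma~\ref{lemma:auxmoves}), as $o_{G_l}=\eta(g)\,o_{G_k}\,t_i^{z}$ with $i=i_{G_k}=i_{G_l}$ and $g\in G_0*_{j\in J-\{l\}}G_j$, precisely the defining condition of folding peripheral subgroups, placing us in case $(\alpha.1)$ with the identity projection. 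Failure of (F2) at $f_k$ supplies $b\in G_0-\{1\}$ with $o_{f_k}^{-1}\eta(b)o_{f_k}\in\langle t_{i_{G_k}}\rangle$, so $\eta(b)\in\eta(\overline G)\cap o_{G_k}\langle t_{i_{G_k}}\rangle o_{G_k}^{-1}$; either this enlarges the intersection beyond $\eta(G_k)$ giving an obvious relation (case $(\alpha.2)$), or $\eta(b)\in\eta(G_k)$ and the relation $b\cdot g^{-1}\in\ker\eta$ (for the unique $g\in G_k$ with $\eta(g)=\eta(b)$) yields a projection to a smaller decorated group (case $(\alpha.3)$).

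If none of the above directly applies, perform a fold via Lemma~\ref{lemma:mainfold}. A type IA fold merges two pendant vertices into one of group $G_k*G_l$; this vertex group is not cyclic, so one applies a vertex morphism (Lemma~\ref{lemma:vertexmorphism}) to project onto a decorated group where $G_k$ and $G_l$ collapse into a single peripheral factor, dropping the rank (case $(\alpha.3)$). A type IIIA fold adds a free generator to a vertex group which is then killed by a subsequent vertex morphism, again producing a projection with $\rk(H)<\rk(G)$ or with the same rank but strictly more torsion (case $(\alpha.3)$). Iterating, the process either triggers case $(\alpha)$ at some intermediate step or terminates in a folded morphism $\overline\varphi:\overline\B\to\A$, whose induced map is injective. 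In this terminal case the image subgroup of $\pi_1^o(\mathcal O)$ corresponds, via Lemma~\ref{lemma:inj} combined with the construction in Example~\ref{ex:03}, to an orbifold covering of the complement of a small disk around a distinguished cone point together with an adjoined cyclic factor along the exceptional boundary; pulling back the disk yields the required special almost orbifold covering $\eta':\mathcal O'\to\mathcal O$, placing us in case $(\beta)$.

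The hard part will be the terminal analysis: showing that the folded morphism yields a \emph{special} almost orbifold covering, i.e.\ that $\deg(\eta'|_C)\leqslant p$ where $p$ is the order of the exceptional point, rather than merely an almost orbifold covering. This specialness must be extracted by bookkeeping the edge elements around the exceptional boundary throughout the folding sequence and verifying, via Grushko's theorem applied to the image in $\pi_1^o(\mathcal O)$, that the adjoined cyclic group cannot wind more than $p$ times around the exceptional disk. A secondary issue is to verify at each fold step that the resulting morphism genuinely encodes a strongly collapsible decorated group, which requires careful type A2 tracking as in Remark~\ref{remark:auxmove} and consistent use of the isomorphisms of Lemmas~\ref{lemma:simpleadjustment} and \ref{lemma:fold} to preserve the free product structure.
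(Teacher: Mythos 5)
Your high-level strategy (turn the decorated group into a graph of groups morphism and convert non-injectivity of $\eta$ into violations of (F0)--(F2)) is the right family of ideas, but the proposal has gaps exactly where the paper's proof does its real work. The first is the setup. You take $\A$ to be a splitting of $\pi_1^o(\mathcal O)$ ``exposing the boundary subgroups $\langle t_i\rangle$ as edge stabilizers'' and call it explicit; no such splitting exists in general. Already for $\mathcal O=\mathbb D^2(p_1,p_2)$ one has $\pi_1^o(\mathcal O)\cong\mathbb Z_{p_1}\ast\mathbb Z_{p_2}$ with the single boundary class $t_1=s_1s_2$ of infinite order and hyperbolic in the natural splitting, and for a pair of pants one would need the three cyclic subgroups $\langle t_1\rangle,\langle t_2\rangle,\langle t_1t_2\rangle$ of $F_2$ to be simultaneously elliptic. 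This is precisely why the paper works with the graph of groups $\AO$ (trivial edge groups, finite cyclic vertex groups) and encodes each peripheral subgroup $G_j$ not through a vertex or edge homomorphism but as a closed $\B$-path $p_j$ mapping onto a power of the boundary path $c_{i(j)}$, together with a connecting path $\gamma_j$ (Definition~\ref{def:decmor}). All of the combinatorial apparatus of Section~6 --- collapsibility, the ``folds squares'' condition, and the local graphs $\G^u$ --- exists to control how these paths interact with folds so that the splitting $G=G_0\ast_j G_j$ and the peripheral data survive each move. Without that bookkeeping your identifications ``(F1) failure $=$ folds peripheral subgroups'' and ``(F2) failure $=$ obvious relation'' are only meaningful at the very first step on the wedge-shaped $\B$, and cannot be iterated.

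The second gap is the terminal analysis, which is where case $(\beta)$ actually lives. You assert that if no $(\alpha)$-type event occurs the process terminates in a \emph{folded} morphism with injective induced map, and that $(\beta)$ is then read off via Lemma~\ref{lemma:inj}. But if the terminal morphism were folded, all of the non-injectivity of $\eta$ would have been absorbed into vertex morphisms along the way, i.e.\ into proper quotients of vertex groups, which lands you in $(\alpha.3)$, not $(\beta)$. In the paper, $(\beta)$ arises from a morphism that is \emph{still not folded}: after a single type IIIA fold one reaches a morphism folded at every vertex except one vertex $u$, where $B_u=\langle b_u\rangle\ast\langle s_v^d\rangle$ with $\phi_u(b_u)=s_v^{k_u}$ --- the vertex homomorphism remains non-injective, and Lemma~\ref{lemma:almost} reads this configuration off as the decorated group of an almost orbifold covering with an adjoined finite subgroup. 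Reaching that configuration requires the local-graph analysis (Claims showing every local graph is a circle except the interval at $u$), none of which appears in your sketch. Finally, the specialness $k_u\leqslant|A_v^{\mathcal O}|$ is not obtained by ``Grushko bookkeeping around the exceptional boundary''; it comes from Lemma~\ref{lemma:adjfinite}, which shows that if $d<k_u$ one can unfold the exceptional edge and re-fold to produce a decorated morphism that folds squares, contradicting the standing case hypothesis. That contradiction argument is the key missing idea in your treatment of specialness, which you correctly flag as the hard part but do not supply.
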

The proof  the proposition is rather long and the main idea  is to identity the fundamental group of $\mathcal{O}$ with a graph of groups and then analyze the non-injectivity of $\eta$ using folds of graphs of groups morphisms.   To avoid a lengthy interruption of the proof of the main theorem we will postpone the   proof Proposition~\ref{proposition:1} to the end of the paper.

%-----------------------------------------------------------------------------

\section{Proof of Theorem~\ref{MainThm}}
Let $\mathcal{O}$ be a sufficiently large orbifold  and $\mathcal{T}_0$ a generating tuple of $\pi_1^o(\mathcal{O})$. We want to find a special almost orbifold covering $\eta':\mathcal{O}'\rightarrow \mathcal{O}$ and a generating tuple $\mathcal{T}'$ of $\pi_1^o(\mathcal{O}')$ such that $\eta_{\ast}'(\mathcal{T}')$ is Nielsen equivalent to $\mathcal{T}_0$.  The proof of the theorem is organized as follows:
\begin{enumerate}
\item In subsection 4.1 we will fix an identification between the fundamental group of $\mathcal{O}$ and the fundamental groups of a graph of groups $\A$ with cyclic edge groups. We  then  consider marked morphisms over $\A$.  This are triples $((\B, u_0), \varphi, \mathcal{T})$ consisting of a graph of groups $\B$, a morphism $\varphi:\B\rightarrow \A$ with special properties, and a generating tuple of the fundamental group of $\B$.  The idea is that the morphism $\varphi$  will ``approximate"   an  almost orbifold covering of $\mathcal{O}$. 

\item In subsection 4.2  we  consider the set  $\Omega(\mathcal{T}_0)$ consisting of all marked morphisms  $((\B, u_1), \varphi, \mathcal{T})$  with the property that $\varphi_{\ast}(\mathcal{T})$ is Nielsen equivalent to $\mathcal{T}_0$. Then we consider the set $\Omega_{\text{min}}(\mathcal{T}_0)$    of all marked morphisms  in $\Omega(\mathcal{T}_0)$  with minimal $c$-complexity.  

\item In subsection 4.3  we will prove, among other things, that if a marked morphism $((\B, u_0), \varphi, \mathcal{T})$ lies in $\Omega_{\text{min}}(\mathcal{T}_0)$ then $\varphi$ can not violate condition (F1) of the definition of folded morphisms.

\item In subsection 4.4 we show that there is at least one marked morphism    $((\B, u_0), \varphi, \mathcal{T})$   in $\Omega_{\text{min}}(\mathcal{T}_0)$ such that $\varphi$ is not vertex injecive.

\item In subsetction 4.5, using Proposition~\ref{proposition:1}, we show that a non-vertex injective marked morphism in  $\Omega_{\text{min}}(\mathcal{T}_0)$  that has the smallest  $d$-complexity gives rise to a special almost orbifold  covering $\eta':\mathcal{O}'\rightarrow \mathcal{O}$ and a tuple $\mathcal{T}'$ with the desired properties. 
\end{enumerate}
  
%----------------------------------------------------------------------------  

\subsection{Splitting the fundamental  group of a sufficiently large orbifold and marked  morphisms}
Let $\mathcal{O}=F(p_1,\ldots, r_{\mathcal{O}})$ be a sufficiently large orbifold. Then there  is  a non-empty (nonunique)  collection  $\gamma_1,\ldots, \gamma_t$  of disjoint simple closed curves on $\mathcal{O}$ such that the following hold:
\begin{enumerate}
\item[(1)] The closure of each component of    $\mathcal{O}- \gamma_1\cup\gamma_2\cup\ldots\cup \gamma_t$ 
is  a small orbifold. 

\item[(2)]  For each $1\leqslant i\leqslant t$,  the boundary components of a small regular neighborhood  $N(\gamma_i)\cong \gamma_i \times [-1,1]$  of $\gamma_i$  are contained in distinct components of $\mathcal{O}-\gamma_1\cup\gamma_2\cup\ldots\cup \gamma_t$.  
\end{enumerate}
\begin{figure}[h!]
\begin{center}
\includegraphics[scale=1]{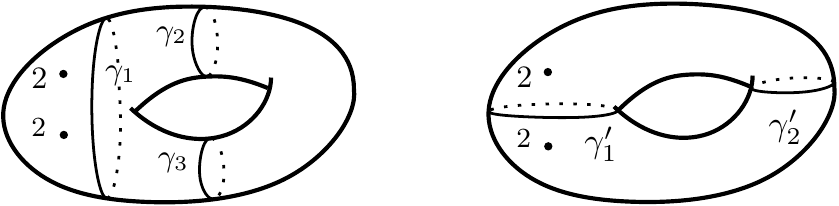}
\end{center}
\caption{Two splittings of $\mathcal{O}=T^2(2,2)$.}\label{fig:no1}
\end{figure}

For the remainder of this section    $\A=\A(\mathcal{O}, \gamma_1, \ldots, \gamma_t)$ will denote  the graph of groups with $\pi_1(\A,v_0)\cong \pi_1^{o}(\mathcal{O})$ corresponding to the above decomposition of $\mathcal{O}$, where  $v_0$ is the base vertex of $\A$.    Thus all vertex groups of $\A$ carry the fundamental group of a sub-orbifold (small orbifold) of $\mathcal{O}$, all edge groups of $\A$ are infinite cyclic,  and the images of the boundary monomorphisms correspond to the peripheral subgroups. More precisely,  
$A_v =\langle a_{v}, t_{v,1} \ | \ a_v^2=t_{v,1}\rangle $
if the vertex orbifold  $\mathcal{O}_v$ is isomorphic to a   Moebius band and 
$$A_v= \langle  s_{v,1},\ldots, s_{v, r_v}, t_{v,1}, \ldots, t_{v, q_v} \ | \ s_{v,1}^{p_{v,1}}, \ldots, s_{v, r_v}^{p_{v, r_v}},    s_{v,1}\cdot\ldots\cdot s_{v,r_{v}}=  t_{v,1}\cdot \ldots\cdot t_{v,q_{v}} \rangle$$
for the remaining cases, where:
\begin{enumerate}
\item[(a)] $r_v=r_{\mathcal{O}_v}=\text{tn}(\pi_1^o(\mathcal{O}_v))\geqslant 0$ is the number of cone points of $\mathcal{O}_v$.

\item[(b)] $p_{v,1}, \ldots, p_{v,r_v}$ are the oreders of the cone points  of $\mathcal{O}_v$.

\item[(c)] $q_v=q_{\mathcal{O}_v}\geqslant 1$ is the number of boundary components of $\mathcal{O}_v$.
\end{enumerate}
For each  $e\in EA$ the edge  group $A_e$ is  generated by the homotopy class $a_e$ of some  of the closed curves $\gamma_1,\ldots, \gamma_t$.  The boundary monomorphism $\alpha_e:A_e\rightarrow A_{\alpha(e)}$ is defined by $\alpha_e(a_e)=t_{\alpha(e), i_e}^{\varepsilon_e}$ for some $\varepsilon_e\in \{\pm 1\}$ and some  $i_e\in \{1,\ldots, q_{\alpha(e)}\}$. Moreover,    for any vertex $v\in VA$ and any   $1\leqslant i\leqslant q_v$ there is exactly one edge $e$  with  $\alpha(e)=v$  such that $i_e=i$.

\begin{definition}[Marked  Morphisms]{\label{def:marked}}
A \emph{marked morphism over $\A$}  is a triple $((\B, u_0), \varphi, \mathcal{T})$, where 
\begin{enumerate}
\item[(1)] $\B$ is a  connected graph of finitely generated  groups and $u_0$ is  a vertex of $\B$ such that  the fundamental group of $\B$  with respect to $u_0$ (hence with respect to any vertex) splits as a free product of cyclic groups.

\item[(2)]  $\varphi$ is a morphism from $\B$ to $\A$  with $\varphi(u_0)=v_0$  that has the following  properties:
\begin{enumerate}
\item[(2.i)] Edge homomorphisms are injective, that is,    $\varphi_f:B_f\rightarrow  A_{\varphi(f)}$  is injective for all $f\in EB$. 

\item[(2.ii)]   If $u\in VB$ is such that $\varphi_u:B_u\rightarrow A_{\varphi(u)}$ is injective, then there is an orbifold covering  
$$\eta_u':\mathcal{O}_u'\rightarrow \mathcal{O}_{\varphi(u)}$$ 
such that $(B_u, \varphi_u, \{\alpha_f(B_f)\}_{f\in \st_1(u, \B)})$  is isomorphic   to a  sub-decoration of the decorated group induced by $\eta_u'$, 
where $\st_1(u, \B):=\{f\in EB  \ | \ \alpha(f)=u \text{ and } B_f\neq 1\}$, see Fig.~\ref{fig:markedmorph}.

\item[(2.iii)] If $u\in VB$  is such that $\varphi_u:B_u\rightarrow A_{\varphi(u)}$ is not injective, then $\st_1(u, \B)\neq \emptyset$ and the decorated group  $(B_u, \varphi_u,\{\alpha_{f }(B_{f})\}_{f\in \st_1(u, \B)})$ 
is isomorphic  to a strongly collapsible decorated group  over  $A_{\varphi(u)}$.
\end{enumerate}

\item[(3)] $\mathcal{T}$ is a generating tuple of $\pi_1(\B, u_0)$. 
\end{enumerate} 

\begin{figure}[h]
\centering
\includegraphics{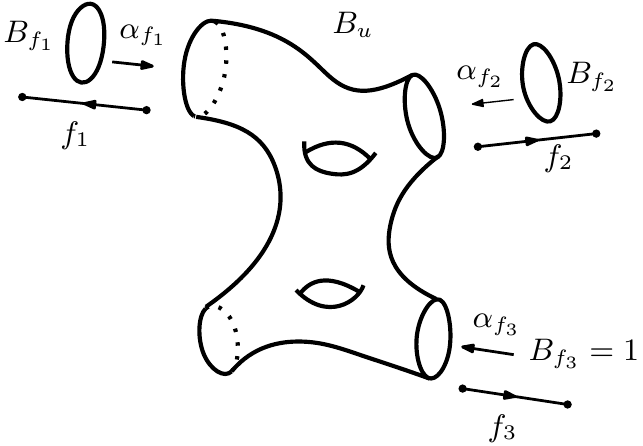}
\caption{$\st_1(u, \B)= \{f_1, f_2\}$ and $\st(u, B)=\{f_1, f_2, f_3\}$.}\label{fig:markedmorph}
\end{figure}  
\end{definition}  

\begin{remark}{\label{remark:type}}
Condition (2.i) implies  that  all edge groups  in $\B$  are either trivial or infinite cyclic.    Item (5) of the definition of graph of groups morphisms further implies that    $\varphi_u(\alpha_{f}(B_{f}))$ is a non-trivial subgroup of  $o_{f}\alpha_{\varphi(f)}(A_{\varphi(f)})o_{f}^{-1}$ which in turn is equal to  $o_{f} \langle t_{\varphi(\alpha(f)), i_{\varphi(f)}}\rangle o_{f}^{-1}$. Hence $\alpha_f(B_f)$ is a peripheral subgroup of $B_{\alpha(f)}$  of type   $(o_{f} , i_{\varphi(f)})$. 
\end{remark}

\begin{remark}{\label{remark:markedmorphisms}}
Let  $\delta_{u,1}', \ldots, \delta_{u,n_u}'$  be the compact boundary components of the orbifold $\mathcal{O}_u'$ given in item (2.ii),  and   $C_{u,j}'$ the corresponding  (maximal) peripheral subgroup of $\pi_1^o(\mathcal{O}_u')$.  The condition that   $(B_u, \varphi_u, \{\alpha_f(B_f)\}_{f\in \st_1(u, \B)})$ is isomorphic   to a  sub-decoration of the decorated group induced by $\eta_u'$  means that  there is a  possibly empty  subset $\{k_{u, 1}, \ldots, k_{u, {m_u}}\}$ of $\{1,\ldots, n_u\}$ such that  
$$(B_u, \varphi_u, \{\alpha_{f}(B_{f})\}_{f\in \st_1(u, \B)}) \cong (\pi_1^o(\mathcal{O}_u'), (\eta_u')_{\ast}, \{C_{u,k_{u,j}}'\}_{1\leqslant j\leqslant m_u}).$$
\end{remark}

\begin{remark}{\label{remark:markedmorphisms1}}
Item (2.iii) is equivalent to saying  that  there are elements $b_f\in B_u$, for $f\in \st_1(u, \B)$,  and  a subgroup $C_u$ of $ B_u$, which intersects $\ker(\varphi_u)$  trivially, such that the vertex group  $B_u$  splits as $$B_u=C_u\ast_{f\in \st_1(u, \B)} b_f \alpha_{f}(B_f) b_f^{-1}.$$ 
\end{remark}

\begin{example}{\label{ex:induced}} An  almost orbifold covering $\eta':\mathcal{O}'\rightarrow \mathcal{O}$  naturally induces  a  morphism over $\A$  for which conditions  (2.i)-(2.iii) are satisfied. Indeed, the inverse image of the    simple closed curves $\gamma_1, \ldots, \gamma_t$ under $\eta'$ are  simple closed curves $ \gamma_1', \ldots, \gamma_{t'}' $ in $\mathcal{O}'$.   Let $\B=\B(\mathcal{O}', \gamma_1', \ldots, \gamma_{t'}')$ be the graph of groups with $\pi_1(\B, u_0)\cong \pi_1^o(\mathcal{O}')$ corresponding to the decomposition of $\mathcal{O}'$ along  $\gamma_1',\ldots, \gamma_{t'}'$. Let further   $\varphi:B\rightarrow A$ be  the  graph-morphism     induced by $\eta'$ (that is, for each vertex  $u$ of $B$,   $\varphi(u)$ is the vertex of $A$  such that $\eta'(\mathcal{O}_u')=\mathcal{O}_{\varphi(u)}$, and  for each  edge  $f$ of $B$, $\varphi(f)$ is the edge of $A$ such that $\eta'(\gamma_f')=\gamma_{\varphi(f)}$). 
\begin{figure}[h!]
\begin{center}
\includegraphics[scale=1]{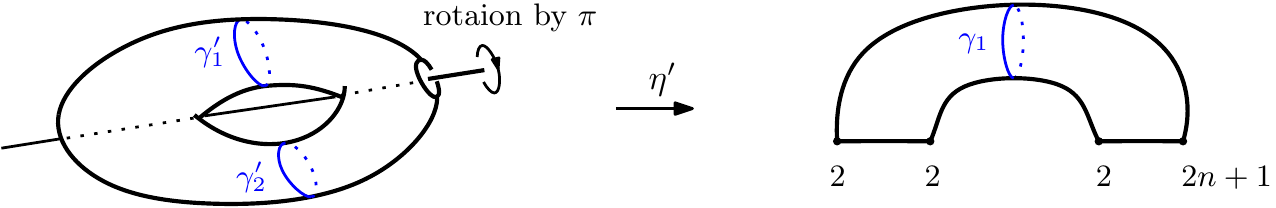}
\end{center}
\caption{The exceptional point of $\eta'$  is the cone point of order $2n+1$.}\label{fig:exinduced}
\end{figure}

Then    $\varphi:\B\rightarrow \A$ is defined    by 
  $ \varphi=(\varphi, \{\varphi_u \ | \ u\in VB\}, \{\varphi_f \ |  \  f\in EB\}, \{o_f \ | \ f\in EB\}, \{t_f \ | \ f\in EB\})$ 
where:
\begin{enumerate}
\item $\varphi_u:B_u\rightarrow A_{\varphi(u)}$ is induced by $\eta'|_{\mathcal{O}_u'}:\mathcal{O}_u'\rightarrow \mathcal{O}_{\varphi(u)}$ for all $u\in VB$.

\item  $\varphi_f:B_f\rightarrow A_{\varphi(f)}$ is induced by $\eta'|_{\gamma_f'}:\gamma_f'\rightarrow \gamma_{\varphi(f)}$ for all $f\in EB$. 

\item  $o_f\in A_{\alpha(\varphi(f))}$ is such that $\varphi_{\alpha(f)}(\alpha_f(B_f)) = o_f \alpha_{\varphi(f)}(\varphi_f(B_f)) o_f^{-1}$ for all $f\in EB$.  
\end{enumerate}
Fig.~\ref{fig:exinduced1} shows the   morphism induced by the almost orbifold covering $\eta$ depicted in Fig~\ref{fig:exinduced}. Notice that we   can always assume that  the exceptional point of $\eta$ is contained in the interior of some vertex orbifold $\mathcal{O}_v\subseteq \mathcal{O}$.  Thus there is a vertex $u'$ of $\B$ such that $\eta'|_{\mathcal{O}_{u'}'}:\mathcal{O}_{u'}'\rightarrow \mathcal{O}_{\varphi(u)}$ is an almost orbifold covering and hence  the vertex homomorphism $\varphi_{u'}$ is not injective. Therefore the morphism   $\varphi$ is not vertex injective.    
\begin{figure}[h!]
\begin{center}
\includegraphics[scale=1]{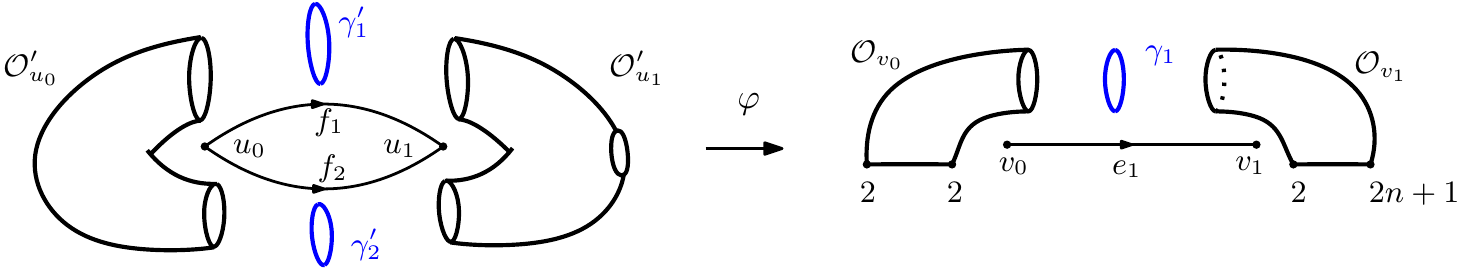}
\end{center}
\caption{The   morphism induced by $\eta'$  where $o_{f_1}=1$, $t_{f_1}=1$,  $o_{f_2}=s_{v_1, 1}$ and $t_{f_{2}}=s_{v_2,1}$.}\label{fig:exinduced1}
\end{figure}
\end{example}

\begin{lemma}{\label{lemma:possinlefolds}}
Let $\M$ be a marked morphism over $\A$. Suppose  that  $\varphi$ is vertex injective. If $\varphi$  is not folded,   then one of the following conditions holds: 
\begin{enumerate}
\item[($\overline{ F1}$)] There are distinct  edges $f$ and $g$  starting at the same vertex  $u$  with  $e:=\varphi(f)=\varphi(g)$  such that  $B_f=1$ or $B_g=1$ and  that $o_g =\varphi_u(b) o_f  \alpha_e(c)$  for some $b\in B_u$ and   $c\in A_e$.

\item[($\overline{F2}$)] There is an edge $f\in EB$ with $B_f=1$ such that $\alpha_{\varphi(f)}^{-1}( o_f^{-1}  \varphi_{\alpha(f)}(B_{\alpha(f)})  o_f)\leq A_{\varphi(f)}$
is non-trivial. 
\end{enumerate} 
\end{lemma}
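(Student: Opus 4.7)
The key observation is that, since $\varphi$ is vertex injective, condition (F0) of Definition~\ref{def:folded} holds automatically, so the hypothesis that $\varphi$ is not folded forces the failure of either (F1) or (F2). My plan is to treat these two failures separately and in each case use the marked-morphism condition~(2.ii) to rule out the possibility that every edge involved carries a non-trivial group; what remains is then precisely one of the two conditions $(\overline{F1})$, $(\overline{F2})$.

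Suppose first that (F1) fails, so there exist distinct edges $f_1,f_2\in EB$ with common initial vertex $u$ and common image edge $e=\varphi(f_1)=\varphi(f_2)$ satisfying $o_{f_2}=\varphi_u(b)\,o_{f_1}\,\alpha_e(c)$ for some $b\in B_u$ and $c\in A_e$. If $B_{f_1}=1$ or $B_{f_2}=1$ then $(\overline{F1})$ holds directly, so it suffices to derive a contradiction when both are non-trivial. In that case $f_1,f_2\in\st_1(u,\B)$, and $\alpha_{f_1}(B_{f_1})$ and $\alpha_{f_2}(B_{f_2})$ are two distinct peripheral subgroups of the decorated group attached to $u$ in the definition of a marked morphism. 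By~(2.ii) this decoration is isomorphic to a sub-decoration of $(\pi_1^o(\mathcal{O}_u'),(\eta_u')_\ast,\{C_{u,j}'\})$ for some orbifold covering $\eta_u':\mathcal{O}_u'\to \mathcal{O}_{\varphi(u)}$. Unpacking Definition~\ref{def:equivdecgroups} and substituting the prescribed formulas $o_{f_k}=(\eta_u')_\ast(h_{f_k})\,o_{C'_{\tau(f_k)}}\,t_{\varphi(u),i_e}^{z_{f_k}}$ into the relation above, a routine simplification yields an equation of the form
$$ o_{C'_{\tau(f_2)}}=(\eta_u')_\ast(\beta)\,o_{C'_{\tau(f_1)}}\,t_{\varphi(u),i_e}^{N} $$
for suitable $\beta\in\pi_1^o(\mathcal{O}_u')$ and $N\in\mathbb Z$. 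Here $C'_{\tau(f_1)}\neq C'_{\tau(f_2)}$ correspond to distinct boundary components of $\mathcal{O}_u'$ lying above the $i_e$-th boundary component of $\mathcal{O}_{\varphi(u)}$. I would then invoke the standard covering-theoretic dictionary that these distinct boundary components correspond to distinct double cosets in $(\eta_u')_\ast(\pi_1^o(\mathcal{O}_u'))\backslash\pi_1^o(\mathcal{O}_{\varphi(u)})/\langle t_{\varphi(u),i_e}\rangle$, while the displayed equation places $o_{C'_{\tau(f_1)}}$ and $o_{C'_{\tau(f_2)}}$ in the \emph{same} double coset, the desired contradiction.

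Suppose next that (F2) fails, so there is an edge $f\in EB$ and $b\in B_{\alpha(f)}-\alpha_f(B_f)$ with $o_f^{-1}\varphi_{\alpha(f)}(b)o_f\in\alpha_{\varphi(f)}(A_{\varphi(f)})$. The goal is to show $B_f=1$. If not, then $f\in\st_1(\alpha(f),\B)$, and (2.ii) combined with the fact that each $C_j'$ in a covering decoration is maximal (Example~\ref{ex:01}) transports maximality across the isomorphism of decorated groups, making $\alpha_f(B_f)$ a \emph{maximal} peripheral subgroup of $B_{\alpha(f)}$ of type $(o_f,i_{\varphi(f)})$. Maximality reads
$$ \varphi_{\alpha(f)}(\alpha_f(B_f))=o_f\,\langle t_{\varphi(\alpha(f)),i_{\varphi(f)}}\rangle\,o_f^{-1}\cap\varphi_{\alpha(f)}(B_{\alpha(f)}), $$
which contains $\varphi_{\alpha(f)}(b)$ by the (F2)-failure hypothesis; vertex injectivity then forces $b\in\alpha_f(B_f)$, contradicting the choice of $b$. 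Hence $B_f=1$; since $b\neq 1$ and $\varphi_{\alpha(f)}$ is injective, $o_f^{-1}\varphi_{\alpha(f)}(b)o_f$ is a non-trivial element of $\alpha_{\varphi(f)}(A_{\varphi(f)})$, so $\alpha_{\varphi(f)}^{-1}(o_f^{-1}\varphi_{\alpha(f)}(B_{\alpha(f)})o_f)\leq A_{\varphi(f)}$ is non-trivial, giving $(\overline{F2})$.

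The main obstacle is the (F1) case: the algebraic bookkeeping that transports the (F1)-failure equation through the decorated-group isomorphism of Definition~\ref{def:equivdecgroups} is fiddly but routine, whereas invoking the boundary-component/double-coset dictionary for orbifold coverings to complete the contradiction is the genuine geometric input. The (F2) case is more direct and rests on maximality of peripheral subgroups in covering decorations together with vertex injectivity.
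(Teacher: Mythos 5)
Your proposal is correct and follows essentially the same route as the paper: after reducing to a failure of (F1) or (F2), both arguments use condition (2.ii) to identify the decoration at a vertex with injective vertex homomorphism with a sub-decoration induced by an orbifold covering, then invoke that distinct boundary components of the cover give distinct $(\varphi_u(B_u),\alpha_e(A_e))$-double cosets (ruling out an (F1)-failure between two edges with non-trivial groups) and that the peripheral subgroups there are maximal (ruling out an (F2)-failure at an edge with non-trivial group). The paper states these two facts more tersely as its points (i) and (ii), but the content and the geometric input are the same as in your write-up.
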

\begin{proof}
As $\varphi$ is not   is not folded,  one of the conditions (F0)-(F2) of the definition of folded morphisms is violated. By hypothesis $\varphi$ is vertex injective, and hence one of the following holds:
\begin{enumerate}
\item[(F1)] There are distinct edges $f$ and $g$   with $u:=\alpha(f)=\alpha(g)$ and  $e:=\varphi(f)=\varphi(g)$ such that   $o_g=\varphi_u(b) o_f \alpha_e(c)$  for some $b\in B_u$ and $c\in A_e$. 

\item[(F2)] There is an edge $f\in EB$  such that
 $\varphi_f(B_f)$ is a proper subgroup of $  \alpha_{\varphi(f)}^{-1}(o_f^{-1}  \varphi_{\alpha(f)}(B_{\alpha(f)})  o_f).$  
\end{enumerate}

Since $\varphi_u$ is injective it follows from   Condition (2.ii)  that for distinct edges   $f$ and $g$ in $\st_1(u, \B)$,   the peripheral subgroups $\alpha_f(B_f)$ and $\alpha_g(B_g)$ of $B_u$ correspond  to  distinct boundary components of the orbifold  $\mathcal{O}_u'$.  This implies that: (i) the $(\varphi_u(B_u), \alpha_e(A_e))$-double cosets $\varphi_u(B_u) o_g  \alpha_e(A_e) $ and $\varphi_u(B_u) o_f \alpha_e(A_e) $  are distinct,  and (ii) $\varphi_f(B_f)= \alpha_e^{-1}(o_f^{-1} \varphi_u(B_u) o_f)$. Therefore if $\varphi$ is not folded then either $(\overline{ F1})$ occurs or $(\overline{F2})$ occurs.
\end{proof}

For a graph of groups $\B$, we denote by $|\B|_c$ the number of edge pairs of the underlying graph of $\B$ that have non-trivial edge group, that is, $|\B|_c:=\frac{1}{2}|\{f\in EB \ | \ B_f\neq 1\}|.$ 
\begin{lemma}{\label{lemma:schenitzer}}
Let $\M$   be a marked  morphism over $\A$.  If  $|\B|_c\geqslant 1$, then there exists a  vertex $u$ of $\B$ with  $\st_1(u, \B)\neq \emptyset$  such that the  decorated group $(B_u, \varphi_u, \{\alpha_{f}(B_{f})\}_{f\in \st_1(u, \B)})$ is isomorphic  to a strongly collapsible decorated group over $A_{\varphi(u)}$. 
\end{lemma}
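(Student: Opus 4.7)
The plan is to argue by contradiction. I will suppose that at every vertex $u$ of $\B$ with $\st_1(u,\B)\neq\emptyset$, the decorated group $(B_u, \varphi_u, \{\alpha_f(B_f)\}_{f\in \st_1(u,\B)})$ fails to be isomorphic to a strongly collapsible decorated group, and derive a contradiction with the hypothesis that $\pi_1(\B, u_0)$ is a free product of cyclic groups.

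The first step is a reduction. Under our assumption, every vertex homomorphism $\varphi_u$ (at a vertex $u$ with $\st_1(u,\B)\neq\emptyset$) must be injective, for otherwise condition~(2.iii) of Definition~\ref{def:marked} would already yield strong collapsibility. Granted injectivity, condition~(2.ii) together with Remark~\ref{rem:collapsible} forces the decorated group at every such vertex $u$ to coincide with the \emph{full} decoration induced by some orbifold covering $\eta_u':\mathcal{O}_u'\to\mathcal{O}_{\varphi(u)}$ of \emph{finite} degree: infinite-degree coverings or proper sub-decorations are strongly collapsible by Remark~\ref{rem:collapsible}. Consequently each $\mathcal{O}_u'$ is compact, and the edges in $\st_1(u,\B)$ are in bijection with the boundary components of $\mathcal{O}_u'$, with $\alpha_f(B_f)$ the full peripheral subgroup of the matching boundary circle.

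The second step is to build a closed orbifold from this data. Let $B'$ be a connected component of the sub-graph $B_c\subseteq B$ consisting of the edges of $\B$ with non-trivial group together with their endpoints; $B_c$ is non-empty because $|\B|_c\geq 1$. Every vertex $v\in VB'$ satisfies $\st_1(v,\B)\neq\emptyset$, so by the reduction above all boundary peripherals of $\mathcal{O}_v'$ are accounted for by edges of $B'$ at $v$. Gluing the orbifolds $\{\mathcal{O}_v'\}_{v\in VB'}$ along the boundary circles prescribed by the edges of $B'$ therefore produces a \emph{closed} $2$-orbifold $\mathcal{O}^\sharp$ whose orbifold fundamental group is isomorphic, by Bass--Serre theory, to $\pi_1(\B(B'), u_1)$; the Normal Form Theorem then embeds this group as a subgroup of $\pi_1(\B, u_0)$.

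The final step, where the main obstacle lies, is to turn this into a contradiction via an Euler characteristic count. Because $\pi_1(\B, u_0)$ is a free product of cyclic groups, the Kurosh subgroup theorem forces $\pi_1^o(\mathcal{O}^\sharp)$ to be a free product of cyclic groups as well. On the one hand, inspection of the list of small orbifolds shows that every small orbifold has non-positive Euler characteristic, so by multiplicativity under finite-degree coverings and the fact that the glued boundary circles contribute zero,
\[
\chi(\mathcal{O}^\sharp) \;=\; \sum_{v\in VB'} \chi(\mathcal{O}_v') \;\leq\; 0.
\]
On the other hand, for a closed $2$-orbifold $\pi_1^o$ is a free product of cyclic groups only when the orbifold is spherical and hence $\chi>0$: Euclidean $2$-orbifold groups contain $\mathbb{Z}^2$ (which is not a subgroup of any free product of cyclic groups), while closed hyperbolic $2$-orbifold groups are one-ended and hence cannot split as a non-trivial free product by Stallings' theorem, leaving only the case of a single cyclic factor, which still forces $\mathcal{O}^\sharp$ to be spherical. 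This contradicts $\chi(\mathcal{O}^\sharp)\leq 0$ and completes the proof.
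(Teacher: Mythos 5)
Your proof is correct and follows essentially the same route as the paper: argue by contradiction, pass to a connected component of the sub-graph spanned by the edges with non-trivial group, apply the Kurosh subgroup theorem to see its fundamental group is a free product of cyclic groups, and then recognize that group as the fundamental group of a closed $2$-orbifold obtained by gluing the finite-degree covers $\mathcal{O}_v'$, which cannot so split. The only difference is that you justify the final step explicitly via the Euler characteristic and one-endedness, whereas the paper asserts without further argument that a closed $2$-orbifold group cannot split as a free product of cyclic groups; your elaboration is sound.
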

\begin{proof}
Suppose that the result does not hold.  Denote by $B_1$ the sub-graph of $B$ having vertex set $V_1:=\{u\in VB \ | \ \st_1(u, \B) \text{ is non-empty}\}$
and edge set $E_1:=\{f\in EB \ | \ B_f\neq 1\}$.  Choose a component  $B'\subseteq B_1$  of  $B_1$ and a vertex $u_0'\in VB'$.  Denote by  $\B'=\B(B')$ the sub-graph of groups of $\B$ carried by the   sub-graph $B'\subseteq B$. Since $\pi_1(\B,u_0')\cong \pi_1(\B,u_0)$ splits as a free product of cyclic groups,  Kurosh subgroup Theorem~\cite{Kurosh} implies  that  the subgroup $\pi_1(\B',u_0')\leq \pi_1(\B, u_0')$   splits as a free product of cyclic groups.

On the other hand, let $\varphi'$ be  the restriction of the morphism  $\varphi$ to $\B'$. Then $\varphi':\B'\rightarrow \A$ satisfies conditions (2.i)-(2.iii) of Definition~\ref{def:marked}. Since the decorated group 
$$(B_u', \varphi_u', \{\alpha_f'(B_f')_{f\in \st_1(u, \B')})= (B_u, \varphi_u, \{\alpha_f(B_f)_{f\in \st_1(u, \B)})$$ is not isomorphic to a strongly collapsible decorated group,  it follows that the following hold: (i) $\varphi_u'$ is injective,  (ii)   the orbifold covering $\eta_u:\mathcal{O}_u'\rightarrow \mathcal{O}_{\varphi(u)}$ given in (2.ii)  has finite degree, and (iii) $(B_u', \varphi_u', \{\alpha_f'(B_f')\}_{f\in \st_1 (u, \B')})$ is isomorphic  to the decorated group induced  by $\eta_u$.  Therefore   $\pi_1(\B', u_0')$ is isomorphic to the fundamental group of a closed $2$-orbifold, and so cannot split as a free product of cyclic groups.  This contradiction completes the proof of the lemma. 
\end{proof} 
 
 \smallskip

Now we define the $c$-complexity and the $d$-complexity of a marked morphism.   Denote the rank of $\pi_1(\B,u_0)$ by $\text{rk}(\B)$ and the torsion of $\pi_1(\B, u_0)$ by $\text{tn}(\B)$. \emph{Throughout the paper  we will always assume that the set $\mathbb{N}_0^k= \mathbb{N}_0\times \ldots \times \mathbb{N}_0 $ is equipped  with the lexicographic order. }

\begin{definition}{\label{def:complexity}} Let $\M$ be a marked morphism over $\A$ . \begin{enumerate}
\item[(\emph{$c$-complexity})] We define the $c$-complexity of $\M$ as the  tuple 
$$c\M:=(\text{rk}(\B), \text{rk}(\B)-\text{tn}(\B), |EB|).$$

\item[(\emph{$d$-complexity})] We define the $d$-complexity of   $\M$ as the   pair 
$$d\M:=(|\B|_c, c_E(\varphi))$$ 
where  $c_E(\varphi):=\frac{1}{2}\sum_{f\in EB, B_f\neq 1} |A_{\varphi(f)}:\varphi_f(B_f)|.$ 
\end{enumerate} 
\end{definition}
\begin{remark}{\label{remark:complexity}}
Note that the $c$-complexity takes into account only the graph of groups in a marked morphism. More precisely,  if $\M$ and $((\B', u_0'), \varphi', \mathcal{T}')$  are marked morphism  such that the underlying graphs and the fundamental groups  of $\B$ and $\B'$ are isomorphic, then they have the same $c$-complexity. On the $d$-complexity we see that the  $c_E$ factor is related to the morphism $\varphi$ in $\M$. 
\end{remark}

%------------------------------------------------------------------------------
 
\subsection{The sets $\Omega(\mathcal{T}_0)$ and $\Omega_{\text{min}}(\mathcal{T}_0)$}  Using the splitting of $\pi_1^o(\mathcal{O})$ as the fundamental group of the graph of groups $\A$ we can think  of $\mathcal{T}_0=(g_1, \ldots, g_m)$ as a tuple in $\pi_1(\A, v_0)$. The idea is to use marked morphisms over $\A$ to get a better understanding of  the Nielsen equivalence class of $\mathcal{T}_0$. Thus we  consider the set, $\Omega(\mathcal{T}_{0})$,  of  all marked morphisms $\M$  over $\A$  with the property that the tuple   $\varphi_{\ast}(\mathcal{T})$ is Nielsen equivalent to $\mathcal{T}_0$.

\begin{remark}
The condition that  $\varphi_{\ast}(\mathcal{T})$ and  $\mathcal{T}_0$ are Nielsen equivalent implies that the  morphism  $\varphi:\B\rightarrow \A$ is $\pi_1$-surjective,  and that ${size}(\mathcal{T}_0)={size}(\mathcal{T})\geqslant \text{rk}(\B)$. By definition,   $\pi_1(\B, u_0)$ splits as $G_1\ast \ldots \ast G_{\text{rk}(\B)}$ where $G_1,\ldots, G_{\text{rk}(\B)}$ are cyclic subgroups of $\pi_1(\B, u_0)$.    Consequently, by  applying   Grushko's theorem  we see that  $\mathcal{T}$ is reducible  if $\text{rk}(\B)<size(\mathcal{T})$.  
\end{remark}

\begin{remark}{\label{remark:notfolded}}
Notice that if $\M$ is a marked morphism, then the homomorphism $\varphi_{\ast}$ induced by $\varphi$ is not injective. This  follows from the fact that  $\pi_1(\A, v_0)$ is isomorphic to the fundamental group of a closed orbifold, and therefore does not  split as a  free product of cyclic groups.  As a consequence we see  that $\varphi$ is not folded.
\end{remark}

We first observe that the set  $\Omega(\mathcal{T}_0)$ is non-empty.  Indeed,  for each $1\leqslant i\leqslant m$, we can  represent the element $g_i$ by a non-necessarily reduced   $\A$-path $p_i=a_{i,0},e_{i,1},a_{i,1},\ldots, a_{i,k_i-1},e_{i,k_i}, a_{i,k_i}$ 
from $v_0$ to $v_0$ of positive length. Then the marked  morphism $\M$  shown in Fig.~\ref{fig:swedge}   belongs to $\Omega(\mathcal{T}_0)$, where $\mathcal{T}=(x_1,\ldots, x_m)$  and  where $x_i$ is represented by the $\B$-path $q_i$.  
\begin{figure}[h!]
\begin{center}
\includegraphics[scale=1]{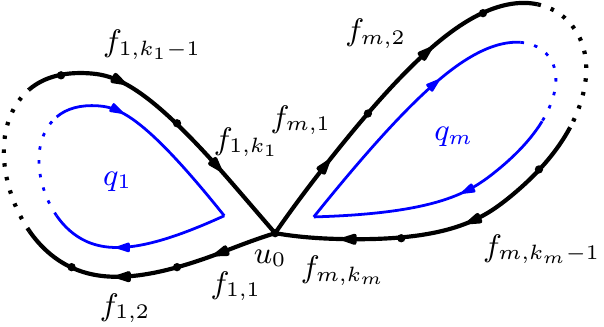}
\end{center}
\caption{Vertex and edge groups of $\B$ are trivial.  $\varphi:\B\rightarrow \A$ maps the edge $f_{i,j}$ onto $e_{i,j}$, and for each $1\leqslant i\leqslant m$,  $o_{f_{i,1}}^{\varphi}=a_{i,0}$  and $ o_{f_{i,j}}^{\varphi}=1$ for   $1 \leqslant j\leqslant  k_i$,  $t_{f_{i,j}}^{\varphi}=a_{i,j}$ for $1\leqslant j\leqslant k_i$.}\label{fig:swedge}
\end{figure} 
 
\smallskip 
 
Now we define some modifications on elements of $\Omega(\mathcal{T}_0)$.  We start with  a  partial vertex morphism. Let $\M$ be a marked morphism  in $\Omega(\mathcal{T}_0)$ and $u$ a vertex of $\B$ with $v:=\varphi(u)$.  Suppose that $\st_1(u, \B)$ is non-empty and that  the decorated group $(B_u, \varphi_u, \{\alpha_f(B_f)\}_{f\in \st_1(u, \B)})$ is strongly collapsible.  Suppose further  that  $(H, \lambda , \{H_k\}_{k\in K})$ is a strongly collapsible decorated group  over $A_v$  such that 
$$(B_u, \varphi_u, \{\alpha_f(B_f)\}_{f\in \st_1(u, \B)})\twoheadrightarrow (H, \lambda , \{H_k\}_{k\in K}).$$ 
We will define a new marked morphism  $((\B', u_0), \varphi', \mathcal{T}')$  over $\A$ that still  belongs to $\Omega(\mathcal{T}_0)$ as follows.

By definition, there is an epimorphism $\psi:B_u\rightarrow H$ such that $\lambda\circ \psi =\varphi_u$, and there is  a bijection  $\tau:\st_1(u, \B)\rightarrow K$ such that for each $f\in \st_1(u,\B)$ the following hold: (i)  $i_f:= i_{\varphi(f)}=i_{H_{\tau(f)}}\in \{1,\ldots,  q_{v}\}$, (ii) $\psi( \alpha_{f}(B_{f})) =a_f H_{\tau(f)}a_f^{-1}$ for some $a_f\in H_{\tau(f)}$, and  (iii) $o_{f}=\lambda(a_f) o_{H_{\tau(f)}}t_{i_f}^{z_f}$ for some integer $z_f$.  
 
Let $\B'$ be the graph of groups   obtained from $\B$ by replacing the vertex group $B_u$ by the group $B_u':=H$,   and  by replacing  the boundary monomorphism $\alpha_{f}:B_{f} \rightarrow B_u$   by  the monomorphism  $\alpha_{f}':B_{f}'= B_{f}=\langle b_{f}\rangle \to B_u'$ defined by  $ \alpha_{f}'(b_{f})= h_{\tau(f)}$, where $h_{\tau(f)}$ is a generator of the peripheral subgroup $H_{\tau(f)}\leq H=B_u'$. 

Note that $\B$ and $\B'$ have the same underlying graphs. Moreover, the fundamental group of $\B'$  splits as a free product of cyclic groups as   $\B'$ and $\B$ only differ  at the vertex $u$,  and at this vertex   the decorated groups $(B_u, \varphi_u, \{\alpha_f(B_f)\}_{f\in \st_1(u, \B)})$ and  $(B_u', \varphi_u', \{\alpha_f'(B_f')\}_{f\in \st_1(u, \B')})$ are   strongly collapsible.

Let  $\varphi':\B'\rightarrow \A$ be the  morphism  obtained from $\varphi$ by replacing the vertex homomorphism  $\varphi_u:B_u\rightarrow A_{v}$ by the homomorphism $\varphi_u':=\lambda:B_u'\rightarrow A_{v}$, and  by replacing the edge element   $o_{f}$ by   $o_{f}^{\varphi'}:=o_{H_{\tau(f)}} \in  A_{v}$. 
\begin{claim}
The morphism $\varphi'$ satisfies conditions (2.i)-(2.iii).
\end{claim}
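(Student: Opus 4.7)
The proof plan is to exploit the fact that the modification from $\varphi$ to $\varphi'$ is entirely local at the vertex $u$: only the vertex group $B_u$, the vertex homomorphism $\varphi_u$, and, for $f\in\st_1(u,\mathbb{B})$, the boundary monomorphism $\alpha_f$ and the edge element $o_f^{\varphi}$ are altered. All edge homomorphisms $\varphi_f$ and all data at vertices $u'\neq u$ are preserved. Consequently condition (2.i) is immediate (the $\varphi'_f$ coincide with the injective $\varphi_f$), and conditions (2.ii)/(2.iii) at each $u'\neq u$ hold exactly as they do for $\varphi$, since the peripheral subgroups $\alpha_g'(B_g')$ for $g\in\st_1(u',\mathbb{B}')$ come from the $\omega$-side of the edges $g$ touching $u$ and are therefore untouched.

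It remains to establish (2.ii) or (2.iii) at $u$. By construction, the new decorated group there is
\[
(B_u',\varphi_u',\{\alpha_f'(B_f')\}_{f\in\st_1(u,\mathbb{B}')}) = (H,\lambda,\{H_{\tau(f)}\}_{f\in\st_1(u,\mathbb{B})}),
\]
which via the bijection $\tau$ is the strongly collapsible decorated group $(H,\lambda,\{H_k\}_{k\in K})$ furnished by the hypothesis. I would then split into two cases. If $\lambda$ is not injective, then $\st_1(u,\mathbb{B}')=\st_1(u,\mathbb{B})\neq\emptyset$ (by our standing assumption on $u$) and the decorated group is already strongly collapsible, so (2.iii) holds. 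If $\lambda$ is injective, then Lemma~\ref{lemma:inj} applied to $(H,\lambda,\{H_k\}_{k\in K})$ produces an orbifold covering $\eta_u':\mathcal{O}_u'\to\mathcal{O}_v$ such that $(H,\lambda,\{H_k\}_{k\in K})$ is isomorphic to a sub-decoration of the decorated group induced by $\eta_u'$, which is exactly condition (2.ii).

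The one preliminary step that must be handled before these three checks is the verification that $\varphi'$ is honestly a graph-of-groups morphism, i.e.\ that the compatibility square (5) still commutes on each edge $f\in\st_1(u,\mathbb{B})$; on every other edge it commutes because nothing has been modified. This is the bookkeeping-heavy part of the argument. Combining $\lambda\circ\psi=\varphi_u$, the identity $\psi(\alpha_f(B_f))=a_f H_{\tau(f)} a_f^{-1}$ with $a_f\in H_{\tau(f)}$ (so that $\psi(\alpha_f(B_f))=H_{\tau(f)}$ in the abelian cyclic setting), the relation $o_f=\lambda(a_f)\,o_{H_{\tau(f)}}\,t_{i_f}^{z_f}$ from Definition~\ref{def:equivdecgroups}, and the original compatibility
\[
\varphi_u(\alpha_f(b_f)) \;=\; o_f\,\alpha_{\varphi(f)}(\varphi_f(b_f))\,o_f^{-1}
\]
yields the required identity $\lambda(h_{\tau(f)})=o_{H_{\tau(f)}}\,\alpha_{\varphi(f)}(\varphi_f(b_f))\,o_{H_{\tau(f)}}^{-1}$, once $h_{\tau(f)}$ is taken to be the generator $\psi(\alpha_f(b_f))$ of $H_{\tau(f)}$. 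Ensuring this matching of generators is, I expect, the main technical obstacle; after it is settled, the case analysis above finishes the proof.
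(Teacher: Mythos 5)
Your argument at the vertex $u$ — splitting on whether $\varphi_u'=\lambda$ is injective, invoking strong collapsibility of $(H,\lambda,\{H_k\}_{k\in K})$ for (2.iii) and Lemma~\ref{lemma:inj} for (2.ii), with all other vertices and all edge homomorphisms untouched — is exactly the paper's proof. The extra verification that the compatibility square (5) still commutes is reasonable bookkeeping that the paper folds into the construction itself rather than into this claim, but it does not change the approach.
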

\begin{proof}[Proof of Claim]
We only need to look at the vertex $u$ as the reaming vertices do not change. If $\varphi_u'$ is not injective,  then (2.iii) is satisfied since the decorated group  $(B_u', \varphi_u', \{\alpha_f'(B_f')\}_{f\in \st_1(u, \B')})$ is strongly collapsible.   If $\varphi_u'$ is injective,  then    Lemma~\ref{lemma:inj}  implies  that $(B_u', \varphi_u', \{\alpha_{f }'(B_{f }')\}_{f\in \st_1(u, \B')})$ is isomorphic to  a sub-decoration of the decorated group induce by some orbifold covering over the vertex orbifold $\mathcal{O}_v$, that is, (2.ii) holds at the vertex $u$.   
\end{proof}

\begin{claim}
There is a $\pi_1$-surjective morphism $\sigma:\B\rightarrow \B'$ such that $\varphi'\circ \sigma =\varphi$.
\end{claim}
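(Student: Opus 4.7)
The plan is to take $\sigma:\B\to\B'$ to act as the identity on the underlying graph (which is the same for $\B$ and $\B'$) and on every vertex and edge group except at $u$, where we set $\sigma_u:=\psi$. All edge homomorphisms are identities: $\sigma_f:=\mathrm{Id}_{B_f}$. It remains to choose the edge elements $o_f^{\sigma}\in B'_{\alpha(f)}$ so as to satisfy both the coherence diagram of a graph of groups morphism and the equality $\varphi'\circ\sigma=\varphi$.

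For edges $f$ with $\alpha(f)\neq u$ the coherence condition is automatic and we take $o_f^{\sigma}=1$. For $f\in\st(u,\B)\setminus\st_1(u,\B)$ the edge group is trivial, so coherence is vacuous and $o_f^{\sigma}$ is free; we use this freedom to make the $\varphi'\circ\sigma$-edge-element at $f$ equal $o_f^{\varphi}$. For $f\in\st_1(u,\B)$, coherence at $f$ reads $\psi(\alpha_f(b_f))=o_f^{\sigma}\,h_{\tau(f)}\,(o_f^{\sigma})^{-1}$. Since $\psi$ restricts to a surjection of infinite cyclic groups $\alpha_f(B_f)\twoheadrightarrow H_{\tau(f)}=\langle h_{\tau(f)}\rangle$, after possibly replacing $h_{\tau(f)}$ by its inverse we may assume $\psi(\alpha_f(b_f))=h_{\tau(f)}$, so the natural choice $o_f^{\sigma}=a_f$ satisfies coherence (recall $a_f\in H_{\tau(f)}$, which is abelian, hence centralizes $h_{\tau(f)}$). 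Right-multiplying $o_f^{\sigma}$ by a further power of $h_{\tau(f)}$ --- which preserves coherence --- absorbs the shift $t_{i_f}^{z_f}$ appearing in the defining equation $o_f^{\varphi}=\lambda(a_f)\,o_{H_{\tau(f)}}\,t_{i_f}^{z_f}$ of the projection, so that $o_f^{\varphi'\circ\sigma}=\lambda(o_f^{\sigma})\cdot o_{H_{\tau(f)}}$ coincides with $o_f^{\varphi}$.

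With $\sigma$ so constructed, the verification $\varphi'\circ\sigma=\varphi$ is routine: underlying graph maps and edge homomorphisms match by construction; vertex homomorphisms match because $\lambda\circ\psi=\varphi_u$ is the defining property of the projection $(B_u,\varphi_u,\{\alpha_f(B_f)\})\twoheadrightarrow (H,\lambda,\{H_k\})$ and all other vertex maps are identities; edge elements were matched by the above choices. The $\pi_1$-surjectivity of $\sigma$ then follows because $\sigma$ is the identity on the underlying graph and on every vertex group except $B_u$, where $\sigma_u=\psi$ is an epimorphism: every $\B'$-loop at $u_0$ can be lifted through $\sigma$ to a $\B$-loop at $u_0$ by lifting its vertex-element entries in $B'_u=H$ through $\psi$. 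The main technical point I expect is the edge-element bookkeeping that makes the strict equality $\varphi'\circ\sigma=\varphi$ hold in the presence of the integer shifts $z_f$; this rests on the compatibility between the peripheral data of the two decorated groups, and once arranged everything else is formal.
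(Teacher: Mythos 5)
Your construction is essentially the paper's: the paper likewise takes $\sigma$ to be the identity on the underlying graph and on all vertex and edge groups except $\sigma_u=\psi$, with $o_f^{\sigma}=a_f$ for $f\in\st_1(u,\B)$ and $o_f^{\sigma}=1$ otherwise, leaving the verification implicit. The one point where you go beyond the paper --- absorbing the shift $t_{i_f}^{z_f}$ by right-multiplying $o_f^{\sigma}$ by a power of $h_{\tau(f)}$ --- is not always exactly achievable (it would require $z_f$ to be divisible by the index of $\lambda(H_{\tau(f)})$ in $o_{H_{\tau(f)}}\langle t_{i_f}\rangle o_{H_{\tau(f)}}^{-1}$), but the residual discrepancy lies in $\alpha_{\varphi(f)}(A_{\varphi(f)})$, is exactly the discrepancy the paper's own choice also leaves, and is harmless since it can be removed by an auxiliary move of type A1 without changing the induced map on fundamental groups.
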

\begin{proof}[Proof of Claim]
Define $\sigma:\B\rightarrow \B'$  by 
$$\sigma=(\sigma, \{\sigma_u\ | \ u\in VB\}, \{\sigma_f\ | \ f\in EB\},  \{o_f^{\sigma} \ | \ f\in EB\}, \{t_f^{\sigma} \ | \ f\in EB\})$$
where $\sigma=\text{Id}_{B}:B\rightarrow B$,   vertex homomorphisms  are given by  $\sigma_w=\text{Id}_{B_w}:B_w \rightarrow B_w=B_w'$  if  $w\neq u$ and $\sigma_u=\psi:B_u\rightarrow B_u'$, edge homomorphisms   are given by  $\sigma_f=\text{Id}_{B_f}:B_g\rightarrow B_{f}$  fr all $f\in EB$, and where edge elements are given by  $o_g^{\sigma}=1$ for  $g \notin \st_1(u, \B)$  and $o_{g}^{\sigma}:=a_{\tau(g)}\in B_u'$ for $g\in \st_1(u, \B)$. 
\end{proof} 

Put $\mathcal{T}':=\sigma_{\ast}(\mathcal{T})$. From the previous claim we see that $\varphi_{\ast}'(\mathcal{T}')$ is equal to $\varphi_{\ast}(\mathcal{T})$. The  tuple $\varphi_{\ast}(\mathcal{T})$  is Nielsen equivalent to $\mathcal{T}_0$ and so $\varphi_{\ast}(\mathcal{T}')$ is also Nielsen equivalent  to $\mathcal{T}_0$.    Thus  $((\B', u_0), \varphi', \mathcal{T}')$ is a marked  morphism    over $\A$   that belongs to $\Omega(\mathcal{T}_0)$. We will say that  $((\B', u_0), \varphi', \mathcal{T}_0')$  is obtained from $\M$ by \emph{replacing the decorated group $(B_u, \varphi_u, \{\alpha_f(B_f)\}_{f\in \st_1(u, \B)})$ by the decorated group   $(H, \lambda, \{H_k\}_{k\in K})$}.

Since $\sigma_{\ast}:\pi_1(\B, u_0)\rightarrow \pi_1(\B', u_0)$ is surjective, and $\B$ and $\B'$ have the same underlying graphs,  it follows that
$$c   ((\B', u_0), \varphi', \mathcal{T}') \leqslant c\M.$$
Furthermore, as edge groups/homomorphisms do  not change, we further conclude that the $d$-complexity does not change. We will need the following lemma.
\begin{lemma}{\label{lemma:pvm}}
If $\rk(H)<\rk(B_u)$  or if $\rk(H)=\rk(B_u)$ and  $\text{tn}(H)>\text{tn}(B_u)$, then  
$$ c((\B', u_0'), \varphi', \mathcal{T}') <c\M.$$  
\end{lemma}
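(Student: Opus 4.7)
My plan is the following. Since $\B$ and $\B'$ share the same underlying graph and differ only in the vertex group at $u$ (which is replaced from $B_u$ to $H$), the third coordinate $|EB|$ of the $c$-complexity is unchanged, so only the first two coordinates need to be compared. To do this I unpack the strong-collapsibility hypotheses using Remark~\ref{remark:markedmorphisms1}: write $B_u = C_u \ast_{f \in \st_1(u, \B)} g_f \alpha_f(B_f) g_f^{-1}$ with $C_u \cap \ker(\varphi_u) = 1$, and analogously $H = C_u' \ast_{k \in K} h_k H_k h_k^{-1}$. Every peripheral subgroup appearing in either decomposition is infinite cyclic: each $\alpha_f(B_f)$ is, because $B_f$ is infinite cyclic (by (2.i) and Remark~\ref{remark:type}) and $\alpha_f$ is injective; each $H_k$ is, because a cyclic group with non-trivial image inside the infinite cyclic subgroup $o_{H_k} \langle t_{i_{H_k}}\rangle o_{H_k}^{-1}$ of $A_{\varphi(u)}$ must itself be infinite cyclic. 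Writing $n_u := |\st_1(u, \B)| = |K|$, Grushko--Kurosh applied to these free products of cyclic groups gives $\rk(B_u) = \rk(C_u) + n_u$, $\text{tn}(B_u) = \text{tn}(C_u)$, and the analogous identities for $H$. Hence the hypothesis on $H$ versus $B_u$ translates to: either $\rk(C_u') < \rk(C_u)$, or $\rk(C_u') = \rk(C_u)$ with $\text{tn}(C_u') > \text{tn}(C_u)$.

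Next I would establish the factorizations $\pi_1(\B, u_0) \cong C_u \ast P$ and $\pi_1(\B', u_0) \cong C_u' \ast P$ for one and the same group $P$. Fix a spanning tree $T$ of the common underlying graph and write the standard graph-of-groups presentation of $\pi_1(\B, u_0)$. For the vertex $u$ take the generating set of $B_u$ to be the union of a generating set of $C_u$ with the elements $\alpha_f(b_f)$ for $f \in \st_1(u, \B)$ (where $b_f$ generates $B_f$); this is compatible with the free-factor decomposition of $B_u$, and each $\alpha_f(b_f)$ appears in precisely one relation of the presentation — the edge relation for $f$, which identifies it with $\omega_f(b_f)$ or with $t_f^{-1}\omega_f(b_f)t_f$. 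Tietze elimination removes all the $\alpha_f(b_f)$ generators; after this, the generators of $C_u$ appear only in the defining relations of $C_u$, exhibiting $C_u$ as a free factor. The complementary factor $P$ is presented entirely from data outside $u$ (the vertex groups $B_v$ for $v \neq u$, the edge groups and boundary monomorphisms away from $u$, and the stable letters), all of which are identical for $\B$ and $\B'$; so running the identical procedure for $\B'$ yields $C_u' \ast P$ with the \emph{same} $P$.

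Finally, Grushko's theorem applied to $\pi_1(\B, u_0) = C_u \ast P$ and $\pi_1(\B', u_0) = C_u' \ast P$ yields $\rk(\B) = \rk(C_u) + \rk(P)$, $\text{tn}(\B) = \text{tn}(C_u) + \text{tn}(P)$, and the analogous equalities for $\B'$. In the first case of the hypothesis the first coordinate of $c$ strictly decreases. In the second case the first coordinate is unchanged, but
\[
\bigl(\rk(\B') - \text{tn}(\B')\bigr) - \bigl(\rk(\B) - \text{tn}(\B)\bigr) = \bigl(\rk(C_u') - \text{tn}(C_u')\bigr) - \bigl(\rk(C_u) - \text{tn}(C_u)\bigr) < 0,
\]
so the second coordinate strictly decreases. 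In either case $c((\B', u_0), \varphi', \mathcal{T}') < c\M$ in the lexicographic order, as required. The main subtlety is in the clean $C_u \ast P$ factorization of $\pi_1(\B, u_0)$ (and likewise for $\pi_1(\B', u_0)$): choosing a generating set of $B_u$ adapted to its free-product structure is what makes the Tietze elimination automatic, and the independence of $P$ from the vertex group at $u$ then follows from the fact that all data outside $u$ is shared between $\B$ and $\B'$.
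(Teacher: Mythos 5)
Your proof is correct and follows essentially the same route as the paper: both decompose $B_u$ and $H$ via strong collapsibility, observe that the $n$ infinite cyclic peripheral factors contribute identically to rank and torsion on each side, and transfer the resulting comparison of $C_u$ with $C_u'$ to $\pi_1(\B,u_0)$ and $\pi_1(\B',u_0)$. The only difference is one of detail: the paper simply asserts the relations $\rk(\B)-\rk(\B')=\rk(C_u)-\rk(C_u')$ and $\text{tn}(\B)-\text{tn}(\B')=\text{tn}(C_u)-\text{tn}(C_u')$, whereas you justify them through the common complementary free factor $P$ and Grushko.
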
 
\begin{proof}
Let $C_u\leq B_u$ and $C_u'\leq B_u'$ such that  $(B_u, \varphi_u, \{ \alpha_f(B_f)\}_{f\in \st_1(u, \B)})$ is strongly $C_u$-collapsible and that    $(B_u', \varphi_u', \{ \alpha_f'(B_f')\}_{f\in \st_1(u, \B')})$  is strongly $C_u'$-collapsible.  The rank and the torsion  of the groups $B_u$ and $B_u'$ are given by:
 \begin{enumerate}
 \item $\text{rk}(B_u)=\text{rk}(C_u)+n$ and $\text{tn}(G)=\text{tn}(G_0)$;

 \item $\text{rk}(B_u')=\text{rk}(C_u')+n$ and $\text{tn}(B_u')=\text{tn}(C_u')$;
\end{enumerate}
where   $n=|\st_1(u, \B)|=|\st_1(u, \B')|$. Consequently, we either have   $\rk(C_u)<\rk(C_u')$ or  $\rk(C_u)=\rk(C_u')$ but  $\text{tn}(C_u)<\text{tn}(C_u')$.  Now, the rank and the torsion of $\B$ and $\B'$ are  related   by 
 $$\text{rk}(\B)-\text{rk}(\B')=\text{rk}(C_u)-\text{rk}(C_u') \ \ \text{ and } \  \ \text{tn}(\B)-\text{tn}(\B')= \text{tn}(C_u)-\text{tn}(C_u').$$
Therefore   $c((\B', u_0), \varphi', \mathcal{T}')$ is smaller than $c\M$, which completes the proof of the lemma. 
\end{proof}

Let  $(n_1, n_2, n_3)\in \mathbb{N}_0^3$  be the minimum of the  $c$-complexity  map $c:\Omega(\mathcal{T}_0)\rightarrow \mathbb{N}_0^3$. Denote the subset 
$$c^{-1}(n_1, n_2, n_3)\subseteq \Omega(\mathcal{T}_0)$$ 
of $\Omega(\mathcal{T}_0)$ by    $\Omega_{\text{min}}(\mathcal{T}_0)$, that is, $\Omega_{\text{min}}(\mathcal{T}_0)$ consists of all marked morphisms in $\Omega(\mathcal{T}_0)$  that have minimal $c$-complexity.

\smallskip

Now we describe the effect of auxiliary  moves  on  marked  morphisms. All the moves   take as input an element of $\Omega_{\text{min}}(\mathcal{T}_0)$   and  give as output  a new  element in $\Omega_{\text{min}}(\mathcal{T}_0)$. Let $\M \in \Omega_{\text{min}}(\mathcal{T}_0)$.  

 \smallskip 
 
\noindent\textit{Auxiliary moves of type A0 and A1.} Suppose that the morphism  $\varphi':\B \rightarrow \A$ is obtained from $\varphi$ by an auxiliary move of type A1  or by an auxiliary move  of type A0  that is $u_0$-admissible.  By Lemma~\ref{lemma:auxmoves},  the induced homomorphisms   $\varphi_{\ast}'$ and $\varphi_{\ast}$  coincide, and  so  $\varphi_{\ast}'(\mathcal{T})=\varphi_{\ast}(\mathcal{T})$.   We need to check that  conditions (2.i)-(2.iii) hold. This follows easily from the description of these moves   since edge groups/homomorphisms do not   change (edge groups in $\A$ are abelian)  and for each vertex $u$ of $\B$ we have    
$$(B_u, \varphi_u', \{\alpha_{f}(B_f)\}_{f\in \st_1(u, \B)})\cong  (B_u, \varphi_u, \{\alpha_{f}(B_f)\}_{f\in \st_1(u, \B)}).$$  
We  will say that $((\B, u_0), \varphi', \mathcal{T})$ is obtained from $\M$  by an auxiliary move of type A0 or by an auxiliary move of type A1, accordingly.  Clearly    $((\B, u_0), \varphi', \mathcal{T})$  belongs to $\Omega_{\text{min}}(\mathcal{T}_0)$.

\smallskip   
  
\noindent\textit{Auxiliary moves of type A2.} Suppose that the morphism $\varphi':\B'\rightarrow \A$ is obtained from $\varphi$ by an auxiliary move of type A2.   Lemma~\ref{lemma:auxmoves} gives a graph of groups isomorphism $\sigma:\B\rightarrow \B'$ such that $\varphi'\circ \sigma =\varphi$. Thus $\pi_1(\B', \sigma(u_0))$ splits as a free product of cyclic group and  the tuple   $\mathcal{T}':=\sigma_{\ast}(\mathcal{T})$  generates  $\pi_1(\B',\sigma( u_0))$.  To conclude that $((\B', u_0), \varphi', \mathcal{T}')$ is indeed a marked morphism  we need to  show that $\varphi'$ satisfies  conditions (2.i)-(2.iii) of Definition~\ref{def:marked}.   But this is trivial  since  the  description of the  fold  implies that   
$$ (B_{\sigma(u)}', \varphi_{\sigma(u)}', \{\alpha_{f}'(B_f')\}_{f\in \st_1(\sigma(u), \B')})\cong (B_u, \varphi_u, \{\alpha_{f}(B_f)\}_{f\in \st_1(u, \B')})$$
for all vertices of of $\B$.   We will say that the marked morphism   $((\B', u_0), \varphi', \mathcal{T}')$  is  obtained from $\M$ by \emph{an auxiliary move of type A2}. Clearly  $((\B', u_0), \varphi', \mathcal{T}')$  belongs to $\Omega(\mathcal{T}_0)$.  As $\B'$ and $\B$ have isomorphic fundamental groups and  the same underlying graphs, we further see that    $((\B', u_0), \varphi', \mathcal{T}')$ belongs to $\Omega_{\text{min}}(\mathcal{T}_0)$.

Finally notice that the $d$-complexity is invariant under auxiliary moves since  edge groups and edge homomorphisms do not change  in any of these moves.

%--------------------------------------------------------------------------------- 

\subsection{Some properties of the  elements   of    $\Omega_{\text{min}}(\mathcal{T}_0)$} Let $\M$ be an element of  $\Omega_{\text{min}}(\mathcal{T}_0)$  and  $u$ a vertex of $\B$.  Suppose that the decorated group $(B_u, \varphi_u, \{\alpha_{f }(B_{ f})\}_{f\in \st_1(u, \B)})$ is strongly collapsible.    Thus $B_u$ splits as $B_u = C_u \ast_{f\in \st_1(u,\B)}  \alpha_{f}(B_{f})$  where $C_u$ is a subgroup of $B_u$  that has trivial intersection with the kernel of $\varphi_u$.   Let $g$ be an edge  starting at $u$ with non-trivial group.   We construct  a new  marked morphism   as follows.    

Let  $\B'$  be the  graph of groups obtained  from $\B$ by replacing the vertex group $B_u$ by the group  
$$B_u':=C_u\ast_{f\in \st_1(u, \B)-\{g\}}  \alpha_{f}(B_{f}),$$  
and  by  replacing the  edge group $B_{g}$ by the  trivial group $B_{g}':=1$.

Let $\varphi':\B'\rightarrow \A$ be the morphism   obtained from $\varphi:\B\rightarrow \A$ by    replacing the vertex homomorphism $\varphi_u: B_u\rightarrow A_{\varphi(u)}$ by the homomorphism $\varphi_u':= \varphi_u\circ \iota_u$,  where $\iota_u :B_u'\hookrightarrow B_u$ is the inclusion map.

Note that the underlying graph of $\B'$ is equal to the underlying graph of $\B$. Moreover, edge elements do not change, that is,    $o_{f}^{\varphi'}=o_{f}^{\varphi}$ for all $f\in EB'=EB$.   The morphism   $\varphi':\B'\rightarrow \A$     satisfies   conditions (2.i)-(2.iii)  of Definition~\ref{def:marked}  since 
$$(B_w', \varphi_w', \{\alpha_f'(B_f')\}_{f\in \st_1(w, \B')})=(B_w, \varphi_w, \{\alpha_f(B_f)\}_{f\in \st_1(w, \B)})$$
for all $w\neq u$ and  $(B_u, \varphi_u, \{\alpha_f(B_f)\}_{f\in \st_1(u, \B)})$ is replaced by 
$$(B_u', \varphi_u', \{\alpha_f'(B_f')\}_{f\in \st_1(u, \B')})=(B_u', \varphi_u'=\varphi_u\circ \iota_u, \{\alpha_f(B_f)\}_{f\in \st_1(u, \B)-\{g\}}).$$  
\begin{claim}{\label{claim:unfold}}
There is a morphism $\sigma:\B'\rightarrow \B$ such that (i) $\sigma_{\ast}:\pi_1(\B', u_0)\rightarrow \pi_1(\B, u_0)$ is an isomorphism   and (ii) $\varphi  \circ \sigma =\varphi'$.  
\end{claim}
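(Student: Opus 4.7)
The plan is to build $\sigma$ as the ``identity-like'' morphism whose only non-trivial components are the free-factor inclusion $B_u'\hookrightarrow B_u$ and the inclusion of $B_g'=1$ into $B_g$, and then to verify part (i) by inspection of standard Bass--Serre presentations.

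First I will define $\sigma$ by setting it to be the identity on the underlying graph $B=B'$, with $\sigma_u=\iota_u:B_u'\hookrightarrow B_u$ and $\sigma_w=\text{Id}_{B_w}$ for every vertex $w\neq u$, with $\sigma_g=\sigma_{g^{-1}}:1\to B_g$ the trivial map and $\sigma_f=\text{Id}_{B_f}$ for $f\notin \{g,g^{-1}\}$, and with all edge elements $o_f^{\sigma}=t_f^{\sigma}=1$. The commutativity condition (5) in the definition of a graph-of-groups morphism is vacuous at the edges $g^{\pm 1}$ because their source groups are trivial, and at each remaining edge $f\in\st_1(u,\B)-\{g\}$ it holds because $\alpha_f(B_f)\subseteq B_u'$ by the very definition of $B_u'$, so that $\iota_u\circ\alpha_f'=\alpha_f$. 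The identity $\varphi\circ\sigma=\varphi'$ is then immediate from the component-wise recipe for composition: the only place where $\varphi$ and $\varphi'$ differ is the vertex homomorphism at $u$, and there $(\varphi\circ\sigma)_u=\varphi_u\circ\iota_u=\varphi_u'$ by the very definition of $\varphi_u'$; all edge homomorphisms and edge elements of $\varphi\circ\sigma$ agree with those of $\varphi$, hence with those of $\varphi'$, because $\sigma_f$ is either an identity or the trivial map and $o_f^{\sigma}=1$.

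For (i), I will fix a maximal spanning tree $T$ of $B=B'$ and compare the standard Bass--Serre presentations of $\pi_1(\B,u_0)$ and $\pi_1(\B',u_0)$. These presentations use identical generators and relations except at the edge $g$: if $g\in ET$, then $\pi_1(\B)$ carries the amalgamation relation $\alpha_g(b)=\omega_g(b)\in B_x$ (where $x=\omega(g)$) together with the enlarged vertex group $B_u=B_u'\ast\alpha_g(B_g)$, while $\pi_1(\B')$ has the smaller group $B_u'$ and no relation at $g$; if $g\notin ET$, then $\pi_1(\B)$ contains an HNN stable letter $t_g$ with relation $t_g\omega_g(b)t_g^{-1}=\alpha_g(b)$, while $\pi_1(\B')$ has the corresponding $t_g$ as a free generator with no relation. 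In either case the edge relation, together with the free-factor splitting $B_u=B_u'\ast\alpha_g(B_g)$ and the injectivity of $\alpha_g$ and $\omega_g$, permits a Tietze transformation that eliminates every generator of the free factor $\alpha_g(B_g)$ in favour of its image in $B_x$ (or its $t_g$-conjugate, in the HNN case), yielding precisely the presentation of $\pi_1(\B')$. The main bookkeeping obstacle is to verify that the resulting Tietze transformation agrees with the inverse of $\sigma_{\ast}$ on generators; once this is checked, $\sigma_{\ast}$ is recognised as an isomorphism with an explicit inverse, completing the proof.
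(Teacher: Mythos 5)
Your construction of $\sigma$ is exactly the one the paper uses (identity on the graph, the inclusion $B_u'\hookrightarrow B_u$ at $u$, trivial edge elements), and your verification of $\varphi\circ\sigma=\varphi'$ and of condition (5) matches what the paper leaves implicit. Your Tietze-transformation check that $\sigma_{\ast}$ is an isomorphism is more detail than the paper provides (its proof simply exhibits $\sigma$ and asserts the claim), but it is the standard justification for this unfolding move and is correct.
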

\begin{proof}[Proof of Claim]
Indeed, consider the morphism $\sigma:\B'\rightarrow \B$ defined by 
$$\sigma=(\sigma, \{\sigma_u\ | \ u\in VB\}, \{\sigma_f\ | \ f\in EB\},  \{o_f^{\sigma} \ | \ f\in EB\}, \{t_f^{\sigma} \ | \ f\in EB\})$$
where $\sigma=\text{Id}_{B}:B\rightarrow B$, $\sigma_x=\text{Id}_{B_x }:B_x \rightarrow B_x$ for all $x\in VB\cup EB-\{u\}$ and $\sigma_u:B_u'\rightarrow B_u$ is the inclusion map  $B_u'\hookrightarrow B_u$, and where $o_f^{\sigma}=1$ for all $f\in EB$.
\end{proof}

It follows from   item (i)  that $\pi_1(\B', u_0)$ splits as a free product of cyclic groups and that  the tuple $\mathcal{T}':=\sigma_{\ast}^{-1}(\mathcal{T})$ generates $\pi_1(\B', u_0)$. Item (ii) further implies that $\varphi_{\ast}'(\mathcal{T}')$ is equal to $\varphi_{\ast}(\mathcal{T})$.   Thus $((\B', u_0), \varphi', \mathcal{T})$ is a marked morphism over $\A$. We will say that $((\B', u_0), \varphi', \mathcal{T})$  is obtained from $\M$ by \emph{unfolding along the edge $g$}.    We have the following result.
\begin{lemma}{\label{lemma:unfold}}
The marked morphism $((\B', u_0), \varphi', \mathcal{T})$  belongs to $\Omega_{\text{min}}(\mathcal{T}_0)$  and   $|\B'|_c= |\B|_c-1$. 
\end{lemma}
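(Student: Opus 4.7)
The plan is to verify three items: (a) that the triple $((\B', u_0), \varphi', \mathcal T')$, with $\mathcal T':=\sigma_\ast^{-1}(\mathcal T)$ as introduced just before the lemma statement, is a marked morphism lying in $\Omega(\mathcal T_0)$; (b) that its $c$-complexity equals $c\M$, hence is minimal; and (c) that $|\B'|_c = |\B|_c - 1$.

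For (a), conditions (2.i)-(2.iii) of Definition~\ref{def:marked} have already been verified for $\varphi'$ in the paragraph preceding Claim~\ref{claim:unfold}. It then remains to observe that $\pi_1(\B', u_0)$ splits as a free product of cyclic groups, because $\sigma_\ast$ is an isomorphism onto $\pi_1(\B, u_0)$, which has such a splitting by the marked-morphism hypothesis on $\M$; that $\mathcal T'$ generates $\pi_1(\B', u_0)$, since $\sigma_\ast(\mathcal T')=\mathcal T$ generates $\pi_1(\B, u_0)$; and that $\varphi'_\ast(\mathcal T')=\varphi_\ast(\sigma_\ast(\mathcal T'))=\varphi_\ast(\mathcal T)\sim_{NE}\mathcal T_0$ by item (ii) of Claim~\ref{claim:unfold}, which places the triple in $\Omega(\mathcal T_0)$.

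For (b), since $\sigma_\ast:\pi_1(\B', u_0)\to\pi_1(\B, u_0)$ is a group isomorphism we obtain $\rk(\B')=\rk(\B)$, and also $\text{tn}(\B')=\text{tn}(\B)$, since the torsion is defined intrinsically as the number of conjugacy classes of maximal finite subgroups of the fundamental group and is therefore preserved under group isomorphism. The underlying graph of $\B'$ equals that of $\B$ by construction, so $|EB'|=|EB|$. Hence $c((\B', u_0), \varphi', \mathcal T')=c\M=(n_1,n_2,n_3)$, placing the triple in $\Omega_{\min}(\mathcal T_0)$. Finally, (c) is immediate from the construction: unfolding along $g$ sets $B'_g=B'_{g^{-1}}=1$ while leaving every other edge group unchanged, so exactly one edge pair is removed from the count $\frac12|\{f\in EB:B_f\neq 1\}|$. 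The only step that is not pure bookkeeping against Claim~\ref{claim:unfold} is the invariance of the torsion under the isomorphism $\sigma_\ast$, which however follows immediately from its intrinsic definition, so no serious obstacle appears in this argument.
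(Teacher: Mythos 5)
Your proof is correct and follows exactly the route the paper intends: the paper leaves this lemma without an explicit proof, relying on the construction and Claim~\ref{claim:unfold} immediately preceding it, and your write-up is precisely the bookkeeping (invariance of $\rk$, $\text{tn}$ and $|EB|$ under the $\pi_1$-isomorphism $\sigma_\ast$ and the unchanged underlying graph, plus the single edge pair whose group is trivialized). Your reading of the generating tuple as $\mathcal{T}'=\sigma_\ast^{-1}(\mathcal{T})$ is the correct interpretation of the paper's slightly inconsistent notation.
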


\begin{lemma}{\label{lemma:unfoldI}} If there is a marked morphism  $\M$ in   $\Omega_{\text{min}}(\mathcal{T}_0)$  such that an elementary fold  is applicable to  $\varphi$,  then there there  is a marked morphism  $((\B', u_0'), \varphi', \mathcal{T}')$ in  $\Omega_{\text{min}}(\mathcal{T}_0)$  such that (i) all edge groups of $\B'$ are trivial,  and (ii) an elementary fold is applicable to  $\varphi'$. 
\end{lemma}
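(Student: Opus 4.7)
The plan is to prove this by induction on $|\B|_c$, using Lemma~\ref{lemma:schenitzer} to locate a vertex where unfolding is possible and Lemma~\ref{lemma:unfold} to perform the unfolding. The base case $|\B|_c = 0$ is immediate: the given marked morphism $\M$ already has all edge groups trivial and, by hypothesis, admits an elementary fold. So the whole content is in the inductive step, which should reduce $|\B|_c$ by one while preserving the existence of an applicable elementary fold.

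For the inductive step, suppose $|\B|_c \geq 1$ and that the elementary fold is applicable at edges $f_1, f_2$ of $\B$ starting at a common vertex. From the description of elementary folds in subsection 2.4 (the resulting graph of groups identifies $f_1, f_2$ into a single edge with \emph{trivial} edge group, and the set-up of that subsection assumes trivial edge groups on the folded edges), we must have $B_{f_1} = B_{f_2} = 1$. Applying Lemma~\ref{lemma:schenitzer} to $\M$, we obtain a vertex $u$ of $\B$ with $\st_1(u, \B) \neq \emptyset$ at which the decorated group $(B_u, \varphi_u, \{\alpha_f(B_f)\}_{f \in \st_1(u, \B)})$ is strongly collapsible. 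Choose any $g \in \st_1(u, \B)$; since $B_g$ is nontrivial while $B_{f_1} = B_{f_2} = 1$, automatically $g \notin \{f_1, f_2, f_1^{-1}, f_2^{-1}\}$. Lemma~\ref{lemma:unfold} then produces a marked morphism $((\B', u_0), \varphi'', \mathcal{T}'') \in \Omega_{\text{min}}(\mathcal{T}_0)$, obtained from $\M$ by unfolding along $g$, with $|\B'|_c = |\B|_c - 1$.

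It remains to verify that the elementary fold is still applicable to $\varphi''$ at the edges $f_1, f_2$. Here the point is that unfolding along $g$ touches only the vertex group at $u$ and the edge group at $g^{\pm 1}$; the underlying graph is unchanged, and all edge elements satisfy $o_f^{\varphi''} = o_f^{\varphi}$ and $t_f^{\varphi''} = t_f^{\varphi}$ for every edge $f$. Consequently, for the edges $f_1, f_2$ (which are distinct from $g^{\pm 1}$) the data $B_{f_i}$, $\varphi''(f_i)$, $o_{f_i}^{\varphi''}$ and $t_{f_i}^{\varphi''}$ coincide with the corresponding data for $\varphi$, so whichever of the conditions defining an elementary fold of type IA or IIIA was satisfied by $\varphi$ at $f_1, f_2$ is satisfied by $\varphi''$ at $f_1, f_2$ as well. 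The inductive hypothesis applied to $((\B', u_0), \varphi'', \mathcal{T}'')$ then finishes the proof.

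The only real thing to watch is the step asserting that unfolding leaves $f_1, f_2$ intact; this is what forces the careful choice of $g$ above, and apart from this the argument is a straightforward induction relying entirely on the two lemmas already established.
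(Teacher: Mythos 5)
There is a genuine gap, and it sits exactly where the real difficulty of the lemma lives. You assert that the two edges $f_1,f_2$ at which the elementary fold is applicable must both have trivial edge group, on the grounds that subsection 2.4 defines elementary folds only for graphs of groups with trivial edge groups. But that is not how the lemma is used (or proved) in the paper: in the proof of Lemma~\ref{lemma:reduceedges}, Lemma~\ref{lemma:possinlefolds} only guarantees that \emph{one} of the two edges, say $g$, has trivial group; the other edge $f$ may carry a non-trivial infinite cyclic group, and "an elementary fold is applicable" is read as the condition on the edge elements ($o_f=o_g$, and $t_f=t_g$ when the terminal vertices differ). Under your reading the hypothesis of Lemma~\ref{lemma:unfoldI} is strictly stronger than what Lemma~\ref{lemma:reduceedges} supplies, so your argument does not prove the statement in the form it is needed. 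Concretely, once $B_f\neq 1$ is allowed, your claim that the unfolding edge $g$ is "automatically" disjoint from $\{f_1^{\pm1},f_2^{\pm1}\}$ fails: $f$ may lie in $\st_1(u,\B)$ and may itself be the edge along which one unfolds, and the auxiliary A2 moves needed at $u$ may change $o_f$. The paper's proof handles precisely this: it applies A2 moves to the edges of $\st_1(u,\B)$ to turn the decorated group at $u$ into one that \emph{is} strongly collapsible (not merely isomorphic to one — a point your proof also elides, since Lemma~\ref{lemma:schenitzer} only gives the decorated group up to isomorphism, while the unfolding move as defined requires an actual splitting $B_u=C_u\ast_{h}\alpha_h(B_h)$), and whenever an A2 move with element $b$ is applied to $f$ it applies the same move to $g$ so that $o_f=o_g$ survives; it then unfolds along an arbitrary edge of $\st_1(u,\B)$, observing that edge elements are unchanged by unfolding, so the fold remains applicable even if that edge is $f$ itself.

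Your overall scheme — induction on $|\B|_c$, locating an unfoldable vertex via Lemma~\ref{lemma:schenitzer}, unfolding via Lemma~\ref{lemma:unfold}, and noting that unfolding preserves edge elements — matches the paper's iteration. To repair the proof you would need to drop the assumption $B_{f_1}=B_{f_2}=1$, insert the A2-move step that realizes strong collapsibility at $u$ while synchronizing the moves on $f$ and $g$, and check that the fold survives unfolding along an edge of $\st_1(u,\B)$ that may coincide with $f$.
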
  
\begin{proof}
Our assumption  on $\varphi$  means that there are distinct edges $f$ and $g$ of $\B$  with same initial vertex and $\varphi(f)=\varphi(g)$ such that $o_f =o_g$ (and $t_f=t_g$ if $\omega(f)\neq \omega(g)$).  By  Lemma~\ref{lemma:possinlefolds}, we can assume  that the edge $g$ has trivial group in $\B$.  Assume further that the elementary fold is of type IIIA, i.e.  $\omega(f)=\omega(g)$. The case of an elementary fold of type IA is handled similarly.  

It follows from Lemma~\ref{lemma:schenitzer} that there is a vertex $u$ of $\B$   such that  $\st_1(u, \B)$ is non-empty  and that   the decorated group $(B_u, \varphi_u, \{\alpha_{h} (B_{h})\}_{h\in \st_1(u, \B)} )$   is isomorphic   to a strongly collapsible decorated group over $A_{\varphi(u)}$.   Thus, after applying auxiliary moves of type A2  to the edges in $\st_1(u, \B)$,   we can assume that
$$(B_u, \varphi_u, \{\alpha_{h} (B_h)\}_{h\in \st_1(u, \B)})$$  
is strongly collapsible.  But we apply this moves in such a way that, if an auxiliary move of type  A2 is applied to the edge $f$    with element $b$ (this   only occurs if   $f\in \st_1(u, \B)$),  then we also apply an auxiliary move of type  A2 to the edge  $g$ with element $b$. Doing in  this way, we still have    $o_f =o_g $ after this A2 move.

Finally,  let $((\B_1, u_0), \varphi_1 , \mathcal{T}_1)$  be the marked   morphism   obtained from  $\M$  by unfolding along an arbitrary edge of $\st_1(x, \B)$.  Thus, $((\B_1, u_0), \varphi_1 , \mathcal{T}_1)$ belongs to $\Omega_{\text{min}}(\mathcal{T}_0)$. Moreover,   an elementary fold is still applicable to   the edges $f$ and $g$. Since  the number of edge pairs with non-trivial group decreases by one, we see that   repeating this argument   $|\B|_c$-times   we obtain an element of $\Omega_{\text{min}}(\mathcal{T}_0)$ with the desired properties.  
\end{proof}

\begin{lemma}{\label{lemma:reduceedges}}
No element of $\Omega_{\text{min}}(\mathcal{T}_0)$ violates condition (F1) of Definition~\ref{def:folded}. 
\end{lemma}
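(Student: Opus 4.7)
The plan is to argue by contradiction. Suppose some marked morphism in $\Omega_{\min}(\mathcal{T}_0)$ violates condition (F1). By Lemma~\ref{lemma:unfoldI}, there exists such a morphism $\M=((\B,u_0),\varphi,\mathcal{T})$ in $\Omega_{\min}(\mathcal{T}_0)$ with every edge group of $\B$ trivial and with some elementary fold applicable to $\varphi$. Because all edge groups are trivial we have $\st_1(u,\B)=\emptyset$ at every vertex, so condition (2.iii) of Definition~\ref{def:marked} can never be invoked and condition (2.ii) forces every vertex homomorphism $\varphi_u$ to be injective.

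Performing the elementary fold yields $\overline{\varphi}:\overline{\B}\to\A$ with $|E\overline{B}|=|EB|-2$ and still trivial edge groups. Lemma~\ref{lemma:fold} supplies $\sigma:\B\to\overline{\B}$ with $\overline{\varphi}\circ\sigma=\varphi$ and with $\sigma_\ast$ an isomorphism of fundamental groups. Set $\overline{\mathcal{T}}:=\sigma_\ast(\mathcal{T})$; then $\overline{\varphi}_\ast(\overline{\mathcal{T}})=\varphi_\ast(\mathcal{T})$ is still Nielsen equivalent to $\mathcal{T}_0$. If the vertex homomorphism $\overline{\varphi}_z$ at the newly created vertex $z$ is injective, then the triple $((\overline{\B},\sigma(u_0)),\overline{\varphi},\overline{\mathcal{T}})$ is automatically a marked morphism in $\Omega(\mathcal{T}_0)$, and it has the same rank and torsion as $\M$ but strictly fewer edges, so its $c$-complexity is lexicographically below $c\M$, contradicting minimality. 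If $\overline{\varphi}_z$ is not injective, I would apply the vertex morphism of Lemma~\ref{lemma:vertexmorphism} at $z$ and transport the generating tuple along the resulting $\pi_1$-surjection, obtaining a marked morphism in $\Omega(\mathcal{T}_0)$ whose vertex group at $z$ is the injective quotient $B_z/\ker(\overline{\varphi}_z)$, a subgroup of the small-orbifold group $A_{\overline{\varphi}(z)}$.

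The remaining work is a lexicographic comparison of the resulting $c$-complexity with $c\M$, in the spirit of Lemma~\ref{lemma:pvm}. Since $B_z/\ker(\overline{\varphi}_z)$ injects into $A_{\overline{\varphi}(z)}$ it is itself the fundamental group of an orbifold covering of the small orbifold $\mathcal{O}_{\overline{\varphi}(z)}$; using this structure one argues that the quotient map $B_z\twoheadrightarrow B_z/\ker(\overline{\varphi}_z)$ either strictly lowers $\rk(B_z)$ or preserves it while strictly raising $\text{tn}(B_z)$. In the borderline case (both vertex invariants preserved) the edge count $|EB|-2$ still wins in the third coordinate, so in every situation the $c$-complexity strictly decreases, contradicting minimality. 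The principal obstacle is precisely this last lexicographic check: ruling out a rank-preserving but torsion-strictly-decreasing quotient of $B_z=B_x\ast B_y$ (type IA) or $B_z=B_x\ast\mathbb{Z}$ (type IIIA) requires a careful appeal to the classification of subgroups of $2$-orbifold groups.
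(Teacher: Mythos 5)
Your outline reproduces the paper's proof: convert the (F1)-violation into an applicable elementary fold, pass via Lemma~\ref{lemma:unfoldI} to a marked morphism with trivial edge groups, perform the fold followed by a vertex morphism, and derive a contradiction from the drop in $c$-complexity. One small precision: Lemma~\ref{lemma:unfoldI} takes as hypothesis that an elementary fold is \emph{applicable}, which is stronger than a mere violation of (F1); as in the paper, you must first apply auxiliary moves to the offending edge pair to arrange $o_g=o_f$ (and $t_g=t_f$ when the terminal vertices differ).

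The one step you leave open --- ruling out a rank-preserving, torsion-decreasing quotient --- does need to be closed, but it requires no classification of subgroups of $2$-orbifold groups. All groups in sight ($B_x$, $B_y$, and the image $H:=B_z/\ker(\overline{\varphi}_z)$ inside $A_w$) split, by Kurosh, as $F\ast D_1\ast\cdots\ast D_t$ with $F$ free and the $D_i$ nontrivial finite cyclic; for such a group $\rk=\rk(F)+t$ and $\text{tn}=t$, so the second complexity coordinate $\rk-\text{tn}$ is exactly the free rank $\rk(F)$. If $P:=B_x\ast B_y$ (type IA) or $P:=B_x\ast\mathbb{Z}$ (type IIIA) surjects onto $H$, then composing with the quotient of $H$ by the normal closure of its torsion kills every finite free factor of $P$, so the free part of $H$ is a quotient of the free part of $P$. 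Hence $\rk(H)-\text{tn}(H)\leqslant \rk(P)-\text{tn}(P)$ as well as $\rk(H)\leqslant\rk(P)$, and since $|E\overline{B}|=|EB|-2$ the $c$-complexity strictly decreases in every case. (This is weaker than the inequality $\text{tn}(H)\geqslant \text{tn}(P)$ asserted at the corresponding point of the paper's proof, which can fail when the rank drops --- e.g.\ two copies of the same $\mathbb{Z}/2$ generate a group of torsion number $1$ --- but the weaker statement is all the lexicographic order needs.)
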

\begin{proof}
Suppose that there is a marked morphism $\M$ in $\Omega_{\text{min}}(\mathcal{T}_0)$  such that $\varphi$ violates condition (F1) of the definition of folded morphisms. We will derive a contradiction by showing that this implies the existence  of a marked morphism   in $\Omega(\mathcal{T}_0)$ with $c$-complexity strictly smaller than $(n_1, n_2, n_3)$.   It follows from Lemma~\ref{lemma:possinlefolds} that there are  distinct edges $f$ and $g$ of $\B$   with $u:=\alpha(f)=\alpha(g)$ and $e:=\varphi(f)=\varphi(g)$ such that $B_g=1$ and that  $o_g^{\varphi}= \varphi_u(b_u) o_f^{\varphi} \alpha_e(c)$ for some $b_u\in B_u$ and $c\in A_e$. 

By  applying  auxiliary moves to  $\varphi$,   we obtain a marked morphism    $((\B, u_0), \varphi',   \mathcal{T}')$  in  $\Omega_{\text{min}}(\mathcal{T}_0)$  such that an elementary fold is applicable to the edges $f$ and $g$, that is, such that   $o_{g}^{\varphi'}=o_{f}^{\varphi'}$ (and $t_{g}^{\varphi'}=t_{f}^{\varphi'}$ if the vertices $\omega(f)$ and $\omega(g)$ are distinct).  Lemma~\ref{lemma:unfoldI} gives us   a  marked    morphism $(\varphi'':\B''\rightarrow \A, \mathcal{T}'')$ in $\Omega_{\text{min}}(\mathcal{T}_0)$  such that   $B_h''=1$ for all $h\in EB''$    and that an elementary fold is applicable to $\varphi''$. Thus there are distinct edges $f''$ and $g''$ of $\B''$ with $u'':=\alpha(f'')=\alpha(g'')$ and $e'':=\varphi(f'')=\varphi(g'')$  such that $o_{g''}^{\varphi''}=  o_{f''}^{\varphi''}$ (and $t_{g''}^{\varphi''}=t_{f''}^{\varphi''}$ if the vertices $\omega(f'')$ and $\omega(g'')$ are distinct)..

Let $\overline{\varphi}:\overline{\B}\rightarrow \A$ be the morphism   obtained from $\varphi''$ by first  folding the edges  $f''$ and $g''$ into a single edge $\overline{f}$,  and then applying   a vertex morphism to the vertex $\omega(\overline{f})$ of $\overline{B}=B''/[f''=g'']$.     Thus $\overline{\varphi}:\overline{\B}\rightarrow \A$ is a vertex injective morphism  with $\overline{B}_f=1$ for  all edges of $\overline{\B}$. Therefore      $\overline{\varphi}$ trivially  satisfies conditions (2.i)-(2.iii) of Definition~\ref{def:marked}   and the fundamental group of $\overline{\B}$ splits as a free product of cyclic groups.  

By Lemmas~\ref{lemma:fold} and \ref{lemma:vertexmorphism}, there is    a  $\pi_1$-surjective  morphism $\sigma:\B''\rightarrow \overline{\B}$ such that $\overline{\varphi}\circ \sigma=\varphi''$. Denote by  $ \overline{\mathcal{T}}$   the tuple $\sigma_{\ast}(\mathcal{T}'')$. Thus $\overline{\mathcal{T}}$ generates the fundamental group of $\overline{\B}$ and $\overline{\varphi}_{\ast}(\overline{\mathcal{T}})=\overline{\varphi}_{\ast}(\sigma_{\ast}(\mathcal{T}''))=  \varphi_{\ast}''(\mathcal{T}'')$ which is Nielsen equivalent to $\varphi_{\ast}(\mathcal{T})$. To see that the $c$-complexity decreases note that 
$$ \text{rk}(\B)-\text{rk}(\overline{\B})= \text{rk}(B_{\omega(f)})+\text{rk}(B_{\omega(g)})- \text{rk}(\overline{B}_{\omega(\overline{f})})\geqslant 0$$
and that 
$$ \text{tn}(\overline{\B})-\text{tn}( \B)= \text{tn}(\overline{B}_{\omega(\overline{f})}) -( \text{tn}(B_{\omega(f)})+ \text{tn}(B_{\omega(g)}))  \geqslant 0.$$
Since $|E\overline{B}|=|EB''|-2=|EB|-2$ it  follows that $c(\overline{\varphi})<c(\varphi)$.
\end{proof}

\begin{corollary}{\label{cor:reduceedges}}
Let $\M$ be a marked  morphism  in $\Omega_{\text{min}}(\mathcal{T}_0)$. Then for any vertex $u$ of $\B$ the  corresponding   decorated group $(B_u, \varphi_u, \{\alpha_f(B_f)\}_{f\in \st_1(u, \B)})$ does not fold peripheral subgroups.  
\end{corollary}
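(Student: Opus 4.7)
The plan is to reduce Corollary~\ref{cor:reduceedges} directly to Lemma~\ref{lemma:reduceedges}. The key observation is that ``the decorated group at $u$ folds peripheral subgroups'' is nothing more than a repackaging of the failure of condition (F1) of Definition~\ref{def:folded} for the morphism $\varphi$ at the vertex $u$. I would argue by contradiction: assume some vertex $u \in VB$ of $\M$ carries a decorated group $(B_u, \varphi_u, \{\alpha_f(B_f)\}_{f \in \st_1(u, \B)})$ that folds peripheral subgroups. Unpacking the definition produces distinct edges $f_1, f_2 \in \st_1(u, \B)$ with $i := i_{\varphi(f_1)} = i_{\varphi(f_2)}$, an integer $z$, and an element $b$ in the free factor of $B_u$ complementary to $\alpha_{f_2}(B_{f_2})$ in the strong collapsibility splitting, satisfying $o_{f_2} = \varphi_u(b)\, o_{f_1}\, t_{\varphi(u),\, i}^{\,z}$.

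The next step is to rewrite this identity in the form required by (F1). By the construction of $\A$ from the curve system $\gamma_1, \ldots, \gamma_t$, at every vertex $v \in VA$ and every boundary index $j \in \{1, \ldots, q_v\}$ there is exactly one edge $e$ with $\alpha(e) = v$ and $i_e = j$; the coincidence $i_{\varphi(f_1)} = i_{\varphi(f_2)}$ therefore forces $\varphi(f_1) = \varphi(f_2) =: e$. Since $\alpha_e(a_e) = t_{\varphi(u),\, i}^{\,\varepsilon_e}$ with $\varepsilon_e \in \{\pm 1\}$, we have $\alpha_e(A_e) = \langle t_{\varphi(u),\, i}\rangle$, and hence $t_{\varphi(u),\, i}^{\,z} = \alpha_e(c)$ for a suitable $c \in A_e$. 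Viewing $b$ as an element of the ambient group $B_u$, the identity becomes $o_{f_2} = \varphi_u(b)\, o_{f_1}\, \alpha_e(c)$, which is exactly the failure of (F1) at the pair $(f_1, f_2)$, contradicting Lemma~\ref{lemma:reduceedges}.

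The main obstacle is purely bookkeeping: one must confirm that the element $b$ supplied by the definition of ``folds peripheral subgroups'' genuinely lies in $B_u$ (which is immediate, as the complementary free factor is a subgroup of $B_u$), and that the dictionary between the boundary index data in $\A$ and the edge image data in $\varphi$ behaves as claimed. Once these matchings are in place, no further argument is needed beyond Lemma~\ref{lemma:reduceedges}.
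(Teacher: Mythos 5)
Your unpacking of ``folds peripheral subgroups'' into an identity $o_{g}=\varphi_u(b)\,o_{f}\,\alpha_e(c)$ with $b\in B_u$, $c\in A_e$ and $e:=\varphi(f)=\varphi(g)$ (your $f_1,f_2$) is correct, and the dictionary you set up --- uniqueness of the edge of $\A$ with a given boundary index forcing $\varphi(f)=\varphi(g)$, and $t_{\varphi(u),i}^{\,z}\in\alpha_e(A_e)$ --- is exactly how the paper's proof begins. The problem is the last step, where you feed this identity directly into Lemma~\ref{lemma:reduceedges} as an (F1) violation committed by $\M$ itself. The proof of Lemma~\ref{lemma:reduceedges} does not treat an arbitrary (F1) violation: its first move is to invoke Lemma~\ref{lemma:possinlefolds} to replace the offending pair by one in which one of the two edges has \emph{trivial} edge group, and that reduction rests on vertex injectivity of $\varphi_u$ (for injective $\varphi_u$, distinct edges of $\st_1(u,\B)$ correspond to distinct boundary components of a covering orbifold and hence determine distinct $(\varphi_u(B_u),\alpha_e(A_e))$-double cosets). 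In the configuration of the corollary both offending edges lie in $\st_1(u,\B)$, so both carry non-trivial groups, and $\varphi_u$ is in general not injective --- indeed, if $\varphi_u$ were injective the decorated group at $u$ could not fold peripheral subgroups at all. So the case you hand to Lemma~\ref{lemma:reduceedges} is precisely the one its proof does not cover, and quoting the lemma's bare statement here begs the question that the corollary exists to settle.

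The missing idea is the unfolding move. The paper first unfolds $\M$ along the edge $g$: this keeps the marked morphism in $\Omega_{\text{min}}(\mathcal{T}_0)$ by Lemma~\ref{lemma:unfold}, makes the group of $g$ trivial, and replaces $B_u$ by the complementary free factor $C_u\ast_{h\in\st_1(u,\B)-\{g\}}\alpha_h(B_h)$. The identity $o_{g}=\varphi_u(b)\,o_{f}\,\alpha_e(c)$ survives this operation precisely because the definition of ``folds peripheral subgroups'' forces $b$ to lie in that complementary factor rather than in all of $B_u$ --- this is the real content of the restriction on $b$, which your proposal dismisses as immediate bookkeeping. Only after this step is one in the situation (an (F1) violation with $B_g=1$) that Lemma~\ref{lemma:reduceedges} genuinely handles, and the contradiction follows.
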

\begin{proof}
Suppose that for some vertex $u$ of   $\B$ the decorated group  $(B_u, \varphi_u, \{\alpha_f(B_f)\}_{f\in \st_1(u, \B)})$ folds peripheral subgroups.  By definition,  the following hold:  (1) $B_u=C_u \ast_{f\in \st_1(u, \B)}     \alpha_f(B_f)$ for some subgroup $C_u\leq B_u$ with $C_u\cap \ker(\varphi_u)=1$,  and  (2) there are  distinct edges $f$ and $g $ in $\st_1(u, \B)$  with  $e:= \varphi(f)=\varphi(g)$ such that  
$$o_{g}^{\varphi}=\varphi_u(b) o_{f}^{\varphi} \alpha_e(c)$$ 
for some $b\in C_u\ast_{h\in \st_1(u, \B)-\{g\}}   \alpha_h(B_h)$ and  $c\in A_e$.

Let $((\B', u_0), \varphi', \mathcal{T})$   be the marked morphism    obtained from $\M$   by unfolding along the edge $g$.   Thus  the group of  $g$ in $\B'$ is trivial,   the group of   the vertex $u$ in $\B'$ is equal to 
$$B_u'=C_u \ast_{h\in \st_1(u, \B)-\{g\} }    \alpha_h(B_h),$$   
and so the edge  element $o_g^{\varphi'}$ lies in the double coset $\varphi_u'(B_u') o_f^{\varphi'} \alpha_e(A_e)$. This implies that $((\B', u_0), \varphi', \mathcal{T})$ is an element of $\Omega_{\text{min}}(\mathcal{T}_0)$ that violates condition (F1) of Definition~\ref{def:folded}, contradicting     Lemma~\ref{lemma:reduceedges}.
\end{proof}

%-------------------------------------------------------------------------------

\subsection{Searching for non-vertex injective marked  morphisms in $\Omega_{\text{min}}(\mathcal{T}_0)$} Let $\M$  be a marked morphism in $\Omega_{\text{min}}(\mathcal{T}_0)$.  Assume that   $\varphi$ is vertex injective. From Lemma~\ref{lemma:reduceedges} we know that  $\varphi$ does not violate    condition (F1).   Since $\varphi$ is not folded,   Lemma~\ref{lemma:possinlefolds} implies  that there is an edge $f$ of $\B$   such that  
$$\text{(i)} \ B_f=1, \ \ \text{ and } \ \ \text{(ii)}  \ \alpha_{e}^{-1}(o_f^{-1}  \varphi_u(B_u)  o_f) \neq 1,$$ 
where $u:=\alpha(f)$ and  $e:=\varphi(f)$.   Denote the vertex $\omega(f)$ by $u'$, the vertex $\varphi(u)$ by $v$ and the vertex $\varphi(u')$ by $v'$. We will construct a new marked morphism in $\Omega_{\text{min}}(\mathcal{T}_0)$  that has fewer edges with trivial group than $\M$.

Recall that the edge group   $A_{e}$ is infinite cyclic  with generator $a_{e}$, that corresponds to a simple closed curve on $\mathcal{O}$.  Thus  $\alpha_{e}^{-1}( o_f^{-1}  \varphi_u(B_u)  o_f)=\langle a_{e}^{k_f'}\rangle$   for some integer $k_f'>0$. As the vertex homomorphism $\varphi_u:B_u\rightarrow A_v$ is injective, there is a unique element $b_f'\in B_u$ such that $\varphi_u(b_f')= o_f \alpha_{e}(a_{e}^{k_f'}) o_f^{-1}$. Then we have 
$$ \langle \varphi_u(b_f')\rangle  = o_f   \langle \alpha_{e}^{k_f'}\rangle  o_f^{-1}= \varphi_u(B_u)\cap  o_f   \alpha_{e}(A_{e})  o_f^{-1}.$$

Let $\B'$  be the graph of  groups obtained from $\B$ by replacing the edge group $B_f=1$ by the group $B_{f}':=\langle b_f'\rangle \leq B_{u}$,  and by replacing the vertex  group $B_{u'} $ by the group $B_{u'}':= B_{u'}\ast \langle b_f'\rangle$.  The boundary monomorphisms $\alpha_{f}':B_f'\rightarrow B_u'=B_u$ and $\omega_f':B_f'\rightarrow B_{u'}'$  are defined as   the inclusion maps.

Let   $\varphi':\mathbb{B}'\rightarrow \A$  be  the morphism  obtained from $\varphi$ by replacing  the edge homomorphism $\varphi_f$ by the homomorphism $\varphi_f':B_f' \rightarrow A_{e}$ defined by   $\varphi_f'(b_f')= a_{e}^{k_f'}$,   and by replacing  the vertex homomorphism $\varphi_{u'}$  by the   homomorphism  $\varphi_{u'}':B_{u'}'  \rightarrow A_{\varphi(u')}$ induced by $\varphi_{u'}$ and  the map   $b_f'\mapsto  t_f^{-1} \omega_{e}(a_{e}^{k_f'}) t_f$. 

Note that the underlying graph of $\B'$ is equal to the underlying graph $B$ of $\B$. Moreover, $o_h^{\varphi'}=o_h$ for all  $h\in EB'=EB$.  As in Claim~\ref{claim:unfold}, we construct a morphism $\sigma:\B\rightarrow \B'$ such that (i) $\sigma_{\ast}$ is an isomorphism, and (ii) $\varphi=\varphi'\circ \sigma$.    Consequently, $\pi_1(\B', u_0)$ splits as a free product of cyclic groups,   the tuple    $\mathcal{T}':=\sigma(\mathcal{T})$ generates $\pi_1(\B', u_0)$, and  clearly   $\varphi_{\ast}'(\mathcal{T}')$ is Nielsen equivalent to $\mathcal{T}_0$.   Fig.~\ref{fig:foldalong} shows the effect of the fold on the vertices $u$ and $u'$. The remaining vertex and edge groups/homomorphisms do not change.   \begin{figure}[h!] 
\begin{center}
\includegraphics[scale=0.9]{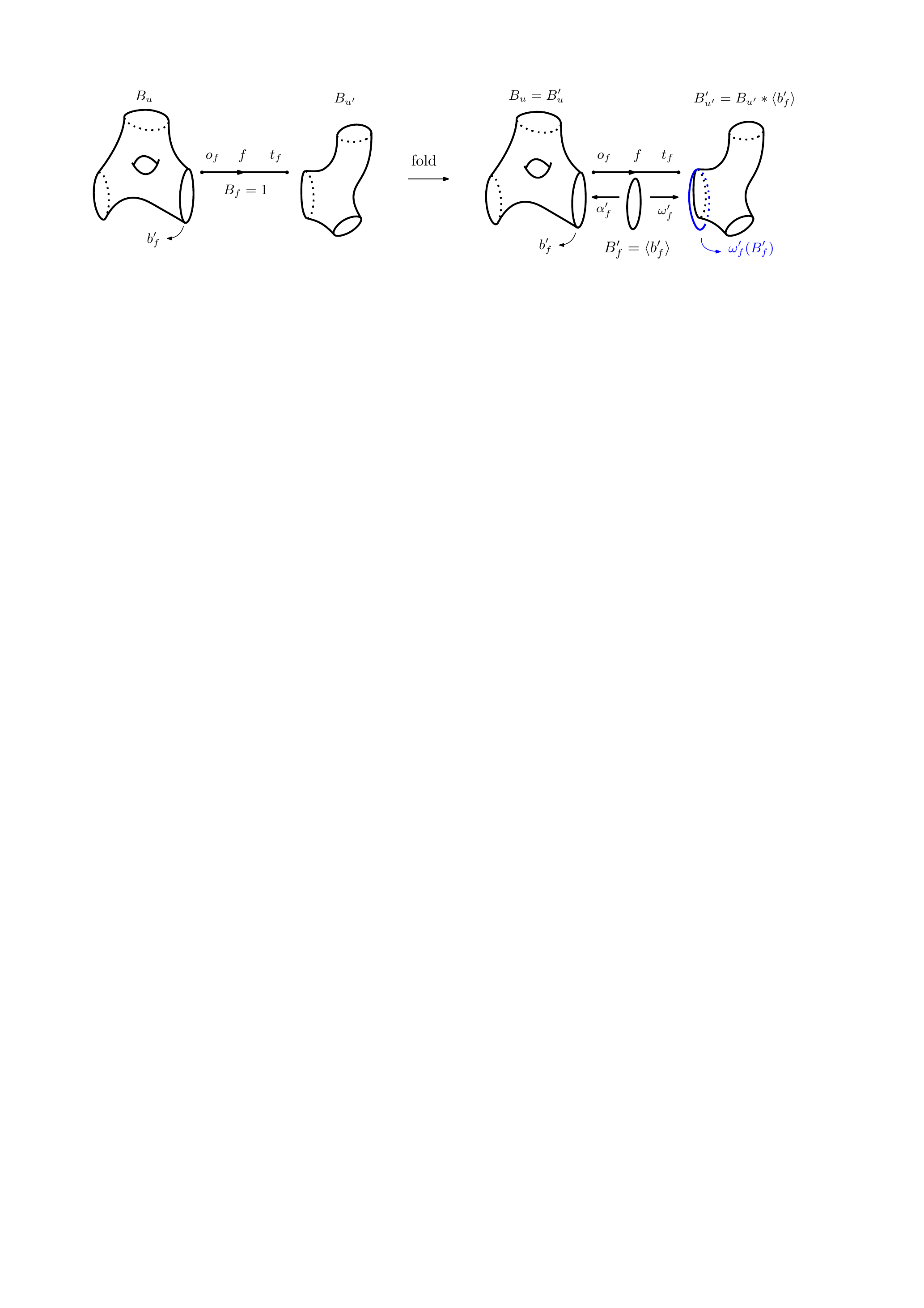}
\end{center}
\caption{A fold along the edge $f$. }\label{fig:foldalong}
\end{figure}

We want to show that   $((\B', u_0), \varphi', \mathcal{T}')$ is marked morphism. To this end we   need  to check that $\varphi'$  satisfies conditions (2.i)-(2.iii) of Definition~\ref{def:marked}.  This is done in the next claim.
\begin{claim}{\label{claim:coverlikefold}}
The morphism  $\varphi':\B'\rightarrow \A$  satisfies   conditions (2.i)-(2.iii) of Definition~\ref{def:marked}. 
\end{claim}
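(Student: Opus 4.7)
The plan is to verify conditions (2.i)--(2.iii) of Definition~\ref{def:marked} locally at each edge and vertex of $\B'$. Since the underlying graph of $\B'$ coincides with that of $\B$ and the only data that change are the group assigned to $f$ and the vertex groups/homomorphisms at $u:=\alpha(f)$ and $u':=\omega(f)$, every edge $h\neq f^{\pm 1}$ and every vertex $w\notin\{u,u'\}$ retains the data from $\M$, so the marked-morphism conditions there carry over directly. Condition (2.i) at the edge $f$ is immediate: $\varphi'_f$ sends the generator $b'_f$ of $B'_f\cong\mathbb{Z}$ to $a_e^{k'_f}$, which has infinite order in $A_e=\langle a_e\rangle$.

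At the vertex $u$ the vertex group $B_u$ and the homomorphism $\varphi_u$ are unchanged; only $\st_1(u,\B')=\st_1(u,\B)\cup\{f\}$ grows, with new peripheral subgroup $\alpha'_f(B'_f)=\langle b'_f\rangle$. I would reuse the orbifold covering $\eta_u:\mathcal{O}'_u\to\mathcal{O}_v$ supplied by applying (2.ii) to $\M$ at $u$. The defining identity $\varphi_u(\langle b'_f\rangle)=\varphi_u(B_u)\cap o_f\alpha_e(A_e)o_f^{-1}$ exhibits $\langle b'_f\rangle$ as the maximal peripheral subgroup of $B_u$ of type $(o_f,i_e)$, hence as the peripheral subgroup of a specific boundary component of $\mathcal{O}'_u$. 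Because Lemma~\ref{lemma:reduceedges} forbids $\M$ from violating (F1), no edge $h\in\st_1(u,\B)$ with $\varphi(h)=e$ has edge element $o_h$ in the same $(\varphi_u(B_u),\alpha_e(A_e))$-double coset as $o_f$; thus the boundary component associated with $\langle b'_f\rangle$ is not one of those already used by $\st_1(u,\B)$, and the enlarged collection yields a sub-decoration of the decorated group induced by $\eta_u$, establishing (2.ii) at $u$.

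At the vertex $u'$ we have $B'_{u'}=B_{u'}\ast\langle b'_f\rangle$, and $\varphi'_{u'}$ extends $\varphi_{u'}$ by sending $b'_f$ to $y:=t_f^{-1}\omega_e(a_e^{k'_f})t_f$, an infinite-order element of the peripheral subgroup $\Lambda:=t_f^{-1}\omega_e(A_e)t_f\leq A_{v'}$. I would split according to whether $\Lambda\cap\varphi_{u'}(B_{u'})$ is trivial. In the trivial case, $\varphi'_{u'}$ is injective on the free product and the required orbifold covering $\mathcal{O}''_{u'}\to\mathcal{O}_{v'}$ is built by attaching to $\mathcal{O}'_{u'}$ an annular sheet along a new boundary component lying over the boundary of $\mathcal{O}_{v'}$ associated with $\Lambda$, with covering degree $k'_f$; verifying that the induced decorated group contains the augmented sub-decoration gives (2.ii). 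In the non-trivial case $\varphi'_{u'}$ has non-trivial kernel, and I would establish (2.iii) by combining the free splitting $B'_{u'}=B_{u'}\ast\langle b'_f\rangle$ with the injectivity of $\varphi_{u'}$ on $B_{u'}$ and with the collapsibility data coming from (2.ii) for $\M$ at $u'$, upgrading this to a strongly collapsible splitting that treats $\langle b'_f\rangle$ as a new free factor corresponding to $f^{-1}\in\st_1(u',\B')$.

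The main obstacle is the geometric step producing the extended orbifold covering in the injective sub-case at $u'$: one must check that the attached sheet yields exactly the peripheral structure demanded, using the hypothesis $\Lambda\cap\varphi_{u'}(B_{u'})=1$ to rule out collisions with existing boundary components of $\mathcal{O}'_{u'}$ together with the divisibility condition on $k'_f$ forced by the definition of an orbifold covering. Once this covering is in place, matching its induced sub-decoration to $\st_1(u',\B')$ is bookkeeping.
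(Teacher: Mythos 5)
Your treatment of condition (2.i) and of the vertex $u$ is essentially the paper's argument: the new peripheral subgroup $\langle b_f'\rangle\leq B_u$ corresponds to a boundary component of $\mathcal{O}_u'$ distinct from those already used by $\st_1(u,\B)$, because a collision would place $o_f$ in the $(\varphi_u(B_u),\alpha_e(A_e))$-double coset of some existing $o_g$ and hence violate (F1), contrary to Lemma~\ref{lemma:reduceedges}. The genuine gaps are at the vertex $u'$.

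First, your case division is keyed to whether $\Lambda\cap\varphi_{u'}(B_{u'})$ is trivial, and you assert that triviality of this intersection makes $\varphi_{u'}'$ injective on $B_{u'}\ast\langle b_f'\rangle$. That implication is false: injectivity on each free factor together with trivial intersection of their images does not force the images to generate their free product (already in $F(x,y)$ the subgroups $\langle xyx^{-1},y\rangle$ and $\langle x\rangle$ intersect trivially yet generate all of $F(x,y)$, not a rank-three group). So your ``injective sub-case'' is not the injective sub-case, and the annulus-attachment construction you propose there is both unjustified --- a degree-$k_f'$ annular sheet cannot simply be adjoined to an existing finite-degree covering and remain an orbifold covering of $\mathcal{O}_{v'}$ --- and unnecessary. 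Second, for condition (2.iii) you need the decorated group at $u'$ to be isomorphic to a \emph{strongly collapsible} one, and this does not follow automatically from (2.ii) for $\M$: by Remark~\ref{rem:collapsible} a sub-decoration of the decorated group induced by an orbifold covering fails to be strongly collapsible precisely when it is the full decoration of a finite-degree covering, and in that case $B_{u'}$ does not split off all of its peripheral subgroups as free factors (a pair of pants has three boundary subgroups but rank two). The paper excludes this possibility by noting that it would force $o_{f^{-1}}=t_f^{-1}$ into the double coset of some existing edge element in $\st_1(u',\B)$, again violating (F1); you skip this step entirely. Once strong collapsibility of $(B_{u'}',\varphi_{u'}',\{\alpha_h'(B_h')\})$ is established, the paper's route is shorter than yours: (2.iii) is immediate when $\varphi_{u'}'$ is not injective, and when it is injective Lemma~\ref{lemma:inj} abstractly supplies the required orbifold covering for (2.ii), with no geometric gluing needed.
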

\begin{proof}[Proof of Claim]
Note that   we only  need to look at the vertices $u$ and $u'$   as the remaining vertex and edge groups/homomorphisms   do not change in the fold.

We start at the vertex $u$. The vertex homomorphism $\varphi_u':B_u'\rightarrow A_{v}$ is injective since  $\varphi$ is vertex injective and $\varphi_u'$ is equal to $\varphi_u$.  Thus we need to show that  (2.ii) holds.   By  definition  there is an  orbifold covering  $\eta_u':\mathcal{O}_u'\rightarrow \mathcal{O}_v$  such that $(B_u, \varphi_u, \{\alpha_h(B_h)\}_{h\in \st_1(u, \B)})$ is isomorphic to a sub-decoration of the decorated group induced by $\eta_u'$. More precisely, let $\delta_{u,1}', \ldots,\delta_{u,n_u}'$  be the compact boundary components of $\mathcal{O}_u'$ and let $C_{u,j }'$ be  the maximal peripheral subgroup $\pi_1^o(\mathcal{O}_u')$ corresponding to  $\delta_{u,j}'$.  Then there is a subset $\{k_{u,1}, \ldots, k_{u,m_u}\}$ of $\{1,\ldots, n_u\}$ such that
$$ (B_u, \varphi_u, \{\alpha_h(B_h)\}_{h\in \st_1(u, \B)}) \cong (\pi_1^o(\mathcal{O}_u'), (\eta_u')_{\ast}, \{C_{u,k_{u,j}}'\}_{1\leqslant j\leqslant m_u}).$$
To the element $b_f'$ there corresponds a unique compact boundary component, $\delta_{u,k_{u,0}}'$,  of $\mathcal{O}_u'$. We need to show that $\delta_{u,k_{u,0}}'\neq \delta_{u,k_{u,1}}', \ldots, \delta_{u,k_{u,m_u}}'$. In fact, suppose that  $\delta_{u, k_0}'=\delta_{u, k_{u,j}}'$ for some $1\leqslant j\leqslant m_u$. Let   $g$ be the edge in  $\st_1(u, \B)$ that corresponds to $\delta_{u, k_{u,j}}'$. Then the edge element  $o_{f}$ belongs to the $(\varphi_{u'}(B_{u'}), \alpha_{e}(A_e))$-double coset  of $o_g$   and hence   the morphism  $\varphi$  violates condition (F1) of Definition~\ref{def:folded}, a contradiction.    Therefore the decorated group $(B_u', \varphi_u', \{\alpha_h'(B_h')\}_{h\in \st_1(u, \B')})$ is isomorphic to the decorated group 
$$ (\pi_1^o(\mathcal{O}_u'), (\eta_u')_{\ast}, \{C_{u,k_{u,0}}', C_{u,k_{u,1}}', \ldots, C_{u,k_{u,m_u}}'\})$$
which is a sub-decoration of $(\pi_1^o(\mathcal{O}'), (\eta_u')_{\ast}, \{C_{u,j}'\}_{1 \leqslant j\leqslant n_u})$.

Now we look at the vertex $u'$. We first observe that  $(B_{u'}, \varphi_{u'}, \{\alpha_{h}(B_h)\}_{h\in \st_1(u', \B)})$ is isomorphic to a strongly decorated group as otherwise the morphism $\varphi$ would violate condition (F1) of Definition~\ref{def:folded}.  Consequently the decorated group   $(B_{u'}', \varphi_{u'}', \{\alpha_{h}'(B_h')\}_{h\in \st_1(u', \B')})$  is also  isomorphic to a strongly decorated group. This implies that $\varphi'$ satisfies condition (2.iii) if the vertex homomorphism  $\varphi_{u'}'$ is not injective.  

Suppose  $\varphi_{u'}'$ is injective. We need to show that (2.ii) holds at $u$. But this follows from the fact that   $(B_u, \varphi_u, \{\alpha_f(B_f)\}_{f\in \st_1(u, \B)})$ is strongly  collapsible and from  Lemma~\ref{lemma:inj}.
\end{proof}

Therefore $((\B', u_0), \varphi', \mathcal{T}')$ is a marked morphism  that clearly   belongs to $\Omega(\mathcal{T}_0)$. Since $\pi_1(\B, u_0)$ and $\pi_1(\B', u_0)$ are isomorphic and have the same underlying graphs, we further see that    
$$c((\B', u_0), \varphi', \mathcal{T}') =c\M. $$
Notice that   the number of edge pairs  that have non-trivial group in   $\B'$ is equal to $|\B|_c+1$.   We will say that the marked morphism  $((\B', u_0), \varphi', \mathcal{T}')$ is obtained from $\M$ by \emph{folding along the edge $f$}.

\begin{lemma}{\label{lemma:nonvertexinj}}
There exists a marked morphism  $((\overline{\B}, \overline{u}_0), \overline{\varphi}, \overline{\mathcal{T}})$  in $\Omega_{\text{min}}(\mathcal{T}_0)$ such that $\overline{\varphi}$ is not vertex injective. 
\end{lemma}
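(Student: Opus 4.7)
The plan is to argue by contradiction: suppose every marked morphism in $\Omega_{\text{min}}(\mathcal{T}_0)$ is vertex injective, and derive a violation of minimality by exhibiting an element of $\Omega_{\text{min}}(\mathcal{T}_0)$ to which the ``fold along an edge'' construction described immediately before this lemma can be applied, thereby producing a new element of $\Omega_{\text{min}}(\mathcal{T}_0)$ that has strictly more nontrivial edge pairs.

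First I would observe that since the third coordinate of the $c$-complexity is fixed equal to $n_3 = |EB|$ across $\Omega_{\text{min}}(\mathcal{T}_0)$, the quantity $|\B|_c$ is bounded above by $n_3/2$ on this set. Hence there exists $\M \in \Omega_{\text{min}}(\mathcal{T}_0)$ for which $|\B|_c$ is \emph{maximal}; fix such a choice. By Remark~\ref{remark:notfolded}, the morphism $\varphi$ is not folded, because $\pi_1(\A,v_0)\cong \pi_1^o(\mathcal{O})$ is the fundamental group of a closed orbifold and therefore does not split as a free product of cyclic groups, whereas $\pi_1(\B,u_0)$ does. By the contradiction hypothesis $\varphi$ is vertex injective, so Lemma~\ref{lemma:possinlefolds} forces $\varphi$ to satisfy either $(\overline{F1})$ or $(\overline{F2})$. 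Since $(\overline{F1})$ is a particular violation of condition $(F1)$, Lemma~\ref{lemma:reduceedges} rules it out. Hence $(\overline{F2})$ holds: there is an edge $f \in EB$ with $B_f = 1$ and with $\alpha_{\varphi(f)}^{-1}(o_f^{-1}\varphi_{\alpha(f)}(B_{\alpha(f)}) o_f)$ non-trivial.

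Now I would apply the fold-along-$f$ construction described immediately before the statement of this lemma, obtaining a marked morphism $((\B', u_0), \varphi', \mathcal{T}')$. By Claim~\ref{claim:coverlikefold} it is indeed a marked morphism, and by construction it belongs to $\Omega(\mathcal{T}_0)$. Since $\pi_1(\B, u_0)$ and $\pi_1(\B', u_0)$ are isomorphic and $B$ and $B'$ share the same underlying graph, we have $c((\B',u_0),\varphi',\mathcal{T}') = c\M$, so $((\B', u_0), \varphi', \mathcal{T}') \in \Omega_{\text{min}}(\mathcal{T}_0)$. Finally, the construction replaces the trivial edge group $B_f = 1$ by the infinite cyclic group $\langle b_f'\rangle$ and leaves every other edge group unchanged, giving $|\B'|_c = |\B|_c + 1$. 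This contradicts the maximality of $|\B|_c$ at $\M$, completing the proof.

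There is essentially no single hard step here: every ingredient has been prepared in the preceding subsections. The only mildly subtle point, which I expect to be the main conceptual hurdle, is recognizing that a \emph{maximization} of $|\B|_c$ (rather than minimization) over $\Omega_{\text{min}}(\mathcal{T}_0)$ is the right hook, because the fold-along-edge construction preserves the $c$-complexity but strictly increases $|\B|_c$, and because the only obstruction to applying it under vertex injectivity is the conjunction of $(\overline{F1})$ being excluded (by Lemma~\ref{lemma:reduceedges}) and $\varphi$ being non-folded (by Remark~\ref{remark:notfolded}).
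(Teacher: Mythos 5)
Your proof is correct and is essentially the paper's argument in disguise: the paper iterates the fold-along-edge move starting from an arbitrary element of $\Omega_{\text{min}}(\mathcal{T}_0)$ and terminates because $|\B|_c$ is bounded by $\tfrac{1}{2}|EB|$, whereas you package the same termination argument as an extremal-principle contradiction by picking the element maximizing $|\B|_c$. The ingredients (Remark~\ref{remark:notfolded}, Lemma~\ref{lemma:possinlefolds}, Lemma~\ref{lemma:reduceedges}, the fold-along-an-edge construction and its effect on $c$-complexity and $|\B|_c$) are identical in both versions.
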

\begin{proof}
Let $\M$ be an arbitrary element of $\Omega_{\text{min}}(\mathcal{T}_0)$. Set $((\B_0, w_{0}), \varphi_0,  \mathcal{T}_0):=\M$. Suppose that for $i\geqslant 0$, the marked morphism $((\B_i, w_{i}), \varphi_i, \mathcal{T}_i)\in \Omega_{\text{min}}(\mathcal{T}_0)$ has been defined so that $|\B_i|_c=|\B|_c+i$.  If $\varphi_i$ is not vertex injective, then   
$$ ((\overline{\B}, \overline{u}_0), \overline{\varphi}, \overline{\mathcal{T}}) :=((\B_i, w_{i}), \varphi_i,  \mathcal{T}_i)$$
is the the element of $\Omega_{\text{min}}(\mathcal{T}_0)$ we are searching for. Suppose that  $\varphi_i$ is vertex injective. We know that $\varphi_i$ is not folded and Lemma~\ref{lemma:reduceedges} further says that $\varphi_i$ does not violate condition (F1) of Definition~\ref{def:folded}.    Thus  there is an edge $f_i$ in $\B_i$  such that  we can fold $\varphi_i$ along   $f_i$. We define $((\B_{i+1}, w_{i+1}), \varphi_{i+1},   \mathcal{T}_{i+1})$ as the marked morphism obtained from  $(( \B_i, w_{i}), \varphi_i,  \mathcal{T}_i)$ by folding  along $f_i$.   Thus  $((\B_{i+1}, u_{0,i+1}), \varphi_{i+1},   \mathcal{T}_{i+1})$ belongs to $\Omega_{\text{min}}(\mathcal{T}_0)$.   Since, for each $i$,  we  have $|\B_i|_c\leqslant \frac{1}{2}|EB|$,  it follows that this process terminates in at most $\frac{1}{2}|EB|$ steps.  
\end{proof}

\begin{corollary}{\label{cor:decreased}}
Let $\M$  be an element of $\Omega_{\text{min}}(\mathcal{T}_0)$    and $u$ a vertex of $\B$. If the decorated group $(B_u, \varphi_u, \{\alpha_f(B_f)\}_{f\in \st_1(u, \B)})$ has an obvious relation, then there is a marked morphism  $((\overline{\B}, \overline{u}_0), \overline{\varphi}, \overline{ \mathcal{T}})$ in $  \Omega_{\text{min}}(\mathcal{T}_0)$  such that (i) $\overline{\varphi}$ is not vertex  injective, and (ii)  $d((\overline{\B}, \overline{u}_0), \overline{\varphi}, \overline{ \mathcal{T}})<d\M$. 
\end{corollary}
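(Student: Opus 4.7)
The plan is to produce $((\overline{\mathbb{B}}, \overline{u}_0), \overline{\varphi}, \overline{\mathcal{T}})$ by first unfolding $\M$ along the edge $f_k \in \st_1(u, \mathbb{B})$ witnessing the obvious relation, and then folding along $f_k$ again with the enlarged cyclic group supplied by that relation. Set $n := |A_{\varphi(f_k)} : \varphi_{f_k}(B_{f_k})|$ and let $d$ be the index of the intersection $\varphi_u(\overline{B_u}) \cap o_{f_k}\langle t_{i_k}\rangle o_{f_k}^{-1}$ in the infinite cyclic group $o_{f_k}\langle t_{i_k}\rangle o_{f_k}^{-1}$; the hypothesis of an obvious relation at $f_k$ gives $0 < d < n$.

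First I invoke Lemma~\ref{lemma:unfold} to obtain a marked morphism $((\mathbb{B}_1, u_0), \varphi_1, \mathcal{T}_1) \in \Omega_{\min}(\mathcal{T}_0)$ with $B_{1, f_k} = 1$, $B_{1, u} = \overline{B_u}$ and $|\mathbb{B}_1|_c = |\mathbb{B}|_c - 1$. Then, because the obvious relation yields $\alpha_{\varphi(f_k)}^{-1}(o_{f_k}^{-1} \varphi_{1, u}(B_{1, u}) o_{f_k}) = \langle a_{\varphi(f_k)}^d\rangle \neq 1$, the fold-along-an-edge move of Section 4.4 (see Claim~\ref{claim:coverlikefold}) applies to $f_k$ and produces a marked morphism $((\mathbb{B}_2, u_0), \varphi_2, \mathcal{T}_2)$. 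Since both moves preserve the fundamental group of the graph of groups as well as the underlying graph of $\mathbb{B}$, they preserve the $c$-complexity, so this triple lies in $\Omega_{\min}(\mathcal{T}_0)$. Furthermore $|\mathbb{B}_2|_c = |\mathbb{B}|_c$ while the new edge group $B_{2, f_k}$ has image of index $d$ in $A_{\varphi(f_k)}$, so $c_E(\varphi_2) = c_E(\varphi) - (n - d) < c_E(\varphi)$, which forces $d((\mathbb{B}_2, u_0), \varphi_2, \mathcal{T}_2) < d\M$ lexicographically.

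The main obstacle, which I address last, is showing that $\varphi_2$ fails to be vertex injective. I will exhibit a non-trivial element of $\ker(\varphi_{2, u'})$ at the vertex $u' := \omega(f_k)$, which is distinct from $u$ since $A$ has no edge loops. After the fold, $B_{2, u'} = B_{u'} * \langle b'_{f_k}\rangle$ is a genuine free product; the old peripheral element $\omega_{f_k}(b_{f_k}) \in B_{u'}$ remains non-trivial because $\omega_{f_k}$ is injective. Using the graph-of-groups compatibility diagram one has $\varphi_{2, u'}(\omega_{f_k}(b_{f_k})) = t_{f_k}^{-1} \omega_{\varphi(f_k)}(a_{\varphi(f_k)}^n) t_{f_k}$ and $\varphi_{2, u'}(b'_{f_k}) = t_{f_k}^{-1} \omega_{\varphi(f_k)}(a_{\varphi(f_k)}^d) t_{f_k}$, so the element $\omega_{f_k}(b_{f_k})^d \cdot (b'_{f_k})^{-n}$ has image $t_{f_k}^{-1} \omega_{\varphi(f_k)}(a_{\varphi(f_k)}^{nd - dn}) t_{f_k} = 1$. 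It is a reduced word of length two in the free product $B_{u'} * \langle b'_{f_k}\rangle$ with both syllables non-trivial (since $d, n > 0$ and both generators have infinite order), hence non-trivial in $B_{2, u'}$. Setting $((\overline{\mathbb{B}}, \overline{u}_0), \overline{\varphi}, \overline{\mathcal{T}}) := ((\mathbb{B}_2, u_0), \varphi_2, \mathcal{T}_2)$ completes the proof.
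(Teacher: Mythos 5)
Your overall route is the same as the paper's: unfold along the edge $f_k$ witnessing the obvious relation, fold along it again so that the new edge group maps onto the larger subgroup $\langle a_{\varphi(f_k)}^{d}\rangle$, and locate the failure of vertex injectivity at $\omega(f_k)$. Your explicit kernel element $\omega_{f_k}(b_{f_k})^{d}\,(b'_{f_k})^{-n}$, reduced of length two in the free product $B_{u'}\ast\langle b'_{f_k}\rangle$, is a correct (and welcome) fleshing-out of the paper's one-line assertion that the vertex homomorphism at $\omega(f)$ is not injective after the fold, and your complexity bookkeeping ($|\B_2|_c=|\B|_c$, $c_E$ drops by $n-d$) matches the paper's.

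There is, however, one genuine gap: you assert that the fold-along-an-edge move of Section 4.4 ``applies to $f_k$'' solely because $\alpha_{\varphi(f_k)}^{-1}(o_{f_k}^{-1}\varphi_{1,u}(B_{1,u})o_{f_k})\neq 1$. That move also requires the vertex homomorphism $\varphi_{1,u}=\varphi_u|_{\overline{B_u}}$ to be injective: the element $b'_{f_k}$ is defined as the \emph{unique} preimage of $o_{f_k}\alpha_{\varphi(f_k)}(a_{\varphi(f_k)}^{d})o_{f_k}^{-1}$, and Claim~\ref{claim:coverlikefold} verifies condition (2.ii) at $u$ using precisely this injectivity and the orbifold-covering structure it supplies. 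Nothing in the definition of an obvious relation forces $\varphi_u$ to be injective on the free factor $\overline{B_u}=C_u\ast_{g\in \st_1(u,\B)-\{f_k\}}\alpha_g(B_g)$, so a case distinction is needed, as in the paper: if $\varphi_{1,u}$ is not injective, the unfolded morphism $((\B_1,u_0),\varphi_1,\mathcal{T}_1)$ already satisfies (i) and (ii), since it lies in $\Omega_{\text{min}}(\mathcal{T}_0)$ by Lemma~\ref{lemma:unfold} and $|\B_1|_c=|\B|_c-1$ makes its $d$-complexity strictly smaller; only when $\varphi_{1,u}$ is injective does your fold-and-kernel argument take over. With that case added, your proof coincides with the paper's.
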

\begin{proof}
By definition, the decorated group $(B_u, \varphi_u, \{\alpha_f(B_f)\}_{f\in \st_1(u, \B)})$ has an obvious relation if the   following two conditions  hold:
(1) $B_u=C_u\ast_{g\in \st_1(u, \B)}  \alpha_g(B_g)$  for some subgroup $C_u\leq B_u$ such that $C_u\cap \ker(\varphi_u)=1$,  and  (2) there is some edge  $f$ in $ \st_1(u, \B)$ with $e:=\varphi(f)$  such that 
$$ 0<|A_e : \alpha_e^{-1}(o_f^{-1}\varphi_x(B_x')o_f)| <| A_e :\varphi_f(B_f)|.$$
where $B_u':= C_u \ast_{g\in \st_1(u, \B)-\{f\}}   \alpha_g(B_g)  \leq B_u$.

Let  $((\B', u_0), \varphi', \mathcal{T}')$ be the marked   morphism   obtained  from $\M$  by unfolding along the edge $f$.  Thus, in $\B'$,  the edge $f$ has trivial group and the vertex $u$ has group $$B_u':= C_u \ast_{g\in \st_1(u, \B)-\{f\}}   \alpha_g(B_g).$$ 
The vertex homomorphism $\varphi_u':B_u'\rightarrow A_v$ is equal to $\varphi_u\circ \iota_u$, where $\iota_u$ is the inclusion map $B_u' \hookrightarrow B_u$.

If the vertex homomorphism $\varphi_u'$  is not injective, then we put  $((\overline{\B}, \overline{u}_0), \overline{\varphi}, \overline{ \mathcal{T}}):=((\B', u_0), \varphi',  \mathcal{T}').$    Lemma~\ref{lemma:unfold} implies that   $((\overline{\B}, \overline{u}_0), \overline{\varphi}, \overline{ \mathcal{T}}) $ belongs to $\Omega_{\text{min}}(\mathcal{T}_0)$.  The $d$-complexity of   $((\overline{\B}, \overline{u}_0), \overline{\varphi}, \overline{ \mathcal{T}})$ is strictly smaller than $d\M$ as   the number of edges with nontrivial group decreases by one.

If $\varphi_u'$ is injective,  then we define  $((\overline{\B}, \overline{u}_0), \overline{\varphi}, \overline{ \mathcal{T}})$  as the marked   morphism   obtained from $(\varphi':\B'\rightarrow \A, \mathcal{T}')$  by folding along the edge $f$ (item (2) implies that this is possible). From the description of the fold we see that   the vertex homomorphism $\overline{\varphi}_{\omega(f)}$  is not injective,  and hence    the morphism  $\overline{\varphi}$ is not vertex injective.     To see that the $d$-complexity decreases,  note that   $|\overline{\B}|_c=|\B'|_c+1=|\B|_c$ and that 
 $$ c_E(\varphi)-c_E(\overline{\varphi}) = | A_e :\varphi_f(B_f)| - |A_e:\overline{\varphi}_f(\overline{B}_f)| = | A_e :\varphi_f(B_f)| -|A_e : \alpha_e^{-1}(o_f^{-1}\varphi_u(B_u')o_f)| <0.$$
\end{proof}

%---------------------------------------------------------------------

\subsection{Non-vertex injective  elements of  $\Omega_{\text{min}}(\mathcal{T}_0)$ with minimal   $d$-complexity}  In this subsection we  will show  that an element of $\Omega_{\text{min}}(\mathcal{T}_0)$ with minimal $d$-complexity yields a special almost orbifold over $\mathcal{O}$ and a generating tuple  that is mapped onto a tuple   Nielsen equivalent to $\mathcal{T}_0$.  Lemma~\ref{lemma:nonvertexinj} implies that the set 
$$\Omega_{\text{min}}^{*}(\mathcal{T}_0):=  \{((\B', u_0'), \varphi', \mathcal{T}')  \in \Omega_{\text{min}}(\mathcal{T}_0) \ | \  \varphi' \text{  is not vertex injective}\}$$
is non-empty.   Choose an element $\M$   in $\Omega_{\text{min}}^{\ast}(\mathcal{T}_0)$   that has  minimal $d$-complexity, i.e.  
$$d\M \leqslant d((\B', u_0'), \varphi', \mathcal{T}') \ \ \text{ for all } \ \ ((\B', u_0'), \varphi', \mathcal{T}')\in  \Omega_{\text{min}}^{*}(\mathcal{T}_0).$$  
As $\varphi:\B\rightarrow \A$ is not vertex injective, there must be  at least one vertex $u$ such that the vertex homomorphism $\varphi_{u}:B_{u}\rightarrow A_{v}$ is not injective,  where $v:=\varphi(u)$.   The next lemma  combined with    condition (2.iii) of Definition~\ref{def:marked},    imply that   $\varphi$ does not satisfy   condition (F0) of Definition~\ref{def:folded} only at  the vertex $u$.  In fact,  if $\varphi_{u'}$ is not injective,  then $\st_1(u' , \B)$ is non-empty and the decorated group $(B_{u'}, \varphi_{u'}, \{\alpha_f(B_f)\}_{f\in \st_1(u', \B)})$ is isomorphic to a strongly collapsible decorated group over $A_{\varphi(u')}$. 
\begin{lemma}{\label{lemma:noncoll}}
If $w$ is a vertex of $\B$ distinct from $u$ with $\st_1(w, \B)\neq \emptyset$,   then  $(B_w, \varphi_w, \{\alpha_g(B_g)\}_{g\in \st_1(w, \B)})$  is not isomorphic to a strongly collapsible decorated group over $A_{\varphi(w)}$.  
\end{lemma}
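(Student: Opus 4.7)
My plan is to prove Lemma \ref{lemma:noncoll} by contradiction, using the unfolding construction and the minimality of the $d$-complexity of $\M$ within $\Omega_{\text{min}}^{*}(\mathcal{T}_0)$.

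Suppose, for contradiction, that there exists a vertex $w \neq u$ of $\B$ with $\st_1(w, \B) \neq \emptyset$ such that the decorated group $(B_w, \varphi_w, \{\alpha_g(B_g)\}_{g \in \st_1(w, \B)})$ is isomorphic to a strongly collapsible decorated group over $A_{\varphi(w)}$. Since $\st_1(w, \B)$ is non-empty, I may pick some edge $g \in \st_1(w, \B)$ and perform the unfolding move along $g$ described before Lemma~\ref{lemma:unfold}. Let $((\B', u_0), \varphi', \mathcal{T})$ denote the resulting marked morphism.

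By Lemma~\ref{lemma:unfold}, the marked morphism $((\B', u_0), \varphi', \mathcal{T})$ belongs to $\Omega_{\text{min}}(\mathcal{T}_0)$ and satisfies $|\B'|_c = |\B|_c - 1$. The key observation is that the unfolding move at $w$ only modifies the vertex group $B_w$ (replacing it with the strongly collapsible factor complement) and the edge group $B_g$ (trivializing it), leaving all other vertex groups, vertex homomorphisms, edge groups and edge homomorphisms unchanged. In particular, since $u \neq w$, the vertex homomorphism $\varphi_u' = \varphi_u$ is still not injective, so $\varphi'$ is not vertex injective. Therefore $((\B', u_0), \varphi', \mathcal{T}) \in \Omega_{\text{min}}^{*}(\mathcal{T}_0)$.

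Comparing the $d$-complexities, we have $d((\B', u_0), \varphi', \mathcal{T}) = (|\B|_c - 1, c_E(\varphi'))$, which, being lexicographically smaller in the first coordinate, is strictly smaller than $d\M = (|\B|_c, c_E(\varphi))$. This contradicts the minimality of $d\M$ among elements of $\Omega_{\text{min}}^{*}(\mathcal{T}_0)$, completing the proof. I do not anticipate any real obstacle here: the argument is an immediate application of the unfolding machinery already established, and the only thing to verify is that modifying the decoration at $w$ does not disturb the non-injectivity witnessed at $u$, which is immediate from the locality of the move.
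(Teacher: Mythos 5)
Your argument is essentially the paper's proof: unfold along an edge of $\st_1(w,\B)$, observe that the non-injectivity of $\varphi_u$ is untouched because the move is local to $w$, and contradict the minimality of the $d$-complexity via $|\B'|_c=|\B|_c-1$. The one detail you elide is that the hypothesis only gives a decorated group \emph{isomorphic} to a strongly collapsible one, whereas the unfolding move as defined requires the splitting $B_w = C_w \ast_{g\in \st_1(w,\B)}\alpha_g(B_g)$ on the nose; as in the paper, one first applies auxiliary moves of type A2 to the edges in $\st_1(w,\B)$ (which stay inside $\Omega_{\text{min}}(\mathcal{T}_0)$ and preserve the $d$-complexity) to arrange this, after which the unfolding is literally applicable.
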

\begin{proof}
We will show that if the result does not hold, then there is a marked morphism in  $\Omega_{\text{min}}^{*}(\mathcal{T}_0)$ with $d$-complexity smaller than $d\M$, which is a contradiction.

Suppose that the lemma does not hold. Thus   there is some vertex $w\neq u$ such that $\st_1(w, \B)\neq \emptyset$  and  the decorated group $(B_w, \varphi_w, \{\alpha_g(B_g)\}_{g\in \st_1(w, \B)})$ is isomorphic to a strongly  collapsible decorated group over $A_{\varphi(w)}$.  After applying auxiliary moves of type A2 to the edges in $\st_1(w, \B)$, we  can assume that  $(B_w, \varphi_w, \{\alpha_g(B_g)\}_{g\in \st_1(w, \B)})$ is strongly collapsible, and therefore we can unfold along any edge of $\st_1(w, \B)$. 

Let $f$ be an arbitrary edge in $\st_1(w, \B)$. The marked morphism $((\B', u_0), \varphi', \mathcal{T}')$  obtained from $\M$ by  unfolding  along  $f$  belongs to $ \Omega_{\text{min}}(\mathcal{T}_0)$.  As the vertex homomorphism $\varphi_{u}:B_{u}\rightarrow A_v$ does not change in the unfolding move, we see that  $\varphi':\B'\rightarrow \A$ is not vertex injective, and so     $((\B', u_0), \varphi', \mathcal{T}')$ belongs to $\Omega_{\text{min}}^{*}(\mathcal{T}_0)$. To finish the proof observe that  $d$-complexity of  $((\B', u_0), \varphi', \mathcal{T}')$  is strictly smaller than $d\M$ as $|\B'|_c=|\B|_c-1$.    
\end{proof}

The previous lemma  combined with  condition (2.ii),  imply that,  for all vertices  $w \in VB$ with $w\neq u$  and  $\st_1(w, \B)\neq \emptyset$,   the decorated group $(B_{w}, \varphi_{w}, \{\alpha_{f}(B_{f})\}_{f\in \st_1(w, \B)})$   is isomorphic  to the decorated group   induced by an orbifold covering $\eta_w':\mathcal{O}_w'\rightarrow \mathcal{O}_{\varphi(w)}$  of finite degree.   In particular, the morphism  $\varphi:\B\rightarrow \A$ is locally surjective at all such vertices of $\B$.

\smallskip

Now we turn our attention to the vertex $u$, where the morphism  $\varphi$ fails to be vertex injective. We will show that the minimal $d$-complexity of $\M$ forces the  decorated group $(B_u, \varphi_u,\{\alpha_f(B_f)\}_{f\in \st_1(u, \B)})$ at $u$ to be isomorphic to one  that is induced by a special almost orbifold covering of the vertex orbifold $\mathcal{O}_v$.   
\begin{lemma}{\label{lemma:almostcovering}}
There exists  a special   almost orbifold covering $\eta_{u}':\mathcal{O}_{u}'\rightarrow\mathcal{O}_{v}$ with $\partial \mathcal{O}_{u}'=\delta_{u,1}'\cup \ldots \cup \delta_{u, n_{u}}'\cup \delta_{u}'$ ($\delta_{u}'$ is the exceptional boundary component of $\mathcal{O}_{u}'$) such that 
$$ (B_{u}, \varphi_{u}, \{\alpha_{f}(B_{f})\}_{f\in \st_1(u, \B)}) \cong (\pi_1^o(\mathcal{O}_{u}'), (\eta_{u}')_{\ast}, \{C_{u,j}'\}_{1\leqslant j\leqslant n_{u}}).$$
In particular, $\varphi$ is locally surjective at $u$. 
\end{lemma}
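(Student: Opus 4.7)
The plan is to apply Proposition~\ref{proposition:1} to the decorated group $(B_u,\varphi_u,\{\alpha_f(B_f)\}_{f\in\st_1(u,\B)})$ at the vertex $u$, which is strongly collapsible by condition (2.iii) of Definition~\ref{def:marked} and whose vertex homomorphism $\varphi_u:B_u\to A_v\cong\pi_1^o(\mathcal{O}_v)$ is not injective. Proposition~\ref{proposition:1} therefore leaves two possibilities, ($\alpha$) and ($\beta$), and the double minimality of $\M$ inside $\Omega_{\text{min}}^{\ast}(\mathcal{T}_0)$ is used to exclude every outcome except the desired one.

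First I will rule out case ($\alpha$). Suppose the decoration at $u$ projects onto a strongly collapsible decorated group $(H,\lambda,\{H_k\}_{k\in K})$ satisfying one of ($\alpha.1$), ($\alpha.2$), ($\alpha.3$). The partial vertex morphism that replaces $(B_u,\varphi_u,\{\alpha_f(B_f)\})$ by $(H,\lambda,\{H_k\})$ produces a marked morphism $((\B',u_0),\varphi',\mathcal{T}')\in\Omega(\mathcal{T}_0)$ whose underlying graph and edge data agree with those of $\B$, so its $d$-complexity equals $d\M$. If ($\alpha.3$) holds, Lemma~\ref{lemma:pvm} gives $c((\B',u_0),\varphi',\mathcal{T}')<c\M$, contradicting the $c$-minimality of $\M$. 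Otherwise the $c$-complexity is unchanged, so $((\B',u_0),\varphi',\mathcal{T}')\in\Omega_{\text{min}}(\mathcal{T}_0)$; then case ($\alpha.1$) contradicts Corollary~\ref{cor:reduceedges}, and case ($\alpha.2$) combined with Corollary~\ref{cor:decreased} produces an element of $\Omega_{\text{min}}^{\ast}(\mathcal{T}_0)$ of $d$-complexity strictly less than $d\M$, contradicting the $d$-minimality of $\M$.

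Consequently, case ($\beta$) holds, providing a special almost orbifold covering $\eta_u':\mathcal{O}_u'\to\mathcal{O}_v$ and an isomorphism between the decoration at $u$ and the decorated group obtained from the one induced by $\eta_u'$ by adjoining a finite cyclic subgroup $g\langle s_x^d\rangle g^{-1}$, for some divisor $d$ of the order $p$ of the exceptional point $x$. The principal remaining obstacle is to show that $d=p$, so that in fact no subgroup is adjoined. The plan here is to argue that if $d<p$ the extra finite cyclic free factor contributes one extra unit to both $\rk(B_u)$ and $\text{tn}(B_u)$, and that one can then exhibit a strongly collapsible decorated group over $A_v$ of strictly smaller rank onto which $(B_u,\varphi_u,\{\alpha_f(B_f)\})$ projects, after suitably readjusting the edge elements $o_f^{\varphi}$ for $f\in\st_1(u,\B)$ so that $\varphi_u$ factors through this projection; a partial vertex morphism with this target, together with Lemma~\ref{lemma:pvm}, then contradicts the $c$-minimality of $\M$ and forces $d=p$.

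Once $d=p$, the desired isomorphism $(B_u,\varphi_u,\{\alpha_f(B_f)\})\cong(\pi_1^o(\mathcal{O}_u'),(\eta_u')_{\ast},\{C_{u,j}'\}_{1\leq j\leq n_u})$ holds. Local surjectivity of $\varphi$ at $u$ is then an immediate consequence: each edge $e\in\st(v,A)$ encodes a boundary component $\delta_{i_e}$ of $\mathcal{O}_v$ whose preimages under $\eta_u'$ are exactly the non-exceptional boundary components $\delta_{u,j}'$ of $\mathcal{O}_u'$ mapping to $\delta_{i_e}$, which via the isomorphism correspond bijectively to the edges in $\st_1(u,\B)\cap\varphi^{-1}(e)$. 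The standard theory of orbifold coverings applied to the restriction of $\eta_u'$ away from the exceptional disk then yields the required double coset decomposition of $A_v$.
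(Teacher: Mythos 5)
Your treatment of case ($\alpha$) is correct and follows the paper's argument: replace the decoration at $u$ via a partial vertex morphism, use $c$-minimality together with Lemma~\ref{lemma:pvm} to kill ($\alpha.3$), Corollary~\ref{cor:reduceedges} to kill ($\alpha.1$), and Corollary~\ref{cor:decreased} plus $d$-minimality to kill ($\alpha.2$). (Two small omissions there: the paper first applies A2 moves so that the decoration at $u$ is actually strongly collapsible, not merely isomorphic to one, and it notes separately that Proposition~\ref{proposition:1} only covers the orientable case.)

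The genuine gap is in your argument that $d=p$, i.e.\ that no finite subgroup is actually adjoined. You propose to show that when $d<p$ the decorated group $(\pi_1^o(\mathcal{O}_u')\ast g\langle s_x^d\rangle g^{-1},\lambda,\{C'_{u,j}\})$ projects onto a strongly collapsible decorated group over $A_v$ of strictly smaller rank, and then to invoke Lemma~\ref{lemma:pvm}. No such projection exists in general: a projection must be an epimorphism $\sigma$ with $\lambda\circ\sigma=\varphi_u$ respecting the peripheral subgroups, and the adjoined factor $g\langle s_x^d\rangle g^{-1}$ is carried injectively by $\lambda$, so it cannot simply be collapsed. Indeed, if such a projection always existed, case ($\beta$) of Proposition~\ref{proposition:1} with $d<p$ would already be subsumed by case ($\alpha.3$), which it is not --- the exclusion of $1\leqslant d<p$ is \emph{not} a local property of the decorated group at $u$ but depends on the generating tuple $\mathcal{T}$. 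The paper's argument is of a different nature: by Grushko's theorem $\mathcal{T}$ is Nielsen equivalent to $\mathcal{T}'\oplus([\gamma\, gs_x^{rd}g^{-1}\gamma^{-1}])$ with $\mathcal{T}'$ generating the complementary factor; since the exceptional boundary element of $\pi_1^o(\mathcal{O}_u')$ maps to $gs_x^kg^{-1}$, one performs the Euclidean algorithm (Nielsen moves of type T3 against an element of $\langle \mathcal{T}'\rangle$) to replace $s_x^{rd}$ by $s_x^{p'}$ with $0\leqslant p'<k$. If $p'=0$ the rank of $\B$ drops, contradicting $c$-minimality; if $p'>0$ one builds a new marked morphism with adjoined subgroup $\langle s_x^q\rangle$ of index $q<k$, and Lemma~\ref{lemma:adjfinite} then shows the resulting decorated group folds peripheral subgroups or has an obvious relation, contradicting Corollaries~\ref{cor:reduceedges} and~\ref{cor:decreased}. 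This replacement of the vertex group is not induced by any homomorphism $B_u\to B_u''$ (in general $\langle s_x^{p'}\rangle\not\leqslant\langle s_x^d\rangle$), so it cannot be phrased as a projection of decorated groups at all; your plan would need to be replaced by this tuple-based argument. The final paragraph on local surjectivity is fine once the isomorphism with the decorated group induced by $\eta_u'$ is established.
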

\begin{proof}
As  $\varphi_u:B_u\rightarrow A_v$  is not injective, it follows from    Definition~\ref{def:marked}(2.iii) that $\st_1(u, \B)$ is non-empty and that  the decorated group  $(B_u, \varphi_u, \{\alpha_{f}(B_{f})\}_{f\in \st_1(u, \B)})$ 
is  isomorphic   to a  strongly collapsible decorated group over $A_v$. Thus,  after   applying  auxiliary moves of type A2 to the edges in $\st_1(u, \B)$,  we can assume that   $(B_u, \varphi_u, \{\alpha_{f}(B_{f})\}_{f\in \st_1(u, \B)})$  is strongly collapsible.  

Suppose that the vertex orbifold is orientable, that is,  $\mathcal{O}_v$ is a small orbifold not isomorphic to a Moebius band (the case in which   $\mathcal{O}_v$ is isomorphic to a Moebius band is handled   as in case ($\beta$) below).  By Proposition~\ref{proposition:1}, one of the following holds:
\begin{enumerate}
\item[($\alpha$)] $(B_u, \varphi_u, \{\alpha_{f}(B_{f})\}_{f\in \st_1(u, \B)})$ projects onto a  strongly collapsible decorated group $(H, \lambda, \{H_k\}_{k\in K})$ over $A_v$ such that one of the following three conditions occur: 
\begin{enumerate}
\item[($\alpha.1$)]$(H, \lambda, \{H_k\}_{k\in K})$ folds peripheral  subgroups.

\item[($\alpha.2$)] $(H, \lambda, \{H_k\}_{k\in K})$  has an obvious relation.

\item[($\alpha.3$)] $\rk(H)<\rk(B_u)$ or $\rk(H)=\rk(B_u)$ and $\text{tn}(H)>\text{tn}(B_u)$.
\end{enumerate}

\item[($\beta$)] There is a special almost orbifold covering $\eta_u':\mathcal{O}_u'\rightarrow \mathcal{O}_v$ with  $\partial \mathcal{O}_{u}'=\delta_{u,1}'\cup \ldots \cup \delta_{u, n_{u}}'\cup \delta_{u}'$ ($\delta_{u}'$ is the exceptional boundary component of $\mathcal{O}_{u}'$)  such that  $(B_u, \varphi_u, \{\alpha_{f}(B_{f})\}_{f\in \st_1(u, \B)})$ is isomorphic to a decorated group  obtained from the decorated group induced by $\eta_u'$  by adjoining a finite subgroup.  
\end{enumerate} 
 
\begin{claim} 
($\alpha$) cannot occur.
\end{claim}
\begin{proof}[Proof of Claim]
We will argue that if ($\alpha$) occurs then  there is a marked  morphism  $((\overline{\B}, \overline{u}_0 ), \overline{\varphi}, \overline{\mathcal{T}})$ in $\Omega(\mathcal{T}_0)$ such that either (1) $c((\overline{\B}, \overline{u}_0 ), \overline{\varphi}, \overline{\mathcal{T}})<(n_1, n_2, n_3)=c\M$,  or  (2) the marked morphism  $((\overline{\B}, \overline{u}_0 ), \overline{\varphi}, \overline{\mathcal{T}})$ belongs to $\Omega_{\text{min}}^{\ast}(\mathcal{T}_0)$  and  has  $d$-complexity strictly smaller than $d\M$.  
This contradicts the minimality of the $c$-complexity  and  of the $d$-complexity of  $\M$.  

Thus  suppose $(\alpha$) occurs.  Let $((\B', u_0'), \varphi', \mathcal{T}')$ be the marked morphism obtained from $\M$ by replacing the (strongly collapsible) decorated group   $(B_u, \varphi_u, \{\alpha_{f}(B_{f})\}_{f\in \st_1(u, \B)})$ by the (strongly collapsible) decorated group $(H, \lambda, \{H_k\}_{k\in K})$. We know that $((\B', u_0'), \varphi', \mathcal{T}')$ lies in $\Omega(\mathcal{T}_0)$. Moreover,  we have 
$$c((\B', u_0'), \varphi', \mathcal{T}')\leqslant c\M =(n_1, n_2, n_3) \ \ \text{ and } \ \ d ((\B', u_0'), \varphi', \mathcal{T}')=d\M.$$  
If the $c$-complexity of $((\B', u_0'), \varphi', \mathcal{T}') $ is strictly smaller than $(n_1, n_2, n_3)$,   then  $((\overline{\B}, \overline{u}_0 ), \overline{\varphi}, \overline{\mathcal{T}}):= ((\B', u_0'), \varphi', \mathcal{T}')$ is  a   marked morphism that belongs to $\Omega(\mathcal{T}_0)$   as in (1).  Assume that  
$$ c((\B', u_0'), \varphi', \mathcal{T}')= c\M=(n_1, n_2, n_3)$$
so that $((\B', u_0'), \varphi', \mathcal{T}')$ belongs to $\Omega_{\text{min}}(\mathcal{T}_0)$. This assumption on $((\B', u_0'), \varphi', \mathcal{T}')$ combined with  Lemma~\ref{lemma:pvm},  imply that    case  ($\alpha.3$) can not occur.

If  ($\alpha.1$) occurs,  then Corollary~\ref{cor:decreased} imlpies that there is a marked morphism $((\overline{\B}, \overline{u}_0 ), \overline{\varphi}, \overline{\mathcal{T}})$ in $\Omega(\mathcal{T}_0)$ with 
$$c ((\overline{\B}, \overline{u}_0 ), \overline{\varphi}, \overline{\mathcal{T}}) <   (n_1, n_2, n_3), $$
and therefore (1) holds.   If ($\alpha.2$) occurs, then   Corollary~\ref{cor:decreased} implies that there is a  marked morphism  $ ((\overline{\B}, \overline{u}_0 ), \overline{\varphi}, \overline{\mathcal{T}})$ in $\Omega_{\text{min}}^*(\mathcal{T}_0)$ with 
 $d((\overline{\B}, \overline{u}_0 ), \overline{\varphi}, \overline{\mathcal{T}})<d ((\B', u_0'), \varphi', \mathcal{T}') =d\M$, i.e.  $((\overline{\B}, \overline{u}_0 ), \overline{\varphi}, \overline{\mathcal{T}})$ is a marked morphism in $\Omega_{\text{min}}^*(\mathcal{T}_0)$  such that  (2) holds. 
\end{proof}

The previous claim implies that  ($\beta$) must occur. Thus,   with the notation from Example~\ref{ex:03},  we have 
$$(B_u, \varphi_u, \{\alpha_{f}(B_{f})\}_{f\in \st_1(u, \B)}) \cong (\pi_1^o(\mathcal{O}')\ast g\langle s_x^d\rangle g^{-1}, \lambda,  \{C_{u,j'}\}_{1\leqslant j\leqslant n_u}), $$ 
where  $s_x\in \pi_1^o(\mathcal{O}_v)$ corresponds to the exceptional point $x$ of $\mathcal{O}_v$, $g\in \pi_1^o(\mathcal{O}_v)$ is such that  the element of $\pi_1^o(\mathcal{O}_u')$ represented by $\delta_u'$  is mapped by $(\eta_u')_{\ast}$ onto $gs_x^kg^{-1}$,   and $d\geqslant1$ is such that $d=|\langle s_x\rangle :\langle s_x^d\rangle |$.   The assertion of the lemma  immediately follows from: 
\begin{claim}
$d=|\langle s_x\rangle|$. Consequently, 
 $$(\pi_1^o(\mathcal{O}_u')\ast g\langle s_x^d\rangle g^{-1}, \lambda,  \{C_{u,j}'\}_{1\leqslant j\leqslant n_u})= (\pi_1^o(\mathcal{O}'), (\eta_u')_{\ast},  \{C_{u,j}'\}_{1\leqslant j\leqslant n_u}).$$ 
\end{claim}
\begin{proof}[Proof of Claim]
The proof  goes as in the previous claim, i.e. we show that   if  $1\leqslant d<|\langle s_x\rangle|$, then  there is a marked  morphism  $((\overline{\B}, \overline{u}_0 ), \overline{\varphi}, \overline{\mathcal{T}})$ in $\Omega(\mathcal{T}_0)$ such that either (1) $c((\overline{\B}, \overline{u}_0 ), \overline{\varphi}, \overline{\mathcal{T}})<(n_1, n_2, n_3)=c\M$,  or  (2)  $((\overline{\B}, \overline{u}_0 ), \overline{\varphi}, \overline{\mathcal{T}})$ belongs to $\Omega_{\text{min}}^{\ast}(\mathcal{T}_0)$  and  its   $d$-complexity strictly smaller than $d\M$.

Assume that  $1\leqslant d<|\langle s_x\rangle|$.   This implies in particular that $x$ is a cone point of $\mathcal{O}_v$.    Without loss of generality assume that the decorated group  $(B_u, \varphi_u, \{\alpha_{f}(B_{f})\}_{f\in \st_1(u, \B)})$ is not only isomorphic to but   equal to $(\pi_1^o(\mathcal{O}_u')\ast g\langle s_x^d\rangle g^{-1}, \lambda,  \{C_{u,j}'\}_{1\leqslant j\leqslant n_u}).$

Let $\B'\subseteq \B$ be the  sub-graph of groups    obtained from $\B$ by replacing the vertex group   $B_u$   by the group $ B_u' := \pi_1^o(\mathcal{O}_u')\leq B_u$.   
Then $B_u=B_u'\ast g\langle s_x^d\rangle g^{-1}$,  and  the group    $\pi_1(\B, u_0)$ splits as $  \pi_1(\B', u_0) \ast \gamma g\langle s_x^d\rangle g^{-1}\gamma^{-1},$ 
for some $\B$-path $\gamma$ contained in $\B'$  with initial vertex $\alpha(\gamma)=u_0$ and terminal vertex $\omega(\gamma)=u$.  It follows from  Grushko's Theorem   that the generating  tuple  $\mathcal{T}$ of $\pi_1(\B , u_0)$ is Nielsen equivalent to a tuple of type  $$\mathcal{T}' \oplus ([\gamma\cdot  gs_x^{rd}g^{-1}  \cdot    \gamma^{-1}]),$$  
where $\mathcal{T}'$ is a generating tuple of $\pi_1(\B', u_0)\leq \pi_1(\B, u_0)$ and the integer $r$ is coprime with   $|\langle s_x^d \rangle|=|\langle s_x\rangle|/d$. Write $rd=lk+p$ for some $0\leqslant p<k$.  We have 
\begin{eqnarray*}
\varphi_{\ast} (\mathcal{T} ) & \sim_{NE}  &  \varphi_{\ast} (\mathcal{T}')\oplus \varphi_{\ast} ([\gamma\cdot gs_x^{ r d} g^{-1} \cdot \gamma^{-1}])\nonumber \\
                                 & = & \varphi_{\ast}(\mathcal{T}') \oplus ([\varphi(\gamma)\cdot g   s_x^{r d} g^{-1}\cdot \varphi(\gamma)^{-1}])\nonumber \\
                                 & \sim_{NE} & \varphi_{\ast}(\mathcal{T}')\oplus ([\varphi(\gamma)\cdot  g  s_x^{-lk} g^{-1} \cdot  \varphi(\gamma)^{-1}] [\varphi(\gamma)\cdot g  s_x^{rd} g^{-1} \cdot \varphi(\gamma)^{-1}])\nonumber \\
                                 & = & \varphi_{\ast}(\mathcal{T}') \oplus([\varphi(\gamma)\cdot g s_x^{p}  g^{-1}\cdot \varphi (\gamma)^{-1}]) \nonumber  
\end{eqnarray*}

Let $q=|\langle s_x\rangle :\langle s_x^p\rangle |$. Thus  we have    $\langle s_x^p\rangle= \langle s_x^q\rangle\leq A_v$.  Define $\B''$ as the  graph of groups obtained from $\B$  by replacing the vertex group  $B_u=\pi_1^o(\mathcal{O}_u') \ast \langle gs_x^dg^{-1}\rangle $ by the group  $B_u'':=\pi_1^o(\mathcal{O}_u') \ast \langle gs_x^qg^{-1}\rangle$.  Then    $\B'$  is a sub-graph of groups  of $\B''$, and the group  $\pi_1(\B'', u_0)$ splits as 
$$\pi_1(\B'', u_0)=\pi_1(\B', u_0) \ast \gamma \langle gs_x^q g^{-1}\rangle   \gamma^{-1}.$$

Let $ \varphi'':\B''\rightarrow \A$ be the morphism   obtained from $\varphi$ by replacing the vertex homomorphism  $\varphi_u $ by the homomorphism $\varphi_u'':B_u''\rightarrow A_v$ induced by the  homomorphism  $B_u'\hookrightarrow  B_u\xrightarrow{\varphi_u} A_v$ 
and   the inclusion map $g\langle s_x^q\rangle g^{-1} \hookrightarrow A_v$. Fig.~\ref{fig:foldfinite} illustrates the definition of $\varphi'':\B''\rightarrow \A$ at the vertex $u$. 
\begin{figure}[h!]
\begin{center}
\includegraphics[scale=1]{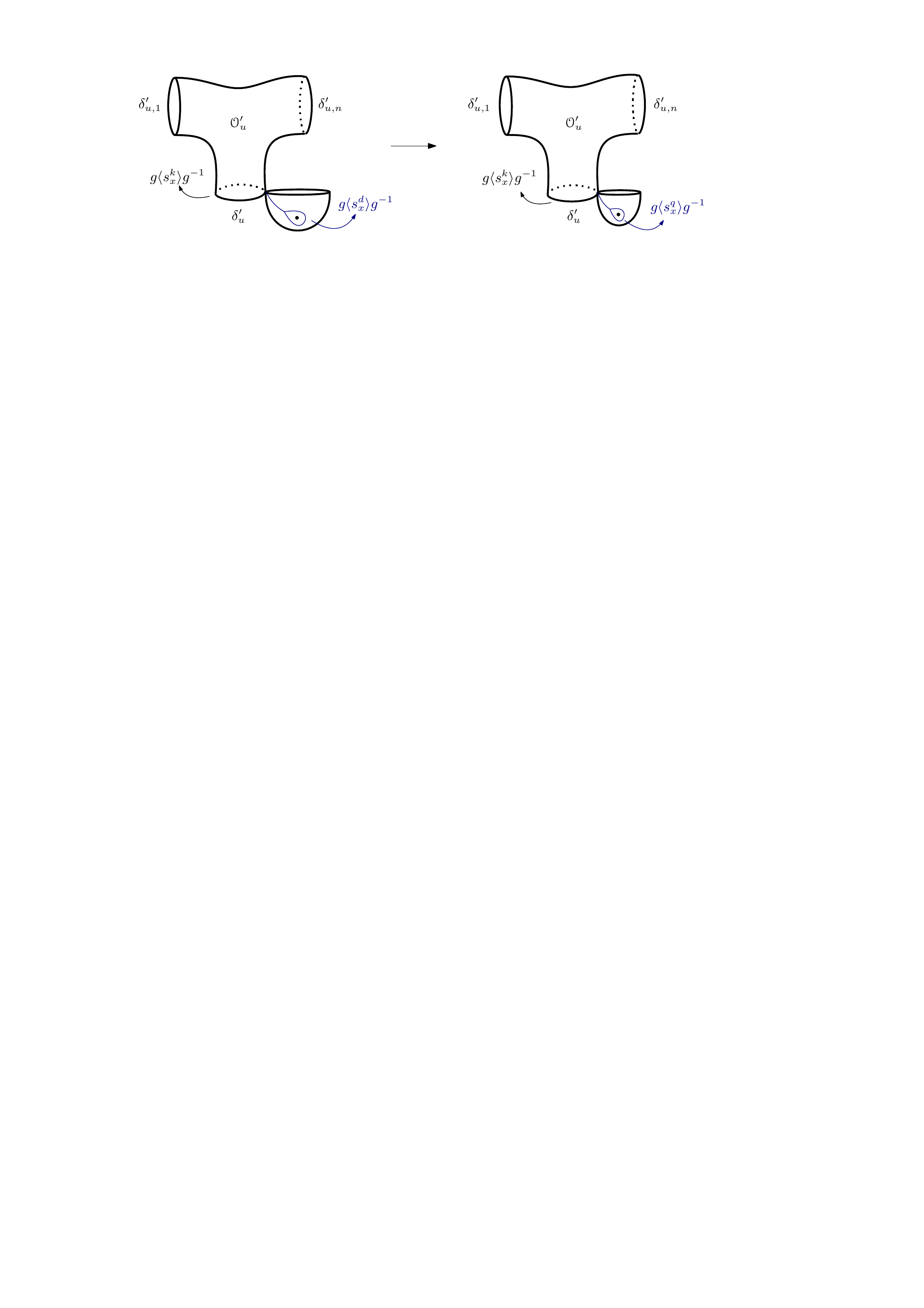}
\end{center}
\caption{The group  $B_u=\pi_1^o(\mathcal{O}_u')\ast g\langle s_x^d\rangle g^{-1}$ is replaced by $B_u''=\pi_1^o(\mathcal{O}_u')\ast g\langle s_x^q\rangle g^{-1}$.  }\label{fig:foldfinite}
\end{figure}

It is not hard to see that the morphism  $\varphi''$ satisfies conditions (2.i)-(2.iii) from Definition~\ref{def:marked}, and that  the fundamental group of  $\B'' $  splits as a free product of cyclic groups.   Moreover, the tuple $$\mathcal{T}'':=\mathcal{T}'\oplus ([\gamma \cdot  gs_x^p g^{-1}\cdot   \gamma^{-1}])$$ 
clearly generates $\pi_1(\B'', {u}_0)$ and, as    the previous paragraph shows,  
$$ \varphi_{\ast}'' (\mathcal{T}'')=\varphi_{\ast}''(\mathcal{T}') \oplus ([\varphi''(\gamma)\cdot g s_x^{p}  g^{-1}\cdot \varphi''(\gamma)^{-1}]) = \varphi_{\ast}(\mathcal{T}') \oplus ([\varphi(\gamma)\cdot g s_x^{p}  g^{-1}\cdot \varphi'(\gamma)^{-1}])$$ 
is Nielsen equivalent to $\varphi_{\ast}(\mathcal{T})$, which implies that  $ ((\B'', u_0''), \varphi'',  \mathcal{T}'')$ belongs to $\Omega(\mathcal{T}_0)$. 
As edge groups and homomorphisms do not change, we have $d((\B'', u_0''), \varphi'',  \mathcal{T}'') = d\M$.

If $q=|\langle s_x\rangle |$  then clearly $\rk(\B'')=\rk(\B)-1$,  and therefore the marked morphism  $((\overline{\B}, \overline{u}_0 ), \overline{\varphi}, \overline{\mathcal{T}}):=((\B'', u_0''), \varphi'',  \mathcal{T}'')$ lies in $\Omega(\mathcal{T}_0)$,  and  is has $c$-complexity smaller than $c\M=(n_1, n_2, n_3)$.  

If $1\leqslant q <k$, then  $((\B'', u_0''), \varphi'',  \mathcal{T}'')$ belongs to $\Omega_{\text{min}}(\mathcal{T}_0)$.  It now follows  from Lemma~\ref{lemma:adjfinite} that the decorated group 
$$(B_u'' , \varphi_u'', \{\alpha_{f }''(B_{f }'')\}_{f\in \st_1(u, \B'')})\cong  (\pi_1^o( \mathcal{O}_u')\ast \langle gs_x^q g^{-1}\rangle , \varphi_u'', \{C_{u,j}'\}_{1\leqslant j\leqslant n_u})$$
either folds peripheral subgroups or has an obvious relation. Now  Corollaries~\ref{cor:reduceedges} and \ref{cor:decreased} imply the existence of a marked morphism $((\overline{\B}, \overline{u}_0 ), \overline{\varphi}, \overline{\mathcal{T}})$ in $\Omega(\mathcal{T}_0)$ such that either (1) or (2)  occurs.      
\end{proof}
\end{proof}

\begin{lemma}{\label{lemma:nonisol}}
For all vertices of $\B$ the sets $\st_1(\cdot, \B)$ and $\st(\cdot ,B)$ coincide.  In particular,     $\st_1( \cdot , \B)\neq \emptyset.$  
\end{lemma}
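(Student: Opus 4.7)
The plan is to argue by contradiction: assume there exists an edge $f \in EB$ with $B_f = 1$, and extract a marked morphism of strictly smaller $c$-complexity in $\Omega(\mathcal{T}_0)$ or of strictly smaller $d$-complexity in $\Omega_{\text{min}}^{*}(\mathcal{T}_0)$, contradicting the choice of $\M$. Once this is established, the auxiliary claim $\st_1(u, \B) \neq \emptyset$ for every vertex $u$ follows quickly from the connectivity of $B$: since $\varphi_{\ast}$ is surjective onto $\pi_1(\A, v_0) \cong \pi_1^o(\mathcal{O})$ and the splitting of $\mathcal{O}$ forces $|VA| \geqslant 2$, we have $|VB| \geqslant 2$, so every vertex meets at least one edge, which, by the main statement of the lemma, has non-trivial group.

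Set $u = \alpha(f)$, $u' = \omega(f)$, and $e = \varphi(f)$. First I would treat the case in which $\varphi_u$ is injective, i.e. $u$ is not the distinguished non-vertex-injective vertex $u^{*}$. Because $\varphi$ is not folded (Remark~\ref{remark:notfolded}) and satisfies (F1) by Lemma~\ref{lemma:reduceedges}, Lemma~\ref{lemma:possinlefolds} forces some trivial-group edge to violate $(\overline{F2})$. After relabeling I may assume this edge is $f$ itself, so that $\alpha_e^{-1}(o_f^{-1} \varphi_u(B_u) o_f) \neq 1$. Applying the fold along $f$ described at the beginning of subsection~4.4 yields a marked morphism $((\B', u_0), \varphi', \mathcal{T}')$ in $\Omega_{\text{min}}(\mathcal{T}_0)$ with $|\B'|_c = |\B|_c + 1$; since the fold only modifies the data at $u$ and $u'$, the non-vertex-injectivity at $u^{*}$ is preserved, placing the new morphism in $\Omega_{\text{min}}^{*}(\mathcal{T}_0)$. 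The enlarged peripheral data at $u'$ now either folds peripheral subgroups or has an obvious relation, in which case Corollary~\ref{cor:reduceedges} or Corollary~\ref{cor:decreased} yields a marked morphism of strictly smaller $d$-complexity than $d\M$; otherwise one performs an unfolding via Lemma~\ref{lemma:unfold} at a suitably chosen edge to bring $|\cdot|_c$ back below $|\B|_c$ while retaining non-vertex-injectivity, again contradicting minimality.

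The remaining case is $u = u^{*}$. By Lemma~\ref{lemma:almostcovering}, the decorated group at $u^{*}$ is induced by a special almost orbifold covering $\eta'_{u^{*}}: \mathcal{O}'_{u^{*}} \to \mathcal{O}_{\varphi(u^{*})}$, with the compact boundary components of $\mathcal{O}'_{u^{*}}$ in bijection with $\st_1(u^{*}, \B)$. After auxiliary moves of type A2, the decoration becomes strongly collapsible, and the extra edge $f$ contributes an additional cyclic free factor to the augmented decoration over $A_{\varphi(u^{*})}$. A direct geometric inspection (using that $f$ corresponds to no compact boundary of $\mathcal{O}'_{u^{*}}$ but attaches trivially through $e$) shows that this augmented decoration either folds peripheral subgroups or possesses an obvious relation; Corollary~\ref{cor:reduceedges} or Corollary~\ref{cor:decreased} then produces the desired reduction and the sought contradiction.

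The main obstacle is the unfolding step in the first case: one has to locate a \emph{different} trivial-group edge that can be safely unfolded after the fold along $f$ so that $|\B|_c$ strictly drops without surrendering non-vertex-injectivity at $u^{*}$. This depends on a global analysis of the graph $B$ and of which peripheral subgroups sit between the vertices $u^{*}$ and the various $w \neq u^{*}$ where (2.ii) forces the decoration to come from a genuine orbifold covering.
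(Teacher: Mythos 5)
Your proposal has a genuine gap at its very first step. You invoke Lemma~\ref{lemma:possinlefolds} to conclude that some trivial-group edge must violate $(\overline{F2})$, but that lemma assumes $\varphi$ is vertex injective, and the marked morphism under consideration lies in $\Omega_{\text{min}}^{*}(\mathcal{T}_0)$, so $\varphi$ is \emph{not} vertex injective. The fact that $\varphi$ is not folded is already fully explained by the failure of (F0) at the distinguished vertex $u^{*}$; it gives you no information about any other edge. Consequently the existence of a trivial-group edge $f$ does not put you in a position to "fold along $f$" at all, and the rest of the first case collapses. The later steps are also unjustified: after a fold along $f$ the quantity $|\cdot|_c$ \emph{increases}, so you do not contradict minimality of the $d$-complexity unless you can subsequently decrease it, and the fallback "unfold a suitably chosen edge" is blocked by Lemma~\ref{lemma:noncoll}, which says the decorated group at every vertex other than $u^{*}$ is \emph{not} strongly collapsible — unfolding is simply unavailable there. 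You flag this as "the main obstacle," but it is not an obstacle to be negotiated; it is the sign that the route is wrong. The case $u=u^{*}$ likewise rests on an unproved "direct geometric inspection."

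The argument the paper actually uses is much more direct and does not perform any fold. By connectivity of $B$ and the fact that $\st_1(u^{*},\B)\neq\emptyset$, if any trivial-group edge exists one can find an edge $h$ with $B_h=1$ whose initial vertex $w=\alpha(h)$ satisfies $\st_1(w,\B)\neq\emptyset$. Lemmas~\ref{lemma:noncoll} and~\ref{lemma:almostcovering} identify the decorated group at every such $w$ with one induced by a finite-degree orbifold covering (or, at $u^{*}$, a special almost orbifold covering), and in particular $\varphi$ is \emph{locally surjective} at $w$: the double cosets $\varphi_w(B_w)\,o_g\,\alpha_{\varphi(g)}(A_{\varphi(g)})$ over $g\in\st_1(w,\B)$ with $\varphi(g)=\varphi(h)$ already exhaust $A_{\varphi(w)}$. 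Hence $o_h$ lies in one of them, i.e.\ $o_h=\varphi_w(b)\,o_f\,\alpha_{\varphi(f)}(c)$ for some $f\in\st_1(w,\B)$ with $\varphi(f)=\varphi(h)$ and $f\neq h$, which is exactly a violation of (F1) — contradicting Lemma~\ref{lemma:reduceedges}. Local surjectivity is the key ingredient your proposal never isolates, and without it the contradiction cannot be extracted.
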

\begin{proof}
In fact, if there is a vertex $w$  such that $\st_1(w, \B)$ is a proper subset  of $\st(w, B)$, then   there exists an edge $h\in EB$ such that ($i$) $B_h=1$,  and  ($ii$)  $\st_1(\alpha(h), \B)$ is non-empty.    Lemmas~\ref{lemma:noncoll}  and~\ref{lemma:almostcovering}  imply that  there exists an edge  $f$ in $ \st_1(\alpha(h), \B)$ with  $\varphi(f)=\varphi(h)$ such that $o_h =\varphi_{\alpha(h)}(b) o_{f}  \alpha_{\varphi(f)}(c)$, for some $b\in B_{\alpha(h)} $ and  $c\in A_{\varphi(f)}$, that is, $\varphi$ vioates   condition (F1) of Definition~\ref{def:folded}. This contradicts  Lemma~\ref{lemma:reduceedges}.  Therefore, the sets  $\st_1(w, \B)$ and $\st(w, B)$ must coincide, see Fig.~\ref{fig:full}.      
\begin{figure}[h!]
\begin{center}
\includegraphics[scale=1]{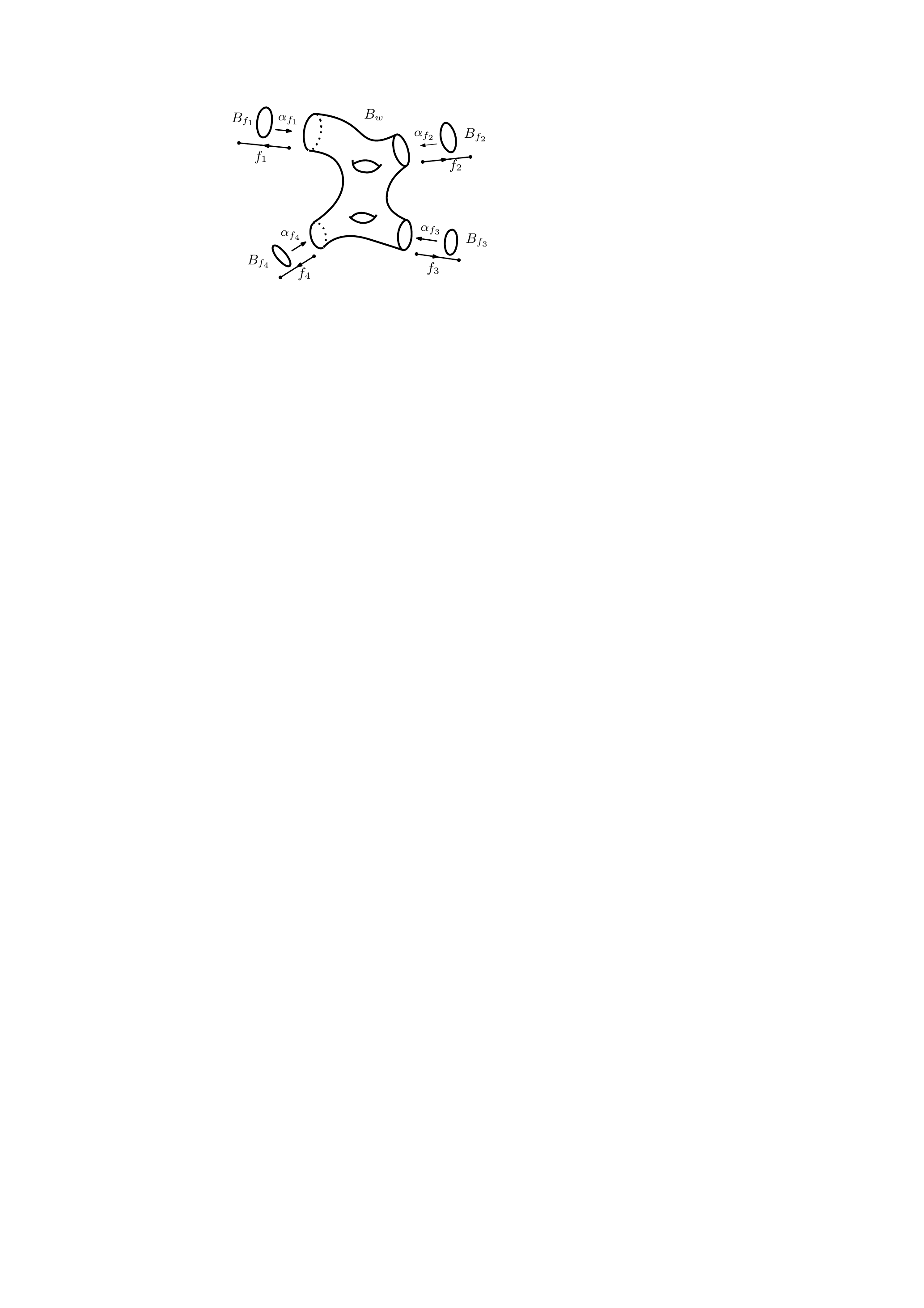}
\end{center}
\caption{$\st_1(w, \B)=\st(w,B)=\{f_1,f_2,f_3,f_4\}$.}\label{fig:full}
\end{figure}
\end{proof}

Lemmas~\ref{lemma:noncoll} and \ref{lemma:nonisol}, imply that, for any  vertex $w$ of $\B$ with $w\neq u$, there is an orbifold covering $\eta_w':\mathcal{O}_{w}'\rightarrow \mathcal{O}_{\varphi(w)}$ of finite degree   with    $\partial \mathcal{O}_w'= \delta_{w,1}', \ldots, \delta_{w, n_w}'$ such that 
$$(B_w, \varphi_w, \{\alpha_f(B_f)\}_{f\in \st_1(w, \B)})\cong (\pi_1^o(\mathcal{O}_w'), (\eta_w')_{\ast}, \{C_{w, j}'\}_{1\leqslant j\leqslant n_w}).$$  
Lemma~\ref{lemma:nonvertexinj} further implies that   $(B_u, \varphi_u(B_f)\}_{f\in \st_1(u, \B)})$ is isomorphic to $(\pi_1^o(\mathcal{O}_u'), (\eta_u')_{\ast}, \{C_{u, j}'\}_{1\leqslant j\leqslant n_u})$, 
  where $ \eta_u':\mathcal{O}_u'\rightarrow \mathcal{O}_v$ is a special almost orbifold covering with boundary $\partial \mathcal{O}_u'=\delta_{u,1}'\cup \ldots \cup \delta_{u, n_u}' \cup \delta_u'$, where $\delta_u'$ is the exceptional boundary component of $\mathcal{O}_u'$.   For each vertex $w$ of $\B$   denote  by  $\tau_w$ the bijection $\st_1(w, \B)\rightarrow \{1, \ldots, n_w\}$ given in the definition of isomorphic decorated groups.

Now we can complete the proof of Theorem~\ref{MainThm}.  Let $\overline{\mathcal{O}}$ be  the disjoint union  of the  orbifolds $\{\mathcal{O}_w' \ | \ w\in VB\}$  and let  $\overline{\eta}:\overline{\mathcal{O}}\rightarrow \mathcal{O}$ be  the orbifold map  induced by the  orbifold maps 
$$\mathcal{O}_w'\xrightarrow{\eta_w'}  \mathcal{O}_{\varphi(w)}\hookrightarrow \mathcal{O}$$ 
for $w\in VB$ (here we allow orbifolds with non-connected underlying surfaces).  Notice   that $\overline{\eta}(x)=\overline{\eta}(y)$ if, and only if, there is an edge $f$ of $\B$  such that $x\in \delta_{\alpha(f), \tau_{\alpha(f)}(f)}'$ and $y\in \delta_{\omega(f), \tau_{\omega(f)}(f^{-1})}'$.  Moreover, for any $f\in EB$, the maps 
$$\eta_{\alpha(f)}'|_{\delta_{\alpha(f), \tau_{\alpha(f)}(f)}'} \ \  \text{ and } \ \  \eta_{\omega(f)}'|_{\delta_{\omega(f), \tau_{\omega(f)}(f^{-1})}'}$$  
coincide.  Thus  the quotient $\mathcal{O}'$ of $\overline{\mathcal{O}}$ by the relation $ x\sim y$ if $\overline{\eta}(x)=\overline{\eta}(y)$   
is an  orbifold. The    the induced map  $\eta':\mathcal{O}'\rightarrow \mathcal{O}$ is  clearly a special almost orbifold covering with exceptional boundary component $\delta_u'\subset \partial \mathcal{O}_u'$. To finish the proof, observe that the morphism $\varphi:\B\rightarrow \A$ is induced by the almost orbifold covering $\eta'$, see Example~\ref{ex:induced}. Thus, there is an isomorphism $\sigma:\pi_1(\B, u_0)\rightarrow \pi^o(\mathcal{O}')$ such that   $ \eta_{\ast}' \circ \sigma =\varphi_{\ast}$.  Therefore the tuple  $\mathcal{T}':=\sigma(\mathcal{T})$ generates $\pi_1^o(\mathcal{O}')$,  and clearly $\eta_{\ast}'(\mathcal{T}')$ is Nielsen equivalent to $\mathcal{T}_0$.

%--------------Louder's thm-------------------------------------------------

\section{Proof of Corollary~\ref{cor:Louder} } 
In this section we show that any generating tuple of the fundamental group of  a sufficiently large orbifold whose cone points have order at most two, is either reducible or is Nielsen equivalent to a standard generating tuple. 
 
Let $\mathcal{T}$ be a generating tuple of $\pi_1^o(\mathcal{O})$, where $\mathcal{O}=F(p_1, \ldots, p_r)$ is a sufficiently large orbifold with either $r=0$ or $r\geqslant 1$ and $p_1=\ldots= p_r=2$. We will show that there exists an almost orbifold covering $\eta:\mathcal{O}'\rightarrow \mathcal{O}$ of degree one and a generating tuple $\mathcal{T}'$ of $\pi_1^o(\mathcal{O}')$ such that   $\eta_{\ast}(\mathcal{T}')$ is Nielsen equivalent to $\mathcal{T}$. Then, by remark~\ref{remark:deg1},    $\mathcal{T}$ is either reducible or it is Nielsen equivalent to a standard generating tuple.

By Theorem~\ref{MainThm}, there is a special almost orbifold covering $\eta:\mathcal{O}'\rightarrow \mathcal{O}$ and a generating tuple $\mathcal{T}'$ of $\pi_1^o(\mathcal{O}')$ such that $\eta_{\ast}(\mathcal{T}') \sim_{NE}\mathcal{T}$. It remains to show that $\text{deg}(\eta)=1$. Let $C\subseteq \partial \mathcal{O}'$ be the exceptional boundary component of $\mathcal{O}'$,  $x\in F$ the exceptional point of $\mathcal{O}$ and $D\subseteq F$ the exceptional disk of $\mathcal{O}$.  Since $p(x)\leqslant 2$,  it follows that the degree of the map  $\eta|_C:C\rightarrow \partial D$ is at most $2$,  and therefore   divides $p(x)$. Example~\ref{ex:almost1} implies that there exists an orbifold covering $\eta'':\mathcal{O}''\rightarrow \mathcal{O}$ of finite degree such that (i) $\mathcal{O}'\subseteq \mathcal{O}''$, (ii) $\mathcal{O}''- \text{Int}(\mathcal{O}')$ is a disk with at most one cone point of order $p(x)$, and (iii) $\eta$ is the resctriction of $\eta''$ to ${\mathcal{O}'}$. 

The fundamental group  of $\mathcal{O}''$ is obtained from $\pi_1^o(\mathcal{O}')$ by adding the relation $s^q=1$ where $s$ is the homotopy class  represented by the curve $C$ and $q=p/\text{deg}(\eta'|_{C})$.  Denote by $\mathcal{T}''$ the image of $\mathcal{T}'$ in $\pi_1^o(\mathcal{O}'')$. Thus    $\mathcal{T}''$  generates $\pi_1^o(\mathcal{O}'')$ and $\eta_{\ast}''(\mathcal{T}'')$ is Nielsen equivalent to $\mathcal{T}$.  Consequently   $\eta''$ is a $\pi_1$-surjective orbifold covering. This implies that $\mathcal{O}''=\mathcal{O}$.  Therefore,  $\mathcal{O}'=\mathcal{O}-\text{Int}(D)=(F-\text{Int}(D), p|_{F-\text{Int}(D)})$ and hence  $\eta:\mathcal{O}'\rightarrow \mathcal{O}$ has degree one.

 %--------------------------------------------------------------------------------

\section{Proof of Proposition~\ref{proposition:1}} 
This section contains the proof of Proposition~\ref{proposition:1}.  The proof is organized as follows:
\begin{enumerate}
\item In subsection 6.1  we  fix an identification between the  fundamental group of the orientable small  orbifold  $\mathcal{O}=F(p_1,\ldots, p_r)$ and the   fundamental group of a graph of groups $\AO$ with trivial edge groups and finite cyclic vertex groups.     Then  we will  consider decorated groups over $\pi_1^o(\mathcal{O})$  that are induced by morphisms $\B\rightarrow \A$  that come equipped  with two collections of $\B$-paths $\{p_j\}_{1\leqslant j\leqslant n}$ and $\{\gamma_j\}_{1\leqslant j\leqslant n}$.

\item In subsection 6.2 we consider tame decorated morphisms. These induce collapsible decorated groups over $\pi_1(\AO, v_1)\cong \pi_1^o(\mathcal{O})$. Then we find sufficient conditions that guarantee that the induced decorated group  has an obvious relation or folds peripheral subgroups.

\item    In subsection 6.3 we define the local graph of a decorated morphism. This graph will reveal   local properties of the $\B$-paths $\{p_j\}_{1\leqslant j\leqslant n}$    as well as local properties of the morphism $\B\rightarrow \AO$.

\item In subsection 6.4 we will consider the set of all decorated morphisms $\B\rightarrow \AO$  that induce decorated groups equivalent to  $(G, \eta, \{G_j\}_{j\in J})$.  We then observe that  a decorated morphism   in this set that has the   minimal number of edges,   either  projects onto a decorated group over $\pi_1(\AO, v_1)$ as in  $(\alpha.1)-(\alpha.3)$  or  induces a decorated group over $\pi_1(\AO, v_1)$  as in  $(\beta)$.
\end{enumerate}

 %----------------------------------------------------------------------------------- 

\subsection{The graph of groups associated to an orientable small orbifold.}
For the remainder of this section let $\mathcal{O}=F(p_1, \ldots, p_{r_{\mathcal{O}}})$ be an orientable small orbifold  with  boundary components  $\delta_1,\ldots, \delta_{q_{\mathcal{O}}}$, where $q_{\mathcal{O}}\geqslant 1$ and $0\leqslant r_{\mathcal{O}}\leqslant 2$. Recall that  $q_{\mathcal{O}}=1   \Rightarrow r_{\mathcal{O}}=2 $ .  Let  further  $x_1, x_2$  be points of $\mathcal{O}$ such that the order $p(x_i)$ of $x_i$ is equal to $p_i$ for $i\leqslant r_{\mathcal{O}}$ and it is equal to $1$ for $i>r_{\mathcal{O}}$.    Thus the fundamental group of $\mathcal{O}$  has the following  presentation
$$\langle s_{x_1}, s_{x_2} , t_1, \ldots, t_{q_{\mathcal{O}}} \ | \ s_{x_1}^{p(x_1)}, s_{x_2}^{p(x_2)} , s_{x_1}s_{x_2}=t_1\cdot \ldots \cdot t_{q_{\mathcal{O}}}\rangle,$$
where the generators $s_{x_1}$ and $s_{x_2}$ correspond  to the points $x_1$ and $x_2$, and  the generators $t_1, \ldots, t_{q_{\mathcal{O}}}$ correspond to the boundary components $\delta_1, \ldots, \delta_{q_{\mathcal{O}}}$.  Note that this presentation is slightly different from the standard one as  the elements $s_{x_1}$ and $s_{x_2}$ are possibly trivial.

\smallskip 

The first step to prove Proposition~\ref{proposition:1} is to associate a graph of groups $\AO$ to the orientable  small orbifold $\mathcal{O}$. $\AO$ is defined as follows:  the underlying graph $A^{\mathcal{O}}$ of $\AO$ has two vertices $v_1,v_2$ and edge-set $\{e_i^{\pm1} \ | \ 1\leqslant i\leqslant q_{\mathcal{O}}\}$ with $\alpha(e_i)=v_1$ and $\omega(e_i)=v_2$ for  all $i$.  All edge groups in  $\AO$ are trivial,  and the vertex  groups are defined as $A_{v_i}^{\mathcal{O}}=\langle s_{v_i} \ | \ s_{v_i}^{p(x_i)}\rangle\cong \mathbb{Z}_{p(x_i)}$ for  $i=1,2$.
\begin{figure}[h!]
\begin{center}
\includegraphics[scale=1]{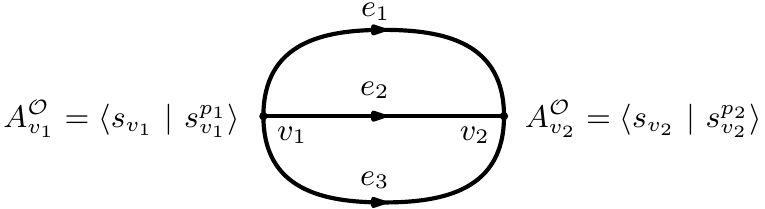}
\end{center}
\caption{The graph of   groups $\AO$ associated to   $\mathcal{O}=F(p_1,p_2)$ where $F$ is a pair of pants.}\label{fig:fundamentalgroup3}
\end{figure}

\emph{ 
For a given   integer $d$, we define $e_d:=e_s\in EA^{\mathcal{O}}$, where $s\in \{1,\ldots, q_{\mathcal{O}}\}$ is  such that $d \equiv s \ (\text{mod } q_{\mathcal{O}})$. 
 }

\smallskip

We now  define paths in $\AO$ that naturally correspond to the boundary  components of $\mathcal{O}$.   For each  $1\leqslant i\leqslant  q_{\mathcal{O}}$, let    $c_i$ be the $\AO$-path  
$$c_i:=s_{v_1}^{\varepsilon_i},e_i,s_{v_2}^{\varepsilon_i}, e_{i+1}^{-1},1$$ where $\varepsilon_1=1$ and $\varepsilon_2=\ldots =\varepsilon_{q_{\mathcal{O}}}=0$. The subgroup of $\pi_1(\AO, v_1)$ generated by the element $[c_i]$ will be denoted by $C_i$.  Note that there is an isomorphism $\theta:\pi_1^o(\mathcal{O})\rightarrow \pi_1(\AO, v_1)$  that sends $t_i$ onto the element  $[c_i]$  for $1\leqslant i\leqslant q_{\mathcal{O}}$,  sends $s_{x_1}$ onto $[s_{v_1}]$   and  $s_{x_2}$ onto $[1,e_1, s_{v_2}, e_1^{-1},1]$.

\begin{remark}
The isomorphism  $\theta:\pi_1^o(\mathcal{O})\rightarrow \pi_1(\AO, v_1)$ might be visualized as follows.   The geometric realization of the   graph $A^{\mathcal{O}}$, which we also denote by $A^{\mathcal{O}}$,  may be embedded in the underlying  surface $F$ of $\mathcal{O}$ in such a way that $A^{\mathcal{O}}$ is a strong deformation retract of $F$, and that the vertices $ v_1$ and $v_2$ are mapped to the points $x_1$ and $x_2$ respectively.  Figure~\ref{fig:strong} shows $A^{\mathcal{O}}$ embedded in $F$. In this deformation, the boundary component $\delta_i\subseteq \partial F$ ($1\leqslant i\leqslant q_{\mathcal{O}}$) is mapped onto the path $e_i e_{i+1}^{-1}$ in $A^{\mathcal{O}}$.  
\begin{figure}[h!]
\begin{center}
\includegraphics[scale=1]{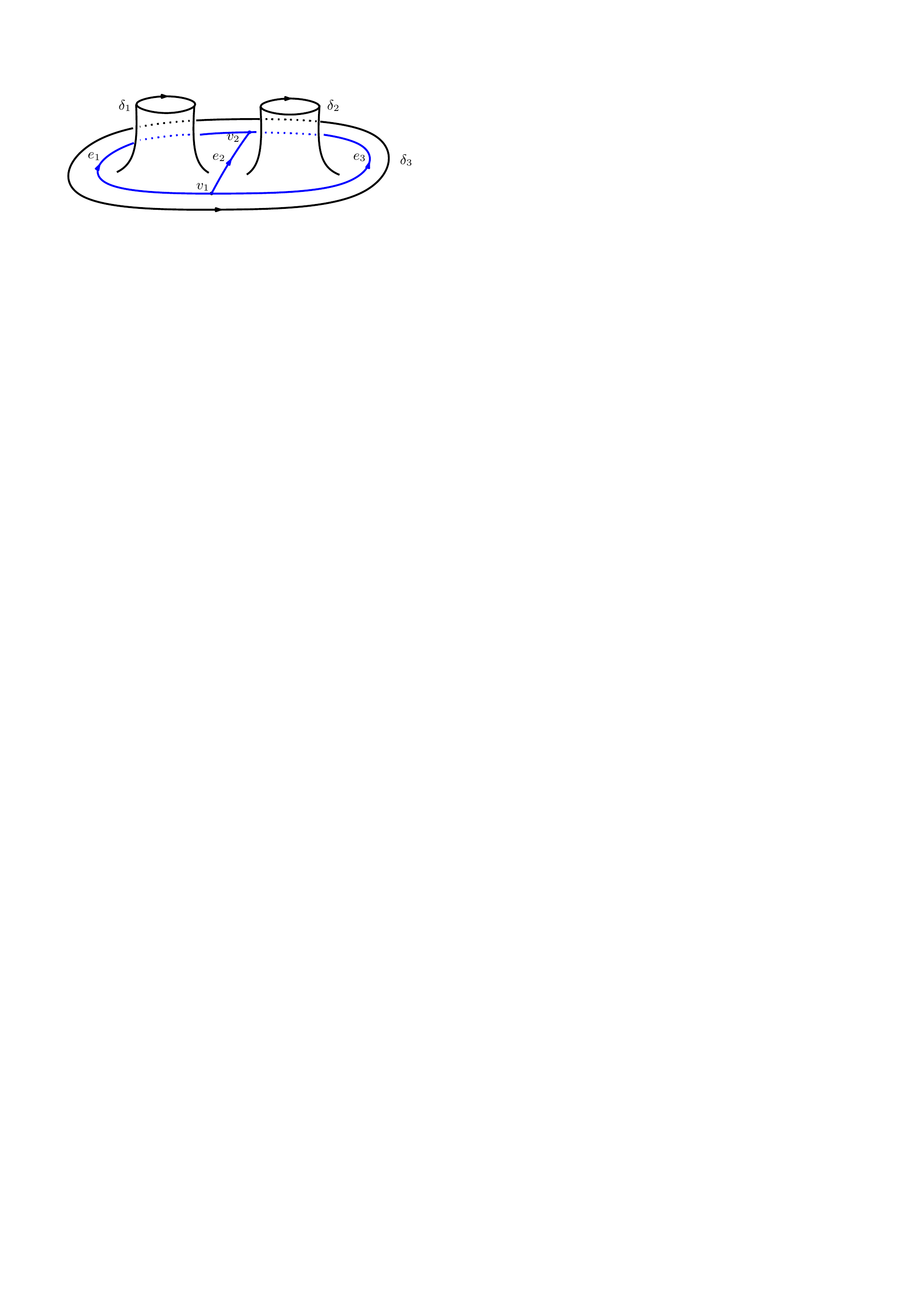}
\end{center}
\caption{The graph   $A^{\mathcal{O}}$ embedded in the underlying surface $F$ of $\mathcal{O}$.}\label{fig:strong}
\end{figure}
\end{remark}

\emph{ For a given   integer $d$, we define $c_d:=c_s\in \{c_1,\ldots, c_{q_{\mathcal{O}}}\}$,  where $s\in \{1,\ldots, q_{\mathcal{O}}\}$ is  such that $d \equiv s \ (\text{mod } q_{\mathcal{O}})$.}
 \smallskip 
 
For the remainder of the paper  we will identify $\pi_1^o(\mathcal{O})$ with $\pi_1(\AO,v_1)$ via the isomorphism ${\theta}$,  and  consequently consider decorated groups over $\pi_1(\AO, v_1)$ instead of $\pi_1^o(\mathcal{O})$.  We  will study decorated groups  over $\pi_1(\AO, v_1)$  that are induced  by  decorated morphisms defined below:  
\begin{definition}{\label{def:decmor}}
A \emph{decorated morphism  over} $\AO$ is a tuple $((\B ,u_1),\phi,  \{p_j\}_{1\leqslant j\leqslant n}, \{\gamma_j\}_{1\leqslant j\leqslant n})$ 
where
\begin{enumerate}
\item $\mathbb{B}$ is a connected graph of groups with $B_f=1$ for all $f\in EB$,  and $u_1$ is the base vertex of $\mathbb{B}$.

\item $\phi$ is a morphism from $\mathbb{B}$ to $\AO$  with $\phi(u_1)=v_1$.

\item $p_1,\ldots p_n$ are closed  paths in $\mathbb{B}$  such that, for each $1\leqslant j\leqslant n$, there is an index $i(j)\in \{1,\ldots q_{\mathcal{O}}\}$ and a cyclic permutation $p_j'$ of $p_j$, such that  
 $\phi(p_j')= a_jc_{i(j)}^{z_j}a_j^{-1} $ 
for some  positive integer $z_j$ and some element  $a_j$ of $ A_{v_1}^{\mathcal{O}}$. 

\item $ \gamma_1, \ldots, \gamma_n$ are  paths  in $\mathbb{B}$ with initial vertex $\alpha(\gamma_j)=u_1$ and terminal vertex $\omega(\gamma_j)=\alpha(p_j)$ ($=\omega(p_j)$).   
\end{enumerate} 
\end{definition}

A decorated morphism over $\AO$ naturally defines a decorated group over $\pi_1(\AO, v_0)$  as follows. By definition,   $\phi:\mathbb{B}\rightarrow \AO$ maps some cyclic permutation of $p_j$   onto the  path   
 $$ a_jc_{i(j)}^{z_j} a_j^{-1}  = a_j(s_{v_1}^{\varepsilon_{i(j)}} , e_{i(j)}, s_{v_2}^{\varepsilon_{i(j)}}, e_{i(j)+1}^{-1},1)^{z_j}a_j^{-1}$$ 
for some positive integer $z_j$ and some element $a_j$ of $A_{v_1}^{\mathcal{O}}$. More precisely, we can write $p_j= p_{j,1}p_{j,2}$  such that 
$$ \phi(p_{j,2}p_{j,1})= a_j(s_{v_1}^{\varepsilon_{i(j)}} , e_{i(j)}, s_{v_2}^{\varepsilon_{i(j)}}, e_{i(j)+1}^{-1},1)^{z_j}a_j^{-1}.$$
Then  the induced homomorphism  $\phi_{\ast}:\pi_1(\B, u_1)\rightarrow \pi_1(\AO, v_1)$ maps the element $h(p_j, \gamma_j):=[\gamma_j  p_j \gamma_j^{-1}]$  onto the element   $o_{H(p_j, \gamma_j)}[c_{i(j)}]^{z_j}o_{H(p_j, \gamma_j)}^{-1}$, where  $o_{H(p_j, \gamma_j)} = [\phi(\gamma_jp_{j,1}) a_j]$. This means that    the  subgroup  $H(p_j, \gamma_j):=\langle h(p_j, \gamma_j)\rangle\leq \pi_1(\B, u_1)$  
  is a peripheral subgroup of type $(o_{H(p_j, \gamma_j)}, i(h))$.   Therefore, 
$$(\pi_1(\mathbb{B},u_1), \phi_{\ast}, \{H(p_j,\gamma_j)\}_{1\leqslant j\leqslant n})$$ 
is a decorated group over $\pi_1(\AO,v_1)$. We will say that  $(\pi_1(\mathbb{B},u_1), \phi_{\ast}, \{H(p_j,\gamma_j)\}_{1\leqslant j\leqslant n})$ is \emph{induced by the decorated morphism  $((\B, u_0), \phi, \{p_j\}_{1\leqslant j\leqslant n}, \{\gamma_j\}_{1\leqslant j\leqslant n})$}.

\begin{example}{\label{ex:decmorph}}
Let $\mathcal{O}=\mathbb{D}^2(2,2)$.  A decorated morphism $((\B, u_1), \phi, \{p_1, p_2\}, \{\gamma_1, \gamma_2\})$ over $\AO$ is depicted in Fig.~\ref{fig:collapsible}. The paths $p_1$ and $p_2$ are given by  $p_1= 1,f_1, 1, f_2^{-1}, 1 $ and $p_2=1,f_2, 1, f_3^{-1}, 1$. The paths    $\gamma_1$ and $\gamma_2$ are both equal to the trivial path $1$  at $u_1$.   Note that $$\phi(p_1)= s_{v_1}, e_1, s_{v_2}, e_1^{-1}, 1=c_1 \ \ \ \text{ and } \ \ \ \phi(p_2)= 1, e_1, s_{v_2}, e^{-1}, s_{v_1}= s_{v_1}^{-1} c_1 s_{v_1}.$$
The fundamental group of $\B$ splits as the free product $H(p_1, \gamma_1)\ast H(p_2, \gamma_2)$.  Moreover,  $H(p_1, \gamma_1)$ is a peripheral subgroup of type $(1, 1)$ and $ H(p_2, \gamma_2)$ is a peripheral subgroup of type $(s_{v_1}^{-1}, 1)$.  
\begin{figure}[h!]
\begin{center}
\includegraphics[scale=1]{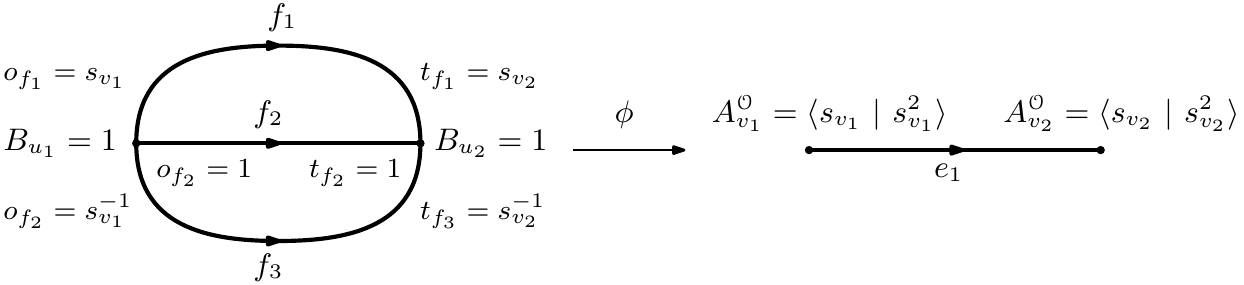}
\end{center}
\caption{A decorated morphism over $\AO$.}{\label{fig:collapsible}}
\end{figure}
\end{example}
 
\subsection*{Redecoration.}  Let  $((\mathbb{B},u_1), \phi, \{p_j\}_{1\leqslant j\leqslant n}, \{\gamma_j\}_{1\leqslant j\leqslant n})$ be a decorated morphism over $\AO$. Suppose that, for each $1\leqslant j\leqslant n$,  the path $p_j'$ is a cyclic permutation of $p_j$ and  that     $\gamma_j'$   is an arbitrary  path in $\mathbb{B}$ with initial vertex  $\alpha(\gamma_j')= u_1$ and terminal vertex  $\omega(\gamma_j')=\alpha(p_j')$.   We  define a new decorated morphism over $\AO$ as follows:
\begin{enumerate}
\item[(1)] replace the paths $p_{1}, \ldots, p_{n}$ by the paths $p_{1}', \ldots, p_{n}'$.

\item[(2)] replace the   paths $\gamma_1,\ldots, \gamma_n$ by the paths $\gamma_1', \ldots, \gamma_n'$.
\end{enumerate}

We  will call the decorated morphism $((\mathbb{B},u_1), \phi, \{p_j'\}_{1\leqslant j\leqslant n}, \{\gamma_j'\}_{1\leqslant j\leqslant n})$ a \emph{redecoration} of $\D$.  A straightforward calculation shows that $$(\pi_1(\mathbb{B},u_1),\phi_{\ast}, \{H(p_j', \gamma_j')\}_{1\leqslant j\leqslant n} ) \cong (\pi_1(\mathbb{B},u_1), \phi_{\ast}, \{H(q_j,\gamma_j)\}_{1\leqslant j\leqslant n} ).$$

\subsection*{Foldings.} The main feature of graph of groups morphisms is that they can be modified by auxiliary moves, vertex morphisms and folds without substantially  changing its properties.  We have to explain how a decorated morphism is modified in these situations. As any fold is the product of auxiliary moves, vertex morphisms and elementary folds, we need only to describe the situation for these cases. Since edge groups of $\AO$ are trivial, there is no  need to deal with auxiliary moves of type A1. For the remainder of this subsection let $((\mathbb{B},u_1), \phi, \{p_j\}_{1\leqslant j\leqslant n}, \{\gamma_j\}_{1\leqslant j\leqslant n})$ be a decorated morphism over $\AO$.

 \smallskip

 \noindent{\emph{Auxiliary move of type A0}.}   Suppose that the morphism $\phi':\mathbb{B} \rightarrow \AO$ is obtained from $\phi:\mathbb{B}\rightarrow \AO$ by an auxiliary move of type A0 applied to the vertex $u\neq u_1$ with element $a\in A_{\phi(u)}^{\mathcal{O}}$. To see that  the tuple  
 $$((\mathbb{B},u_1),\phi', \{p_j\}_{1\leqslant j\leqslant n}, \{\gamma_j\}_{1\leqslant j\leqslant n})$$  
 is a decorated morphism over $\AO$ we need to verify that condition (3) of Definition~\ref{def:decmor} holds. But this follows immediately from  the definition of an auxilairy move of type A0 since for any  closed path $p$ in $\B$ we have  
\begin{equation*}
    \phi'(p) = \begin{cases}
                \  \phi(p)            & \text{ if \ \ }  \ \alpha(p)\neq u \\
               \ a\phi(p)a ^{-1}       & \text{ if \ \ }  \ \alpha(p)=u .
           \end{cases}
\end{equation*}
Consequently $((\mathbb{B},u_1),\phi', \{p_j\}_{1\leqslant j\leqslant n}, \{\gamma_j\}_{1\leqslant j\leqslant n})$ is a decorated morphism over $\AO$. We say that it is  obtained from $((\mathbb{B},u_1),\phi , \{p_j\}_{1\leqslant j\leqslant n}, \{\gamma_j\}_{1\leqslant j\leqslant n})$ by an \emph{auxiliary move of type A0}.

It follows from Lemma~\ref{lemma:auxmoves} that $\phi_{\ast}=\phi_{\ast}'$ and hence  $((\mathbb{B},u_1), \phi', \{p_j\}, \{\gamma_j\})$ and $\D$ induce the same decorated group over $\pi_1(\AO,v_1)$.

\smallskip

\noindent{\emph{Auxiliary moves of type A2:}}   Suppose that the graph of groups morphism $\phi': \mathbb{B}\rightarrow \AO$ is obtained from $\phi$ by an auxiliary move of type A2.  We define a new  decorated morphism over $\AO$ as follows.

As the edge groups  in $\B$ are trivial, Lemma~\ref{lemma:simpleadjustment}  gives  a graph of groups isomorphism  $\sigma:\B\rightarrow \B$ such that $\phi' \circ \sigma =\phi$.  For each $j\in \{1,\ldots, n\}$, define  $p_j':=\sigma(p_j)$ and   $\gamma_j':=\sigma(\gamma_j)$. Condition (3) of Definition~\ref{def:decmor}    follows from the definition of $p_j'$ and the fact that $ \phi'\circ \sigma\phi$ . Thus  $((\B,u_1), \phi', \{p_j'\}_{1\leqslant j\leqslant n}, \{\gamma_j'\}_{1\leqslant j\leqslant n})$  is a decorated morphism over $\AO$.

We will sat that $((\B,u_1), \phi', \{p_j'\}_{1\leqslant j\leqslant n}, \{\gamma_j'\}_{1\leqslant j\leqslant n})$ is  obtained  from $\D$  by \emph{an auxiliary move of type A2}.   Notice that  the decorated group $(\pi_1(\mathbb{B}, u_1), \phi_{\ast}', \{H(p_j', \gamma_j')\}_{1\leqslant j\leqslant n})$ is isomorphic to  $(\pi_1(\mathbb{B}, u_1), \phi_{\ast}, \{H(p_j, \gamma_j)\}_{1\leqslant j\leqslant n})$.

\smallskip  
 
\noindent{\emph{Elementary folds and vertex morphisms:}}   Suppose that the graph of groups morphism $\phi': \B'\rightarrow \AO$ is obtained from $\phi$ by either an elementary fold or by a vertex morphism. Similarly as done in the case of  an   auxiliary move  of type A2,   we use  Lemmas~\ref{lemma:fold} and  \ref{lemma:vertexmorphism} to define  a new  decorated morphism 
$$((\B',u_1'), \phi', \{p_j'\}_{1\leqslant j\leqslant n}, \{\gamma_j'\}_{1\leqslant j\leqslant n})$$ 
over $\AO$   which we will say to be obtained from $\D$ by an \emph{elementary move or by a vertex morphism}, accordingly.

Note that if $((\B',u_1'), \phi', \{p_j'\}_{1\leqslant j\leqslant n}, \{\gamma_j'\}_{1\leqslant j\leqslant n})$ is obtained from $\D$ by an elementary fold, then    Lemma~\ref{lemma:fold} implies  that  
$$(\pi_1(\B, u_1), \phi_{\ast}, \{H(p_j, \gamma_j)\}_{1\leqslant j\leqslant n})\cong  (\pi_1(\B', u_1'), \phi_{\ast}', \{H(p_j', \gamma_j')\}_{1\leqslant j\leqslant n}).$$
 If $((\B',u_1'), \phi', \{p_j'\}_{1\leqslant j\leqslant n}, \{\gamma_j'\}_{1\leqslant j\leqslant n})$ is obtained from $\D$ by a vertex morphism, then  Lemma~\ref{lemma:vertexmorphism} implies   that 
 $$(\pi_1(\B, u_1), \phi_{\ast}, \{H(p_j, \gamma_j)\}_{1\leqslant j\leqslant n}) \twoheadrightarrow(\pi_1(\B', u_1'), \phi_{\ast}', \{H(p_j', \gamma_j')\}_{1\leqslant j\leqslant n}) .$$

The proof of the following result is an immediate consequence of the previous paragraphs. 
\begin{lemma}{\label{lemma:3}}
Suppose that for each pair $(r,s)\in \{1,2\}\times \{1,2\}$ a decorated morphism   
$$((\B^{r,s},u_1^{r,s}), \phi^{ r,s }, \{p_j^{ r,s }\}_{1\leqslant j\leqslant n}, \{\gamma_j^{ r,s }\}_{1\leqslant j\leqslant n})$$  
over $\AO$ is given such  that the following diagram commutes:
$$
\begin{tikzcd}
((\B^{1,1},u_1^{1,1}), \phi^{1,1}, \{p_j^{1,1}\}_{1\leqslant j\leqslant n}, \{\gamma_j^{1,1}\}_{1\leqslant j\leqslant n})  \arrow[d,"\text{ Elementary  Fold  IA/IIIA\ }"'] &    ((\B^{2,1},u_1^{2,1}), \phi^{2,1}, \{p_j^{2,1}\}_{1\leqslant j\leqslant n},  \{\gamma_j^{2,1})\}_{1\leqslant j\leqslant n}) \arrow[d,"\text{\ Elementary      Fold IA/IIIA}"]  \\ 
((\B^{1,2},u_1^{1,2}), \phi^{1,2}, \{p_j^{1,2}\}_{1\leqslant j\leqslant n}, \{\gamma_j^{1,2})\}_{1\leqslant j\leqslant n}) \arrow[r, "\text{A2}" ']                                &  ((\B^{2,2},u_1^{2,2}), \phi^{2,2}, \{p_j^{2,2}\}_{1\leqslant j\leqslant n}, \{\gamma_j^{2,2}\}_{1\leqslant j\leqslant n})
\end{tikzcd}$$ 
that is, such that the following hold:
\begin{enumerate}
\item[(a)]  $((\B^{r,2},u_1^{r,2}), \phi^{r,2}, \{p_j^{r,2}\}_{1\leqslant j\leqslant n}, \{\gamma_j^{r,2}\}_{1\leqslant j\leqslant n})$ is obtained from 
 $$((\B^{r,1},u_1^{r,1}), \phi^{r,1} , \{p_j^{r,1}\}_{1\leqslant j\leqslant n}, \{\gamma_j^{r,1}\}_{1\leqslant j\leqslant n} )$$  
by an elementary fold.

\item[(b)]  $((\B^{2,2},u_1^{2,2}), \phi^{2,2}, \{p_j^{2,2}\} , \{\gamma_j^{2,2}\})$ is obtained from 
  $$((\B^{1,2},u_1^{1,2}), \phi^{1,2}, \{p_j^{1,2}\}_{1\leqslant j\leqslant n}, \{\gamma_j^{1,2}\}_{1\leqslant j\leqslant n})$$ 
by an auxiliary move of type A2.
\end{enumerate}

Then  
$$(\pi_1(\B^{ 1,1 },u_1^{ 1,1 }), \phi_{\ast}^{ 1,1 }, \{H_j^{ 1,1 }\}_{1\leqslant j\leqslant n} ) \cong (\pi_1(\B^{ 2,1 },u_1^{ 2,1 }), \phi_{\ast}^{ 2,1 }, \{H_j^{ 2,1 }\}_{1\leqslant j\leqslant n }),$$ 
 where $H_j^{ 1,1 }:=H(p_{j}^{ 1,1 },\gamma_j^{ 1,1 })$ and $H_j^{ 2,1 }:=H(p_{j}^{ 2,1 },\gamma_j^{ 2,1 })$ for $1\leqslant j\leqslant n$. 
\end{lemma}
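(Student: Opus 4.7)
The plan is to prove the lemma by a straightforward diagram chase, using the fact that each of the three arrows appearing in the hypothesis already preserves the isomorphism class of the induced decorated group. The two vertical arrows are elementary folds and the bottom horizontal arrow is an auxiliary move of type A2; for each of them I would exhibit an isomorphism of the induced decorated groups and then compose. Concretely, I would produce a chain
\[
(\pi_1(\B^{1,1}, u_1^{1,1}), \phi^{1,1}_\ast, \{H_j^{1,1}\}) \cong (\pi_1(\B^{1,2}, u_1^{1,2}), \phi^{1,2}_\ast, \{H_j^{1,2}\}) \cong (\pi_1(\B^{2,2}, u_1^{2,2}), \phi^{2,2}_\ast, \{H_j^{2,2}\}) \cong (\pi_1(\B^{2,1}, u_1^{2,1}), \phi^{2,1}_\ast, \{H_j^{2,1}\})
\]
and invert the last isomorphism to conclude.

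For the two vertical arrows I would appeal to Lemma~\ref{lemma:fold}: for each $r \in \{1,2\}$ it supplies a graph-of-groups morphism $\sigma^r : \B^{r,1} \to \B^{r,2}$ with $\phi^{r,2} \circ \sigma^r = \phi^{r,1}$ and with $\sigma^r_\ast$ an isomorphism of fundamental groups. By the construction of the modified decorated morphism described in the paragraph on elementary folds preceding the statement, one has $p_j^{r,2} = \sigma^r(p_j^{r,1})$ and $\gamma_j^{r,2} = \sigma^r(\gamma_j^{r,1})$, so $\sigma^r_\ast$ carries the generator $h(p_j^{r,1}, \gamma_j^{r,1})$ of $H_j^{r,1}$ to the generator $h(p_j^{r,2}, \gamma_j^{r,2})$ of $H_j^{r,2}$. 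Combined with the identity $\phi^{r,2}_\ast \circ \sigma^r_\ast = \phi^{r,1}_\ast$ this is precisely an isomorphism of decorated groups in the sense of Definition~\ref{def:equivdecgroups}.

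For the bottom horizontal arrow I would argue identically using Lemma~\ref{lemma:simpleadjustment}: it supplies a graph-of-groups isomorphism $\sigma : \B^{1,2} \to \B^{2,2}$ with $\phi^{2,2} \circ \sigma = \phi^{1,2}$, and by the construction of the A2 move on decorated morphisms one sets $p_j^{2,2} := \sigma(p_j^{1,2})$ and $\gamma_j^{2,2} := \sigma(\gamma_j^{1,2})$, so the same verification applies verbatim. There is no real obstacle here; the only point that requires checking is the compatibility of the peripheral data $\{H_j^{\cdot,\cdot}\}$ with each $\sigma$, and this compatibility is built into the very definition of how a decorated morphism is modified by an elementary fold or by an A2-move, as laid out in the preceding subsection. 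Composing the three isomorphisms (and inverting the second fold) then yields the required equivalence between the $(1,1)$- and $(2,1)$-induced decorated groups, finishing the proof.
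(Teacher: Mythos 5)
Your proposal is correct and follows essentially the same route as the paper: the paper dispatches this lemma as "an immediate consequence of the previous paragraphs," which record exactly the two facts you invoke, namely that an elementary fold (via Lemma~\ref{lemma:fold}) and an auxiliary move of type A2 (via Lemma~\ref{lemma:simpleadjustment}) each preserve the isomorphism class of the induced decorated group. Chaining the isomorphisms $(1,1)\cong(1,2)\cong(2,2)\cong(2,1)$ and inverting the last one is precisely the intended argument.
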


%--------------------------------------------------------------------------------

\subsection{Tame decorated morphisms.}  
Now we will consider a special class of decorated morhisms over $\AO$, called tame. We will see that tame decorated morphisms   induce (up to isomorphism)    collapsible  decorated groups over $\pi_1(\AO, v_1)$. All definitions in this subsection  were introduced by L. Louder in the context of graphs and graphs of graphs morphisms,  see ~\cite[Definitions 3.5 and 7.1]{Louder}. Here we simply adjust Louder's definitions  to the setting of decorated  morphisms.

\smallskip 

For a given subset $S=\{f_1, \ldots, f_r\}\subseteq  EB$  we will denote by $\B_S$, or alternatively  by $\B_{f_1, \ldots, f_r}$,  the sub-graph of groups of $\B$   carried by the sub-graph $B-\{f_1^{\pm1}, \ldots, f_r^{\pm 1}\}\subseteq B$.

\begin{definition}{\label{def:collapsible}}
A decorated morphism  $((\B, u_1), \phi, \{p_j\}_{1\leqslant j\leqslant n}, \{\gamma_j\}_{1\leqslant j\leqslant n})$ over $\AO$ is called  \emph{collapsible} provided that  for each  non-empty subset $K\subseteq \{1,\ldots, n\}$ there is   an edge $f_K\in EB$  and $k\in K$ such that (1) $p_k=p_k'  (1,f_K,1)  p_k''$  and the paths $p_k'$ and $p_k''$ are contained in $\B_{f_K}$, and (2) for each   $k'\in K$ with $k'\neq k$, the path $p_{k'}$ is contained in $\B_{f_K}$. 
\end{definition}

Let  $((\B, u_1), \phi, \{p_j\}_{1\leqslant j\leqslant n}, \{\gamma_j\}_{1\leqslant j\leqslant n})$ be a collapsible decorated morphism over $\AO$. It follows from  the definition  that there  are  distinct edges $f_1,\ldots, f_n$ of $\B$,  and a bijection $\nu:\{1,\ldots, n\}\rightarrow \{1,\ldots, n\}$ such that (1) $\{f_1,\ldots, f_n\}\cap \{f_1^{-1},\ldots, f_n^{-1}\}=\emptyset$, and (2)  for  each  $1\leqslant k\leqslant  n$  the  path $p_{\nu(k)}$ decomposes   as    $p_{\nu(k)}=p_{\nu(k)}' (1,f_k,1) p_{\nu(k)}''$  with   $p_{\nu(k)}'$ and $p_{\nu(k)}''$   contained in the sub-graph of groups $\B_{f_{1},\ldots, f_k}\subseteq \B$.  Item (2)  tells us   that   the sub-graph $B-\{f_1^{\pm1}, \ldots, f_n^{\pm1}\}\subseteq B$ is connected.

Now we define a redecoration  of $((\B, u_1), \phi, \{p_j\}_{1\leqslant j\leqslant n}, \{\gamma_j\}_{1\leqslant j\leqslant n})$ that induce a  collapsible decorated group  over $\pi_1(\AO, v_1)$.   For each $1\leqslant j\leqslant n$, let $q_{j}$ be  an arbitrary cyclic permutation    of $p_{j}$ and  let  $\delta_{j}$ be an arbitrary   path contained in  $\mathbb{B}_{f_1,\ldots, f_{n}}\subseteq\mathbb{B}$   such that     $\alpha(\delta_{j})= u_1$ and    $\omega(\delta_{j})=\alpha(q_{j})$.
\begin{claim}
The decorated morphism $((\mathbb{B}, u_1), \phi, \{q_j\}_{1\leqslant j\leqslant n}, \{\delta_j\}_{1\leqslant j\leqslant n})$ induces a collapsible decorated group over $\pi_1(\AO, v_1)$. 
\end{claim}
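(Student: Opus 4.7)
The plan is to realize the induced decorated group as (isomorphic to) a collapsible one by exhibiting an explicit free product decomposition of $\pi_1(\mathbb{B},u_1)$. Let $B_0 := B - \{f_1^{\pm 1},\ldots,f_n^{\pm 1}\}$. By the discussion preceding the claim, $B_0$ is connected; it also contains $u_1$ and every vertex of $B$. Denote by $\mathbb{B}_0 := \mathbb{B}(B_0)$ the sub-graph of groups carried by $B_0$ and set $G_0 := \pi_1(\mathbb{B}_0,u_1)$. Fix a spanning tree $T$ of $B_0$ (which is also a spanning tree of $B$) and let $t_v$ be the unique reduced $\mathbb{B}$-path in $T$ from $u_1$ to $v$. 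For each $k$ set $g_k := [t_{\alpha(f_k)} (1,f_k,1) t_{\omega(f_k)}^{-1}]$ in $\pi_1(\mathbb{B},u_1)$. Since all edge groups of $\mathbb{B}$ are trivial, Bass-Serre theory applied to $T$ gives
\begin{equation*}
\pi_1(\mathbb{B},u_1) = G_0 \ast F(g_1,\ldots,g_n).
\end{equation*}

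The key observation is that for each $k$, the generator of $H(q_{\nu(k)},\delta_{\nu(k)})$ is, up to conjugation in $\pi_1(\mathbb{B},u_1)$, of the form $w_k g_k$ for some $w_k$ in the subgroup $H_{k+1} := \langle G_0, g_{k+1}, \ldots, g_n\rangle$, which coincides with $\pi_1(\mathbb{B}_{f_1,\ldots,f_k},u_1)$ under the tree $T$. Indeed, the cyclic permutation $q_{\nu(k)}$ of $p_{\nu(k)} = p_{\nu(k)}'(1,f_k,1)p_{\nu(k)}''$ contains $(1,f_k,1)$ exactly once and can be written as $q_{\nu(k)} = \beta_1 (1,f_k,1) \beta_2$ with $\beta_1, \beta_2$ contained in $\mathbb{B}_{f_1,\ldots,f_k}$. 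Since $\delta_{\nu(k)}$ lies in $\mathbb{B}_0 \subseteq \mathbb{B}_{f_1,\ldots,f_k}$, decomposing $[\delta_{\nu(k)} q_{\nu(k)} \delta_{\nu(k)}^{-1}]$ along $T$ yields an expression $u_k \cdot g_k \cdot v_k$ with $u_k, v_k \in H_{k+1}$, and conjugation by $u_k$ shows that $H(q_{\nu(k)},\delta_{\nu(k)})$ is conjugate in $\pi_1(\mathbb{B},u_1)$ to $\langle w_k g_k\rangle$ with $w_k := v_k u_k \in H_{k+1}$.

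The final step is an iterated Nielsen-type substitution on the free product, proceeding downward from $k=n$ to $k=1$, using the elementary fact that if $G = A \ast \langle b\rangle \ast B'$ with $b$ of infinite order and $a \in A$, then $G = A \ast \langle ab\rangle \ast B'$. Applied with $A$ equal to the cumulative free product $G_0 \ast \langle w_n g_n\rangle \ast \ldots \ast \langle w_{k+1} g_{k+1}\rangle$, which contains $w_k$ because $H_{k+1} = \langle G_0, w_{k+1}g_{k+1},\ldots, w_n g_n\rangle$, this yields after $n$ steps
\begin{equation*}
\pi_1(\mathbb{B},u_1) = G_0 \ast \langle w_n g_n\rangle \ast \langle w_{n-1} g_{n-1}\rangle \ast \cdots \ast \langle w_1 g_1 \rangle,
\end{equation*}
with each factor $\langle w_k g_k\rangle$ conjugate in $\pi_1(\mathbb{B},u_1)$ to $H(q_{\nu(k)},\delta_{\nu(k)})$. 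Invoking Remark~\ref{remark:collapsible} then gives that the induced decorated group is isomorphic to a collapsible one. The main technical point is identifying precisely which subgroup $w_k$ lies in; this uses the nested structure $\mathbb{B}_{f_1,\ldots,f_k} \supseteq \mathbb{B}_{f_1,\ldots,f_{k+1}}$ and the fact that $p_{\nu(k)}',p_{\nu(k)}''$ lie in $\mathbb{B}_{f_1,\ldots,f_k}$ (but not necessarily in $\mathbb{B}_0$), which controls which of the $g_j$'s can appear in $w_k$ and ensures the downward induction closes.
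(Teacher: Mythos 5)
Your argument is sound, but it follows a genuinely different route from the paper's. The paper exploits the freedom in choosing the stable letter for the missing edge $f_k$: since $\delta_{\nu(k)}q_{\nu(k)}'$ and $\delta_{\nu(k)}(q_{\nu(k)}'')^{-1}$ are paths in $\B_{f_1,\ldots,f_k}$ joining $u_1$ to $\alpha(f_k)$ and to $\omega(f_k)$, the free generator complementing $\pi_1(\B_{f_1,\ldots,f_k},u_1)$ inside $\pi_1(\B_{f_1,\ldots,f_{k-1}},u_1)$ may be taken to be $[\delta_{\nu(k)}\, q_{\nu(k)}'\,(1,f_k,1)\,q_{\nu(k)}''\,\delta_{\nu(k)}^{-1}]=h(q_{\nu(k)},\delta_{\nu(k)})$ itself, giving $\pi_1(\B_{f_1,\ldots,f_{k-1}},u_1)=\pi_1(\B_{f_1,\ldots,f_k},u_1)\ast H(q_{\nu(k)},\delta_{\nu(k)})$ on the nose in a single step, after which induction finishes. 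You instead fix tree-based stable letters $g_k$ first and then recover the peripheral subgroups by Nielsen transvections and conjugation; your bookkeeping (that $w_k\in H_{k+1}$ and that $H_{k+1}=\langle G_0, w_{k+1}g_{k+1},\ldots,w_ng_n\rangle$, so the downward induction closes) is correct, up to the harmless transposition between $w_kg_k$ and $g_kw_k$. The price of your route is that the free factors in your final decomposition are the conjugates $\langle w_kg_k\rangle=c_k\,H(q_{\nu(k)},\delta_{\nu(k)})\,c_k^{-1}$ rather than the peripheral subgroups themselves, so you conclude only that the induced decorated group is \emph{isomorphic} to a collapsible one (via Remark~\ref{remark:collapsible}), not that it is collapsible on the nose, and you do not directly obtain the intermediate splittings $\pi_1(\B_{f_1,\ldots,f_{k-1}},u_1)=\pi_1(\B_{f_1,\ldots,f_k},u_1)\ast H(q_{\nu(k)},\delta_{\nu(k)})$ that Lemma~\ref{lemma:collapsible} records and that are used later (for instance in the proof of Lemma~\ref{lemma:foldsquares}). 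This can be repaired inside your framework, since each conjugator $c_k$ lies in $H_{k+1}$, which is the free product of the factors already processed, so the factors can be conjugated into place one at a time; but the paper's choice of stable letters makes all of that bookkeeping unnecessary.
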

\begin{proof}[Proof of Claim]
In fact, condition (2) implies that  the path $q_{\nu(k)}$ can  be decomposed as  $q_{\nu(k)} =q_{\nu(k)}'(1,f_k,1)q_{\nu(k)}''$ 
with $q_{\nu(k)}'$ and $q_{\nu(k)}''$ contained in $\B_{f_1,\ldots, f_k}$.   Hence the paths  $\delta_{\nu(k)}  q_{\nu(k)}'$ and $\delta_{\nu(k)}  (q_{\nu(k)}'')^{-1}$ lie in the sub-graph of groups  $\mathbb{B}_{f_1,\ldots, f_{k}}=(\B_{f_1,\ldots, f_{k-1}})_{f_k}$.  The edge groups of $\mathbb{B}$ are trivial,  and so  $\pi_1(\mathbb{B}_{f_1,\ldots , f_{k-1}}, u_1)\leq \pi_1(\mathbb{B}, u_1)$ splits as   
$$\pi_1(\B_{f_{1},\ldots, f_{k-1}},u_1)=\pi_1(\B_{f_1,\ldots, f_k},u_1)\ast  \langle[\delta_{\nu(k)}  q_{\nu(k)}'   (1,f_k,1)  q_{\nu(k)}''  \delta_{\nu({k})}^{-1}] \rangle.$$
As  $[\delta_{\nu(k)}  q_{\nu(k)}   (1,f_k,1) q_{\nu(k)}''   \delta_{\nu(k)}^{-1}]=h(q_{\nu(k)}, \delta_{\nu(k)})$, we see that 
$$\pi_1(\B_{f_1, \ldots, f_{k-1}}, u_1)=\pi_1(\B_{f_1,\ldots, f_k}, u_1)\ast H(q_{\nu(k)}, \delta_{\nu(k)})  \ \ \text{ for all }  \ \ 1\leqslant k\leqslant n.  $$
By induction  we obtain  $\pi_1(\B, u_1)=\pi_1(\B_{f_1,\ldots, f_n}, u_1)\ast H(q_{1}, \delta_{1})\ast \ldots \ast H(q_{n}, \delta_{n})$, that is, the decorated group $( \pi_1(\B, u_1), \phi_{\ast}, \{H(q_j, \gamma_j)\}_{1\leqslant j\leqslant n})$ is collapsible. 
\end{proof}

The previous claim implies   that a collapsible decorated morphim over $\AO$  has a whole family of redecorations that  induce    collapsible decorated groups over $\pi_1(\AO, v_1)$. More precisely, we have: 
\begin{lemma}{\label{lemma:collapsible}}
If  $((\B, u_1), \phi, \{p_j\}_{1\leqslant j\leqslant n}, \{\gamma_j\}_{1\leqslant j\leqslant n})$ is a collapsible decorated morphism over $\AO$,  then there are   edges   $f_1,\ldots, f_n\in EB$  and a bijection $\nu:\{1,\ldots, n\}\rightarrow \{1,\ldots, n\}$  such that the following hold:
\begin{enumerate}
\item[(1)] $\{f_1,\ldots, f_n\}\cap \{f_1^{-1},\ldots, f_n^{-1}\}=\emptyset$.

\item[(2)] for each $1\leqslant k\leqslant n$, the path      $p_{\nu(k)}$ decomposes as  $p_{\nu(k)}=p_{\nu(k)}' (1,f_k,1) p_{\nu(k)}''$   
with $p_{\nu(k)}'$ and $p_{\nu(k)}''$ contained in  $\B_{f_{1},\ldots, f_k}\subseteq \B$.
\end{enumerate}
Furthermore,  if   $((\B, u_1), \phi, \{q_j\}_{1\leqslant j\leqslant n} , \{\delta_j\}_{1\leqslant j\leqslant n})$ is a redecoration  of $((\B, u_1), \phi, \{p_j\}_{1\leqslant j\leqslant n}, \{\gamma_j\}_{1\leqslant j\leqslant n})$ with  $\delta_{1}, \ldots, \delta_{n}$  contained in   $\B_{f_1,\ldots, f_n}$, then  for all $1\leqslant k\leqslant n$ the group   $\pi_1(\B_{f_1,\ldots, f_{k-1}}, u_1)$ splits as  
$$\pi_1(\B_{f_1,\ldots, f_{k-1}}, u_1)=\pi_1(\B_{f_1,\ldots, f_k}, u_1)\ast H(q_{\nu(k)}, \delta_{\nu(k)}).$$ 
In particular,  $\pi_1(\B, u_1)=\pi_1(\B_{f_1,\ldots, f_n},u_1)\ast H(q_{1}, \delta_{1})\ast \ldots \ast H(q_{n}, \delta_{n}).$ 
\end{lemma}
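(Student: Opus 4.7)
The plan is to construct the edges $f_1, \ldots, f_n$ and the bijection $\nu$ by iterated application of the collapsibility hypothesis, and then to derive the splitting from the elementary fact that adjoining a single trivial-group edge between two vertices of a graph of groups that already lie in a common component yields a free product with $\mathbb{Z}$. Set $K_1 := \{1, \ldots, n\}$. Applying Definition~\ref{def:collapsible} to $K_1$ produces an edge $f_1 \in EB$ and an index $\nu(1) \in K_1$ with the two properties listed there. Having constructed $\nu(1), \ldots, \nu(k-1)$ and $f_1, \ldots, f_{k-1}$, I put $K_k := \{1, \ldots, n\} \setminus \{\nu(1), \ldots, \nu(k-1)\}$ and apply Definition~\ref{def:collapsible} to $K_k$ to produce $f_k$ and $\nu(k)$. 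The key observation is that for every $j < k$ the index $\nu(k)$ still belongs to $K_j \setminus \{\nu(j)\}$, so the second clause of collapsibility at step $j$ gives $p_{\nu(k)} \subset \B_{f_j}$. It follows that $p_{\nu(k)} \subset \B_{f_1, \ldots, f_{k-1}}$, and consequently the subpaths $p_{\nu(k)}'$ and $p_{\nu(k)}''$ arising from step $k$ lie in $\B_{f_1, \ldots, f_{k-1}} \cap \B_{f_k} = \B_{f_1, \ldots, f_k}$, proving (2). For (1), observe that $p_{\nu(k)}$ traverses $f_k$ but avoids $f_1^{\pm 1}, \ldots, f_{k-1}^{\pm 1}$, hence $f_k \notin \{f_j, f_j^{-1} : j < k\}$; an easy induction on $k$ then yields the disjointness asserted in (1).

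For the splitting, I fix a redecoration with each $\delta_j$ contained in $\B_{f_1, \ldots, f_n}$. Since $q_{\nu(k)}$ is a cyclic permutation of $p_{\nu(k)}$ it uses exactly the same edges, so it decomposes as $q_{\nu(k)} = q_{\nu(k)}'(1, f_k, 1) q_{\nu(k)}''$ with both halves in $\B_{f_1, \ldots, f_k}$. Because $\delta_{\nu(k)} \subset \B_{f_1, \ldots, f_n} \subseteq \B_{f_1, \ldots, f_k}$, the concatenations $\delta_{\nu(k)} q_{\nu(k)}'$ and $\delta_{\nu(k)} (q_{\nu(k)}'')^{-1}$ are paths in $\B_{f_1, \ldots, f_k}$ connecting $u_1$ to $\alpha(f_k)$ and to $\omega(f_k)$ respectively. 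Hence $\alpha(f_k)$ and $\omega(f_k)$ lie in the connected component of $u_1$ in $\B_{f_1, \ldots, f_k}$, so $\B_{f_1, \ldots, f_{k-1}}$ is obtained from $\B_{f_1, \ldots, f_k}$ by adjoining a single edge of trivial group whose endpoints already share a component.

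By the standard description of the fundamental group of a graph of groups with trivial edge groups (equivalently, an HNN extension with trivial associated subgroups), this adjunction yields a free product with an infinite cyclic factor generated by any loop at $u_1$ traversing the new edge exactly once. The loop $\delta_{\nu(k)} q_{\nu(k)} \delta_{\nu(k)}^{-1}$ is manifestly such a loop, and its class in $\pi_1(\B_{f_1, \ldots, f_{k-1}}, u_1)$ is exactly $h(q_{\nu(k)}, \delta_{\nu(k)})$. This gives $\pi_1(\B_{f_1, \ldots, f_{k-1}}, u_1) = \pi_1(\B_{f_1, \ldots, f_k}, u_1) \ast H(q_{\nu(k)}, \delta_{\nu(k)})$, and iterating from $k = 1$ to $n$ (using that $\nu$ is a bijection in order to reindex the free factors) yields the final decomposition. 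The main subtlety to anticipate is distinguishing this \emph{loop creation} case from the \emph{component merging} case, in which adjoining $f_k$ would instead absorb the other component's full fundamental group rather than contribute an infinite cyclic free factor; the hypothesis that each $\delta_j$ lies in $\B_{f_1, \ldots, f_n}$ is precisely the device that ensures we are always in the loop case at every step of the iteration.
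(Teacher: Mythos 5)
Your proposal is correct and follows essentially the same route as the paper: the paper likewise obtains the edges $f_1,\ldots,f_n$ and the bijection $\nu$ by iterating Definition~\ref{def:collapsible} over the shrinking subsets $K_k=\{1,\ldots,n\}\setminus\{\nu(1),\ldots,\nu(k-1)\}$, and then derives the splitting from the observation that $\delta_{\nu(k)}q_{\nu(k)}'$ and $\delta_{\nu(k)}(q_{\nu(k)}'')^{-1}$ connect $u_1$ to both endpoints of $f_k$ inside $\B_{f_1,\ldots,f_k}$, so that reattaching the trivial-group edge $f_k$ contributes the infinite cyclic free factor $H(q_{\nu(k)},\delta_{\nu(k)})$. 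Your closing remark about excluding the component-merging case is exactly the role these connecting paths play in the paper's argument.
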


\begin{definition}{\label{def:squares}} Let $((\B, u_1), \phi, \{p_j\}_{1\leqslant j\leqslant n}, \{\gamma_j\}_{1\leqslant j\leqslant n})$  be a decorated morphism over $\AO$. 
\begin{enumerate}
\item[(S.1)] We say that $\D$ \emph{folds peripheral paths}  if there   exist an edge $f\in EB$ and distinct elements   $j,  k \in \{1,\ldots, n\}$ with  $i(j)=i(k) \in \{1, \ldots, q_{\mathcal{O}}\}$ such that 
$$p_{j}=p_{j}' (1,f,1)  p_{j}'' \ \ \text{ and } \ \ p_{k}=p_{k}' (1,f,1)  p_{k}'' .$$

\item[(S.2)] We say that $\D$ \emph{self-folds} if there  exist  an edge $f\in EB$ and $j\in \{1,\ldots, n\}$ such that 
$$p_j=p_j' (1,f,1)  p_j'' (1,f,1) p_j'''.$$ 
\end{enumerate}
We say that $\D$ \emph{folds squares} if it either  folds peripheral paths or  self-folds.
\end{definition}
\begin{figure}[h!]
\begin{center}
\includegraphics[scale=1]{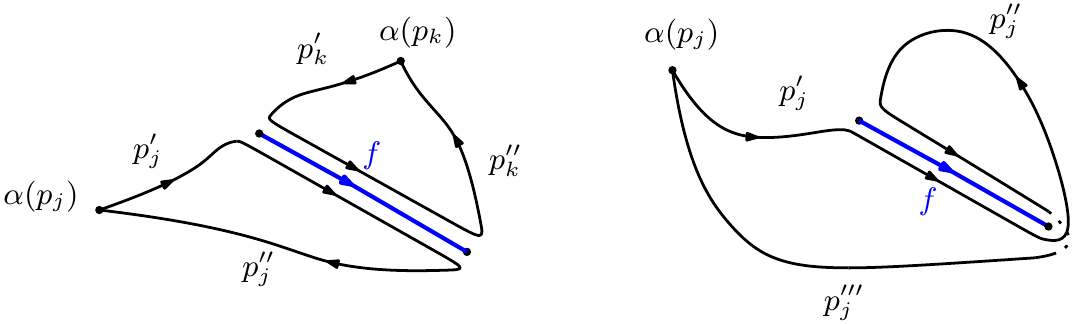}
\end{center}
\caption{Case (S.1) on the left and case (S.2) on the right.}{\label{fig:folds1}}
\end{figure}

\begin{remark}{\label{remark:6.9}}
Note that auxiliary moves  and vertex morphisms do not change the structure of the underlying graph of a graph of groups.  Thus the properties of being collapsible and of not folding squares   are clearly  preserved by these moves. 
\end{remark}

\begin{definition}{\label{def:tame}}  We say that  a decorated morphism  $((\B, u_1), \phi, \{p_j\}_{1\leqslant j\leqslant n}, \{\gamma_j\}_{1\leqslant j\leqslant n})$   over $\AO$ is   \emph{vertex injective} if the morphism  $\phi:\mathbb{B}\rightarrow\AO$ is vertex injective.  We  further say that  $((\B, u_1), \phi, \{p_j\}_{1\leqslant j\leqslant n}, \{\gamma_j\}_{1\leqslant j\leqslant n})$   is  \emph{tame} if it is  vertex injective, collapsible and   does not fold squares.
\end{definition}

We have seen that  a collapsible decorated morphism over $\AO$ induces,  up to a redecoration,   a collapsible decorated group over $\pi_1(\AO, v_1)$.  As the following lemma shows,  a decorated group over $\pi_1(\AO, v_1)$ is induced by a (tame) decorated morphim provided that  it is strongly collapsible.   
\begin{lemma}{\label{lemma:initial}}
If $(G,\eta, \{G_j\}_{j\in J})$ is a strongly collapsible decorated group over $\pi_1(\AO,v_1)$,  then there exists a tame decorated morphism  $((\mathbb{B}, u_1), \phi, \{p_j\}_{1\leqslant j\leqslant n}, \{\gamma_j\}_{1\leqslant j\leqslant n})$ over $\AO$  such that
$$(G,\eta, \{G_j\}_{1\leqslant j\leqslant n })\cong (\pi_1(\mathbb{B},u_1), \phi_{\ast}, \{H(p_j,\gamma_j)\}_{1\leqslant j\leqslant n}).$$ 
\end{lemma}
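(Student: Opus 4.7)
The plan is to build the decorated morphism $\D$ in two stages: first I would realise the injection $\eta|_{G_0}\colon G_0\hookrightarrow\pi_1(\AO,v_1)$ by a graph of groups morphism $\phi_0\colon\B_0\to\AO$, and then I would attach one \emph{private} petal to $\B_0$ for each peripheral generator $g_j$ of $G_j$. For the first stage, since $G_0\cap\ker(\eta)=1$ and $G_0$ is finitely generated, I would let $G_0$ act on the Bass--Serre tree $T$ of $\AO$ via $\eta$, take $\B_0$ to be the quotient of the minimal $G_0$-invariant subtree of $T$, and (if necessary) extend $\B_0$ by a short tree-like corridor so as to produce a base vertex $u_1$ lying over $v_1$. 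Because edge groups in $\AO$ are trivial the same is true of $\B_0$; the vertex groups of $\B_0$ are the finite cyclic stabilisers of vertices of $T$, so the canonical projection $\phi_0\colon\B_0\to\AO$ is vertex injective and realises $\eta|_{G_0}$ on $\pi_1$.

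For the second stage, each $G_j$ is necessarily infinite cyclic (because $\eta(g_j)$ has infinite order in $\pi_1(\AO,v_1)$), and I would write $\eta(g_j)=[r_jc_{i(j)}^{z_j}r_j^{-1}]$ for an explicit $\AO$-path $r_j$ from $v_1$ to $v_1$ representing $o_{G_j}$. I would then attach at $u_1$ a fresh chain of new vertices and edges (disjoint from $\B_0$ and from every previously attached petal) that subdivides the $\AO$-path $r_jc_{i(j)}^{z_j}r_j^{-1}$ letter by letter: the outgoing $r_j$-portion becomes $\gamma_j$ and ends at a new vertex $u^{(j)}$ over $v_1$, the $c_{i(j)}^{z_j}$-portion becomes a simple closed loop $p_j$ at $u^{(j)}$, and the returning $r_j^{-1}$-portion is literally $\gamma_j^{-1}$. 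All new vertex groups are declared trivial and the edge elements $o^\phi_f,t^\phi_f$ are chosen so that $\phi$ spells out these $\AO$-paths.

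Since every petal uses only fresh edges, Seifert--van Kampen for graphs of groups with trivial edge groups gives $\pi_1(\B,u_1)\cong\pi_1(\B_0,u_1)\ast H(p_1,\gamma_1)\ast\cdots\ast H(p_n,\gamma_n)\cong G_0\ast G_1\ast\cdots\ast G_n=G$, and the natural isomorphism $\sigma\colon G\to\pi_1(\B,u_1)$ obtained by sending $G_0$ to $\pi_1(\B_0,u_1)$ via the Bass--Serre identification and $g_j$ to $h(p_j,\gamma_j)$ satisfies $\phi_\ast\circ\sigma=\eta$. Taking the trivial decomposition $p_j=1\cdot p_j$ with $a_j=1$ in Definition~\ref{def:decmor}(3), the peripheral data $o_{H(p_j,\gamma_j)}=[\phi(\gamma_j)]=[r_j]=o_{G_j}$ and $i(j)$ match exactly, so the required isomorphism of decorated groups holds. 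Vertex injectivity of $\phi$ follows from that of $\phi_0$ together with triviality of the new vertex groups. Collapsibility is immediate: for any nonempty $K\subseteq\{1,\dots,n\}$ and any $k\in K$, any edge $f_K$ of the petal $p_k$ is used exactly once by $p_k$ (because the petal is a simple cycle) and not at all by the other $p_{k'}$ with $k'\in K\setminus\{k\}$ (because of the privacy of the fresh edges); the same observation rules out both (S.1) and (S.2), so $\D$ does not fold squares.

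The main obstacle will be the first stage, namely producing $\B_0$ with all the required properties in a way that plugs cleanly into the petal construction. Either a Bass--Serre argument with the minimal invariant subtree or, equivalently, a Stallings-style folding of $\eta(G_0)$ inside the graph of groups $\AO$ delivers what is needed, but one has to verify finiteness of $\B_0$ (using finite generation of $G_0$), arrange that the base vertex sits over $v_1$, and check vertex injectivity of $\phi_0$. Once $\B_0$ is in place, the petal construction is essentially formal and all the tameness conditions reduce to the privacy of the fresh edges used in each petal.
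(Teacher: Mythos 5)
Your construction is essentially identical to the paper's: the paper also takes a folded graph-of-groups realization $\B_0$ of $\eta(G_0)\cong G_0$ and wedges on $n$ ``lollipops'' at the base vertex, each consisting of a stem $\gamma_j$ with $[\phi(\gamma_j)]=o_{G_j}$ and a loop $p_j$ mapping onto $c_{i_{G_j}}^{z_j}$, and then observes that tameness and the isomorphism of decorated groups are immediate from the privacy of the petal edges. Your extra detail on producing $\B_0$ via the minimal invariant subtree in the Bass--Serre tree of $\AO$ fills in a step the paper leaves implicit, and it is correct.
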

\begin{proof}
After re-indexing we can assume that $J=\{1,\ldots, n\}$.  By hypothesis  $(G, \eta, \{G_j\}_{1\leqslant j\leqslant n})$ is strongly collapsible. Thus   there is a subgroup $G_0\leq G$, which intersects $\ker(\eta)$ trivially,   such that  $G=G_0\ast G_{1}\ast \ldots \ast G_{n}$. Let $g_j$   be the  generator of the peripheral subgroup $G_j\leq G$ that is mapped  by $\eta$ onto the element $o_{G_j} [{c}_{{i_{G_j}}}]^{z_j}   o_{G_j}^{-1}$ for some positive integer $z_j$. 

Consider the decorated morphism depicted in Fig.~\ref{fig:wedge}, where $\phi|_{\B_0}$ is folded and corresponds to the subgroup $\eta(G_0)\cong G_0$ of $\pi_1(\A, v_0)$, and where for each $1\leqslant j\leqslant n$, $[\phi(\gamma_j)]=o_{G_j}$ and $\phi$ maps the path $p_j$ onto the path $ c_{i_{G_j}}^{z_j}$.

 Note that $\phi:\mathbb{B} \rightarrow \A^{\mathcal{O}}$ is   well-defined up to auxiliary moves of type A0 but this causes no harm  as we are only interested in the homomorphism induced by $\phi$.    
 It is not hard to see that $((\B, u_1), \phi, \{p_j\}_{1\leqslant j\leqslant n}, \{\gamma_j\}_{1\leqslant j\leqslant n})$ is tame and that the induced decorated group is isomorphic to $(G, \eta, \{G_j\}_{1\leqslant j\leqslant n})$.
\begin{figure}[h!]
\begin{center}
\includegraphics[scale=0.9]{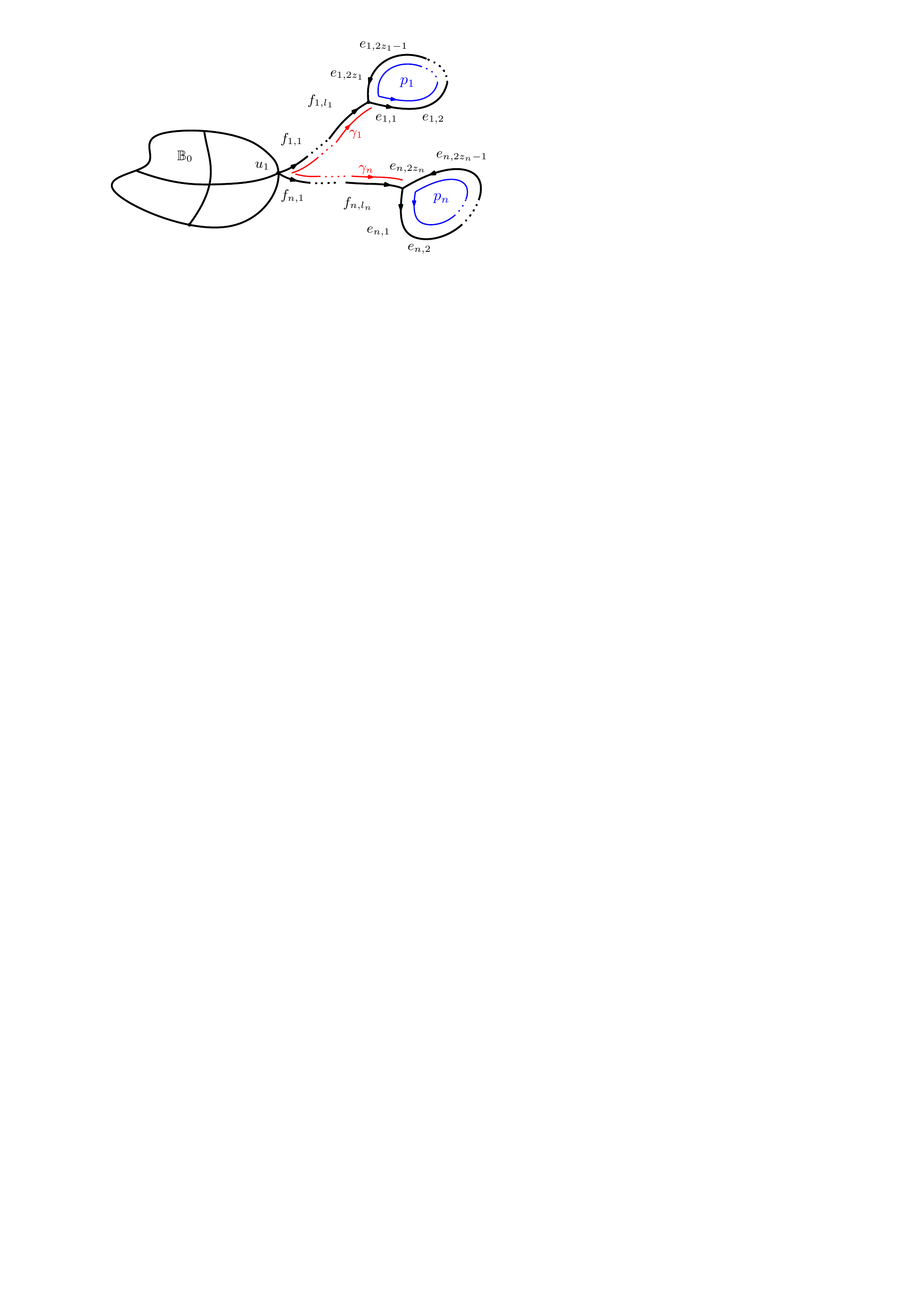}
\end{center}
\caption{$\B$ is obtained from $\B_0$ by gluing $n$ lolipos at the base vertex $u_1$.}{\label{fig:wedge}}
\end{figure}
\end{proof}

In remark~\ref{remark:6.9} we pointed out    that, if we apply an auxiliary move or a vertex morphism to a decorated morphism that does not fold squares, then the resulting decorated morphism does not fold squares.  On the other hand, folds of type IA and IIIA clearly do not have this property as they change the structure of the underlying graph. Next result tells us what happens if the resulting decorated morphism  self-folds. 
\begin{lemma}{\label{lemma:foldsquares}}
Let $((\B, u_1), \phi, \{p_j\}_{1\leqslant j\leqslant n}, \{\gamma_j\}_{1\leqslant j\leqslant n})$ be a tame decorated morphism over $\AO$. Suppose that  it  folds onto  a decorated morphism that  self-folds. Then $(\pi_1(\mathbb{B}, u_1), \phi_{\ast}, \{H(p_j, \gamma_j)\}_{1\leqslant j\leqslant n}) $ projects onto a decorated group   over $\pi_1(\AO, v_1)$  that has an obvious relation.  
\end{lemma}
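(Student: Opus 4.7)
The plan is to leverage the fold from $\D$ onto the given decorated morphism (which by Lemma~\ref{lemma:3} induces an isomorphic decorated group over $\pi_1(\AO,v_1)$) and the self-folding hypothesis to produce an explicit element of the free-product complement of $H(p_k,\gamma_k)$ whose image under $\phi_\ast$ lies in the peripheral cyclic subgroup $o_{H_k}\langle[c_{i(k)}]\rangle o_{H_k}^{-1}$ at a strictly smaller index than $\phi_\ast(H(p_k,\gamma_k))$. A fold is a composition of auxiliary moves and an elementary fold of type IA or IIIA; by Lemma~\ref{lemma:3} the induced decorated group is preserved up to isomorphism, so I may freely work with either $\D$ or the target of the fold. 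The elementary fold identifies two edges $f_1,f_2\in EB$ into a single edge $f$, and the self-folding hypothesis yields an index $k$ and (after cyclic rotation) a decomposition $p_k'=q_1(1,f,1)q_2(1,f,1)q_3$, where $p_k'$ is the corresponding path in the folded graph of groups.

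Since $\D$ is tame, in particular neither self-folds nor folds peripheral paths (Definition~\ref{def:squares}), the path $p_k$ in $\B$ must traverse both $f_1$ and $f_2$ exactly once and in the same direction, and no other $p_j$ uses either $f_1$ or $f_2$. Thus, up to cyclic permutation, $p_k=\alpha(1,f_1,1)\beta(1,f_2,1)\gamma$ with $\alpha,\beta,\gamma$ avoiding $f_1,f_2$. The collapsibility of $\D$, combined with the fact that only $p_k$ uses $f_1$, allows me to invoke Lemma~\ref{lemma:collapsible} with $f_1$ chosen as the decomposing edge for $p_k$. This yields the free product decomposition $\pi_1(\B,u_1)=\pi_1(\B_{f_1,\ldots},u_1)\ast H(p_k,\gamma_k)\ast H(p_{\nu(2)},\gamma_{\nu(2)})\ast\cdots$, where after a suitable redecoration I may assume that $\gamma_k$ lies inside $\B_{f_1,\ldots}$ (in particular avoids $f_1$) and the complementary free factor $\overline{G}$ consists of classes represented by loops in $\B_{f_1}$.

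The key witness is the closed path at $u_1$ given by $\eta:=\gamma_k\,\alpha\,(1,f_2,1)\,\gamma\,\gamma_k^{-1}$, obtained from $\gamma_k p_k\gamma_k^{-1}$ by deleting the middle subpath $(1,f_1,1)\beta$. Since $\alpha,\gamma,\gamma_k$ all avoid $f_1$, the loop $\eta$ avoids $f_1$, so $[\eta]\in\overline{G}$. To compute $\phi_\ast([\eta])$, note that the preparatory auxiliary moves for the elementary fold arrange $\phi(f_1)=\phi(f_2)=e$ and identify the $o$-elements (and in type IA also the $t$-elements). The image $\phi_\ast([\gamma_kp_k\gamma_k^{-1}])=o_{H_k}[c_{i(k)}]^{z_k}o_{H_k}^{-1}$, and the two $(o,e,t)$-blocks arising from $f_1,f_2$ realize the $j_1$-th and $j_2$-th forward $e_{i(k)}$-traversals in the standard expression $c_{i(k)}^{z_k}$ for some $1\leq j_1<j_2\leq z_k$. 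Deleting $(1,f_1,1)\beta$ therefore removes exactly $j_2-j_1$ periods of $c_{i(k)}$, giving $\phi_\ast([\eta])=o_{H_k}[c_{i(k)}]^{z_k-(j_2-j_1)}o_{H_k}^{-1}$ (with a harmless vertex-group correction absorbed into the conjugator in the IIIA case). With $1\leq j_2-j_1\leq z_k-1$, the resulting exponent lies strictly between $0$ and $z_k$, so $\phi_\ast(\overline{G})\cap o_{H_k}\langle[c_{i(k)}]\rangle o_{H_k}^{-1}$ realizes a strictly smaller index than $\phi_\ast(H(p_k,\gamma_k))$, establishing the obvious relation.

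The main obstacle is the final identification of $\phi_\ast([\eta])$ as a genuine power of $[c_{i(k)}]$ with exponent strictly between $0$ and $z_k$. The combinatorial description of $\phi(\gamma_kp_k\gamma_k^{-1})$ as an $\AO$-path, which only after $\sim$-reduction equals a conjugate of $c_{i(k)}^{z_k}$, must be carefully matched against the deleted path, and in the IIIA case the vertex-group correction arising from $t_{f_1}\neq t_{f_2}$ must be shown to merely shift the conjugator $o_{H_k}$ rather than pull $\phi_\ast([\eta])$ out of the cyclic subgroup $o_{H_k}\langle[c_{i(k)}]\rangle o_{H_k}^{-1}$.
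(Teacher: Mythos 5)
Your core idea is the paper's: delete the segment between the two occurrences of the folded edge to obtain a shorter loop whose image is an intermediate power $o_{H_k}[c_{i(k)}]^{z'}o_{H_k}^{-1}$ with $0<z'<z_k$, and exhibit it inside the complementary free factor to certify the obvious relation. The gap is in how you place the witness in that complement. You assert that tameness forces ``no other $p_j$ to use either $f_1$ or $f_2$'' and that you may therefore invoke Lemma~\ref{lemma:collapsible} ``with $f_1$ chosen as the decomposing edge for $p_k$''. Neither claim is justified. Definition~\ref{def:squares} only forbids two paths from crossing a common edge \emph{with the same orientation} (and, in case (S.1), with the same index $i(j)=i(k)$); a second path $p_j$ may perfectly well cross $f_1^{-1}$, and when $q_{\mathcal{O}}=1$ even $p_k$ itself may cross $f_1$ in both orientations, since $c_1$ traverses $e_1$ forwards and backwards in each period. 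Because $\B_{f_1}$ is carried by $B-\{f_1^{\pm 1}\}$, such backward crossings obstruct taking $f_1$ as the first edge of the collapsing sequence; and Lemma~\ref{lemma:collapsible} does not let you prescribe that sequence in any case --- it only extracts \emph{some} nested family $f_1,\ldots,f_n$ from the collapsibility data.

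This matters because if the decomposing edge $f_{j'}$ that Lemma~\ref{lemma:collapsible} actually attaches to $p_k$ is not contained in the deleted segment, your loop $\eta$ still crosses $f_{j'}$ exactly once and therefore carries a nontrivial $H(p_k,\gamma_k)$-syllable: it does not lie in $\overline{G}$, and the index inequality defining the obvious relation is not established. The paper resolves exactly this point by going the other way around: it keeps the collapsing sequence produced by Lemma~\ref{lemma:collapsible} and instead chooses the cyclic permutation of $p_{j'}$ (swapping the roles of the two folded edges $g$ and $h$ if necessary) so that $f_{j'}$ lies in the deleted portion $(1,g,1)\,p_{j'}''$. The surviving loop $p_{j'}'(1,h,1)p_{j'}'''$ is then contained in $\B_{f_1,\ldots,f_{j'}}$, hence in $\overline{H}=\pi_1(\B_{f_1,\ldots,f_n},u_1)\ast_{j\neq j'}H(p_j,\delta_j)$ after a redecoration placing the connecting paths $\delta_j$ in $\B_{f_1,\ldots,f_n}$. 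Your computation of $\phi_{\ast}([\eta])$ and the concluding index comparison are otherwise in line with the paper's argument; the missing step is this repositioning of the decomposing edge, without which the witness may fail to lie in the complement.
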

 \begin{proof}
By hypothesis $((\B, u_1), \phi, \{p_j\}_{1\leqslant j\leqslant n}, \{\gamma_j\}_{1\leqslant j\leqslant n})$ folds onto a decorated morphism over $\AO$ that self-folds. Thus,  the following hold:  (a)  there are distinct edges $g$ and $h$ in $\B$  with $\phi(g)=\phi(h)$ and $u:=\alpha(f)=\alpha(h)$ such that     $ o_g^{\phi}= \phi_u(b) o_h^{\phi }$ for some $b\in B_u$,     and  (b)  there is an index    $j'\in \{1,\ldots,n\}$  such that the path $p_{j'}$ decomposes as $ p_{j'}' (1,g,1)  p_{j'}'' (1,h,1)  p_{j'}'''.$ 

It   follows from Lemma~\ref{lemma:collapsible} that there are edges  $f_1,\ldots, f_n$ of  $\mathbb{B}$ and a bijection  $\nu: \{1,\ldots , n\}\rightarrow \{1,\ldots , n\} $ such that (1) $\{f_1, \ldots, f_n\}\cap \{f_1^{-1}, \ldots, f_n^{-1}\}=\emptyset$ and (2)  for each $1\leqslant k\leqslant n$, the path      $p_{\nu(k)}$ decomposes as  
$$p_{\nu(k)}=p_{\nu(k)}' (1,f_k,1) p_{\nu(k)}''$$   
with $p_{\nu(k)}'$ and $p_{\nu(k)}''$ contained in  $\B_{f_{1},\ldots, f_k}\subseteq \B$.   To simplify the proof assume that $\nu$ is the identidy map. 
\begin{figure}[h!]
\begin{center}
\includegraphics[scale=1]{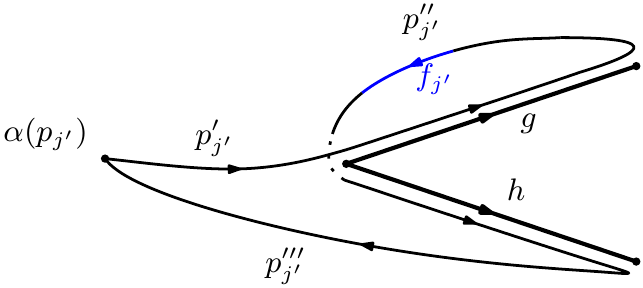}
\end{center}
\caption{The edges $g$ and $h$ are folded onto the same edge.}{\label{fig:lemma1}}
\end{figure}

After a cyclic permutation of   $p_{j'}$,  we can assume that $\phi(p_{j'})=a_{j'} c_{i(j')}^{z_{j'}}a_{j'}^{-1}$  for some  element $a_{j'}$ of  $A_{v_1}^{\mathcal{O}}$ and some positive integer $z_{j'}$ and that the edge $f_{j'}$ lies in the  path $(1,g,1)  p_{j'}''$ as shown  in Fig.~\ref{fig:lemma1}. Now let $((\B, u_1), \phi, \{p_j\}_{1\leqslant j\leqslant n}, \{\delta_j\}_{1\leqslant j\leqslant n})$ be a redecoration  of $((\B, u_1), \phi, \{p_j\}_{1\leqslant j\leqslant n}, \{\gamma_j\}_{1\leqslant j\leqslant n})$ with $\delta_1, \ldots, \delta_n$ contained in $\B_{f_1, \ldots, f_n}$. Lemma~\ref{lemma:collapsible} implies that  
$$\pi_1(\mathbb{B}_{f_1,\ldots, f_{j'}}, u_1)=\pi_1(\mathbb{B}_{f_1,\ldots,   f_{n}}, u_1)\ast H(p_{j'+1}, \delta_{j'+1})\ast \ldots \ast H(p_{n}, \delta_{n})  $$ 
and that  $$\pi_1(\mathbb{B}, u_1)=\pi_1(\B_{f_1,\ldots, f_{j'}},u_1)\ast H(p_{1}, \delta_{1})\ast \ldots \ast H(p_{j'}, \delta_{j'}).$$

Let $\overline{H}$ denote the  subgroup 
$$\pi_1(\mathbb{B}_0,u_1) \ast_{{j\in \{1,\ldots, n\}-\{j'\}}} H(p_{j}, \delta_{j})$$  
of $\pi_1(\B, u_1)$. Observe  that the  path  $p_{j'}'(1,h,1)p_{j'}'''$ is contained in $\mathbb{B}_{f_1,\ldots, f_{j'}}$  and that  its image under $\phi$ is equal to  $a_{j'} c_{i(j')}^{z'} a_{j'}^{-1}$ 
for some $0<z'<z_{j'}$. Hence   the non-trivial  element $o_{j'} [c_{i(j')}]^{z'} o_{j'}^{-1}$  belongs to   
$$o_{j'}C_{i(j')}o_{j'}^{-1}   \cap \phi_{\ast} \pi_1(\mathbb{B}_{f_1,\ldots, f_{j'}},u_1)$$
 which is clearly a subgroup of 
$o_{j'}C_{i(j')}o_{j'}^{-1}   \cap \phi_{\ast} (\overline{H})$. Therefore 
\begin{equation}{\label{eq:lemma}}
 0<|o_{j'} C_{i(j')}o_{j'}^{-1} : \phi_{\ast}(\overline{H})\cap o_{j'} C_{i(j')}o_{j'}^{-1}|<|o_{j'} C_{i(j')}o_{j'}^{-1} : \phi_{\ast}(H(p_{j'}, \delta_{j'}))|
\end{equation}  
where $o_{j'}=o_{H(p_{j'},\delta_{j'})}$.

Now define $H:= H_0 \ast_{1\leqslant j\leqslant n} H(p_j, \delta_j)$  where  $H_0:= \phi_{\ast}\pi_1( \mathbb{B}_0, u_1)\leq \pi_1(\AO, v_1)$.   Let further $\lambda:H\rightarrow \pi_1(\AO, v_1)$ be the homomorphism  induced by the inclusion map $H_0\hookrightarrow \pi_1(\AO, v_1)$ and  the homomorphisms
$$H(p_j, \delta_j) \hookrightarrow \pi_1(\B, u_1) \xrightarrow{\phi_{\ast}} \pi_1(\AO, v_1) \ \ \text{ for } \  1\leqslant j\leqslant n.$$  
It is not hard to see that  
$$(\pi_1(\B, u_0), \phi_{\ast}, \{H(p_j, \delta_j)\}_{1\leqslant j\leqslant n}) \twoheadrightarrow (H, \lambda, \{H(p_j, \delta_j)\}_{1\leqslant j\leqslant n}),$$  which implies that   
$$(\pi_1(\B, u_0), \phi_{\ast}, \{H(p_j, \gamma_j)\}) \twoheadrightarrow (H, \lambda, \{H(p_j, \delta_j)\}_{1\leqslant j\leqslant n}).$$ 
Eq.~(\ref{eq:lemma}) implies   that   $(H, \lambda, \{H_j(p_j, \delta_j)\}_{1\leqslant j\leqslant n})$ has an obvious relation.  
\end{proof}

With an entirely analogous  argument we show the following result.
\begin{lemma}{\label{cor:foldsperipheral}}
 If  the decorated morphism $((\B, u_1), \phi, \{p_j\}_{1\leqslant j\leqslant n}, \{\gamma_j\}_{1\leqslant j\leqslant n})$ over $\AO$ folds onto a decorated morphism that  folds peripheral paths,   then the decorated group $(\pi_1(\mathbb{B}, u_1), \phi_{\ast}, \{H(p_j, \gamma_j)\}_{1\leqslant j\leqslant n})$  projects onto a  decorated group  over $\pi_1(\AO, v_1)$  that folds peripheral subgroups.
\end{lemma}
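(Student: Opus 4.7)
The plan is to imitate the proof of Lemma~\ref{lemma:foldsquares}, substituting ``folds peripheral paths'' for ``self-folds'' and ``folds peripheral subgroups'' for ``has an obvious relation''. The hypothesis on $\D$ unpacks as follows: the fold produces distinct edges $g, h \in EB$ with $\phi(g) = \phi(h)$ and $u := \alpha(g) = \alpha(h)$ whose edge elements satisfy $o_g^{\phi} = \phi_u(b)\, o_h^{\phi}$ for some $b \in B_u$, together with distinct indices $j', k' \in \{1,\ldots,n\}$ with $i(j') = i(k')$ such that, after suitable cyclic permutations, $p_{j'} = p_{j'}' \cdot (1,g,1) \cdot p_{j'}''$ and $p_{k'} = p_{k'}' \cdot (1,h,1) \cdot p_{k'}''$.

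Next, I would invoke Lemma~\ref{lemma:collapsible} to obtain edges $f_1, \ldots, f_n \in EB$ and a bijection $\nu:\{1,\ldots,n\} \to \{1,\ldots,n\}$ witnessing collapsibility. As in the proof of Lemma~\ref{lemma:foldsquares}, I would arrange (using the flexibility in the choice of the $f_k$) that $f_{j'}$ lies inside the sub-path $(1,g,1)\cdot p_{j'}''$ and $f_{k'}$ lies inside the sub-path $(1,h,1)\cdot p_{k'}''$. Passing to a redecoration $((\mathbb{B}, u_1), \phi, \{p_j\}, \{\delta_j\})$ whose auxiliary paths $\delta_1,\ldots,\delta_n$ are all contained in $\mathbb{B}_{f_1,\ldots,f_n}$, Lemma~\ref{lemma:collapsible} yields the free product splitting
\[ \pi_1(\mathbb{B}, u_1) = \pi_1(\mathbb{B}_{f_1,\ldots,f_n}, u_1) \ast H(p_1, \delta_1) \ast \ldots \ast H(p_n, \delta_n). \]

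Set $H_0 := \phi_{\ast}\pi_1(\mathbb{B}_{f_1,\ldots,f_n}, u_1)$, $H := H_0 \ast_{1\leqslant j\leqslant n} H(p_j, \delta_j)$, and let $\lambda: H \to \pi_1(\AO, v_1)$ be the homomorphism induced by the inclusion $H_0 \hookrightarrow \pi_1(\AO, v_1)$ together with $\phi_{\ast}$ on each $H(p_j,\delta_j)$; this gives a projection $(\pi_1(\mathbb{B}, u_1), \phi_{\ast}, \{H(p_j, \delta_j)\}) \twoheadrightarrow (H, \lambda, \{H(p_j, \delta_j)\})$. The central computation is then to trace how the relation $o_g^{\phi} = \phi_u(b)\, o_h^{\phi}$ propagates through the base-point elements $o_{H(p_{j'}, \delta_{j'})}$ and $o_{H(p_{k'}, \delta_{k'})}$. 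Using that the initial fragments $\delta_{j'}\cdot p_{j'}'$ and $\delta_{k'}\cdot p_{k'}'$ are both paths in $\mathbb{B}_{f_1,\ldots,f_n}$ (and so $\phi_{\ast}$ of their difference lies in $H_0$), one obtains an identity of the form
\[ o_{H(p_{k'}, \delta_{k'})} = \lambda(w)\, o_{H(p_{j'}, \delta_{j'})}\, [c_{i(j')}]^{z} \]
for some integer $z$ and some $w \in H_0 \ast_{\ell \neq k'} H(p_\ell, \delta_\ell)$, which together with $i(j') = i(k')$ is precisely the defining property of folding peripheral subgroups for the target decorated group. Invariance of the induced decorated group under redecoration then gives the conclusion for $\{H(p_j, \gamma_j)\}$.

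The main obstacle is the same careful bookkeeping that drives the proof of Lemma~\ref{lemma:foldsquares}: one must verify that the element $w$ lies in the proper sub-free-product $H_0 \ast_{\ell \neq k'} H(p_\ell, \delta_\ell)$ rather than in all of $H$. This is guaranteed by the placement of $f_{k'}$ \emph{after} the occurrence of $h$ in $p_{k'}$ (so that the contribution of $H(p_{k'}, \delta_{k'})$ is cleanly isolated on one side of the identity) and by the confinement of every $\delta_j$ to $\mathbb{B}_{f_1,\ldots,f_n}$; together these force every intermediate element to be expressible as a word in the free factors listed in $\overline{H}$, exactly as in the self-folding case.
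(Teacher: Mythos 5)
Your overall route is the one the paper intends: its entire ``proof'' of this lemma is the sentence preceding the statement (``With an entirely analogous argument\dots''), deferring to the proof of Lemma~\ref{lemma:foldsquares}, and your proposal is precisely that adaptation — unpack the fold into edges $g,h$ with $o_g^{\phi}=\phi_u(b)\,o_h^{\phi}$, fix the collapsibility data of Lemma~\ref{lemma:collapsible}, redecorate with all $\delta_j$ in $\B_{f_1,\ldots,f_n}$, pass to $H=H_0\ast_{j}H(p_j,\delta_j)$ with the induced $\lambda$, and exhibit a double-coset relation between $o_{H(p_{j'},\delta_{j'})}$ and $o_{H(p_{k'},\delta_{k'})}$.

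There is, however, one step whose justification as written would fail. You argue that the connecting element $w$ lies in $H_0\ast_{\ell\neq k'}H(p_\ell,\delta_\ell)$ because $f_{k'}$ is placed after the occurrence of $h$ in $p_{k'}$. That placement only guarantees that the prefix $p_{k'}'$ avoids $f_{k'}$; it says nothing about whether $p_{j'}'$ avoids $f_{k'}$. Lemma~\ref{lemma:collapsible} confines $p_{\nu(k)}'$ and $p_{\nu(k)}''$ only to $\B_{f_1,\ldots,f_k}$, so (taking $\nu=\mathrm{id}$ as in the proof of Lemma~\ref{lemma:foldsquares}) if $j'<k'$ the path $p_{j'}'$ may perfectly well cross $f_{k'}$, and then $w$ picks up a genuine contribution from $H(p_{k'},\delta_{k'})$. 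The repair is to let the \emph{smaller} index play the role of the excluded factor $l$ in the definition of folding peripheral subgroups: the closed connecting path built from $\delta_{k'}$, $p_{k'}'$, the element $b^{\pm1}\in B_u$, $(p_{j'}')^{-1}$ and $\delta_{j'}^{-1}$ is contained in $\B_{f_1,\ldots,f_{j'}}$ when $j'<k'$, and $\pi_1(\B_{f_1,\ldots,f_{j'}},u_1)=\pi_1(\B_{f_1,\ldots,f_n},u_1)\ast H(p_{j'+1},\delta_{j'+1})\ast\cdots\ast H(p_n,\delta_n)$ lies in $H_0\ast_{\ell\neq j'}H(p_\ell,\delta_\ell)$. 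So the relation must be written as $o_{H(p_{j'},\delta_{j'})}=\lambda(w)\,o_{H(p_{k'},\delta_{k'})}[c_{i(j')}]^{z}$ with the roles of $j'$ and $k'$ dictated by the ordering $\nu$, which the definition of folding peripheral subgroups permits. With that indexing adjustment your argument is exactly the paper's.
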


%---------------------------------------------------------------------------------

\subsection{The local graph.}  Recall that a \emph{directed graph} $\Gamma$ is a pair $(V\Gamma, E\Gamma)$ consisting of a vertex set $V\Gamma$ and an edge set  $E\Gamma \subseteq V\Gamma\times V\Gamma$. An edge   $(v,w)$  of $\Gamma$ is denoted by   $v\mapsto w$. 
 
By a  \emph{circle} we mean a directed graph isomorphic to  $\mathcal{C}_n=(\{1,\ldots, n\}, \{(1,2), \ldots, (n-1, n), (n, 1)\})$  for some $n\geqslant 1$. An \emph{interval} is a directed graph isomorphic  to  $\mathcal{I}_n=(\{1,\ldots, n\}, \{(1,2), \ldots, (n-1, n) \})$   for some $n\geqslant 1$. Note that the interval   $\mathcal{I}_1=(\{1\}, \emptyset)$  is degenerate as is consists of a single vertex.

\medskip

Throughout this subsection  let $((\B, u_1), \phi, \{p_j\}_{1\leqslant j\leqslant n}, \{\gamma_j\}_{1\leqslant j\leqslant n})$  be a decorated morphism over $\AO$ and $u$ a vertex of $\B$. We assume further that  $((\B, u_1), \phi, \{p_j\}_{1\leqslant j\leqslant n}, \{\gamma_j\}_{1\leqslant j\leqslant n})$   does not fold  squares.  

\begin{definition}
The \emph{local graph  of $((\B, u_1), \phi, \{p_j\}_{1\leqslant j\leqslant n}, \{\gamma_j\}_{1\leqslant j\leqslant n})$ at the vertex $u$} is defined as the directed graph $\G^u$ having vertex set 
$$V\Gamma_{((\mathbb{B}, u_1), \phi, \{p_j\}, \{\gamma_j\})}^{u}:= \text{St}(u,B)=\{f\in EB \ | \ \alpha(f)=u\},$$ 
and having  edge set $E\G^u$ consisting  of all pairs of vertices  $(f, g)$ for which     there is some $j\in \{1, \ldots, n\}$ and a cyclic permutation $q_j$ of $p_j$  such that $$q_j= q_j'   (1,f^{-1}, b, g, 1)  q_j''$$ for some $b\in B_u.$  The     label of  the edge $f\mapsto g$ is defined by $l(f\mapsto g):= (j, b)\in \{1, \ldots, n\}\times B_u$. 
\end{definition}

Note that $l(f\mapsto g)$ is well defined as we assumed that    $((\B, u_1), \phi, \{p_j\}_{1\leqslant j\leqslant n}, \{\gamma_j\}_{1\leqslant j\leqslant n})$ does not fold squares. Furthermore,     there is at most one vertex $h_1$ of $\G^u$ with $f\mapsto h_1$, and  there is at most one  vertex $h_2$ of $\G^x$ with $h_2\mapsto f$. This  implies that the components of $\G^u$ are either  (degenerate) intervals or  circles.

\begin{example}
Consider the decorated morphism $((\B, u_1), \phi, \{p_1, p_2\}, \{\gamma_1, \gamma_2\})$ from Example~\ref{ex:decmorph}. Then the local graph $\Gamma_{((\B, u_1), \phi, \{p_1, p_2\}, \{\gamma_1, \gamma_2\})}^{u_1}$ has vertex set $\{f_1, f_2, f_3\}$  and edge set $\{f_3\mapsto f_2, f_2\mapsto f_1\}$. The label of $f_3\mapsto f_2$ is $(2, 1) \in \{1,2\}\times B_{u_1}$ and the label of $f_2\mapsto f_1$ is $(1,1)\in \{1,2\}\times B_{u_1}$, see Fig.~\ref{fig:decmorlocalgraph}.
\begin{figure}[h!]
\begin{center}
\includegraphics[scale=1]{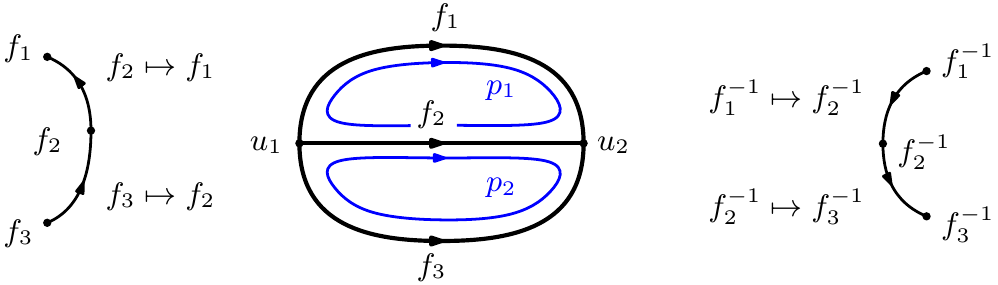}
\end{center}
\caption{$\Gamma_{((\B, u_1), \phi, \{p_1, p_2\}, \{\gamma_1, \gamma_2\})}^{u_1}$  on the left and $\Gamma_{((\B, u_1), \phi, \{p_1, p_2\}, \{\gamma_1, \gamma_2\})}^{u_2}$  on the right.}{\label{fig:decmorlocalgraph}}
\end{figure}
\end{example}

Now we collect  some facts  that local graph gives about the paths $p_1, \ldots, p_n$ and about the morphism $\phi:\B\rightarrow \AO$.    Suppose that $f\mapsto g$ is an edge of $\G^u$ with label $(j, b)$. By definition, there is a cyclic permutation $q_j$ of  $p_j$, an element $a_j\in A_{v_1}^{\mathcal{O}}$,   an index $i(j)\in \{1,\ldots, q_{\mathcal{O}}\}$,  and  a positive integer $z_j$  such that $\phi(q_j)=a_j c_{i(j)}^{z_j} a_j^{-1}$.  Recall that the $\AO$-path $c_i$ ($1\leqslant i\leqslant q_{\mathcal{O}}$) is given by
\begin{equation}{\label{eq:pathci}}
c_i = \begin{cases}
          s_{v_1}, e_1, s_{v_2}, e_2^{-1}, 1              & \text{ if  }  \ i=1  \\
          1,e_i, 1, e_{i+1}^{-1}, 1               & \text{ if  }  \ 2\leqslant  i\leqslant  q_{\mathcal{O}}-1 \\ 
          1, e_{i_{\mathcal{O}}}, 1, e_1^{-1}, 1 & \text{ if } \ i=i_{\mathcal{O}}
        \end{cases}
\end{equation}
Thus
\begin{equation}{\label{path:permutation}}
\phi(q_j)=a_j(s_{v_1}^{\varepsilon_{i(j)}}, e_{i(j)}, s_{v_2}^{\varepsilon_{i(j)}}, e_{i(j)+1}^{-1}, 1)^{z_j} a_j^{-1}\end{equation}
where $\varepsilon_{i(j)}$ is given by
\begin{equation}{\label{eq:epsilon}}
\varepsilon_{i(j)} = \begin{cases}
          1              & \text{ if  }  \ i(j)=1  \\ 
             0               & \text{ if  }  \ i(j)\geqslant 2. 
        \end{cases}
\end{equation} 
Equations~(\ref{eq:pathci})-(\ref{eq:epsilon}) imply that $o_f^{-1} \phi_u(b) o_g=s_{v}^{\varepsilon_{i(j)}}$, that  $\phi(f)=e_{i_0}^{\epsilon}$ and that   $\phi(g)=e_{i_0-\epsilon}^{\epsilon}$,    where $i_0$  and $\epsilon\in \{\pm 1\}$ are given by:
\begin{enumerate}
\item[(a)] $i_0=i(j)+1$  and $\epsilon=1$  if $v=v_1$ (i.e $\phi(u)=v_1$).

\item[(b)] $i_0=i(j)$ and $\epsilon=-1$ if  $v=v_2$ (i.e $\phi(u)=v_2$). 
\end{enumerate}

By putting together all these facts  we get the following result: 
\begin{lemma}{\label{lemma:path}}
Let  $f_1\mapsto f_2\mapsto \ldots \mapsto f_l\mapsto f_{l+1}$ be  a path in $\G^u$ of length $l\geqslant 1$. Suppose that  $l(f_t\mapsto f_{t+1})=(j_t, b_t)$  for  $1\leqslant t\leqslant l$.   Then,  for each    $1\leqslant t\leqslant l$, the following hold:  
\begin{enumerate}
\item[(i)] $o_{f_{t+1}}=\phi_u(b_1\cdot \ldots \cdot b_t)^{-1} o_{f_1} s_{v}^{ \varepsilon_{i(j_1)}+\ldots +\varepsilon_{i(j_t)} }$.

\item[(ii)] $\phi(f_1)=e_{i_0}^{\epsilon} \text{ and }  \phi(f_{t+1})=e_{i_0- \epsilon t}^{\epsilon}$.

\item[(iii)] $i(j_t)\equiv i(j_1)+\epsilon (t-1) (\text{mod }q_{\mathcal{O}})$.
\end{enumerate}
\end{lemma}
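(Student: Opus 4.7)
The plan is a short induction on $t$ that feeds on the single-edge case analysis carried out in the paragraphs immediately preceding the lemma. The base case $t=1$ is essentially established there already: for the unique edge $f_1\mapsto f_2$ of $\G^u$ with label $(j_1,b_1)$, the pre-lemma computation yields
$$o_{f_1}^{-1}\phi_u(b_1)\,o_{f_2}=s_v^{\varepsilon_{i(j_1)}},\qquad \phi(f_1)=e_{i_0}^{\epsilon},\qquad \phi(f_2)=e_{i_0-\epsilon}^{\epsilon},$$
where $(i_0,\epsilon)$ equals $(i(j_1)+1,1)$ if $\phi(u)=v_1$ and $(i(j_1),-1)$ if $\phi(u)=v_2$. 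Rearranging the first relation gives (i) at $t=1$; the remaining two formulas directly deliver (ii) and (iii) at $t=1$.

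For the inductive step, fix $1\leqslant t<l$ and assume (i)--(iii) at step $t$. Apply the same single-edge analysis, but this time to the edge $f_{t+1}\mapsto f_{t+2}$ with label $(j_{t+1},b_{t+1})$. Since $\alpha(f_{t+1})=u$ has not changed, the resulting sign $\epsilon$ coincides with the one from the base step. The analysis delivers
$$o_{f_{t+2}}=\phi_u(b_{t+1})^{-1}\,o_{f_{t+1}}\,s_v^{\varepsilon_{i(j_{t+1})}},\qquad \phi(f_{t+1})=e_{i_0'}^{\epsilon},\qquad \phi(f_{t+2})=e_{i_0'-\epsilon}^{\epsilon},$$
with $i_0'=i(j_{t+1})+1$ or $i_0'=i(j_{t+1})$ according as $\phi(u)=v_1$ or $\phi(u)=v_2$. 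Substituting the inductive expression for $o_{f_{t+1}}$ from (i), and using that the factors $o_{f_1}$ and all the $s_v$-powers lie in the abelian group $A_v^{\mathcal{O}}=\langle s_v\rangle$ (so the exponents combine additively), immediately yields (i) at step $t+1$. Matching the new identity $\phi(f_{t+1})=e_{i_0'}^{\epsilon}$ with the inductive identity $\phi(f_{t+1})=e_{i_0-\epsilon t}^{\epsilon}$ forces $i_0'\equiv i_0-\epsilon t\pmod{q_{\mathcal{O}}}$, after which $\phi(f_{t+2})=e_{i_0'-\epsilon}^{\epsilon}=e_{i_0-\epsilon(t+1)}^{\epsilon}$ is precisely (ii) at $t+1$. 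Substituting the case-dependent value of $i_0'$ into $i_0'\equiv i_0-\epsilon t$ and solving for $i(j_{t+1})$ in each of the two sub-cases $\phi(u)=v_1,v_2$ produces the congruence (iii) at step $t+1$.

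The argument is essentially bookkeeping; the only mild point to watch is the commutativity of $A_v^{\mathcal{O}}$, which is what allows the $\varepsilon_{i(j_s)}$'s to aggregate additively in the exponent of $s_v$. Once the base step is in hand, no new geometric input is required: the inductive step is just an iterated application of the single-edge computation at the common vertex $u$, and the assumption that $\D$ does not fold squares is used only implicitly, to ensure that the label $(j_t,b_t)$ on each edge of $\G^u$ is unambiguous.
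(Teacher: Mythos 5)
Your proof is correct and follows essentially the same route as the paper: the paper simply declares that (i) and (ii) follow by induction from the single-edge computation preceding the lemma, and proves (iii) by exactly the matching of the two expressions for $\phi(f_t)$ that you describe. One bookkeeping remark: if you actually solve $i_0'\equiv i_0-\epsilon t$ for $i(j_{t+1})$ in either sub-case you obtain $i(j_{t+1})\equiv i(j_1)-\epsilon t$, i.e.\ (iii) with $-\epsilon(t-1)$ in place of $+\epsilon(t-1)$; this agrees with the paper's own proof of (iii) (which derives $i(j_t)\equiv i(j_1)+(t-1)$ in the case $\epsilon=-1$) and with how (iii) is used in Lemma~\ref{lemma:path1}, so the sign in the printed statement appears to be a typo rather than a gap in your argument.
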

\begin{proof}
(i) and (ii) follow by induction. To prove (iii),     suppose that $\phi(u)=v_2$ (the case $\phi(x)=v_1$ is analogous).  From item (ii)  we obtain  $\phi(f_t)=e_{i(j_1)+ t-1}^{-1}$.  Applying  the argument from the previous paragraph to the edge $f_t\mapsto f_{t+1}$ we see that   $\phi(f_t)=e_{i(j_t)}^{-1}$. Thus 
$$e_{i(j_1)+(t-1)}=e_{i(j_t)}\in EA^{\mathcal{O}}$$ 
and so  $i(j_t)\equiv i(j_1)+ t-1 (\text{mod }q_{\mathcal{O}})$.    
\end{proof}

The following result shows that the existence of a closed path in the local graph $\G^u$ implies that the morphism $\phi$ is locally surjective at the vertex $u$.   
\begin{lemma}{\label{lemma:path1}}
Suppose that  $f_1\mapsto f_2\mapsto \ldots \mapsto f_l\mapsto f_{l+1}$ is a path in  $\G^u$ and that 
$$l(f_t\mapsto f_{t+1})=(j_t, b_t)\in \{1,\ldots, n\}\times B_u \ \ \text{ for }\ \  1\leqslant t \leqslant l.$$  
\begin{enumerate}
\item If $\phi(f_1)=\phi(f_{l+1})$  (in particular, if  $f_1=f_{l+1}$),  then $q_{\mathcal{O}}$ divides $l$ and 
 $$o_{f_{l+1}}=\phi_u(b_1\cdot \ldots \cdot b_l)^{-1} o_{f_1} s_{v}^{k} \ \ \text{ where } \ \ k:=l/q_{\mathcal{O}}.$$

\item If $f_{l+1}=f_1$,  then  the following hold:
\begin{enumerate}
\item[(2.i)] the index of $\phi_u(B_u)$ in  $ A_v^{\mathcal{O}} $ is less or equal to    $k$.

\item[(2.ii)] for each $e\in \st(v, A^{\mathcal{O}})$ there are exactly $k$ edges $f_{n_1}^e, \ldots f_{n_k}^e$ in $\{f_1, \ldots, f_l\}\cap \phi^{-1}(e)$. Moreover,   
$$A_{v}^{\mathcal{O}}= \phi_u(B_u) o_{f_{n_1}^e} \cup \phi_u(B_u) o_{f_{n_2}^e} \cup \ldots \cup \phi_u(B_u) o_{f_{n_k}^e}.$$
\end{enumerate}
\end{enumerate}
\end{lemma}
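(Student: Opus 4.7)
The plan is to deduce everything directly from Lemma~\ref{lemma:path} together with the structure of the finite cyclic group $A_v^{\mathcal{O}} = \langle s_v\rangle$. For part (1), I will invoke Lemma~\ref{lemma:path}(ii), which gives $\phi(f_1) = e_{i_0}^{\epsilon}$ and $\phi(f_{l+1}) = e_{i_0 - \epsilon l}^{\epsilon}$; the equality $\phi(f_1) = \phi(f_{l+1})$ in $EA^{\mathcal{O}}$, combined with the convention that $e_d = e_s$ whenever $d \equiv s \pmod{q_{\mathcal{O}}}$, forces $l \equiv 0 \pmod{q_{\mathcal{O}}}$, so $k := l/q_{\mathcal{O}}$ is a non-negative integer. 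To identify the exponent of $s_v$ in the formula of Lemma~\ref{lemma:path}(i) with $k$, I use that $\varepsilon_i = 1$ iff $i = 1$, together with Lemma~\ref{lemma:path}(iii), which asserts $i(j_t) \equiv i(j_1) + \epsilon(t-1) \pmod{q_{\mathcal{O}}}$: as $t$ ranges over $\{1, \ldots, l = kq_{\mathcal{O}}\}$, exactly $k$ of the values $i(j_t)$ are congruent to $1$ mod $q_{\mathcal{O}}$, so $\sum_{t=1}^l \varepsilon_{i(j_t)} = k$ and the stated formula follows.

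For part (2.i), specializing (1) to the case $f_{l+1} = f_1$ yields $o_{f_1} = \phi_u(b_1 \cdots b_l)^{-1} o_{f_1} s_v^k$, which rearranges (using abelianness of $A_v^{\mathcal{O}}$) to $s_v^k \in \phi_u(B_u)$. Writing the cyclic subgroup $\phi_u(B_u) \leq \langle s_v \rangle$ as $\langle s_v^d \rangle$ with $d := |A_v^{\mathcal{O}} : \phi_u(B_u)|$ immediately gives $d \mid k$, hence $d \leq k$, which is (2.i).

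For (2.ii), Lemma~\ref{lemma:path}(ii) gives $\phi(f_s) = e_{i_0 - \epsilon(s-1)}^{\epsilon}$, so for any $e = e_i^{\epsilon} \in \st(v, A^{\mathcal{O}})$ the condition $\phi(f_s) = e$ reduces to a single residue condition mod $q_{\mathcal{O}}$, admitting exactly $k$ solutions $n_1 < \cdots < n_k$ in $\{1, \ldots, l\}$, in arithmetic progression of common difference $q_{\mathcal{O}}$. Applying (1) to each subpath from $f_{n_1}$ to $f_{n_j}$, whose length is $(j-1)q_{\mathcal{O}}$, produces $o_{f_{n_j}} \in \phi_u(B_u)\, o_{f_{n_1}}\, s_v^{j-1}$; the abelianness of $A_v^{\mathcal{O}}$ then identifies the $k$ left cosets $\phi_u(B_u) o_{f_{n_j}}$ with $k$ consecutive translates of $\phi_u(B_u) o_{f_{n_1}}$ by successive powers of $s_v$. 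Since $\phi_u(B_u) = \langle s_v^d \rangle$ has index $d \leq k$ in $\langle s_v \rangle$, these $k$ consecutive translates exhaust all $d$ distinct cosets of $\phi_u(B_u)$, and their union is $A_v^{\mathcal{O}}$. I do not anticipate any substantial obstacle; the argument is essentially bookkeeping with Lemma~\ref{lemma:path}, the sole combinatorial input being that in $kq_{\mathcal{O}}$ consecutive integers each residue class modulo $q_{\mathcal{O}}$ is attained exactly $k$ times.
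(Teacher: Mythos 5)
Your proposal is correct and follows essentially the same route as the paper's proof: part (1) via Lemma~\ref{lemma:path}(ii) for divisibility and the count of indices $t$ with $\varepsilon_{i(j_t)}=1$ for the exponent $k$; part (2.i) by rearranging in the abelian group $A_v^{\mathcal{O}}$ to get $s_v^k\in\phi_u(B_u)$; and part (2.ii) by identifying the fibre of each $e$ as an arithmetic progression of step $q_{\mathcal{O}}$ and showing the $k$ consecutive coset translates by powers of $s_v$ exhaust all $d\leqslant k$ cosets. The only cosmetic difference is that you apply part (1) to subpaths where the paper normalizes by a cyclic permutation so that $i(j_t)\equiv t\ (\mathrm{mod}\ q_{\mathcal{O}})$.
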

\begin{proof}
We start with item (1). Suppose that $\phi(f_{l+1})=\phi(f_{1})$.  From Lemma~\ref{lemma:path}(ii) we know that    
$$e_{i_0}^{\epsilon}=\phi(f_1)=\phi(f_{k+1}) =e_{i_0-\epsilon l}^{\epsilon}.$$ Hence  the edges  $e_{i_0}$ and $e_{i_0-\epsilon l}$   coincide, which implies that  $i_0\equiv i_0-\epsilon l(\text{mod }q_{\mathcal{O}})$. Thus $q_{\mathcal{O}}$ divides $l$.  For the second claim in (1),  note that  Lemma~\ref{lemma:path}(i)  gives 
$$o_{f_{l+1}}=\phi_u(b_1\cdot \ldots\cdot b_l)^{-1}o_{f_1}s_{v}^{\varepsilon_{i(j_1)}+\cdots + \varepsilon_{i(j_l)}}.$$  
By combining  Equation~\ref{eq:epsilon} and Lemma~\ref{lemma:path}(iii) we get   $\varepsilon_{i(j_1)}+\cdots+ \varepsilon_{i(j_{l})}=k$. This completes the  proof of item (1).

Now assume  that $f_{l+1}=f_1$. It follows from item (1) that $q_{\mathcal{O}}$ divides $l$ and that 
$$o_{f_{l+1}}= \phi_u(b_1\cdot \ldots\cdot b_l)^{-1}o_{f_1}s_{v}^{k}.$$   
As $f_1=f_{l+1}$  we conclude that   $o_{f_{l+1}}=o_{f_1}$, and so we have     
$$s_{v}^k=\phi_u(b_1\cdot \ldots\cdot b_l)^{-1}.$$ 
Thus  $s_{v}^k$ belongs to  $\phi_u(B_u)$, and therefore  the index of $\phi_u(B_u)$ in $A_v^{\mathcal{O}}$ is less or equal to  $k$.

To prove the second claim in (2) suppose that $\phi(x)=v_2$ (the case $\phi(x)=v_1$ is handled similarly). After a cyclic permutation of $f_1\mapsto f_2\mapsto \cdots \mapsto f_{l}\mapsto f_{l+1}=f_1$ we can assume that the index  $i(j_t)\in \{1,\ldots, q_{\mathcal{O}}\}$ is such that $i(j_t)\equiv t \ (\text{mod  }q_{\mathcal{O}})$ for all $1\leqslant t\leqslant l$. With this assumption we have 
$$\phi(f_1)=e_1^{-1}, \phi(f_2)=e_2^{-1}\ldots,  \phi(f_{q_{\mathcal{O}}})=e_{q_{\mathcal{O}}}, \phi(f_{q_{\mathcal{O}}+1})=e_1^{-1},\ldots, $$    
and   
\begin{equation}{\label{equ:1}} 
    \varepsilon_{i(j_t)} = \begin{cases}
               1              & \text{ if  }  \ t\equiv 1 \ (\text{mod  }q_{\mathcal{O}}) \\
               0               & \text{ if  }  \ t\not\equiv 1 \ (\text{mod  } q_{\mathcal{O}}) 
            \end{cases}
\end{equation}{\label{eq:epsilon1}}
Consequently,   for each $1\leqslant i\leqslant q_{\mathcal{O}}$, the intersection   $\{f_1,\ldots, f_{l}\} \cap \phi^{-1}(e_{i}^{-1})$ is equal to $\{ f_i, f_{i+q_{\mathcal{O}}}, \ldots, f_{i+(l-1)q_{\mathcal{O}}}\}.$  Lemma~\ref{lemma:path}(i) and Equation~(\ref{eq:epsilon1})    imply  that  
 $$\phi_u(B_u)o_{f_{i+tq_{\mathcal{O}}}} = \phi_u(B_u) o_{f_{i}}  s_{v_2}^{t} \ \ \ \text{ for } \ \  1\leqslant t\leqslant l-1.$$ 
From  item (1) we know that  $|A_{v_2}^{\mathcal{O}}:\phi_u(B_u)|\leqslant k$. Hence, we can write $o_{f_i}=\phi_u(b)s_{v_2}^{m}$ for some $b\in B_u$ and some $0\leqslant m\leqslant k-1$. Thus 
$$ \bigcup_{t=0}^{k-1}\phi_u(B_u)   o_{f_{i+tq_{\mathcal{O}}}}=\bigcup_{t=0}^{k-1} \phi_u(B_u)  o_{f_{i}}  s_{v_2}^{t}=\bigcup_{t=0}^{k-1} \phi_x(B_x)s_{v_2}^{t+m}=   A_{v_2}^{\mathcal{O}},$$
which completes the proof of item (2).
\end{proof}

The next property of the paths  $p_1, \ldots, p_n$  that we obtain  from the local graph is that in many situations we can assume that $p_1, \ldots, p_n$ do not cross some   vertex groups. The precise meaning of the expression ``do not cross a vertex group'' will be given  below in item (2) of Lemma~\ref{lemma:loctrivial}. 

Let $u$ be a vertex of $\mathbb{B}$ and $S$ a subset of $\st(u, B)=\{f\in EB \ |  \ \alpha(f)=u\}$.  We say that a $\mathbb{B}$-path $p$  is \emph{$S$-trivial at $u$} if $\alpha(p)\neq u\neq \omega(p)$  and  $p$ can be written  as    $$p= p_0 (1,f_1^{-1},1, g_1,1)  p_1\cdot \ldots \cdot p_{k-1} (1,f_k^{-1},1, g_k,1)  p_k$$   
where $f_1, g_1, \ldots, f_k, g_k\in S$ and the paths $p_0,\ldots, p_k$ are contained in the sub-graph of groups $\mathbb{B}_{S}\subseteq \mathbb{B}$.
\begin{figure}[h!]
\begin{center}
\includegraphics[scale=01]{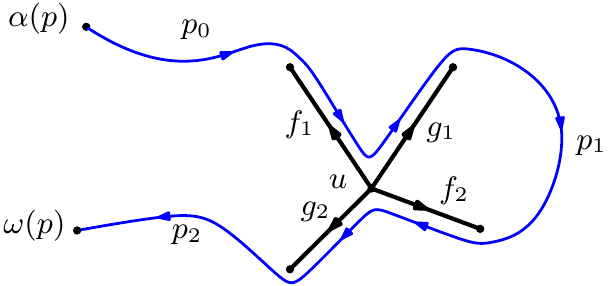}
\end{center}
\caption{$p=p_0 (1,f_1^{-1}, 1, g_1, 1)p_1 (1,f_2^{-1}, 1, g_2, 1) p_2$.}
\end{figure}

Note that a $\B$-path $p$    is   $\st(u, B)$-trivial at $u$ if, and only if,  $\alpha(p)\neq u\neq \omega(p)$ and $p$ is contained in the sub-graph of groups   $\mathbb{B}_u\subseteq \mathbb{B}$ obtained from $\mathbb{B}$  by replacing the vertex group ${B}_u$ by the trivial group.

\begin{lemma}{\label{lemma:loctrivial}}
Let $((\B, u_1), \phi, \{p_j\}_{1\leqslant j\leqslant n}, \{\gamma_j\}_{1\leqslant j\leqslant n})$ be a decorated morphism over $\AO$.  Suppose that  $f_1\mapsto f_2\mapsto \cdots \mapsto f_l$ is a contractible component of $\G^u$ with 
$$l(f_t\mapsto f_{t+1})=(j_t, b_t) \in \{1,\ldots, n\}\times B_u \ \ \text{ for } \ \    1\leqslant t\leqslant l-1.$$ 
Then  there exists a decorated morphism $((\mathbb{B},u_1), \phi', \{p_{j}'\}_{1\leqslant j\leqslant n}, \{\gamma_{j}'\}_{1\leqslant j\leqslant n})$ such that the following hold:
\begin{enumerate}
\item[(1)] $((\mathbb{B},u_1), \phi', \{p_{j}'\}_{1\leqslant j\leqslant n}, \{\gamma_{j}'\}_{1\leqslant j\leqslant n})$ is obtained from $((\B, u_1), \phi, \{p_j\}_{1\leqslant j\leqslant n}, \{\gamma_j\}_{1\leqslant j\leqslant n})$ by a  redecoration and  finitely many auxiliary moves of type A2  applied  to the edges  $f_2,\ldots, f_l$ .

\item[(2)] The paths   $p_{1}', \ldots, p_{n}'$ are $\{f_1,\ldots, f_l\}$-trivial at the vertex  $u$. 
\end{enumerate} 
In particular, the label of the edge  $f_t\mapsto f_{t+1}$    of  $\Gamma_{((\mathbb{B}, u_1), \phi', \{p_j'\}, \{\gamma_j'\})}^u$  is given by
 $$l(f_t\mapsto f_{t+1})=(j_t, 1)\in \{1,\ldots, n\}\times B_u.$$
\end{lemma}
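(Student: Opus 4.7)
The plan is to proceed in two stages: an iterated cascade of auxiliary moves of type A2 that trivialises the labels along the chain, followed by a redecoration. For each $1\leq t\leq l-1$ set $c_t:=b_1b_2\cdots b_t\in B_u$; at the $t$-th step of the cascade we apply the A2-move to the edge $f_{t+1}$ with element $c_t$. By Remark~\ref{remark:auxmove} this move is realised by a graph-of-groups automorphism $\sigma_t:\B\to\B$ whose only effect on $\B$-paths is to replace each occurrence of $(1,f_{t+1},1)$ by $(c_t^{-1},f_{t+1},1)$ and, dually, each occurrence of $(1,f_{t+1}^{-1},1)$ by $(1,f_{t+1}^{-1},c_t)$, while leaving every other edge of $\B$ untouched.

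The decisive combinatorial input is that $f_1\mapsto\cdots\mapsto f_l$ is an entire component of $\G^u$ and is an interval, so every interior vertex $f_t$ has in-degree and out-degree both equal to $1$ and each endpoint has one of these degrees equal to $0$. Combined with the standing assumption that $\D$ does not fold squares, this forces the edge $f_{t+1}$ (for each $1\le t\le l-1$) to occur as $(1,f_{t+1},1)$ in the paths $p_1,\ldots,p_n$ exactly inside the pattern $(1,f_t^{-1},b_t,f_{t+1},1)\subseteq p_{j_t}$, and to occur as $(1,f_{t+1}^{-1},1)$ exactly inside the pattern $(1,f_{t+1}^{-1},b_{t+1},f_{t+2},1)\subseteq p_{j_{t+1}}$ (when $t+1\leq l-1$; otherwise not at all). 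An induction on $t$ now yields the claim: after step $t$ the label of $f_{t'}\mapsto f_{t'+1}$ equals $(j_{t'},1)$ for every $t'\leq t$; the label of $f_{t+1}\mapsto f_{t+2}$ has been updated from $(j_{t+1},b_{t+1})$ to $(j_{t+1},c_t b_{t+1})=(j_{t+1},c_{t+1})$, which is precisely the element to be killed at the next step; and every label outside the chain is unchanged. After step $l-1$ all chain labels are trivial.

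It remains to perform the redecoration. For each $j$, pick a cyclic permutation $p_j'$ of the updated path whose initial vertex is distinct from $u$ (possible because no edge of $\B$ is a loop and each $p_j$ has positive length), and take $\gamma_j'$ to be any $\B$-path from $u_1$ to $\alpha(p_j')$ contained in $\B_{f_1,\ldots,f_l}$. Splitting $p_j'$ at its visits to $u$ that use chain edges yields the required decomposition $p_j'=p_{j,0}(1,f^{-1},1,g,1)p_{j,1}\cdots(1,f^{-1},1,g,1)p_{j,k}$ with each $p_{j,i}$ contained in $\B_{f_1,\ldots,f_l}$ and with $f,g\in\{f_1,\ldots,f_l\}$, witnessing the $\{f_1,\ldots,f_l\}$-triviality of $p_j'$ at $u$. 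The main technical difficulty is the Stage 1 bookkeeping: ruling out any ``unexpected'' occurrence of $f_{t+1}$ in the paths, which is exactly the combinatorial content extracted from ``$f_1\mapsto\cdots\mapsto f_l$ is a whole component of $\G^u$'' combined with the no-square-folds hypothesis.
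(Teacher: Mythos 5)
Your proposal is correct and follows essentially the same route as the paper: a cascade of A2 moves on $f_2,\ldots,f_l$ with the accumulated elements $c_t=b_1\cdots b_t$, using the interval structure of the component together with the no-fold-squares hypothesis to guarantee that each chain edge occurs only in the expected pattern; the paper merely writes out the representative case $l=3$, $j_1\neq j_2$, while you carry out the induction in general. The only cosmetic difference is that the paper performs the cyclic permutation (so that $\alpha(p_j)\neq u\neq\omega(p_j)$) at the outset rather than at the end, which slightly simplifies the bookkeeping but changes nothing essential.
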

\begin{proof}
After a  cyclic permutation  of the paths $p_1, \ldots, p_n$  we can assume that $\alpha(p_j)\neq u\neq \omega(p_j)$ for all $1\leqslant j\leqslant n$. If $l=1$, then there is nothing to show  as in this case    the paths $p_{1}, \ldots, p_{n}$ are contained in the sub-graph of groups $\mathbb{B}_{f_1}\subseteq \mathbb{B}$, and so they are $\{f_1\}$-trivial at $u$.

In order to clarify the argument we  assume that $l=3$ and that $j_1\neq j_2$. The idea of the proof is to trivialize the paths $p_{j_1},p_{j_2}$ one  by one by applying auxiliary moves to the edges $f_2$ and $f_3$  with  appropriate elements of the vertex group $B_u$.    By definition,  
$$p_{j_1}= p_{j_1}'  (1,f_{1}^{-1},b_1 ,f_{2},1)  p_{j_1}''  \ \ \text{ and } \ \ p_{j_2}=p_{j_2}' (1, f_2^{-1}, b_2, f_3, 1)p_{j_2}''.$$ 
Lemma~\ref{lemma:path} implies that 
 $o_{f_{2}}=\phi_u(b_1)^{-1} o_{f_1}s_v^{\varepsilon_{i(j_1)}}$  and  that $ o_{f_3}= \phi_u(b_1 b_2)^{-1} o_{f_1} s_v^{\varepsilon_{i(j_1)}+\varepsilon_{i(j_2)}}.$  Since the decorated morphism  $((\B, u_1), \phi, \{p_j\}_{1\leqslant j\leqslant n}, \{\gamma_j\}_{1\leqslant j\leqslant n})$ does not fold squares it follows that the paths $p_{j_1}', p_{j_1}'', p_{j_2}', p_{j_2}''$ and the paths  $p_{1}, \ldots, p_{j_1-1}, p_{j_1+1}, \ldots, p_{j_2-1}, p_{j_2+1}, \ldots, p_n$   are contained in the sub-graph of groups $\B_{f_1, f_2, f_3}\subseteq \B$.

Let $((\mathbb{B},u_1), \phi',  \{p_{j}'\}_{1\leqslant j\leqslant n},  \{\gamma_{j}'\}_{1\leqslant j\leqslant n})$ be the decorated morphism  obtained from  $$((\B, u_1), \phi, \{p_j\}_{1\leqslant j\leqslant n}, \{\gamma_j\}_{1\leqslant j\leqslant n})$$
 by an auxiliary move of type A2 applied to the edge $f_2$ with element $b_1\in B_u$. We know from Remark~\ref{remark:auxmove}  that  
$$p_{j_1}' =p_{ j_1}' (1, f_1^{-1}, b_1 b_1^{-1}, f_2, 1) p_{  j_1}''=p_{ j_1}'(1, f_1^{-1}, 1, f_2, 1) p_{  j_1}'',$$ 
that $p_{j_2}'=p_{j_2}'( 1, f_2^{-1}, b_1 b_2, f_3, 1) p_{j_2}''$,
and that  $p_j'=p_j$  for all $j\neq j_1, j_2$.

Next, define  $((\mathbb{B},u_1), \phi'',  \{p_{j}''\}_{1\leqslant j\leqslant n},  \{\gamma_{j}''\}_{1\leqslant j\leqslant n})$  as  the decorated morphism   obtained from $$((\mathbb{B},u_1), \phi',  \{p_{j}'\}_{1\leqslant j\leqslant n},  \{\gamma_{j}'\}_{1\leqslant j\leqslant n})$$ by an auxiliary move of type A2 applied to the edge $f_3$ with element $b_1b_2\in B_u$.  Again, using Remark~\ref{remark:auxmove}, we  see that    
$$p_{j_2}''=p_{j_2}'(1, f_2^{-1}, b_1b_2 (b_1b_2)^{-1}, f_3, 1) p_{j_2}''= p_{j_2}'(1, f_2^{-1}, 1, f_3, 1) p_{j_2}'',$$
and  that $p_j''=p_j'$ for all $j\neq j_2$. Therefore $p_{1}'', \ldots, p_n''$ are $\{f_1, f_2, f_3\}$-locally trivial at $u$.
\end{proof}

The following lemma is simply  a reformulation of the notion of an almost orbifold covering   of $\mathcal{O}$ to the language of decorated morphism over the associated graph of groups $\AO$. 
\begin{lemma}{\label{lemma:almost}}
Let $((\B, u_1), \phi, \{p_j\}_{1\leqslant j\leqslant n}, \{\gamma_j\}_{1\leqslant j\leqslant n})$ be a decorated morphism over $\AO$ that does not fold squares.  Let further $u$ be a vertex of $\mathbb{B}$ with $v:=\phi(u)\in  VA^{\mathcal{O}}$. Suppose that the following hold:
\begin{enumerate}
\item[(a)] For each  $w\in VB$ the local graph $\G^w$ is a circle 
 $f_{w,1}\mapsto f_{w,2}\mapsto \cdots \mapsto f_{w,l_w}\mapsto f_{w,1}$.

\item[(b)] If $w\neq u$, then the morphism  $\phi:\mathbb{B}\rightarrow \AO$ is folded at $w$.

\item[(c)] At the vertex $u$ the following hold:
\begin{enumerate}
\item[(c.1)] $B_u=\langle b_u \ | \ -\rangle \ast \langle s_{v}^d\rangle$ where $1\leqslant d=|A_v^{\mathcal{O}}:\langle s_v^d\rangle| $.

\item[(c.2)] $l(f_{u,t}\mapsto f_{u, t+1})=(j_t, 1)$ for $1\leqslant t\leqslant l_u-1$ and $l(f_{u, l_u}\mapsto f_{u, 1})=( j_{l_u}, b_u)$.

\item[(c.3) ] The vertex homomorphism  $\phi_u: B_u\rightarrow A_{v}^{\mathcal{O}}$ is induced by the  maps $\langle s_v^d\rangle \hookrightarrow A_v$ and   $b_u\mapsto s_{v}^{k_u}$, where $k_u=l_u/q_{\mathcal{O}}$.
\end{enumerate}
\end{enumerate}
 
Then there is an almost orbifold covering $\eta':\mathcal{O}'\rightarrow \mathcal{O}$ with exceptional point $v\in A^{\mathcal{O}}\subseteq F$  with the property that the element represented by the exceptional boundary component of $\mathcal{O}'$ is mapped by $\eta_{\ast}'$ onto $gs_v^{k_u}g^{-1}$ for some $g\in \pi_1^o(\mathcal{O})$ such that 
$$(\pi_1^o(\mathcal{O}')\ast g\langle s_v^d\rangle g^{-1}, \lambda , \{C_1',\ldots, C_n'\}) \cong (\pi_1(\mathbb{B},u_1), \phi_{\ast}, \{H(p_1, \gamma_1), \ldots, H(p_n, \gamma_n)\}),$$
where $\lambda$ is the homomorphism induced by  $\eta_{\ast}':\pi_1^o(\mathcal{O}')\rightarrow \pi_1^o(\mathcal{O})$  and the inclusion map $g \langle s_v^d\rangle g^{-1}\hookrightarrow \pi_1^o(\mathcal{O})$.  
\end{lemma}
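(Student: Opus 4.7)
The plan is to reverse-engineer the almost orbifold covering $\eta':\mathcal{O}'\to\mathcal{O}$ from the data of $\D$, essentially inverting the construction of Example~\ref{ex:induced}. The graph $A^{\mathcal O}$ is a spine of the underlying surface $F$ of $\mathcal O$, with vertices $v_1,v_2$ at $x_1,x_2$ and edges running as bands between consecutive boundary components; I will build $\mathcal O'$ so that $B$ is analogously a spine of its underlying surface $F'$, and $\phi:B\to A^{\mathcal O}$ extends to the underlying map $F'\to F$ of $\eta'$.

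The local analysis at each vertex $w$ of $B$ is the heart of the argument. Hypothesis (a) says the local graph $\G^w$ is a circle of length $l_w=q_{\mathcal O}k_w$, and Lemma~\ref{lemma:path}(ii)--(iii) shows $\phi$ sends the cyclic sequence of edges around $w$ to $k_w$ copies of the cyclic sequence $e_{i_0}^{\epsilon},\ldots,e_{i_0-(q_{\mathcal O}-1)\epsilon}^{\epsilon}$. For $w\neq u$, foldedness (b) and Lemma~\ref{lemma:path1}(2) together force $\phi_w$ injective with $|A^{\mathcal O}_{\phi(w)}:\phi_w(B_w)|=k_w$ and exhaust all double cosets by the edges of the circle; this is precisely the combinatorial signature of a cone point $y_w\in F'$ of order $|B_w|$ whose $k_w$-sheeted neighborhood covers a small disk around $\phi(w)$ in $\mathcal O$.

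The vertex $u$ carries the exceptional behavior. By (c.2)--(c.3) the loop in $B_u$ tracing the circle $\G^u$ evaluates to $b_u$, and $\phi_u(b_u)=s_v^{k_u}$; geometrically this is a boundary loop in $\mathcal O'$ winding $k_u$ times around $v$. The decomposition $B_u=\langle b_u\rangle\ast\langle s_v^d\rangle$ of (c.1) splits this into the genuine geometric loop $b_u$---which becomes the exceptional boundary component $\delta_u'$ of $\mathcal O'$---and an adjoined finite factor $\langle s_v^d\rangle$ that is not realized in $\mathcal O'$ itself. Thus the local picture at $u$ is a punctured disk around $v$ with puncture $\delta_u'$ and $\eta'|_{\delta_u'}\to\partial D$ of degree $k_u$, which is precisely the germ of an almost orbifold covering with $\langle s_v^d\rangle$ adjoined in the sense of Example~\ref{ex:03}.

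Finally I assemble $\mathcal O'$ by gluing these germs along each edge $f\in EB$ via a band over $\phi(f)\in EA^{\mathcal O}$, with the identification determined by the edge elements $o_f,t_f$. The conclusion then follows from Bass--Serre theory: $\pi_1(\mathbb{B},u_1)$ decomposes as $\pi_1^o(\mathcal O')\ast g\langle s_v^d\rangle g^{-1}$ for $g=[\phi(\delta)]$ with $\delta$ any path in $B$ from $u_1$ to $u$, and each closed path $p_j$ traces a boundary component of $\mathcal O'$ whose corresponding peripheral subgroup $C_j'$ matches $H(p_j,\gamma_j)$ because $\phi(\gamma_jp_j\gamma_j^{-1})=o_{H(p_j,\gamma_j)}[c_{i(j)}]^{z_j}o_{H(p_j,\gamma_j)}^{-1}$ by construction of the decorated morphism. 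The main obstacle lies in verifying the gluing is consistent: the cyclic order of edges at each vertex (read off from its local graph circle) must agree with an actual planar embedding of $B$ into $F'$, and the $o_f,t_f$ must resolve band-attachment ambiguities coherently. This reduces to a bookkeeping exercise using condition (5) of the morphism definition together with the coset-exhaustion provided by Lemma~\ref{lemma:path1}(2).
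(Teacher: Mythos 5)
The paper offers no written proof of this lemma --- it is introduced with the remark that it is ``simply a reformulation of the notion of an almost orbifold covering \ldots to the language of decorated morphisms'' --- and your construction (realizing $B$ as a spine of $F'$, reading the local covering/branching data off the circular local graphs via Lemma~\ref{lemma:path1}(2), isolating the exceptional boundary component and the adjoined factor $\langle s_v^d\rangle$ at $u$, and gluing bands over the edges as in the inverse of Example~\ref{ex:induced}) is exactly the argument the author leaves implicit. Your proposal is correct and supplies the intended proof; the only cosmetic point is that injectivity of $\phi_w$ for $w\neq u$ is already part of hypothesis (b) (foldedness at $w$ includes (F0) there) rather than a consequence of Lemma~\ref{lemma:path1}.
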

\begin{remark}
Item (b) means that the vertex homomorphism  $\phi_w:B_w\rightarrow A_{\phi(w)}^{\mathcal{O}}$ is injective and  that the cosets  $\phi_w(B_w)  o_g^{\phi}$ and  $ \phi_w(B_w)  o_f^{\phi}$  are distinct whenever $f$ and $g$ are edges of $\B$  starting at $w$   with $\phi(f)=\phi(g)$. In item (c.1) note that $B_u=\langle b_u \ |  \ -\rangle $ if $d=|A_v^{\mathcal{O}}|$. 
\end{remark}

\begin{lemma}{\label{lemma:adjfinite}}
With the notation and hypothesis from Lemma~\ref{lemma:almost}, if $1\leqslant d < k_u=l_u/q_{\mathcal{O}}$, then there is a decorated  morphism $((\B', u_0'), \phi', \{p_j'\}_{1\leqslant j\leqslant n}, \{\gamma_j'\}_{1\leqslant j\leqslant n})$ over $\AO$  with  
 $$(\pi_1(\B', u_0'), \phi_{\ast}', \{H(p_j', \gamma_j')\}_{1\leqslant j\leqslant n})\cong (\pi_1(\B, u_0), \phi_{\ast}, \{H(p_j, \gamma_j)\}_{1\leqslant j\leqslant n})$$ 
such that $((\B', u_0'), \phi', \{p_j'\}_{1\leqslant j\leqslant n}, \{\gamma_j'\}_{1\leqslant j\leqslant n})$  folds onto a decorated morphism over $\AO$ that folds squares.
\end{lemma}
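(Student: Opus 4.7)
The plan is to realize, at the level of decorated morphisms, the same ``shrinking of the adjoined finite factor'' that is carried out on the decorated-group side in the proof of the second claim inside Lemma~\ref{lemma:almostcovering}. There, under the hypothesis $1 \le d < k_u$, a Grushko-style rewriting replaces the generator $gs_x^d g^{-1}$ of the finite factor by $gs_x^p g^{-1}$ for some $0 \le p < k_u$; one then substitutes $\langle s_v^d\rangle$ by $\langle s_v^q\rangle$, where $q = |\langle s_v\rangle : \langle s_v^p\rangle|$. I would carry out precisely this surgery on the graph of groups $\B$ itself, rather than on the induced decorated group, and then observe that the resulting local configuration at $u$ forces an elementary fold whose output folds squares.

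\textbf{Construction of $((\B', u_0'), \phi', \{p_j'\}, \{\gamma_j'\})$.} First I would pick $r \ge 1$ coprime to $|\langle s_v^d\rangle|$ and write $rd = l k_u + p$ with $0 \le p < k_u$; set $q := |\langle s_v\rangle : \langle s_v^p\rangle|$. Next, I would construct $\B'$ from $\B$ by replacing the vertex group at $u$, namely $B_u = \langle b_u\rangle \ast \langle s_v^d\rangle$, with $B_u' := \langle b_u\rangle \ast \langle s_v^q\rangle$; the morphism $\phi'$ coincides with $\phi$ off $u$ and is defined at $u$ by $b_u \mapsto s_v^{k_u}$ together with the inclusion $\langle s_v^q\rangle \hookrightarrow A_v^{\mathcal{O}}$. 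The paths $p_j'$ and $\gamma_j'$ are obtained from $p_j$ and $\gamma_j$ by the same rewriting used in Lemma~\ref{lemma:almostcovering}'s second claim: whenever a $p_j$ traverses the subpath representing the $\langle s_v^d\rangle$-loop at $u$, we replace it by the $\langle s_v^q\rangle$-loop composed $r$ times (modulo powers of $b_u$). One checks from $rd \equiv p \pmod{|\langle s_v\rangle|}$ that this preserves the peripheral type $(o_{H(p_j,\gamma_j)}, i(j))$ of each peripheral subgroup, so that the induced decorated groups $(\pi_1(\B', u_0'), \phi'_\ast, \{H(p_j', \gamma_j')\}_{1\leqslant j\leqslant n})$ and $(\pi_1(\B, u_1), \phi_\ast, \{H(p_j, \gamma_j)\}_{1\leqslant j\leqslant n})$ are isomorphic.

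\textbf{Folding onto a square-folder.} Because $p < k_u$, the new cycle at $u$ of $\Gamma_{((\B',u_0'),\phi',\{p_j'\},\{\gamma_j'\})}^{u}$ is strictly shorter than what Lemma~\ref{lemma:almost}(a)--(c) would require for $\phi'$ to be folded at $u$: two edges in this cycle (or a new edge paired with an old one coming from the $b_u$-portion) must map to the same edge of $A^{\mathcal{O}}$ with coinciding edge elements. An elementary fold of type IA or IIIA is therefore applicable, and after performing it, the single folded edge is traversed either by two distinct $p_{j_1}'$ and $p_{j_2}'$ with $i(j_1) = i(j_2)$, giving a fold of peripheral paths, or by a single $p_j'$ twice, giving a self-fold. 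In either case the resulting decorated morphism folds squares in the sense of Definition~\ref{def:squares}. The main obstacle is the bookkeeping in the previous step: one must verify that after substituting the $\langle s_v^q\rangle$-loop for the $\langle s_v^d\rangle$-loop the paths $p_j'$ are still valid closed $\B'$-paths whose cyclic permutations hit the $c_{i(j)}^{z_j}$-pattern required by Definition~\ref{def:decmor}(3), and that the edges newly introduced into the cycle sit in positions along the paths $p_j'$ which actually create the claimed square fold rather than being annihilated by a simultaneous cancellation elsewhere.
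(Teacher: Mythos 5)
Your construction does not prove the lemma, and the gap is already in the first step. The statement requires the new decorated morphism to induce a decorated group \emph{isomorphic} (in the sense of Definition~\ref{def:equivdecgroups}) to the old one, hence in particular an isomorphism $\pi_1(\B',u_0')\to\pi_1(\B,u_1)$. Replacing $B_u=\langle b_u\rangle\ast\langle s_v^d\rangle$ by $B_u'=\langle b_u\rangle\ast\langle s_v^q\rangle$ changes the order of the finite free factor from $|A_v^{\mathcal{O}}|/d$ to $|A_v^{\mathcal{O}}|/q$, so unless $q=d$ the two fundamental groups are simply not isomorphic. The manoeuvre you are importing from the second Claim in the proof of Lemma~\ref{lemma:almostcovering} is genuinely weaker than what is needed here: there one only has to keep the \emph{image of a generating tuple} in the same Nielsen class, the integer $r$ is supplied by the Grushko decomposition of that tuple, and Lemma~\ref{lemma:adjfinite} is then \emph{invoked} for the new exponent $q$ --- so reproducing that move inside the proof of Lemma~\ref{lemma:adjfinite} is both circular and insufficient. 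Moreover, in the setting of Lemma~\ref{lemma:almost} there is no generating tuple, so your $r$ has no source, and the path rewriting you describe acts on nothing: by hypothesis (c.2) the peripheral paths $p_1,\dots,p_n$ meet the vertex group $B_u$ only in the elements $1$ and $b_u$; the factor $\langle s_v^d\rangle$ is an adjoined finite subgroup, not an edge loop traversed by any $p_j$. Finally, the assertion that ``an elementary fold is therefore applicable and its output folds squares'' is precisely the content to be proved, and you exhibit no pair of edges for which it happens.

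The paper's argument runs in the opposite direction: it \emph{unfolds} the edge $f_{u,1}$, i.e.\ it adds a parallel edge $f_{u,l_u+1}$ with edge element $o^{\phi'}_{f_{u,l_u+1}}=s_v^{k_u}$, deletes the factor $\langle b_u\rangle$ from the vertex group (so that $B_u'=\langle s_v^d\rangle$), and reroutes the unique occurrence of $b_u$ in $p_{j_{l_u}}$ through the new edge. Since the original morphism is recovered by a type IIIA fold, Lemma~\ref{lemma:fold} gives the required isomorphism of induced decorated groups for free. The local graph at $u$ then becomes an interval $f_{u,1}\mapsto\cdots\mapsto f_{u,l_u+1}$ with all labels of the form $(j_t,1)$, and Lemma~\ref{lemma:path1} yields $o_{f_{u,l_u+1}}=o_{f_{u,1}}s_v^{k_u}$; the $k_u+1$ edges lying over $e=\phi(f_{u,1})$ carry edge elements $o_{f_{u,1}}s_v^{t}$ for $0\leqslant t\leqslant k_u$, and since $|A_v^{\mathcal{O}}:\phi_u'(B_u')|=d<k_u$ two of these lie in the same $\phi_u'(B_u')$-coset, so a fold is applicable to that pair. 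That fold produces a decorated morphism which folds squares because each of the identified edges and its inverse is crossed by one of the paths $p_j'$. This unfolding step, which strips $b_u$ out of the vertex group so that the pigeonhole on cosets of $\langle s_v^d\rangle$ can be run, is the idea missing from your proposal.
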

\begin{proof}
Let  $((\B', u_1'), \phi', \{p_j'\}_{1\leqslant j\leqslant n}, \{\gamma_j'\}_{1\leqslant j\leqslant n})$ be the decorated morphism    obtained from 
 $$((\B , u_1 ), \phi , \{p_j \}_{1\leqslant j\leqslant n}, \{\gamma_j \}_{1\leqslant j\leqslant n})$$
by unfolding the edge $f_{u,1}$ of $\B$. Thus  $((\B', u_1'), \phi', \{p_j'\}_{1\leqslant j\leqslant n}, \{\gamma_j'\}_{1\leqslant j\leqslant n})$ can be constructed as follows: 
\begin{enumerate}
\item[(1)] we add an edge $f_{u, l_u+1}$  to the graph  $B$ with  $\alpha(f_{u, l_u+1})=u$ and  $\omega(f_{u, l_u+1}):=\omega(f_{u,1})$.

\item[(2)] we extend the graph-morphism $\phi:B\rightarrow A^{\mathcal{O}}$ to a graph-morphism  
$$\phi':B'\rightarrow A^{\mathcal{O}}$$ 
by defining   $\phi'(f_{u, l_u+1}):=\phi(f_{u,1})$.

\item[(3)] we replace the vertex  group $B_u=\langle b_u \ | \ - \rangle \ast \langle s_v^d\rangle$ by the group  $B_u':=\langle s_v^d\rangle$.

\item[(4)] we define the edge elements of $f_{u,l_u+1}$  by   
 $o_{f_{u, l_u+1}}^{\phi'}:= s_{v}^{k_u}$  and  $ t_{f_{u, l_u+1}}^{\phi'}=t_{f_{u, 1}}^{\phi}.$

\item[(5)] the path $p_{j_{l_u}}=p_{j_{l_u}} '(1, f_{u,l_u}^{-1}, b_u, f_{u,1}, 1) p_{j_{l_u}}''$ is replaced by $p_{j_{l_u}}':= p_{j_{l_u}}'(1, f_{u,l_u}^{-1}, 1, f_{u,l_u+1}, 1) p_{j_{l_u}}''$.
\end{enumerate}

Note that $((\B , u_1 ), \phi , \{p_j \}_{1\leqslant j\leqslant n}, \{\gamma_j \}_{1\leqslant j\leqslant n})$  is obtained from  $((\B', u_1'), \phi', \{p_j'\}_{1\leqslant j\leqslant n}, \{\gamma_j'\}_{1\leqslant j\leqslant n})$ by  a fold of type IIIA applied to the   edges $f_{u,1}^{-1}$ and $f_{u, l_u+1}^{-1}$.  This implies that 
$$(\pi_1(\B', u_1'), \phi_{\ast}', \{H(p_j', \gamma_j')\}_{1\leqslant j\leqslant n})\cong (\pi_1(\B , u_1 ), \phi_{\ast} , \{H(p_j, \gamma_j )\}_{1\leqslant j\leqslant n}).$$

We will show that  $((\B', u_1'), \phi', \{p_j'\}_{1\leqslant j\leqslant n}, \{\gamma_j'\}_{1\leqslant j\leqslant n})$ folds onto a decorated morphism that folds squares. In  the local graph $\Gamma_{((\B', u_1'), \phi', \{p_j'\}, \{\gamma_j'\})}^u$, we have 
 $$l(f_{u, t}\mapsto f_{u, t+1})=(j_t, 1) \ \ \text{ for } \ \   1\leqslant t\leqslant l_u.$$ 
 Since $\phi'(f_{u,1})=\phi'(f_{u, l_u+1})=\phi(f_{u,1})$,  it follows  from Lemma~\ref{lemma:path1} that  
$$o_{f_{u, l_u+1}}^{\phi'}=o_{f_{u,1}}^{\phi'} s_v^{k_u}.$$    
By hypothesis $d=|A_v^{\mathcal{O}}: \phi_u'(B_u')|$ is strictly smaller than $k_u= l_u/q_{\mathcal{O}}$.   Lemma~\ref{lemma:path} implies that there is some $i<l_u+1$ such that the edges $f_{u,1}$ and $f_{u,i}$ can be folded. Now observe  that the  edge $f_{u,1}$ is crossed by the path $p_{u,j_1}$, the edge   $f_{u,i}$ is crossed by the path $p_{j_{i-1}}'$,  and the edge $f_{u,i}^{-1}$ is crossed by the path $p_{j_i}$. Hence  the decorated morphism   obtained from $((\B', u_1'), \phi', \{p_j'\}_{1\leqslant j\leqslant n}, \{\gamma_j'\}_{1\leqslant j\leqslant n})$ by folding the edges $f_{u, 1}$ and $f_{u,i}$ folds squares. This concludes the proof of the lemma.
\end{proof}

%----------------------------------------------------------------------------------------  

\subsection{Proof of Proposition~\ref{proposition:1}.} 
Let  $(G, \eta, \{G_j\}_{j\in J})$ be a strongly collapsible decorated group over $\pi_1(\AO, v_1)$, and suppose  that  the homomorphism  $\eta$ is not injective.    Lemma~\ref{lemma:initial} implies that the set $\Lambda_{(G, \eta, \{G_j\})}$ of all tame decorated morphisms $((\B, u_1), \phi, \{p_j\}_{1\leqslant j\leqslant n}, \{\gamma_j\}_{1\leqslant j\leqslant n})$ over $\AO$ that induce decorated groups isomorphic to $(G, \eta, \{G_j\}_{j\in J})$ is non-empty. 
 
 \smallskip

Let  $H:\Lambda_{(G, \eta, \{G_j\})} \rightarrow \mathbb{N}$ be the map defined by $H\D:=|EB| $ and  let  $n_0\in \mathbb{N}$ be the minimum of $H$.  Let 
$$\Lambda_{(G, \eta, \{G_j\})}^{\text{min}}:=H^{-1}(n_0)\subseteq  \Lambda_{(G, \eta, \{G_j\})}$$ 
be the set of  all tame decorated  morphisms  $((\B, u_1), \phi, \{p_j\}_{1\leqslant j\leqslant n}, \{\gamma_j\}_{1\leqslant j\leqslant n})$ in $\Lambda_{(G, \eta, \{G_j\})}$   with $|EB|=n_0$. By hypothesis,  $\eta:G\rightarrow \pi_1(\AO, v_1)$ is not injective. Consequently   no element of  $\Lambda_{(G, \eta, \{G_j\})}$ is    folded.

\smallskip

Let  $((\B, u_1), \phi, \{p_j\}_{1\leqslant j\leqslant n}, \{\gamma_j\}_{1\leqslant j\leqslant n})$  be an arbitrary element element of $\Lambda_{(G, \eta, \{G_j\})}^{\text{min}}$.  Since the morphism  $\phi:\B\rightarrow \AO$ is  vertex injective and the edge groups of $\AO$ are trivial, condition (F1) of Definition~\ref{def:folded}  must be violated.  Thus  either a  fold  of type IA or a  fold of type  IIIA is applicable to $\phi$.   Suppose  that the decorated morphism   $((\mathbb{B}',u_1'), \phi', \{p_j'\}_{1\leqslant j\leqslant n} , \{\gamma_j'\}_{1\leqslant j\leqslant n})$ is   obtained from   $((\B, u_1), \phi, \{p_j\}_{1\leqslant j\leqslant n}, \{\gamma_j\}_{1\leqslant j\leqslant n})$ by  one of these  folds.  Lemma~\ref{lemma:mainfold} implies that  the decorated groups   
 $$(\pi_1(\mathbb{B}', u_1'), \phi_{\ast}', \{H(p_j',\gamma_j')\}_{1\leqslant j\leqslant n}) \ \ \text{ and } \ \  (\pi_1(\mathbb{B}, u_1), \phi_{\ast}, \{H(p_j,\gamma_j)\}_{1\leqslant j\leqslant n} ) $$ 
 are isomorphic.  Since  the number of edges of the underlying graph decreases by two,  we conclude that  $((\mathbb{B}',u_1'), \phi', \{p_j'\}_{1\leqslant j\leqslant n} , \{\gamma_j'\}_{1\leqslant j\leqslant n})$ is not tame.   Thus one of the following must occur: 
\begin{enumerate}
\item[(C1)] $((\mathbb{B}',u_1'), \phi', \{p_j'\}_{1\leqslant j\leqslant n} , \{\gamma_j'\}_{1\leqslant j\leqslant n})$ folds squares.

\item[(C2)] $((\mathbb{B}',u_1'), \phi', \{p_j'\}_{1\leqslant j\leqslant n} , \{\gamma_j'\}_{1\leqslant j\leqslant n})$  is not vertex injective.

\item[(C3)] $((\mathbb{B}',u_1'), \phi', \{p_j'\}_{1\leqslant j\leqslant n} , \{\gamma_j'\}_{1\leqslant j\leqslant n})$  is not collapsible. 
\end{enumerate}

We distinguish two cases according to  whether there is   a decorated morphism in $\Lambda_{(G, \eta, \{G_j\})}^{\text{min}}$ that folds onto a decorated morphism that folds squares or not.

\noindent{\textbf{Case (1)}:} Suppose that there is a decorated morphism  $((\mathbb{B}',u_1'), \phi', \{p_j'\}_{1\leqslant j\leqslant n} , \{\gamma_j'\}_{1\leqslant j\leqslant n})$ in $\Lambda_{(G, \eta, \{G_j\})}^{\text{min}}$  that folds onto a decorated morphism that folds squares.    It follows from Lemmas \ref{lemma:foldsquares} and \ref{cor:foldsperipheral}  that  the decorated group $(\pi_1(\mathbb{B}, u_1), \phi_{\ast}, \{H(p_j, \gamma_j)\}_{1\leqslant j\leqslant n})$ projects onto a decorated group   that  either folds peripheral subgroups or has an obvious relation.   Consequently  the same holds to  $(G, \eta, \{G_j\}_{1\leqslant j\leqslant n})$ as it is isomorphic to $(\pi_1(\mathbb{B}, u_1), \phi_{\ast}, \{H(p_j, \gamma_j)\}_{1\leqslant j\leqslant n})$.

\medskip

\noindent{\textbf{Case (2)}:} Suppose now that Case (1) does not occurs. Thus  any decorated morphism  that is obtained from an  element of $\Lambda_{(G, \eta, \{G_j\})}^{\text{min}}$ by a fold does not fold squares.    We separate this case in two sub-cases according to whether   there exists a decorated morphism in $\Lambda_{(G, \eta, \{G_j\})}$  that folds onto a  (not tame) collapsible decorated morphism or not.

\smallskip

\noindent{\textbf{Case (2.a)}:} Suppose that there exists a decorated morphism   $((\B, u_1), \phi, \{p_j\}_{1\leqslant j\leqslant n}, \{\gamma_j\}_{1\leqslant j\leqslant n})$ in  $\Lambda_{(G, \eta, \{G_j\})}^{\text{min}}$  that folds onto a decorated moprhism  $((\mathbb{B}',u_1'), \phi', \{p_j'\}_{1\leqslant j\leqslant n}, \{\gamma_j'\}_{1\leqslant j\leqslant n})$ that is collapsible.  Thus  $\phi'$ is not vertex injective.  We will show that $(G, \eta, \{G_j\}_{j\in J})$  projects onto a  strongly collapsible decorated group $(H, \lambda ,\{ H_j\}_{1\leqslant j\leqslant n})^{\text{min}}$  over $\pi_1^o(\mathcal{O})$  such that either 
$$\rk(H)< \rk(G) \ \ \text{ or } \ \ \rk(H)=\rk(G) \  \text{ and } \ \ \text{tn}(H)> \text{tn}(G).$$

Indeed, as $\phi'$ is not vertex injective, there is a vertex $w$ of $\mathbb{B}'$  such that the vertex homomorphism  $\phi_w':B_w'\rightarrow A_{\phi'(w)}^{\mathcal{O}}$ is not injective. As  $((\B, u_1), \phi, \{p_j\}_{1\leqslant j\leqslant n}, \{\gamma_j\}_{1\leqslant j\leqslant n})$  is vertex injective and the vertex groups of $\AO$ are finite cyclic,  one of the following must holds:
\begin{enumerate}
\item In the case of a fold of type IA,  the vertex group $B_w'$ is a free product of two finite cyclic groups. Hence $\text{rk}(B_w')=\text{tn}(B_w')=2$. 

\item In the case of a fold of type IIIA,  the vertex $B_w'$ is the free product of an infinite cyclic group with a (possibly trivial)  finite cyclic group. Hence $1\leqslant \text{rk}(B_w)\leqslant 2$ and $ \text{tn}(B_w')=\text{rk}(B_w')-1$. 
\end{enumerate}

Lemma~\ref{lemma:collapsible} provides a sub-graph of groups $\mathbb{B}_0\subseteq \mathbb{B}$ that contains the vertex $u_1$ of $\mathbb{B}$  and a redecoration $((\mathbb{B}, u_1), \phi, \{p_j\}_{1\leqslant j\leqslant n},  \{\delta_j\}_{1\leqslant j\leqslant n})$  of $((\B, u_1), \phi, \{p_j\}_{1\leqslant j\leqslant n}, \{\gamma_j\}_{1\leqslant j\leqslant n})$  such that  $\pi_1(\mathbb{B}, u_1)$ splits as 
$$\pi_1(\mathbb{B}, u_1)=\pi_1(\mathbb{B}_0,u_1)\ast H(p_{1}, \delta_{1})\ast \ldots \ast H(p_{n}, \delta_{n}).$$

Define $H:= H_0 \ast_{j=1}^n H(p_j, \delta_j)$  where  $H_0:= \phi_{\ast}\pi_1( \mathbb{B}_0, u_1)\leq \pi_1(\AO, v_1)$.   Let further $\lambda:H\rightarrow \pi_1(\AO, v_1)$ be the homomorphism  induced by the inclusion map $H_0\hookrightarrow \pi_1(\AO, v_1)$ and  the maps 
$$H(p_j, \delta_j) \hookrightarrow \pi_1(\mathbb{B}, u_1) \xrightarrow{\phi_{\ast}}  \pi_1(\AO, v_1) \ \ \text{ for } \  1\leqslant j\leqslant n.$$  

It is not hard to see that   the decorated  group  induced by $((\B, u_1), \phi, \{p_j\}_{1\leqslant j\leqslant n}, \{\gamma_j\}_{1\leqslant j\leqslant n})$  projects onto the decorated group $(H, \lambda, \{H_j(p_j, \delta_j)\}_{1\leqslant j\leqslant n})$, and therefore 
$$(G, \eta, \{G_j\}_{j\in J})\twoheadrightarrow   (H, \lambda, \{H_j(p_j, \delta_j)\}_{1\leqslant j\leqslant n}).$$ 
If the vertex  group $B_w'$ is the free product of two non-trivial cyclic groups, then   
$$\text{rk}(H)<\text{rk}(\pi_1(\mathbb{B}',u_1'))=\text{rk}(G).$$  
If $B_w'$ is infinite cyclic, then  
$$\text{rk}(H)=\text{rk}(\pi_1(\mathbb{B}',u_1'))=\text{rk}(G) \ \ \text{ and } \ \  \text{tn}(G)=\text{tn}(\pi_1(\mathbb{B}',u_1'))<\text{tn}(H).$$

\medskip

\noindent{\textbf{Case (2.b)}:} Finally assume that (2.a) does not occur. Thus    any decorated morphism  that is obtained from an element of $\Lambda_{(G, \eta, \{G_j\})}^{\text{min}}$  by a fold    is not collapsible (by our assumption that Case (1) does not occur we also know  that it does not fold squares).    In this case we will construct a decorated group over $\pi_1(\AO, v_1)$ that  is isomorphic to the decorated group obtained from a special almost orbfold covering by adjoining a finite  subgroup.

\smallskip
 
Let $((\B, u_1), \phi, \{p_j\}_{1\leqslant j\leqslant n}, \{\gamma_j\}_{1\leqslant j\leqslant n})$  be an arbitrary element of $ \Lambda_{(G, \eta, \{G_j\})}^{\text{min}}$ and suppose that the decorated morphism  $((\mathbb{B}',u_1'), \phi', \{p_j'\}_{1\leqslant j\leqslant n}, \{\gamma_j'\}_{1\leqslant j\leqslant n})$ is obtained from $((\B, u_1), \phi, \{p_j\}_{1\leqslant j\leqslant n}, \{\gamma_j\}_{1\leqslant j\leqslant n})$ by a fold. By our assumptions we know that   
$((\mathbb{B}',u_1'), \phi', \{p_j'\}_{1\leqslant j\leqslant n}, \{\gamma_j'\}_{1\leqslant j\leqslant n})$ 
does not fold squares and it is not collapsible.

 \smallskip

It is not hard to see that   a fold of type IA preserves collapsibility. Thus $((\mathbb{B}',u_1'), \phi', \{p_j'\}_{1\leqslant j\leqslant n}, \{\gamma_j'\}_{1\leqslant j\leqslant n})$ is obtained from $((\B, u_1), \phi, \{p_j\}_{1\leqslant j\leqslant n}, \{\gamma_j\}_{1\leqslant j\leqslant n})$ by a fold of type IIIA.  After an auxilairy move of type A2  we can assume that the fold is elementary. This means that there are  distinct  edges $f$ and $g$ of $\mathbb{B}$   such that (1)  $w:=\alpha(f)=\alpha(g) $ and $u:=\omega(f)=\omega(g)$,   $e:=\phi(f)=\phi(g)$, and   $o_g^{\phi} =o_f^{\phi},$  where $v$ denotes the vertex $\phi(u)$. 
\begin{figure}[h!]
\begin{center}
\includegraphics[scale=1]{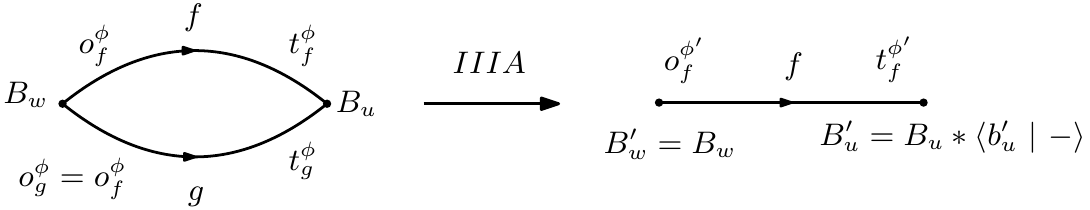}
\end{center}
\caption{$\phi_u'(b_u')=(t_f^{\phi})^{-1} t_g^{\phi}\in A_{\phi(u)}$,  $o_{f}^{\phi'}=o_f^{\phi}$ and $t_{f}^{\phi'}=t_f^{\phi}$.}{\label{fig:foldingIIIA}}
\end{figure}

The underlying graph $B'$ of $\mathbb{B}'$  is equal to  $B/[f=g]$.  In order to simplify the  notation, we will identify    $B'$  with the sub-graph $B-\{g,g^{-1}\}$ of $B$. As the morphism  $\phi:\B\rightarrow \AO$ is vertex injective,   we can  further assume that  $B_u= \langle s_{v}^d \rangle$ with $d= |A_v^{\mathcal{O}}:\langle s_v^d \rangle|\geqslant 1$ and that the vertex homomorphism $\phi_u$ is the inclusion map $\langle s_v^d \rangle \hookrightarrow A_v^{\mathcal{O}}$.   With this simplifications, we see that   $\phi':\mathbb{B}'\rightarrow \AO$ is given by
 $$\phi'=(\phi', \{\phi_y' \ | \ y\in VB'\}, \{\phi_h'\ | \ h\in EB'\}, \{o_h^{\phi'} \ | \ h\in EB'\}, \{t_h^{\phi'} \ | \ h\in EB'\}) $$
 where  (i) the graph-morphism  $\phi':B'\rightarrow A^{\mathcal{O}}$ is the restriction of $\phi$ to   $B'=B-\{g, g^{-1}\}$, (ii)  for all vertices $y\neq u$ the vertex homomorphism  $\phi_y':B_y'\rightarrow A_{\phi'(y)}$ coincides with $\phi_y$, (iii) the vertex homomorphism  $\phi_u':B_u'=B_u\ast \langle b_u'\ | \ -\rangle \rightarrow A_{v}^{\mathcal{O}}$ is induced by the maps 
$$B_u\xrightarrow{\phi_u} A_{v}^{\mathcal{O}} \ \ \text{ and } \ \ b_{u}' \mapsto (t_f^{\phi})^{-1}  t_g^{\phi} ,$$
and (iv) the edge elements are give by $o_{h}^{\phi'}=o_{h}^{\phi}$  for all  $h\in EB'=EB-\{g,g^{-1}\}$.  Fig. \ref{fig:foldingIIIA} shows the portion of the underlying graph of $\mathbb{B}$ involved in the fold.

Since $((\mathbb{B}',u_1'), \phi', \{p_{j}'\}_{1\leqslant j\leqslant n}, \{\gamma_{j}'\}_{1\leqslant j\leqslant n})$ is not collapsible and   does not fold squares,  there is a non-empty subset $K=\{k_1,\ldots, k_{m}\}\subseteq \{1,\ldots, n\}$ such that,  for each  $h\in EB'=EB-\{g,g^{-1}\}$,  exactly  one of the following holds:
\begin{enumerate}
\item[(a)] The paths $p_{k_1}', \ldots, p_{k_m}'$ are contained in the sub-graph of groups $\mathbb{B}_h'\subseteq \mathbb{B}'$.

\item[(b)] There are non necessarily distinct elements  $k$ and $ k'$ in $ K$ such that $$p_{k}'=p_{k,1}'(1,h,1)p_{k,2}' \ \ \text{ and } \  \  p_{k'}'=p_{k',2}'(1,h^{-1}, 1)p_{k',2}'$$ 
and that the following hold:  (b.i) the paths  $p_{k,1 }', p_{k,2}'$ and $p_{k',1}', p_{k',2}' $ are contained in $\B_h$, and  (b.ii) for each $k''\in K-\{k,k'\}$ the path  $p_{k''}'$ is  contained in   $\B_h$

\item[(c)] There is an element  $k$ in $ K$ such that 
$$p_k'=p_{k,1}'(1,h,1) p_{k,2}' (1,h^{-1}, 1)p_{k,3}'$$ 
and that the following hold: (c.i) the paths $p_{k,1}', p_{k,2}'$ and $ p_{k,3}'$ are contained in $\B_h$ and  (c.ii) for each $k'\in K-\{k\}$ the path   $p_{k'}'$ is  contained in   $\B_h$.   
\end{enumerate}

Let $B_K'$ be the sub-graph of $B'$ having edge set 
$$EB_K':=\{ h\in EB' \ | \  h \text{ is crossed by some } p_k' \ \text{in } \ \{p_{k_1}', \ldots, p_{k_m}'\}\}$$ 
and vertex set  $VB_K':=\{\alpha(h) \ | \  h\in EB_K'\}.$  The following hold: 
\begin{enumerate}

\item[(i)]  $B_K'$ is connected.

\item[(ii)] The paths $p_{k_1}', \ldots, p_{k_m}'$ are contained in the sub-graph of groups $\mathbb{B}_K'\subseteq \mathbb{B}'$ carried by the sub-graph $B_K'\subseteq B'$.

\item[(iii)] For each   $h\in B_{K}'$  there are   $k, k'\in K$  such that  $$p_{k}'=p_{k,1}'  (1,h,1)  p_{k,2}' \ \ \text{ and } \ \  p_{k'}'=p_{k',1}'  (1,h^{-1},1)  p_{k',2}'$$ 
and that the following hold:  (iii.a) the paths $p_{k,1}', p_{k,2}', p_{k',1}'$ and $p_{k',2}'$ are contained in  $(\B_K')_h$ and (iii.b) for each $k''\in K-\{ k, k'\}$ the path  $p_{k''}'$ is   contained in $(\B_K')_h$.
\end{enumerate}
Recall that  $(\B_K')_h$ denotes  the sub-graph of groups of  $ \mathbb{B}_K'$ carried by the sub-graph $B_K'-\{h, h^{-1}\}$ of $B_K'$.     From item  (iii)  we conclude that  
\begin{enumerate}
\item[(iv)]   for each $y$ of $B_{K}'$, the local  graph  $\Gamma_{((\mathbb{B}', u_1'), \phi', \{p_j'\}, \{\gamma_j'\})}^y$    has a circular component  
$$h_{y,1}\mapsto h_{y,2} \cdots \mapsto h_{y, l_y}\mapsto h_{y,1}$$ 
with  all edges     $h_{y,1}, \ldots, h_{y,l_y}$ contained  in $B_K'$.  
\end{enumerate}

\begin{claim}{\label{claim:0}}
For each vertex $y\in VB'=VB$   the local  graph $\Gamma_{((\mathbb{B}', u_1'), \phi', \{p_j'\}, \{\gamma_j'\})}^y$ contains at most one circle.
\end{claim}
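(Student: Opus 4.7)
The plan is to argue by contradiction. Suppose $\Gamma_{((\B', u_1'), \phi', \{p_j'\}, \{\gamma_j'\})}^y$ contains two distinct circles $C_1$ and $C_2$ at some vertex $y$. The strategy is to exhibit an element of $\Lambda_{(G, \eta, \{G_j\})}$ with strictly fewer than $n_0$ edges, contradicting the minimality of $n_0$.

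First I would apply Lemma~\ref{lemma:path1}(2) separately to each circle. For every $e \in \st(\phi'(y), A^{\mathcal{O}})$, the edges of $C_i$ mapping to $e$ yield a complete set of $\phi'_y(B_y')$-coset representatives in $A_{\phi'(y)}^{\mathcal{O}}$. Assuming first that $y$ is different from both $u$ and $w=\alpha(f)=\alpha(g)$ (the vertices altered by the type~IIIA fold producing $(\B',\phi',\ldots)$), one has $B'_y=B_y$ and $\phi'_y=\phi_y$. Since $C_1$ and $C_2$ are disjoint and each already exhausts the cosets, comparing the two transversals yields two distinct edges $f_1\in C_1$ and $f_2\in C_2$ with $\phi'(f_1)=\phi'(f_2)=e$ and $\phi'_y(B'_y) o_{f_1}^{\phi'}=\phi'_y(B'_y) o_{f_2}^{\phi'}$. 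This is a violation of condition (F1) of Definition~\ref{def:folded} at $y$, and because the fold producing $\phi'$ from $\phi$ did not alter the star of $y$, the same two edges $f_1,f_2\in\st(y,B)$ exhibit an (F1)-violation for $\phi$ at $y$.

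Next, after suitable auxiliary moves of type A0/A2, I would apply the corresponding elementary fold (of type IA or IIIA, according as $\omega(f_1)\neq\omega(f_2)$ or $\omega(f_1)=\omega(f_2)$) to the minimal element $((\B,u_1),\phi,\{p_j\},\{\gamma_j\})\in\Lambda^{\min}_{(G,\eta,\{G_j\})}$, obtaining a decorated morphism $((\overline{\B},\overline{u}_1),\overline{\phi},\{\overline{p}_j\},\{\overline{\gamma}_j\})$ with $|E\overline{B}|=n_0-2$. By Lemmas~\ref{lemma:mainfold} and~\ref{lemma:3}, the induced decorated group is isomorphic to $(G,\eta,\{G_j\}_{j\in J})$. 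I would then verify that this new morphism is tame: collapsibility persists because the collapse tree from Lemma~\ref{lemma:collapsible} may be chosen to avoid $f_1$ and $f_2$ entirely (both circles live in $\Gamma^y$, which by hypothesis does not contain edges essential to the collapse); non-folding-of-squares is automatic under the standing hypothesis of Case~(2.b) that no element of $\Lambda^{\min}_{(G,\eta,\{G_j\})}$ folds onto a squares-folding morphism; and vertex injectivity follows because the specific character of the (F1)-violation (with $o_{f_1}^\phi$ and $o_{f_2}^\phi$ in the same $\phi_y(B_y)$-coset) forces the generator appended by a IIIA fold to land inside $\phi_y(B_y)$, so it can be absorbed by auxiliary A2 moves. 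This yields an element of $\Lambda_{(G,\eta,\{G_j\})}$ with fewer than $n_0$ edges, the desired contradiction.

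The main obstacle will be the exceptional cases $y\in\{u,w\}$. At $y=u$ the vertex group carries the extra generator $b'_u$ produced by the IIIA fold, with $\phi'_u(b'_u)=(t_f^\phi)^{-1}t_g^\phi$; transferring the (F1)-violation back to $\phi$ requires tracking how $b'_u$ enters the words labelling edges of the two circles, and this may force one to cyclically rearrange the circles or invoke a preliminary redecoration to expose the violation on the level of $\phi$. At $y=w$ the identification of $f$ with $g$ in $\B'$ collapses two edges of $\st(w,B)$ into one, so two circles in $\Gamma^w_{(\B',\phi',\ldots)}$ correspond to a slightly richer configuration in $\Gamma^w_{(\B,\phi,\ldots)}$, which may include a circle that passes through both $f$ and $g$. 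I expect that in both exceptional cases the additional combinatorial analysis still produces a pair of edges in $\B$ witnessing an (F1)-violation (possibly at a nearby vertex reached by following the circle), after which the reduction in Steps~2 and~3 goes through unchanged.
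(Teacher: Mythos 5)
Your first step runs parallel to the paper's: you pass from the two circles in the local graph of $\phi'$ at $y$ back to the local graph of $\phi$ at $y$ and use Lemma~\ref{lemma:path1}(2) to extract two distinct edges of $\st(y,B)$ with the same image under $\phi$ whose $o$-elements lie in the same $\phi_y(B_y)$-coset, i.e.\ an (F1)-violation. The divergence, and the gap, is in how you convert this into a contradiction. The paper never touches minimality of $n_0$ here: it observes that the two offending edges lie in two \emph{distinct non-trivial components} of the local graph of $\phi$ at $y$, hence each is crossed by a peripheral path (with matching index $i(\cdot)$, forced by Lemma~\ref{lemma:path}), so the fold identifying them immediately produces a decorated morphism that folds squares --- contradicting the standing Case~(2) assumption that no element of $\Lambda_{(G,\eta,\{G_j\})}^{\text{min}}$ folds onto a squares-folding morphism. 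That is a two-line finish requiring no tameness verification whatsoever.

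Your route instead requires the folded morphism to be tame, and the two key verifications do not hold up. For collapsibility you assert that the collapse edges of Lemma~\ref{lemma:collapsible} ``may be chosen to avoid $f_1$ and $f_2$'' because the circles contain no edges essential to the collapse; there is no such hypothesis --- the vertices of the local graph are by definition exactly the edges crossed by the peripheral paths, which is precisely where the collapse edges live, so the circles can and typically do contain them. (Note also that under the exclusion of Case~(2.a) any fold of an element of $\Lambda_{(G,\eta,\{G_j\})}^{\text{min}}$ is \emph{not} collapsible, so a correct proof of collapsibility would by itself already yield the contradiction; but the argument offered does not establish it.) For vertex injectivity, your observation only controls the $o$-elements at $y=\alpha(f_1)=\alpha(f_2)$: a type~IIIA fold adjoins at $\omega(f_1)$ a generator mapping to $(t_{f_1}^{\phi})^{-1}t_{f_2}^{\phi}$, which need not lie in $\phi_{\omega(f_1)}(B_{\omega(f_1)})$, and a type~IA fold replaces the target vertex group by a free product of two finite cyclic groups mapping into the finite cyclic group $A^{\mathcal{O}}_{\omega(\phi(f_1))}$, which is essentially never injective --- this failure is exactly why the trichotomy (C1)--(C3) is set up in the first place. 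Finally, the exceptional cases $y\in\{u,w\}$ are only conjectured to work (``I expect\ldots''), whereas the paper disposes of all vertices uniformly by noting that the local graph of $\phi'$ at $y$ is obtained from that of $\phi$ by identifying at most one pair of vertices, so two circles downstairs force at least two non-trivial components, one of them a circle, upstairs.
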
 
 \begin{proof}[Proof of Claim]
Indeed,     $ \Gamma_{((\mathbb{B}', u_1'), \phi', \{p_j'\}, \{\gamma_j'\})}^y$    can be described in terms of $ \G^y $  as follows: 
\begin{equation*}
    \Gamma_{((\mathbb{B}', u_1'), \phi', \{p_j'\}, \{\gamma_j'\})}^y = \begin{cases}
               \G^y                     & \text{ if  }  \ y\neq w,u  \\
               \G^w/[f=g]               & \text{ if  }  \ y=w \\
               \G^u/[f^{-1}=g^{-1}]     & \text{ if  }   \ y=u.     
           \end{cases}
\end{equation*}
From this,   we see that,  if   two components of $\Gamma_{((\mathbb{B}', u_1'), \phi', \{p_j'\}, \{\gamma_j'\})}^y$ are circles, then   $\Gamma_{((\mathbb{B} , u_1 ), \phi , \{p_j \}, \{\gamma_j \})}^y$  contains at least  two non-trivial components, and  one of them  must be a  circle.  Now Lemma~\ref{lemma:path1} implies that  $((\B, u_1), \phi, \{p_j\}_{1\leqslant j\leqslant n}, \{\gamma_j\}_{1\leqslant j\leqslant n})$   folds onto a decorated morphism that folds squares,  which is  a contradiction.   
\end{proof}

\begin{claim}{\label{claim:1}}
 For each vertex $y\in VB_K'$,   the local  graph $\Gamma_{((\mathbb{B}', u_1'), \phi', \{p_j'\}, \{\gamma_j'\})}^y$ is a circle.
 \end{claim}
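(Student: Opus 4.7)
The plan is to argue by contradiction. Suppose some vertex $y \in VB_K'$ has a local graph $\Gamma^y := \Gamma_{((\mathbb{B}',u_1'),\phi',\{p_j'\},\{\gamma_j'\})}^y$ that is not itself a circle. By item (iv), $\Gamma^y$ contains a circular component whose vertices all lie in $B_K'$, and by Claim~\ref{claim:0} this circle is unique among components of $\Gamma^y$. Therefore $\Gamma^y$ must possess at least one additional component, which is necessarily an interval $h_1 \mapsto \cdots \mapsto h_l$ with $l \geqslant 1$ (possibly degenerate, consisting of a single isolated edge in $\st(y,B')$); none of $h_1, \ldots, h_l$ lies in the circle.

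First I would invoke Lemma~\ref{lemma:loctrivial} on this interval: after finitely many auxiliary moves of type A2 on $h_2,\ldots,h_l$ together with a redecoration, I may assume that all paths $p_1',\ldots,p_n'$ are $\{h_1,\ldots,h_l\}$-trivial at $y$. These operations preserve the underlying graph $B'$ and the isomorphism class of the induced decorated group over $\pi_1(\AO,v_1)$, so the resulting decorated morphism still lies in the same equivalence class.

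Next I would lift the situation back to the tame morphism $\D \in \Lambda_{(G,\eta,\{G_j\})}^{\text{min}}$. Since $\D'$ was obtained from $\D$ by a fold of type IIIA identifying $f,g \in \st(w,B)$ with $u=\omega(f)=\omega(g)$, for any $y \notin \{w,u\}$ the local graphs of $\D$ and $\D'$ coincide at $y$; for $y \in \{w,u\}$ the two differ by identifying particular vertices, and, using that $\D$ does not fold squares together with Claim~\ref{claim:0}, one checks that the same interval pattern survives at the preimage vertex. In either case, Lemma~\ref{lemma:loctrivial} applies directly to $\D$ as well, producing $\overline{\D} \in \Lambda_{(G,\eta,\{G_j\})}^{\text{min}}$ whose induced decorated group coincides with that of $\D$ and whose paths are trivialized through the corresponding interval edges.

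The main obstacle is the final step: exhibiting from $\overline{\D}$ a tame decorated morphism with strictly fewer edges than $\D$, contradicting minimality of $H$ on $\Lambda_{(G,\eta,\{G_j\})}$. The strategy is to exploit the trivialization to perform an additional elementary fold of type IA at $y$ that identifies one of $h_1,\ldots,h_l$ with a suitable companion edge of the circle, reducing $|EB|$ by two. The delicate part is that one must simultaneously preserve (i) vertex injectivity (since after trivialization no $p_j$ forces a non-trivial vertex-group element at the identified vertex), (ii) collapsibility (the $\nu$-ordering of Lemma~\ref{lemma:collapsible} survives because the trivialized segments can be absorbed into the paths $p_k''$), and (iii) the absence of square-folding (no single $p_j$ crosses both copies of the newly-identified edge pair, thanks to the interval being a separate component of $\Gamma^y$). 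Verifying (i)--(iii) will require separate treatment of the degenerate case $l=1$ versus $l \geqslant 2$, and careful tracking via Lemma~\ref{lemma:path} of the edge elements produced by the fold.
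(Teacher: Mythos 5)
Your opening is sound and matches the paper: item (iv) plus Claim~\ref{claim:0} give a unique circular component, so a disconnected $\Gamma^y$ must contain an interval component $h_1\mapsto\cdots\mapsto h_t$. The problem is the endgame. You propose to fold an interval edge with a circle edge and then contradict minimality of $|EB|$ over $\Lambda_{(G,\eta,\{G_j\})}$; for that you would need the result of the fold to be \emph{tame}, and it essentially never is. A fold of type IIIA adjoins an infinite cyclic free factor to a vertex group mapping into the finite cyclic group $A_v^{\mathcal{O}}$, and a fold of type IA merges the two terminal vertex groups into a free product mapped by the induced homomorphism into $A_v^{\mathcal{O}}$; in either case vertex injectivity is destroyed unless the relevant vertex groups are trivial, and your trivialization via Lemma~\ref{lemma:loctrivial} only affects how the paths pass through $y$, not the vertex groups at the other endpoints of $h_1$ and $f_{y,i}$. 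Moreover, when the interval is non-degenerate the folded morphism actually \emph{does} fold squares (both some $p_j$ crossing $h_1$ and the circle paths crossing $f_{y,i}^{\pm 1}$ now traverse the identified edge), so condition (iii) of your checklist fails rather than holds. You also never justify that a fold between an interval edge and a circle edge is applicable at all; this requires the local surjectivity statement of Lemma~\ref{lemma:path1}(2.ii), which is forced by the circle and gives $o_{h_1}^{\phi'}=a\,o_{f_{y,i}}^{\phi'}$ with $a\in\phi_y'(B_y')$.

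The paper's proof uses a different contradiction, namely with the standing case hypotheses rather than with edge-count minimality. It modifies the edge elements of the \emph{pre-fold} morphism $\phi$ on $h_1,\ldots,h_t$ by $a^{-1}$ so that the extra fold becomes elementary, checks via Lemma~\ref{lemma:3} that the modified decorated morphism still lies in $\Lambda^{\text{min}}_{(G,\eta,\{G_j\})}$, and then observes a dichotomy: if $t=1$ (no path crosses $h_1$) the extra fold produces a collapsible decorated morphism, contradicting the assumption that Case (2.a) does not occur; if some $p_j$ crosses $h_1$ the extra fold produces a morphism that folds squares, contradicting the assumption that Case (1) does not occur. No tameness of the folded object is needed, which is precisely what makes the argument close. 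Your proposal as written does not reach a contradiction.
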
 
\begin{proof}[Proof of Claim]
Denote  the  graph $\Gamma_{((\B', u_1'), \phi', \{p_j'\}, \{\gamma_j'\})}^y$ by  $\Gamma^y$.  We will show that if $\G^y$ is not connected, then  there is a decorated morphism in $\Lambda_{(G, \eta, \{G_j\})}^{\text{min}}$  that either  folds onto a decorated morphism that folds squares (which contradicts our assumption that case (1) does not occur) or onto a morphism  that is collapsible (which contradicts our assumption that case (2.a) does not occur).   

Item  (iv) implies that  at least one component  of $\Gamma^y$  is a circle  $ f_{y,1}\mapsto f_{y,2}\mapsto \cdots \mapsto f_{y,l_y}\mapsto f_{y,1}$ 
with all edges  $f_{y,1}, \ldots, f_{y,l_y}$ in $B_{K}$. If $\Gamma^y$ is not connected, then the previous claim  implies that  there is an interval 
$$h_1\mapsto h_2\mapsto \cdots \mapsto h_{t-1} \mapsto h_t$$
in $\Gamma^y$  with  $h_1, \ldots, h_t\in EB'$.   Lemma~\ref{lemma:path1} implies that   there is some $i\in\{1,\ldots l_y\}$ such that 
$$e':=\phi'(f_{y,i})=\phi'(h_1) \ \ \text{ and  that } \ \ o_{h_1}^{\phi'}=ao_{f_{y,i}}^{\phi'}$$ 
for some element   $a$ in $\phi_y'(B_y')$.

Define the morphism $\phi'':\mathbb{B}\rightarrow \AO$ from $\phi:\mathbb{B}\rightarrow \AO$ by replacing, for each $1\leqslant i\leqslant t$,  the edge element  $o_{h_1}^{\phi'}=o_{h_1}^{\phi} $ by   $o_{h_i}^{\phi''}:=a^{-1} o_{h_i}^{\phi}$. Thus $o_{h_1}^{\phi''}=o_{f_{y,i}}^{\phi}=o_{f_{y,i}}^{\phi''}$. Now  let 
 $((\mathbb{B}, u_1), \phi'', \{p_{j}\}_{1\leqslant j\leqslant n}, \{\gamma_{j}\}_{1\leqslant j\leqslant n})$ 
be the decorated morphism   obtained from  $$((\mathbb{B}, u_1), \phi, \{p_{j}\}_{1\leqslant j\leqslant n}, \{\gamma_{j}\}_{1\leqslant j\leqslant n})$$ by replacing  $\phi:\mathbb{B}\rightarrow \AO$  by $\phi'':\mathbb{B}''\rightarrow \AO$. 

It is not hard to see that the decorated morphism  obtained from $((\mathbb{B}, u_1), \phi'', \{p_{j}\}_{1\leqslant j\leqslant n}, \{\gamma_{j}\}_{1\leqslant j\leqslant n})$ by folding the edges $f$ and $g$    is also  obtained from $((\mathbb{B}', u_1'), \phi', \{p_j'\}_{1\leqslant j\leqslant n}, \{\gamma_j'\}_{1\leqslant j\leqslant n})$ by  applying  finitely many  auxiliary moves of type A2  to the edges $h_1,\ldots, h_t$ of $B'$.  It  now follows from  Lemma~\ref{lemma:3}  that  
$$((\mathbb{B}, u_1), \phi'', \{p_{j}\}_{1\leqslant j\leqslant n}, \{\gamma_{j}\}_{1\leqslant j\leqslant n})\in \Lambda_{(G, \eta, \{g_j\})}^{\text{min}}.$$

Finally,  let  $((\mathbb{B}'', u_1''), \phi''',  \{p_j''\}_{1\leqslant j\leqslant n}, \{\gamma_j''\}_{1\leqslant j\leqslant n})$ 
be the decorated morphism obtained from   $$((\mathbb{B}, u_1), \phi'', \{p_{j}\}_{1\leqslant j\leqslant n}, \{\gamma_{j}\}_{1\leqslant j\leqslant n})$$
 by folding the edges $f_{y,i}$ and $h_1$.  If the edge $h_1$ is not crossed by the paths $p_{1}, \ldots, p_{n}$, i.e.  if the interval $h_1\mapsto \cdots \mapsto h_t$ has length $t=1$,   then  clearly   $((\mathbb{B}'', u_1''), \phi''', \{p_j''\}_{1\leqslant j\leqslant n}, \{\gamma_j''\}_{1\leqslant j\leqslant n})$ is collapsible and does  not fold squares. If $h_1$ is crossed by some  path  $p_j$, then   $((\mathbb{B}'', u_1''), \phi''', \{p_j''\}_{1\leqslant j\leqslant n}, \{\gamma_j''\}_{1\leqslant j\leqslant n})$ folds  squares.  
\end{proof}

\begin{claim}{\label{claim:2}}
$B_K'=B'=B-\{g,g^{-1}\}$. Consequently     $K=\{k_1, \ldots, k_m\}$ is equal to $\{1\ldots, n\}$.
\end{claim}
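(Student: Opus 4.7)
\textbf{Proof plan for Claim~\ref{claim:2}.} The plan is to argue by contradiction. Suppose that $B_K' \subsetneq B'$. Since $B' = B - \{g, g^{-1}\}$ is connected (the edge $f$ still joins the vertices formerly joined by $g$) and $B_K'$ is connected by property (i) above, there exists an edge $h_0 \in EB' \setminus EB_K'$ having at least one endpoint, say $y = \alpha(h_0)$, in $VB_K'$. By Claim~\ref{claim:1}, the local graph $\Gamma_{((\mathbb{B}', u_1'), \phi', \{p_j'\}, \{\gamma_j'\})}^y$ is a single circle whose vertex set is $\st(y, B')$. Since $h_0$ is a vertex of this circle, it has an incoming edge and an outgoing edge with labels $(j', b')$ and $(j'', b'')$, and by definition of the labels these imply that $p_{j'}'$ and $p_{j''}'$ both cross $h_0$. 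The condition $h_0 \notin EB_K'$ then forces $j', j'' \notin K$.

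Next I would exploit the contrast between vertices of the circle that lie in $EB_K'$ and those that do not. Because $y \in VB_K'$ there exists at least one vertex $f^* \in \st(y, B') \cap EB_K'$; by the definition of $EB_K'$, some $p_k'$ with $k \in K$ crosses $f^*$, producing an incident edge of the circle at $f^*$ with label $(k, \cdot)$. Traversing the cyclic ordering of $\st(y, B')$ imposed by the circle from $f^*$ toward $h_0$, one necessarily encounters a first transition between consecutive vertices of the circle belonging to $EB_K'$ and $EB' \setminus EB_K'$: a pair of adjacent vertices $f_\bullet, g_\bullet$ such that $f_\bullet \in EB_K'$, $g_\bullet \notin EB_K'$, and the circle-edge $f_\bullet \to g_\bullet$ (or $g_\bullet \to f_\bullet$) has some label $(j_\bullet, b_\bullet)$; this label simultaneously records that $p_{j_\bullet}'$ crosses $g_\bullet \notin EB_K'$ (forcing $j_\bullet \notin K$) and that $p_{j_\bullet}'$ crosses $f_\bullet \in EB_K'$, producing a restrictive structural configuration near $f_\bullet$ involving both $K$-indexed and non-$K$-indexed paths.

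The main obstacle will be to convert this local mixture into a global contradiction. I expect the argument to run as follows: combine the balanced-crossing conditions (iii.a)--(iii.b) for $f_\bullet \in EB_K'$ with the case~(a) behaviour at $g_\bullet$ and Lemma~\ref{lemma:path1} applied along the circle segment lying outside $B_K'$, and then trace this configuration back through the fold to the original tame decorated morphism $\D$. Doing so should produce one of two outcomes, both contradictory: either (1) a decorated morphism in $\Lambda_{(G,\eta,\{G_j\})}$ with strictly fewer than $n_0$ edges, contradicting $\D \in \Lambda_{(G,\eta,\{G_j\})}^{\mathrm{min}}$ (achieved by collapsing or removing the edges outside $B_K'$, which carry no $K$-indexed crossings and are therefore decoupled from the peripheral subgroup structure), or (2) a fold of $\D$ yielding a decorated morphism that folds squares or is collapsible, contradicting our Case~(2.b) assumption. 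Once $B_K' = B'$ is established, the equality $K = \{1, \ldots, n\}$ follows from a short secondary argument: any $j \notin K$ would give a closed nontrivial path $p_j'$ that must cross some edge of $B' = B_K'$, and (iii.b) applied to this edge forces $p_j' = p_k'$ or $p_j' = p_{k'}'$ for the associated $k, k' \in K$, contradicting $j \notin K$.
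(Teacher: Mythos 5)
Your opening move is the right one (assume $B_K'\varsubsetneq B'$, pick an edge $h_0\in EB'-EB_K'$ with $y:=\alpha(h_0)\in VB_K'$, and look at the local graph at $y$), but from there you overlook the fact that closes the argument immediately and instead launch into a ``transition'' analysis whose decisive step is never supplied. The missing ingredient is item (iv) established just before Claim~\ref{claim:0}: for each $y\in VB_K'$ the local graph at $y$ has a circular component all of whose vertices lie in $EB_K'$. Claim~\ref{claim:1} says that this local graph \emph{is} a circle, hence connected, so that circular component is the entire local graph and every edge of $\st(y,B')$ already lies in $EB_K'$. Your $h_0\in\st(y,B')- EB_K'$ is then an outright contradiction --- no incoming/outgoing labels, no transition pair $(f_\bullet,g_\bullet)$, and no appeal to Lemma~\ref{lemma:path1}, edge counts, or ``tracing back through the fold'' is needed. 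This one-line observation is exactly the paper's proof. By contrast, your third paragraph is explicitly conjectural (``I expect'', ``should produce''), and neither of the two outcomes you gesture at is actually derived; as written it is a plan, not a proof. For what it is worth, your transition configuration \emph{can} be pushed to a contradiction, but by a different mechanism than the ones you name: the label $(j_\bullet,b_\bullet)$ of the circle-edge joining $f_\bullet\in EB_K'$ to $g_\bullet\notin EB_K'$ shows that $p_{j_\bullet}'$ with $j_\bullet\notin K$ traverses a directed edge of $B_K'$ which, by property (iii), is already traversed by a $K$-indexed path; since $i(j)$ is determined by the image of that edge under $\phi'$, this is a peripheral-path fold, contradicting the standing assumption of Case (2) that the folded morphism does not fold squares.

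Your derivation of $K=\{1,\ldots,n\}$ also misuses (iii.b): that condition only constrains the paths $p_{k''}'$ with $k''\in K$, so it cannot ``force $p_j'=p_k'$'' for $j\notin K$. The correct argument is the same collision as above: $p_j'$ maps onto a path of positive length, so it traverses some directed edge $h$ of $B'=B_K'$; by (iii) some $p_k'$ with $k\in K$ traverses $h$ as well, and two distinct peripheral paths traversing the same directed edge constitute a peripheral-path fold (the indices $i(j)$ and $i(k)$ agree because both are determined by $\phi'(h)$), contradicting the fact that $((\B',u_1'),\phi',\{p_j'\},\{\gamma_j'\})$ does not fold squares.
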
 
\begin{proof}[Proof of Claim]
We know from  the previous claim that,   for each vertex $y$  of $B_K'$,  the graph  $\Gamma_{((\mathbb{B}', u_1'), \phi', \{p_j'\}, \{\gamma_j'\})}^y$  is a circle  $f_{y,1}\mapsto f_{y,2} \cdots \mapsto f_{y, l_{y}}\mapsto f_{y,1}$ (with $f_{y,1}, \ldots, f_{y,l_{y}}\in EB_K'$) and therefore   connected. 

If $B_K'$ were a  a proper sub-graph of $B'$,  then there would be an edge $h\in B'-B_K'$ with $\alpha(h)\in B_K'$. This would then imply, as the component  that contains  the vertex $h$  is  clearly distinct from the circle  
$$f_{\alpha(h),1}\mapsto f_{\alpha(h),2} \mapsto\cdots \mapsto f_{\alpha(h), l_{\alpha(h)}}\mapsto f_{\alpha(h),1},  $$
that the graph  $\Gamma_{((\mathbb{B}', u_1'), \phi', \{p_j'\}, \{\gamma_j'\})}^{\alpha(h)}$ is not connected, a contradiction.  

Therefore $B_K'=B'$.  Since the decorated morphism  $((\mathbb{B}', u_1'), \phi', \{p_j'\}_{1\leqslant j\leqslant n},  \{\gamma_j'\}_{1\leqslant j\leqslant n})$ does not fold squares,  it follows that $K=\{1,\ldots, n\}$.
\end{proof}

We will show that  item ($\beta$) of Proposition~\ref{proposition:1} holds. To this end we  show that the decorated morphism $((\B', u_0'), \phi', \{p_j'\}_{1\leqslant j\leqslant n}, \{\gamma_j'\}_{1\leqslant j\leqslant})$ satisfies the hypothesis of Lemma~\ref{lemma:almost}.  We already know that it does not fold squares. 

It follows from Claims~\ref{claim:1} and \ref{claim:2} and from the description of $\Gamma_{((\mathbb{B}', u_1'), \phi', \{p_j'\}, \{\gamma_j'\})}^u$ given in the proof of Claim~\ref{claim:0},   that the graph  $\G^u$   is an interval $$f_{u,1}\mapsto f_{u,2}\mapsto \cdots \mapsto f_{u, l_u}\mapsto f_{u, l_u+1}$$ with $\{f_{u,1}, f_{u, l_u+1}\}=\{f^{-1}, g^{-1}\}$. Without loss of generality   suppose that $f_{u,1}=f^{-1}$ and $f_{u, l_u+1}=g^{-1}$.  It  follows from   Lemma~\ref{lemma:loctrivial}  that,  after a redecoration  and finitely many auxiliary moves of type A2 applied to the edges $f_{u,1},\ldots, f_{u, l_u+1}\in \st(u, B)$,     we can assume    that the paths $p_{1}, \ldots, p_{n}$  are $\st(u,B)$-trivial at $u$. In   other words,    the paths $p_{1}, \ldots, p_{n}$ are contained in the sub-graph of groups $\mathbb{B}_u$ of $\mathbb{B}$.  Thus,  the  label of the edges  of $\G^u$ is  given   by 
$$l(f_{u,t}\mapsto f_{u, t+1})=(j_t, 1)\in \{1,\ldots, n\}\times B_u \ \ \text{ for } \ \ 1\leqslant  t \leqslant l_u.$$
As the IIIA fold is applicable to the edges $f$ and $g$,  we have  
$$\phi(f_{u,1})=\phi(f^{-1})=e^{-1}=\phi(g^{-1})=\phi(f_{u,l_u+1}).$$ 
Item (i) of Lemma~\ref{lemma:path1}  implies  that $q_{\mathcal{O}}$ divides $l_u$ and that 
$$(t_g^{\phi})^{-1}=o_{f_{k+1}}^{\phi}=o_{f_1}^{\phi }s_{v}^{k_u}=(t_f^{\phi})^{-1}s_{v}^{k_u},$$  
where $k_u:=l_u/q_{\mathcal{O}}$.

To conclude the proof of the Proposition  we  show that $((\B', u_1), \phi', \{p_j\}_{1\leqslant j\leqslant n}, \{\gamma_j\}_{1\leqslant j\leqslant n})$ satisfies  conditions (a)-(c)  of Lemma~\ref{lemma:almost}.  From Claim~\ref{claim:1} we know that, for each vertex  $w$ of $B'$, the graph    $\Gamma_{((\mathbb{B}', u_1'), \phi' ,  \{p_j'\}, \{\gamma_j'\})}^w$ is a circle 
$$ f_{w, 1}\mapsto f_{w,2}\mapsto \cdots \mapsto f_{w,l_w}\mapsto f_{w,1}.$$

If $\phi':\B'\rightarrow \A$ is not folded at some vertex $w\neq u$,  then  a  fold  can be applied to distinct edges $h', h''\in  \text{St}(w,B')$.  Since the vertex groups $B_w'$ and $B_w$ coincide,   it follows   that  a fold  that identifies the edges $h'$ and $h''$ can also be applied  to  $\phi:\B\rightarrow \AO$. This implies that  
$$((\mathbb{B}, u_1), \phi, \{p_j\}_{1\leqslant j\leqslant n},  \{\gamma_j\}_{1\leqslant j\leqslant n})$$ 
 folds onto a decorated morphism that  folds squares, a contradiction.  Therefore the morphism $\phi':\B'\rightarrow \AO$ is folded at all vertices of $B'$ distinct from $u$.

It remains to verify that at the vertex $u$  the following hold:
\begin{enumerate}
\item[(c.1)] $B_u'=\langle b_u' | \ -\rangle \ast \langle s_{v}^d\rangle$ where $d=|A_v^{\mathcal{O}}:\langle s_v^d\rangle|$.

\item[(c.2)] $l(f_{u,t}\mapsto f_{u, t+1})=(j_t, 1)$ for $1\leqslant t\leqslant l_u-1$ and $l(f_{u, l_u}\mapsto f_{u, 1})=( j_{l_u}, b_u')$. 

\item[(c.3)] The vertex homomorphism  $\phi_u': B_u'\rightarrow A_{v}^{\mathcal{O}}$ is induced by the  maps $\langle s_v^d\rangle \hookrightarrow A_v$ and   $b_u'\mapsto s_{v}^{k_u}$.
\end{enumerate}

Item  (c.1)  follows immediately from the definition of the fold.  
For item  (c.2), note that, as   the edges $f_{u,1}\mapsto f_{u,2}, \ldots, f_{u,l_u}\mapsto f_{u, l_u+1}$ of $\G^u$ have label 
$$l(f_{u, t}\mapsto f_{u, t+1})=(j_t, 1)\ \ \text{ for } \ \  1\leqslant t\leqslant l_u, $$ 
it follows that  the edges  $f_{u,1}\mapsto f_{u,2}, \ldots, f_{u,l_u}\mapsto f_{u, 1}$ of $\Gamma_{(\B', u_1'), \phi', \{p_j'\}, \{\gamma_j'\}) }^u$  have label 
$$l(f_{u, t}\mapsto f_{u, t+1})=(j_t, 1)  \ \ \text{ for } \ \  1\leqslant t\leqslant l_u-1 \ \ \text{ and } \ \ l(f_{u, l_u}\mapsto f_{u, 1})=(b_u', j_{l_u}).$$

Item (c.3)   follows immediately from Lemma~\ref{lemma:path1}. 
 
\smallskip
 
 Lemma~\ref{lemma:almost} says that  there  is an almost orbifold covering $\eta':\mathcal{O}'\rightarrow \mathcal{O}$ with exceptional point $v\in A^{\mathcal{O}}\subseteq F$  with the property that  the element represented by the exceptional boundary component of $\mathcal{O}'$ is mapped by $\eta_{\ast}'$ onto $gs_v^{k_u}g^{-1}$ for some $g\in \pi_1^o(\mathcal{O})$ such that 
$$(\pi_1^o(\mathcal{O}')\ast g\langle s_v^d\rangle g^{-1}, \lambda , \{C_1',\ldots, C_n'\}) \cong  (\pi_1(\mathbb{B},u_1), \phi_{\ast}, \{H(p_1, \gamma_1), \ldots, H(p_n, \gamma_n)\}),$$
where $\lambda$ is the homomorphism induced by  $\eta_{\ast}':\pi_1^o(\mathcal{O}')\rightarrow \pi_1^o(\mathcal{O})$ and the inclusion map $g \langle s_v^d\rangle g^{-1}\hookrightarrow \pi_1^o(\mathcal{O})$.

It remains to show that $\eta'$ is special. It  follows from  Lemma~\ref{lemma:adjfinite} and from our assumption that no element of $\Lambda_{(G, \eta, \{G_j\})}^{\text{min}}$ folds onto a decorated morphism over $\AO$   that folds squares, that   $d\geqslant k_u$.  Therefore  $k_u\leqslant |A_v^{\mathcal{O}}|$, and so    $\eta':\mathcal{O}'\rightarrow \mathcal{O}$ is a special almost orbifold covering.

\end{document}